\newcommand{\dR}{\mathds{R}}
\DeclareMathOperator{\Conj}{ConjRad}
\DeclareMathOperator{\diam}{diam}
\DeclareMathOperator{\II}{II}
\DeclareMathOperator{\id}{Id}
\DeclareMathOperator{\Image}{Image}
\DeclareMathOperator{\InjRad}{InjRad}
\DeclareMathOperator{\Isom}{Isom}
\DeclareMathOperator{\length}{length}
\DeclareMathOperator{\Lip}{Lip}
\DeclareMathOperator{\N}{N}
\DeclareMathOperator{\pr}{pr}
\DeclareMathOperator{\R}{R}
\DeclareMathOperator{\rank}{rank}
\DeclareMathOperator{\Ric}{Ric}
\DeclareMathOperator{\Rm}{Rm}
\DeclareMathOperator{\Step}{Step}
\DeclareMathOperator{\Supp}{Supp}
\DeclareMathOperator{\Tor}{Tor}
\DeclareMathOperator{\Vol}{Vol}
\newtheorem{theorem}{Theorem}[section]
\newtheorem{lemma}[theorem]{Lemma}
\newtheorem{proposition}[theorem]{Proposition}
\newtheorem{corollary}[theorem]{Corollary}
\theoremstyle{definition}
\newtheorem{definition}[theorem]{Definition}
\theoremstyle{remark}
\newtheorem{remark}{Remark}[section]
\newtheorem{example}{Example}[section]
\theoremstyle{remark}
\numberwithin{equation}{section}
\begin{document}

\title{Topology and $\epsilon$-regularity Theorems on Collapsed Manifolds with Ricci Curvature Bounds}
\date{\today}
\author{Aaron Naber}
\address{Department of Mathematics, Northwestern, Evanston, Il, USA}
\email{anaber@math.northwestern.edu}
\author{Ruobing Zhang}
\address{Department of Mathematics, Princeton University, Princeton, NJ 08544, USA}
\email{ruobingz@math.princeton.edu}

\maketitle

\begin{abstract}
In this paper we discuss and prove $\epsilon$-regularity theorems for Einstein manifolds $(M^n,g)$, and more generally manifolds with just bounded Ricci curvature, in the collapsed setting.  

A key tool in the regularity theory of noncollapsed Einstein manifolds is the following:  If $x\in M^n$ is such that $\Vol(B_1(x))>v>0$ and that $B_2(x)$ is sufficiently Gromov-Hausdorff close to a cone space $B_2(0^{n-\ell},y^*)\subset \dR^{n-\ell}\times C(Y^{\ell-1})$ for $\ell\leq 3$, then in fact $|\Rm|\leq 1$ on $B_1(x)$.  No such results are known in the collapsed setting, and in fact it is easy to see without more such results are false.  It turns out that the failure of such an estimate is related to topology.  Our main theorem is that for the above setting in the collapsed context, either the curvature is bounded, or there are topological constraints on $B_1(x)$.

More precisely, using established techniques one can see there exists $\epsilon(n)$ such that if $(M^n,g)$ is an Einstein manifold and $B_2(x)$ is $\epsilon$-Gromov-Hausdorff close to ball in $B_2(0^{k-\ell},z^*)\subset\dR^{k-\ell}\times Z^\ell$, then the fibered fundamental group $\Gamma_\epsilon(x)\equiv \Image[\pi_1(B_\epsilon(x))\to\pi_1(B_2(x))]$ is almost nilpotent with $\rank(\Gamma_{\epsilon}(x))\leq n-k$.  The main result of the this paper states that if $\rank(\Gamma_{\epsilon}(x))= n-k$ is maximal, then $|\Rm|\leq C$ on $B_1(x)$.  In the case when the ball is close to Euclidean, this is both a necessary and sufficient condition.  There are generalizations of this result to bounded Ricci curvature and even just lower Ricci curvature.

\end{abstract}

\tableofcontents

\section{Introduction}

A classical theme in Einstein manifolds, and indeed any nonlinear equation, is that of an $\epsilon$-regularity theorem.  The spirit of any such theorem is that one assumes some weak control on the solution, and then proves from this strong control on the solution.  The original example of such a theorem in the case of an Einstein manifold $(M^n,g)$ is to assume on some ball $B_2(x)$ that
\begin{align}
\fint_{B_2(x)}|\Rm|^{n/2} <\epsilon\, .\label{L^{n/2}-average}
\end{align}
In this case it has then been proven that $|\Rm|\leq 1$ on $B_1(x)$,  see \cite{Anderson-L^2}.  This had proven to be especially powerful in the four dimensional case.  In fact, in the four dimensional case (\ref{L^{n/2}-average}) can be relaxed to $\int_{B_{2}(p)}|\Rm|^2<\epsilon$, which gives the $\epsilon$-regularity for collapsed Einstein $4$-manifolds under an assumed $L^2$ curvature bound, see \cite{CT}.  Unfortunately, in higher dimensions, even in the noncollapsed setting, there is no natural context in which one can apply the above $\epsilon$-regularity without simply assuming a bound on $\int_M |\Rm|^{n/2}$.  To fix this one has to look for a more powerful $\epsilon$-regularity theorem.  Namely, it is the combined work of \cite{Anderson}, \cite{ChC1} that if one assumes $|\Ric|\leq\epsilon$ and
\begin{align}\label{e:strong_noncollapsed_eps_reg}
d_{GH}(B_2(x),B_2(0^n))<\epsilon\, ,
\end{align}
where $d_{GH}$ is the Gromov-Hausdorff distance and $B_2(0^n)\subseteq \dR^n$, then it still holds that $|\Rm|\leq 1$ on $B_1(x)$. 
More generally, by the work of \cite{ChNa2} one needs only assume that $B_2(x)$ is Gromov-Hausdorff close to $\dR^{n-\ell}\times C(Y^{\ell-1})$ for a metric space $Y^{\ell-1}$ with $\dim C(Y^{\ell-1})=\ell\leq 3$, see Theorem \ref{t:eps_reg_noncollapsed} for a slight refinement.

These turn out to be a much more powerful $\epsilon$-regularity theorem, the reason being that the condition $d_{GH}(B_{2r}(x),B_{2r}(y^*))<\epsilon r$, with $y^*\in\dR^{n-\ell}\times C(Y^{\ell-1})$ a cone point, is one that can be shown to hold at most points and most scales $r<1$ under only a noncollapsing assumption, see \cite{ChC1}, \cite{ChNa1} for the classical and quantitative stratification results.  In fact, using this as a starting point, a whole regularity theory, including {\it apriori} $L^p$ estimates for the curvature, can be proven for general noncollapsed Einstein manifolds, see \cite{ChNa1}, \cite{ChNa2}.  One would like a similar structure theory for the collapsed setting.

The above $\epsilon$-regularity theorems, and their induced regularity theories, depend heavily on the assumption of noncollapsing.  To be more precise, given a pointed Riemannian manifold $(M^n,g,p)$ with $\Ric\geq -(n-1)$ we say that $M$ is noncollapsed, or $v$-noncollapsed, if $\Vol(B_1(p))\geq v>0$.  One can see that $\ln\Vol(B_1(p))$ plays the role of a weak energy, and so a lower bound on the volume acts as an upper bound on the energy, see \cite{ChNa1} for more on this.

The goal of this paper is to extend the $\epsilon$-regularity theorems of (\ref{e:strong_noncollapsed_eps_reg}) and \cite{ChNa2} to the collapsed setting.  That is, we wish to replace the assumption $d_{GH}(B_2(x),B_2(0^n))<\epsilon$ with
\begin{align}\label{e:strong_collapsed_eps_reg}
d_{GH}(B_2(x),B_2(0^k))<\epsilon\, ,
\end{align}
where $k<n$ and $0^k\in\dR^k$.  In fact, we will wish to deal with more general situations where $0^k\in \dR^{k-\ell}\times Z^{\ell}$, where $Z^{\ell}$ is a length space of dimension $\ell\leq 3$.  However, we will begin our discussion on the case $0^k\in \dR^k$.  The motivation is that even in the collapsed setting it is still the case that the assumption of 
\begin{equation}d_{GH}(B_{2r}(x),B_{2r}(0^k))<\epsilon r\end{equation} is sufficiently weak that it may be controlled at most points and scales, see \cite{ChC1}, \cite{ColdingNaber}.

Unfortunately, condition (\ref{e:strong_collapsed_eps_reg}) by itself is simply not enough when $k<n$.  We will discuss these examples in more detail in Section \ref{s:motivating examples}, but let us briefly describe an important example of Gross-Wilson in \cite{GrW}.  In this case by studying a holomorphic fibration map $f:K3\to S^2$ they build a sequence of Ricci flat spaces $(K3,g_j)$ which collapse to the topological $2$-sphere $X\approx S^2$.  $X$ is smooth away from a finite number of points $\{p_\alpha\}$, however the tangent cone is $\dR^2$ at every point of $X$.  In particular, for $r$ small and $j$ large every ball $B_r(x)\subseteq (K3,g_j)$ is close to the corresponding ball $B_r(0^2)\subset\dR^2$ in the sense of Gromov-Hausdorff topology.  However, the curvature blows up near $p_\alpha$, and hence the no $\epsilon$-regularity can possibly hold.

To see how to fix this, and to motivate our theorems, let us expand a little on the behavior of the above example.  If we begin again with a ball $B_{r}(x)\subseteq (K3,g_j)$, then we have two cases to analyze.  Namely, either the ball $B_r(x)$ is near a singular point or not.  On the one hand, if the ball is not near any singular point then we have the desired curvature bound $|\Rm|\leq r^{-2}$.  A closer look shows us that in this case the topology of $f^{-1}(B_r(x))\approx B_r(0^2)\times T^2$, where $T^2$ is the two dimensional torus.  In particular, we would like to emphasize that the fundamental group of $f^{-1}(B_r(x))$ is a free abelian group of rank $2$, which is of maximal rank.  On the other hand, let us assume for simplicity that $x=p_\alpha$ is one of the singular points.  In this case as we discussed we have no such {\it apriori} curvature bounds on $B_r(p_\alpha)$, and a closer look also tells us the topology of $f^{-1}(B_r(p_\alpha))$ is also more ill-behaved.  In particular $f^{-1}(p_\alpha)$ is topologically of Kodaira type $I_1$, which is a two dimensional torus with one circle fiber identified to a point.  In particular, we would like to emphasize that the fundamental group of $f^{-1}(B_r(p_\alpha))$ is of rank one, which is not maximal.  See Section \ref{s:motivating examples} for more on this.  See also \cite{GVZ} for more examples.

The main results of this paper are to see that this the above example is typical in its behavior.

\subsection{$\epsilon$-Regularity and the Fundamental Group}
\label{ss:1-1}

In this subsection we will state the main theorems of this paper.  Throughout, we are always assuming $(M^n,g,p)$ is a Riemannian manifold such that $B_2(p)$ has a compact closure in $B_4(p)$.  This acts as a form of local completeness.  Similar to the standard $\epsilon$-regularity theorem for noncollapsed spaces, we will be interested in the case when $B_2(p)$ has bounded Ricci curvature $|\Ric|\leq n-1$ and is Gromov-Hausdorff close to a singular space of the form $\dR^{k-\ell}\times Z^{\ell}$,
\begin{equation}\label{e:cone_space}
d_{GH}(B_2(p),B_2(0^{k-\ell},z^{\ell}))<\delta,\ (0^{k-\ell},z^{\ell})\in\dR^{k-\ell}\times Z^{\ell},\end{equation}
where $\dim(\dR^{k-\ell}\times Z^{\ell})=(k-\ell)+\ell\equiv k$.  Note by \cite{ColdingNaber} that if $\dR^{k-\ell}\times Z^{\ell}$ is the Gromov-Hausdorff limit of a sequence of manifolds with a lower Ricci bound then the length space $Z^{\ell}$ does in fact have a well defined dimension (See theorem \ref{limiting-dimension} for the more precise description).  We call such a $Z^\ell$ a Ricci limit space.  Inspired by \cite{FY} our $\epsilon$-regularity theorems come from studying the `fibered' fundamental group
\begin{equation}\label{e:fibered_fg}
\Gamma_\delta(p)\equiv \Image[\pi_1(B_{\delta}(p))\rightarrow\pi_1(B_2(p))]\, .
\end{equation}
Note that if $M$ were an honest fiber bundle over $\dR^{k-\ell}\times Z^{\ell}$, then $\Gamma_\delta(p)$ would be the fundamental group of the fiber.  Following the techniques of \cite{FY} and \cite{KW} it can be checked for $\delta\leq \delta_0(n,B_1(z^{\ell}))$ that under the assumption of (\ref{e:cone_space})  $\Gamma_{\delta}(p)$ contains a nilpotent subgroup of index $\leq w_0$
and rank $\leq n-k$, see Section \ref{sss:almost_nilpotency} and  Appendix \ref{s:proof-of-margulis} for more on this.  Note that given any finitely generated almost nilpotent subgroup $\Gamma$, the nilpotency rank of $\Gamma$ is well-defined. In the above context, it thus holds that $\rank(\Gamma_{\delta}(p))\leq n-k$.  Our main theorem states that an $\epsilon$-regularity result holds if $\rank(\Gamma_\delta(p))$ is maximal.  More precisely:

\begin{theorem}\label{t:eps_reg_collapsed}
Let $(M^n,g,p)$ satisfy the Ricci bound $|\Ric|\leq n-1$, then for each Ricci-limit space $(Z^{\ell},z^\ell)$ with $\dim Z^{\ell}= \ell\leq 3$ in the sense of \cite{ColdingNaber}, there exists a positive constant 
$\delta=\delta(n,B_1(z^{\ell}))>0$, $w_0=w_0(n,B_1(z^\ell))$, $c_0=c_0(n,B_1(z^\ell))$ such that if
\begin{equation}
d_{GH}\big(B_2(p),B_2(0^{k-\ell},z^{\ell})\big)<\delta,\ (0^{k-\ell},z^{\ell})\in\dR^{k-\ell}\times Z^{\ell},\label{Gromov-Hausdorff-Control}
\end{equation}
then $\Gamma_\delta(p)\equiv\Image[\pi_1(B_{\delta}(p))\rightarrow\pi_1(B_2(p))]$ is $(w_0,n-k)$-nilpotent with $\rank(\Gamma_{\delta}(p))\leq n-k$, and if equality holds then for each $q\in B_{1}(p)$ we have the conjugate radius bound
\begin{equation}\label{e:conj_bound_ein}
\Conj(q)\geq c_0(n,B_1(z^{\ell}))>0.
\end{equation}
In particular, if $M^n$ is Einstein, then we have that
\begin{equation}\sup\limits_{B_1(p)}|\Rm|\leq C(n,B_1(z^{\ell})).\end{equation}  
Finally, we have the following converse.  Assume $|\Ric|\leq n-1$ and $\Conj(q)\geq c_0>0$ holds for each $q\in B_3(p)$, then if 
\begin{equation}d_{GH}\big(B_3(p),B_3(0^{k})\big)<\delta(n,c_0),\ 0^k\in \dR^k, \label{smooth-limit}
\end{equation}
 then $\Gamma_{\delta}(p)\equiv\Image[\pi_1(B_{\delta}(p))\rightarrow\pi_1(B_2(p))]$ is almost nilpotent with $\rank(\Gamma_{\delta}(p))=n-k$. 
\end{theorem}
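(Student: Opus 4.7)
The plan is to handle the four claims in order: the nilpotency and rank bound are essentially the generalized Margulis lemma, the conjugate radius estimate under maximal rank is the heart of the theorem and I would prove it by contradiction via a blow-up on the universal cover, the Einstein curvature bound then follows by harmonic radius arguments, and the converse is an equivariant GH dimension count. For the first conclusion, that $\Gamma_\delta(p)$ is $(w_0,n-k)$-nilpotent of rank at most $n-k$, this is essentially a quantitative version of the Fukaya-Yamaguchi / Kapovitch-Wilking Margulis lemma, carried out in Appendix \ref{s:proof-of-margulis}; the rank upper bound reflects that the Lie algebra of the limit group arising from the deck action on covers cannot exceed the collapsing codimension $n-\dim(\dR^{k-\ell}\times Z^\ell)=n-k$, using the Colding-Naber dimension theorem (Theorem \ref{limiting-dimension}) to pin down $\dim Z^\ell=\ell$.

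For the conjugate radius estimate I would argue by contradiction and blow-up. Assume a counterexample sequence $(M_i^n,g_i,p_i)$ satisfying all the hypotheses with $\rank(\Gamma_\delta(p_i))=n-k$ but with points $q_i\in B_1(p_i)$ such that $r_i\equiv\Conj(q_i)\to 0$. Rescale to $(M_i,r_i^{-2}g_i,q_i)$ so that $\Conj(q_i)=1$ in the rescaled metric; since the original base limit is $\dR^{k-\ell}\times Z^\ell$, the rescaled base limit is a tangent cone near $(0,z)$, which by the cone structure of Ricci-limit tangent cones has the form $\dR^{k-\ell}\times C(Y^{\ell-1})$ with $\ell\leq 3$. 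Pass to the local universal cover $(\tilde U_i,\tilde q_i)$ on which $\Gamma_\delta(p_i)$ acts by deck transformations, and take an equivariant Gromov-Hausdorff limit
\begin{equation*}
(\tilde U_i,\tilde q_i,\Gamma_\delta(p_i))\longrightarrow(\tilde X,\tilde q_\infty,G).
\end{equation*}
By the Kapovitch-Wilking structure theory $G$ is a nilpotent Lie group whose dimension equals the nilpotency rank, namely $n-k$; since this matches the collapsing dimension, $\tilde X$ is a genuinely $n$-dimensional noncollapsed Ricci limit. The splitting theorem applied to the $\dR^{k-\ell}$-base factor, combined with the $\dR^{n-k}$-factor swept out by the orbits of $G$, identifies the tangent cone of $\tilde X$ at $\tilde q_\infty$ as
\begin{equation*}
T_{\tilde q_\infty}\tilde X\;=\;\dR^{n-\ell}\times C(Y^{\ell-1}),\qquad \ell\leq 3.
\end{equation*}
Applying the noncollapsed $\epsilon$-regularity Theorem \ref{t:eps_reg_noncollapsed} on a ball of radius $4$ around $\tilde q_i$ then yields $|\Rm|\leq 1/4$ at $\tilde q_i$ for $i$ large, hence $\Conj(\tilde q_i)\geq 2\pi$, contradicting $\Conj(\tilde q_i)=1$ imposed by the rescaling.

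The Einstein curvature bound then follows from the conjugate radius estimate together with standard harmonic radius theory: a conjugate radius lower bound with bounded Ricci produces a uniform harmonic radius, on which the Einstein equation $\Ric=\lambda g$ is an elliptic system whose Schauder estimates yield $|\Rm|\leq C$. For the converse, a uniform conjugate radius bound on $B_3(p)$ makes the local universal cover noncollapsed on scale $c_0$; GH closeness of $B_3(p)$ to a Euclidean $k$-ball together with this noncollapsing forces the cover to be GH close to a Euclidean $n$-ball, and an equivariant GH dimension count identifies the fiber dimension as exactly $n-k$, forcing $\rank(\Gamma_\delta(p))=n-k$.

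The main obstacle is the identification of the limit $\tilde X$ in the blow-up step with a space of the precise form $\dR^{n-\ell}\times C(Y^{\ell-1})$, $\ell\leq 3$, required to invoke Theorem \ref{t:eps_reg_noncollapsed}. This requires simultaneously producing the full $\dR^{n-k}$ factor from the orbits of the limit Lie group $G$---which uses the maximal rank hypothesis to force $G$ to act with trivial isotropy on short scales, so that the splitting theorem can be applied to its orbits---and preserving the cone structure of the base under both the rescaling and the covering operation. Marrying the Ricci-limit cone theory of \cite{ChC1,ColdingNaber} with the nilpotent Lie group limit machinery of \cite{FY,KW} in this quantitative way is where the bulk of the technical work will lie.
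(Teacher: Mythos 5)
Your outline of parts (1) and (3) and the overall architecture (reduce to the noncollapsed $\epsilon$-regularity on the universal cover) matches the paper, but the central step --- the conjugate radius bound under maximal rank --- has two genuine gaps. First, blowing up to the scale $r_i=\Conj(q_i)\to 0$ destroys the only geometric hypothesis available: the Gromov--Hausdorff closeness to $\dR^{k-\ell}\times Z^{\ell}$ holds at scale $1$ with a parameter $\delta_i\to 0$ whose relation to $r_i$ you cannot control, so after rescaling by $r_i^{-1}$ the base spaces need not converge to anything related to $Z^{\ell}$; in particular the assertion that the rescaled base is a tangent cone of the form $\dR^{k-\ell}\times C(Y^{\ell-1})$ is unjustified, since tangent cones of \emph{collapsed} Ricci limits need not be metric cones at all. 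Second, and more seriously, the claim that the orbits of the limit nilpotent group $G$ sweep out an $\dR^{n-k}$-factor of $\tilde X$ is exactly the step that fails when $Z^{\ell}$ is noncompact: Example \ref{ss:example 5} (built on Wei's construction) exhibits a noncollapsed limit cover with positive Ricci curvature, a free isometric action of a simply connected nilpotent group of maximal rank, and no lines whatsoever. This is why the paper cannot use Fukaya--Yamaguchi line-splitting and instead (i) first proves noncollapsing of the cover (Proposition \ref{non-collapsed-universal-cover}) by transporting the maximal-rank hypothesis to a Colding--Naber regular point of $Z^{\ell}$ via the nonlocalness Lemma \ref{every-point-rank} and applying the Euclidean-base splitting of Proposition \ref{quantitative-splitting-at-origin} plus volume convergence, and then (ii) produces the $\dR^{n-k}$-factor by the \emph{cone-splitting} principle at a quantitative-differentiation good scale (Proposition \ref{non-collapsed-splitting}), using the $(m,\epsilon)$-displacement property of the refined polycyclic series (Theorem \ref{t:refinement-of-lower-central-series}) to guarantee each cyclic factor contributes a full Hausdorff dimension to the orbit. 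You name this as "the main obstacle," but the route you propose through it is the one the paper's own counterexample rules out; likewise your noncollapsing assertion ("since this matches the collapsing dimension, $\tilde X$ is \ldots noncollapsed") is circular, as establishing that volume lower bound is conclusion (1) of Theorem \ref{non-collapsed-splitting-maximal-rank} and is the main technical content of the paper.

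The converse is also under-argued: a dimension count on the equivariant limit only yields $\rank(\Gamma_{\delta}(p))\leq n-k$, which is part (1). To get equality one must know that the fiber of the collapse is an infranilmanifold of dimension $n-k$, whose fundamental group then has rank exactly $n-k$ by Gromov--Ruh; Anderson's Example \ref{ss:example1} (codimension-one collapse with trivial fibered fundamental group) shows the rank is not determined by dimensions alone. The paper obtains this from the fibration theorem of Proposition \ref{fiber-bundle} together with the isomorphism $\Gamma_{\delta}(p)\cong\pi_1(F^{n-k})$ established in Proposition \ref{converse-rank-maximum}.
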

\begin{remark} To obtain the uniform curvature bound,
instead of Einstein we may simply assume a bound on $|\nabla\Ric|$.
\end{remark}
\begin{remark}
We will in fact prove a lower bound on the weak harmonic radius, not just the conjugate radius.  However, under bounded Ricci these two are known to be equivalent.
\end{remark}


\begin{remark}
The dimension assumption $\dim Z^{\ell}=\ell\leq 3$ in Theorem \ref{t:eps_reg_collapsed} is sharp. That is, if $\dim Z^{\ell}=4$, one cannot expect the $\epsilon$-regularity in this context.
See Example \ref{ss:example3} on Eguchi-Hanson metric for more details.
\end{remark}

\begin{remark}
In the converse direction of Theorem \ref{t:eps_reg_collapsed}, we have assumed
\begin{equation} d_{GH}(B_3(p), B_3(0^k))<\delta,
\end{equation}
where $B_3(0^k)\subset\dR^k$. Notice that this assumption 
 $B_3(0^k)\subset\dR^k$ is necessary. If this condition is removed, there are counterexamples in which the curvature is uniformly bounded while the rank is not maximal. See Example \ref{half-space-counter-example}.
 
\end{remark}


Let us note that unlike the $\epsilon$-regularity in the non-collapsed case (see Theorem \ref{t:eps_reg_noncollapsed}), the positive constants $\delta(n,B_1(z^{\ell}))$, $c_0(n,B_1(z^{\ell}))$, $C(n,B_1(z^{\ell}))$ have a dependence on $Z^{\ell}$ in the collapsed context.  In fact, this dependence can be made quite explicit.  Indeed, by \cite{ColdingNaber}, $Z^{\ell}$ has a unique dimension in that for $a.e.\  z^{\ell}\in Z^{\ell}$ the tangent cone is unique and isometric to $\dR^\ell$ (see theorem \ref{limiting-dimension} for the more precise statement).   Thus, given the base point $z^{\ell}\in Z^{\ell}$ let us consider the {\it noncollapsing radius} defined by
\begin{equation}\label{e:noncollapsing_radius}
r_{c}(z^\ell)\equiv \sup\Big\{0<r\leq 1\Big|\exists\, z\in B_1(z^\ell), d_{GH}\big(B_{r}(z),B_r(0^\ell)\big)<10^{-6} r\Big\}\, 
\end{equation}
 where $0^\ell\in \dR^\ell$.  Then, in Theorem \ref{t:eps_reg_collapsed} we have that $\delta=\delta(n,r_{c}(z^\ell)), c_0=c_0(n,r_{c}(z^\ell)), C=C(n,r_{c}(z^\ell))$.  Thus, a lower bound on the noncollapsing radius is a natural replacement for the lower volume bound assumption needed in the noncollapsing case.

\subsection{Outline of the Proof}\label{ss:intro_outline_proof}

The proof of the main result requires several basic steps.  Let us give the outline for Einstein manifolds, the general bounded Ricci case is verbatim, but one replaces curvature bounds by lower bounds on the harmonic or conjugate radius.  To describe the proof let us even begin by discussing the noncollapsed setting.  In \cite{ChNa2} it was proved that if $(M^n,g,p)$ satisfies $\Vol(B_1(p))>v>0$ then there exists $\epsilon(n,v)>0$ such that if $|\Ric|\leq \epsilon$ and
\begin{align}
d_{GH}\Big(B_2(p),B_2(0,y^*)\Big)<\epsilon\, ,
\end{align}
where $(0,y^*)\in \dR^{n-\ell}\times C(Y^{\ell-1})$ and $\ell\leq 3$, then
\begin{align}
\sup_{B_1(p)}|\Rm|\leq 1\, .
\end{align}
The first observation is that the assumption of a cone structure on the three dimensional factor, while natural in the noncollapsed setting, is very unnatural in the collapsed setting.  As a first step it is therefore helpful then to see that even in the noncollapsed setting it is unnecessary.  Indeed, combining the $\epsilon$-regularity results of \cite{ChNa2} with the quantitative differentiation of \cite{ChNa1}, we see in Theorem \ref{t:eps_reg_noncollapsed} that the following may be proved.  If $(M^n,g,p)$ satisfies $\Vol(B_1(p))>v>0$ then there exists $\epsilon(n,v)>0$ such that if $|\Ric|\leq \epsilon$ and
\begin{align}
d_{GH}\Big(B_2(p),B_2(0,z^*)\Big)<\epsilon\, ,
\end{align}
where $(0,z^*)\in \dR^{n-\ell}\times Z^\ell$ and $\ell\leq 3$, then
\begin{align}
\sup_{B_1(p)}|\Rm|\leq C(n,v)\, .
\end{align}
Notice that, in exchange for losing the cone structure we have only changed the curvature bound from $1$ to some uniform constant $C(n,v)$.

Now our primary goal in proving Theorem \ref{t:eps_reg_collapsed} is to reduce the statement to the one of Theorem \ref{t:eps_reg_noncollapsed}. To accomplish this we need to show two points.  First, if $(M^n,g,p)$ satisfies $|\Ric|\leq \epsilon$ with 
\begin{align}\label{e:outline:assumptions}
d_{GH}\Big(B_2(p),B_2(0,z^*)\Big)<\epsilon\, ,\notag\\
\rank(\Gamma_\delta(p))=n-k\, ,
\end{align}
where $(0,z^*)\in \dR^{k-\ell}\times Z^\ell$, then we need to show that
\begin{align}\label{e:outline:noncollapsed}
\Vol(B_1(\tilde p))>v>0\, ,
\end{align}
has some uniform lower bound, where $\tilde p$ is a lift of $p$ to the universal cover of $B_2(p)$.  Additionally, we then need to show that
\begin{align}\label{e:outline:splitting}
B_{r}(\tilde p)\approx B_r(0,z^*)\subseteq \dR^{n-\ell}\times \tilde Z^\ell\, ,
\end{align}
where $\tilde Z$ is potentially a different length space and $r>0$ is of some uniform size.

Let us begin by mentioning that the proof of \eqref{e:outline:noncollapsed} and \eqref{e:outline:splitting} is easier if $\ell=0$ and thus $Z^\ell\equiv\{pt\}$ is a point and $B_r(\tilde p)\approx \dR^n$ is a ball in $\dR^n$.  Indeed, in this case one can use the rank condition along with the ideas of \cite{FY} and the algebraic constructions of Section \ref{s:almost-nilpotent} to build $(n-k)$-independent lines on the universal cover of $B_2(p)$, which in turn force the new $\dR$ factors on the cover by using the splitting theorem.  The moral of this argument is similar to the abelian rank arguments presented in \cite{ChGr}.  The precise statement is in Proposition \ref{quantitative-splitting-at-origin}.

In the general case of \eqref{e:outline:noncollapsed} and \eqref{e:outline:splitting} when $\ell\neq 0$, one cannot directly use the rank condition to construct the new lines on the cover.  The reason for this is the potential lack of compactness of the factor $Z^\ell$.  See Section \ref{ss:prelim:GH_basics} to see why the compactness plays a role, and see Example \ref{ss:example 5} to see that the line splitting argument legitimately fails when $Z^\ell$ is not compact.  That is, in Example \ref{ss:example 5}, $Z^{\ell}$ is smooth and noncompact with positive Ricci curvature, and $\Gamma_{\delta}(p)$ has maximal nilpotency rank, but the non-collapsed covering space does not admit any lines.

  Indeed, we must argue in an entirely different manner, and instead rely on the cone splitting ideas of \cite{ChNa1} to build our new $\dR$-factors on the cover.  However, in the context of lower bounds on Ricci curvature, the construction of cone points fundamentally requires noncollapsing.  Thus, we will in fact need to prove \eqref{e:outline:noncollapsed} first, and then once this is in hand we will prove \eqref{e:outline:splitting} through a cone splitting argument.  This is all contained in Theorem \ref{non-collapsed-splitting-maximal-rank}, which is the main technical challenge of the paper.

In more detail, to prove \eqref{e:outline:noncollapsed} in the general case we begin by proving a refined version of the $\ell=0$ case.  Indeed, in Section \ref{ss:symmetry-and-splitting} we show that if the assumptions \eqref{e:outline:assumptions} hold with $\ell=0$, then for any normal covering of $B_2(p)$ whose deck transformation group satisfies the appropriate rank condition, the conclusion that $B_r(\hat p)\approx \dR^n$ holds, where $\hat p$ is a lift of $p$ to the given normal cover.  The proof of this is similar in spirit to that of the $\ell=0$ case on the universal cover, that is we want to use the rank assumption to build lines on the cover. Similar but weaker statements were proved in \cite{FY} and \cite{KW} in the context of lower sectional and Ricci respectively; in our case the algebraic requirements are much more involved, see Remark \ref{remark-on-line-splitting} for more detailed explanation on this.  In the construction of the new lines by exploiting the rank condition, a key step of this argument is to build geometrically {\it well behaved} collection of generators for the deck transformation group.  More precisely, in Section \ref{s:almost-nilpotent} we show how to build a geometrically compatible polycyclic extension of the lower central series for a general deck transformation group in this context.  With this in hand we can argue as in \cite{FY} to build independent lines on the normal cover, and thus force new splittings.

We use this result to prove the noncollapsing \eqref{e:outline:noncollapsed} in the following manner.  Returning to the general factor space $Z^\ell$, we use \cite{ColdingNaber} to see that there exists a ball $B_{2r}(z)\subseteq Z^\ell$ with $B_{2r}(z)\approx\dR^\ell$.  In particular, there exists a ball $B_{2r}(q)\subseteq B_2(p)$ in the original ball such that $B_{2r}(q)\approx \dR^{k}$.  If we lift $B_{2r}(q)$ to the universal cover of $B_2(p)$, then we can view the connected component of this lifting as a normal covering of $B_{2r}(q)$ itself.  We prove in Lemma \ref{every-point-rank} a nonlocalness property for the fibered fundamental group.  Roughly, it shows that if $\rank\Gamma_\delta(p)=n-k$, then $\rank\Big(\Image[\pi_1(B_{\delta' r}(q))\to \pi_1(B_2(p))]\Big)=n-k$, where $\delta<<\delta' r$.  This is sufficient to apply the results of Section \ref{ss:symmetry-and-splitting} discussed in the last paragraph, and in particular to see that there is some definite size ball $B_{r'}(q')\subseteq B_2(\tilde p)$ which is close to a ball in $\dR^n$.  By volume convergence one can immediately conclude the noncollapsing \eqref{e:outline:noncollapsed} for $B_2(\tilde p)$ itself.

Once \eqref{e:outline:noncollapsed} has been proved, then we can proceed to prove \eqref{e:outline:splitting} through a cone splitting argument.  More precisely, using the quantitative differentiation of \cite{ChNa1}, see lemma \ref{l:bad_scales_finite} for a precise statement, we see that after dropping to some definite radius that $B_{r}(\tilde p)\approx \dR^{k-\ell}\times C(Y)$ is very close to being a cone space with $\tilde p$ the cone point.  On the other hand, with the algebraic results of Section \ref{s:almost-nilpotent}, we can use the rank condition on the fibered fundamental group to construct $n-k$ independent points $p_1,\ldots, p_{n-k}\in C(Y)$ inside the cone factor, which are themselves also tips of cone points.  The cone splitting of \cite{ChNa1}, see lemma \ref{cone-splitting-principle}, tells us that if we have $n-k$ independent points for which a metric space is a cone space with respect to all of these points, then our metric space must split a $\dR^{n-k}$-factor.  That is, we can conclude that $C(Y)\approx \dR^{n-\ell}\times \tilde Z^\ell$, which in particular proves \eqref{e:outline:splitting}.  When $\ell\leq 3$, by applying Theorem \ref{t:eps_reg_noncollapsed} we can thus prove Theorem \ref{t:eps_reg_collapsed}.

\subsection{Organization of the Paper}\label{ss:organization}
This paper is organized as follows:

 In Section \ref{s:prelim} we will introduce some preliminary notions and results from the literature.  At several points we will give slight extensions of those results from the literature, but these extensions are minor and mostly well understood by experts.

Section \ref{s:motivating examples} is dedicated to presenting some motivating examples which intuitively show the pictures of the $\epsilon$-regularity in the collapsed setting.  Additionally, we will give examples in order to show that each assumption in Theorem \ref{t:eps_reg_collapsed} is sharp.

In Section \ref{s:almost-nilpotent} we focus primarily on studying some algebraic properties of almost nilpotent groups.  Our main result of this Section will be to produce a polycyclic extension of the lower central series of a nilpotent group which will behave well from a geometric point of view.  This will be important in the proof the quantitative splitting results of Section \ref{s:quantitative-splitting}.
 
In Section \ref{s:quantitative-splitting}, we will develop our central technical tools for the $\epsilon$-regularity in the collapsed setting. Our main result of this Section, and indeed the main technical point of the paper, will be Theorem \ref{non-collapsed-splitting-maximal-rank}, whose content was the focus of the previous subsection.

In Section \ref{s:eps_reg_lower_and_bounded_ricci} we combine the results of all the previous Sections in order to prove the main Theorem of the paper.  Indeed, we will also prove a version of the main Theorem in the context of only a lower Ricci bound, though the result is obviously much stronger under a two sided Ricci curvature bound.

In Appendix \ref{s:proof-of-margulis} we prove a generalization of the main result of \cite{KW}.  In fact, the proof is primarily just a combination of the results of \cite{KW} with some technical constructions from Section \ref{s:almost-nilpotent} and \ref{s:quantitative-splitting}.  However, since this refinement is important for applications we include it.

In Appendix \ref{s:proof-of-fiber-bundle} we give a proof of a fiber bundle structure in the context of bounded Ricci curvature and a lower conjugate radius bound.  This structure has been stated without proof in the literature, and in any case is well known to experts, but we include a proof for completeness since we require this result for the converse direction of Theorem \ref{t:eps_reg_collapsed}.

\section{Preliminaries}\label{s:prelim}

In this section, we will review some preliminary results needed for the paper.  At times we will give mild generalizations of known results in the literature.  The organization of this section is as follows.  In Section \ref{ss:prelim:GH_basics} we begin by recalling the notion of Gromov-Hausdorff convergence, as well as some basic results on isometries and lines in homogeneous spaces.  In Section \ref{ss:prelim:eps_reg} we recall the noncollapsed $\epsilon$-reglarity results of \cite{ChNa2}.  In Section \ref{ss:prelim:Riccilimit} we discuss some basic properties of Ricci limit spaces.  In particular, we recall from \cite{ColdingNaber} that there is a well defined dimension, and what this means, as we will be using this point.  Finally in Section \ref{ss:prelim:fundamental_group} we recall the basics of nilpotent groups and their relation to the fundamental group of spaces with lower Ricci bounds.  We recall the results of \cite{KW}, and give a mild extension of their results which will be useful in our context.  This extension is proved in Apprendix \ref{s:proof-of-margulis}.

\subsection{Gromov-Hausdorff Convergence and Group Actions}\label{ss:prelim:GH_basics}
\subsubsection{Basic concepts in Gromov-Hausdorff convergence theory} To begin with,
we list basic definitions in Gromov-Hausdorff theory.

\begin{definition}[Pointed $\epsilon$-GHA] Let $(X,p)$ and $(Y,q)$ be pointed metric spaces. A pointed map $f:B_{\epsilon^{-1}}(p)\rightarrow (Y,q)$ with $f(p)=q$
is called a pointed $\epsilon$-GHA (Gromov-Hausdorff approximation) if it satisfies the following:

(i) ($\epsilon$-onto) $B_{\epsilon^{-1}}(q)\subset B_{\epsilon}(f(B_{\epsilon^{-1}}(p)))$,

(ii) ($\epsilon$-isometric) for any $x_1,x_2\in B_{\epsilon^{-1}}(p)$, we have that
\begin{equation}|d_X(x_1,x_2)-d_Y(f(x_1),f(x_2))|<\epsilon.\end{equation}
\end{definition}
By the pointed $\epsilon$-GHA, we can define the pointed $GH$-distance between two metric spaces.
\begin{definition}[pointed GH-distance] Let $(X,p)$ and $(Y,q)$ be pointed metric spaces, then pointed GH-distance is defined as follows:
\begin{equation}d_{GH}^p((X,p),(Y,q))\equiv\inf\Big\{\epsilon>0\Big|\exists \epsilon\text{-GHAs}\
f: (X,p)\rightarrow(Y,q)\ \text{and}\ g:(Y,q)\rightarrow (X,p).\Big\}\end{equation}
\end{definition}
\begin{remark}
(1) Let $\mathcal{M}et^p\equiv\{\text{all of the isometric classes of proper and complete pointed metric spaces}\}$, then this collection endowed with the pointed GH-distance $(\mathcal{M}et^p,d_{GH}^p)$ is a complete metric space.

(2) For a convergent sequence of pointed complete metric spaces $\{(X_i,p_i)\}$, assume that they are proper and $\diam(X_i)\leq D$, then the convergence is independent of the choice of reference points.

\end{remark}

\subsubsection{Homogeneity and the existence of lines}  In this subsection, we introduce a general lemma by which we can construct a line on a non-compact length space.  

\begin{definition}Let $(X,d)$ be a metric space, the isometric embedding $\gamma:(-\infty,+\infty)\rightarrow X$ is called a line, and the isometric embedding $\gamma:[0,+\infty)\rightarrow X$ is called a ray.
\end{definition}

A standard result (see for instance \cite{yamaguchi}) is that, for any non-compact complete proper length space $X$, if it is homogeneous, then it admits a line.  The following is a slight generalization of this fact, and is fairly standard.  We give a proof for the convenience of the readers.

\begin{lemma}\label{homoline}
Let $(X,d)$ be a non-compact complete locally compact length space. If $X$ is $C$-homogeneous for some $C<\infty$, that is, for every $x,y\in X$, there is an isometry $f\in \Isom(X)$ with
$d_X(y,f(x))\leq C,$
then $X$ admits a line.
\end{lemma}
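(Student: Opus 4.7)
The plan is to produce a line by taking a limit of increasingly long minimizing geodesics whose midpoints have been translated into a bounded region by the homogeneity assumption. First I would invoke Hopf--Rinow: since $(X,d)$ is a complete, locally compact length space, closed balls are compact and any two points are joined by a minimizing geodesic. Fix a basepoint $p\in X$. Using non-compactness together with properness, choose a sequence $q_i\in X$ with $L_i\equiv d(p,q_i)\to\infty$, and for each $i$ let $\gamma_i:[0,L_i]\to X$ be a unit-speed minimizing geodesic from $p$ to $q_i$, with midpoint $m_i=\gamma_i(L_i/2)$.

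Next I would use the $C$-homogeneity to bring the midpoints back near $p$. Choose isometries $f_i\in\Isom(X)$ with $d(p,f_i(m_i))\leq C$, and recenter by setting
\begin{equation*}
\sigma_i:[-L_i/2,L_i/2]\to X,\qquad \sigma_i(t)\equiv f_i(\gamma_i(t+L_i/2)).
\end{equation*}
Each $\sigma_i$ is a unit-speed minimizing geodesic with $\sigma_i(0)=f_i(m_i)\in \overline{B_C(p)}$. Because $\overline{B_C(p)}$ is compact, the image of each $\sigma_i$ on any fixed interval $[-T,T]$ lies in the compact set $\overline{B_{T+C}(p)}$, and the family $\{\sigma_i\}$ is equicontinuous (in fact $1$-Lipschitz). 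Applying Arzel\`a--Ascoli and a diagonal extraction over $T\to\infty$, I would pass to a subsequence so that $\sigma_i$ converges uniformly on compact subsets of $\mathbb{R}$ to a $1$-Lipschitz map $\sigma:\mathbb{R}\to X$.

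Finally I would verify that $\sigma$ is an isometric embedding. For any fixed $s,t\in\mathbb{R}$ and any $i$ with $L_i/2\geq \max(|s|,|t|)$, the restriction $\sigma_i|_{[-L_i/2,L_i/2]}$ is a minimizing unit-speed geodesic, so $d(\sigma_i(s),\sigma_i(t))=|s-t|$. Passing to the limit yields $d(\sigma(s),\sigma(t))=|s-t|$, so $\sigma$ is a line.

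The only point requiring care is that the argument nowhere lets the midpoints escape to infinity; this is precisely what the $C$-homogeneity buys, and the rest is a standard Hopf--Rinow plus Arzel\`a--Ascoli extraction. I do not anticipate a serious obstacle, but the one subtlety worth emphasizing is the need for properness, which justifies both the existence of minimizing geodesics $\gamma_i$ and the compactness of $\overline{B_C(p)}$ used in the limit extraction.
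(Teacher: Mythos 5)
Your proof is correct and uses essentially the same mechanism as the paper's: recentering long minimizing geodesics via the $C$-homogeneity so their centers stay in the compact ball $\overline{B_C(p)}$, then extracting a limit by Arzel\`a--Ascoli and verifying the distance identity in the limit. The only cosmetic difference is that the paper first constructs a ray and then translates points far along that ray back to $p$, whereas you translate the midpoints of long minimizing segments directly in one step; both are valid.
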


\begin{remark}If the constant $C$ in the above lemma is $0$, the space $X$ is homogeneous and thus the above lemma is the standard one.\end{remark}

\begin{proof}
First, we show that for any $p\in X$ there is a ray at $p$.
Take a sequence of points $\{q_i\}\subset X$ such that $d(p,q_i)\rightarrow\infty$.
Since $X$ is a locally compact and complete length space,
for each $q_i$, there is a minimal geodesic joining $q_i$ and $p$, denoted by $\gamma_i$.
Let $B_1(p)$ be a closed metric ball of radius $1$ in $X$, so it is compact in our context.
Then, by Arzel{\`a}-Ascoli theorem, there exists a subsequence $\{\gamma_{i_1}\}$ which converge to a minimal geodesic $\gamma_1:[0,1]\rightarrow X$.
Similarly, in $B_2(p)$, there exists a subsequence of $\{\gamma_{i_1}\}$, denoted by $\{\gamma_{i_2}\}$ which converge to a minimal geodesic $\gamma_2:[0,2]\rightarrow X$. Note that $\gamma_2$ coincides with $\gamma_1$ in $B_1(p)$. Continuing we can construct $\gamma_3$, $\gamma_4$ and so forth.
Then the ray is defined by $\gamma=\bigcup\limits_{i=1}^{\infty}\gamma_i:[0,\infty)\rightarrow X$.

The next is to construct a line from $\gamma$. For each $m\in\mathbb{N}$, 
let $g_m\in\Isom(X)$ be an isometry satisfying $d_X(g_m\cdot\gamma(m),p)\leq C$.
We denote by $\gamma_m=g_m\circ\gamma:[0,+\infty)\rightarrow X$ the image of $\gamma$ under the isometry $g_m\in\Isom(X)$, then $\gamma_m$ is a ray.
Let  $\sigma_m:[-m,+\infty)\rightarrow X$ be a ray defined by $\sigma(t):=\gamma_m(t+m)$. Since $d_X(\sigma_m(0),p)=d_X(g_m\cdot\gamma(m),p)\leq C$,  after passing to a subsequence the rays $\sigma_m:[-m,\infty)\rightarrow X$ converge to a line.
\end{proof}

\subsection{Ricci Curvature and Noncollapsed $\epsilon$-regularity Theorems}\label{ss:prelim:eps_reg}

The following is a standard useful manner of measuring regularity on a manifold:

\begin{definition}[$C^{1}$-harmonic coordinates]
Let $u=(u_1,\ldots,u_n):B_r(p)\rightarrow\dR^n$ with $u(p)=0$ and $u$ a diffeomorphism onto its image.  We call $u$ a $C^{1}$-harmonic coordinates system with $\|u\|_r\leq 1$ if the following properties hold:
\begin{enumerate}
\item For each $1\leq k\leq n$, $u_k$ is harmonic.
\item If $g_{ij}=g(\nabla u_i,\nabla u_j)$ is the metric in coordinates, then
$|g_{ij}-\delta_{ij}|_{C^0(B_r(p))}+r| g_{ij}-\delta_{ij}|_{C^1(B_r(p))}<10^{-6}$, where the scale-invariant norms are taken in euclidean coordinates.
\end{enumerate}
\end{definition}

Using the above we immediately have the notion of the harmonic radius of $M$:
\begin{definition}
For $x\in M$ we define the harmonic radius $r_h(x)$ by
\begin{align}
r_h(x)\equiv\sup\{r>0| \exists \text{ harmonic coordinates } u:B_r(x)\to \dR^n \text{ with } \|u\|_r\leq 1\}.
\end{align}
\end{definition}
\begin{remark}
Note that the harmonic radius is scale invariant.  That is, if $r_h(x)\equiv r$, and we rescale the metric of $M$ so that $B_r(x)\to B_1(x)$, then $r_h(x)\equiv 1$ in the new space.
\end{remark}
\begin{remark}
Notice the Lipschitz bound $|\nabla r_h|\leq 1$.
\end{remark}

It was discussed in the introduction that a key $\epsilon$-regularity theorem tells us that for spaces with bounded Ricci curvature $|\Ric|\leq\epsilon(n)$ that if $d_{GH}(B_2(p),B_2(z^*))<\epsilon(n)$, where $z^*\in \dR^{n-3}\times C(Y)$, then $r_h(p)\geq 1$.  We will want to study in this section what happens when the cone assumption is dropped.  In this case we have the following, which is a slight extension of the $\epsilon$-regularity theorem (theorem 6.1) in \cite{ChNa2}.

\begin{theorem}\label{t:eps_reg_noncollapsed} Given $n$, $v>0$, there are positive constants 
$\epsilon=\epsilon(n,v)>0$ and $r_0=r_0(n,v)>0$ such that if  $(M^n,g,p)$ satisfies $|\Ric|\leq n-1$, $\Vol(B_1(p))>v>0$ and
\begin{equation}
d_{GH}(B_2(p),B_2(0^{n-3},y))<\epsilon, \ (0^{n-3},y)\in \dR^{n-3}\times Y,\label{(n-3)-GH-control}
\end{equation}
for some metric space $(Y,y)$,
 then for each $q\in B_1(p)$, we have that
 \begin{equation}r_h(q)\geq r_0>0.\end{equation}
\end{theorem}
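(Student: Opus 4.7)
The plan is to reduce the statement to the cone version, theorem 6.1 of \cite{ChNa2}, by combining it with the quantitative stratification of \cite{ChNa1}. The key idea is that although $Y$ need not be a cone, the noncollapsing hypothesis $\Vol(B_1(p))>v$ forces, for any $q\in B_1(p)$, the existence of a definite scale $r\geq r_0(n,v)$ at which the rescaled ball is close to a metric cone; that cone must then inherit the $\dR^{n-3}$-splitting from the ambient closeness to $\dR^{n-3}\times Y$ and can be fed into the cone $\epsilon$-regularity.

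In more detail, first fix $\eta_0=\eta_0(n)$ to be the threshold of the cone $\epsilon$-regularity of \cite{ChNa2}, and choose $r_*=r_*(n)$ small enough so that in the rescaled metric $r^{-2}g$ for $r\leq r_*$ the bound $|\Ric|\leq \eta_0$ automatically holds. The quantitative stratification under noncollapsing then implies that, for every $q\in B_1(p)$, the number of dyadic scales $r\in(0,r_*]$ at which the rescaled ball at $q$ fails to be $\eta_0/2$-Gromov-Hausdorff close to a metric cone with vertex at $q$ is bounded by some $C(n,v,\eta_0)$. In particular there is a scale $r=r(q)\in[r_0,r_*]$, with $r_0=r_0(n,v)>0$, at which the ball $B_r(q)$ rescaled to unit size is $\eta_0/2$-close to a cone $C(W)$. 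Choosing the initial $\epsilon\leq \eta_0 r_0/100$, the same rescaled ball is also $\eta_0/100$-close to a ball in $\dR^{n-3}\times Y$, and so contains $n-3$ almost-orthogonal almost-lines. Combining this with the almost-splitting theorem of Cheeger--Colding and uniqueness of Gromov-Hausdorff limits forces $C(W)\cong \dR^{n-3}\times C(W')$ with $\dim C(W')=3$. Theorem 6.1 of \cite{ChNa2} then applies to the rescaled ball at scale $r(q)$ and yields a unit lower bound for the rescaled harmonic radius, which translates back to $r_h(q)\geq r(q)\geq r_0(n,v)$ in the original metric.

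The principal obstacle is the splitting step: showing rigorously that the cone produced by quantitative differentiation actually decomposes as $\dR^{n-3}\times C(W')$, rather than being merely close to such a product. The cleanest route is a contradiction and compactness argument, in which a sequence of counterexamples would converge to a noncollapsed Ricci limit that is simultaneously isometric to a metric cone and to an honest piece of $\dR^{n-3}\times Y_\infty$; uniqueness of Gromov-Hausdorff limits then forces the desired decomposition, and full $n$-dimensionality from noncollapsing pins down $\dim C(W')=3$. Tracking the constants so that $r_0$ depends only on $n$ and $v$, independent of the smallness of $\epsilon$, is routine bookkeeping given the inputs from \cite{ChC1,ChNa1,ChNa2}.
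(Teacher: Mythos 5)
Your proposal is correct and follows essentially the same route as the paper: quantitative differentiation from \cite{ChNa1} to locate a definite scale $r\geq r_0(n,v)$ at which the ball at $q$ is cone-like, then combining that cone structure with the closeness to $\dR^{n-3}\times Y$ (the paper packages this as Lemma \ref{cone-splitting-lemma}, proved by the same compactness/contradiction argument you sketch) to land in $\dR^{n-3}\times C(Y_0)$, and finally invoking theorem 6.1 of \cite{ChNa2}. The only cosmetic difference is that you carry out the cone-splitting step inline rather than isolating it as a lemma.
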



\begin{remark}
We can also write the above result in a scale-invariant version.  That is, if $|\Ric|\leq n-1$, $\Vol(B_r(p))> v\cdot r^n$ and \begin{equation}d_{GH}(B_{2r}(p),B_{2r}(0^{n-3},y))<\epsilon r,\ (0^{n-3},y)\in\dR^{n-3}\times Y
 \end{equation}
 then correspondingly we have the harmonic radius bound $r_h(p)\geq r\cdot r_0$.
\end{remark}
Before the proof, we need clarify some notions.
\begin{definition}
Let $(X,d)$ be a length space, then we define the following:

$(i)$ We say $X$ is $k$-symmetric at $x$ if there is some compact metric space $Y$ such that $X$ is isometric to $C(Y)\times\dR^k$, where $C(Y)$ is  a cone space over $Y$ and $x$ is the cone tip under this isometry.

$(ii)$ Given $x\in X$, $0<r\leq 1$ and $\epsilon>0$, we say $X$ is $(k,\epsilon,r)$-symmetric at $x$ if there is some compact  metric space $Z$ which is $k$-symmetric at $z\in Z$ such that
\begin{equation}
d_{GH}(B_r(x),B_r(z))<r\epsilon.
\end{equation}

$(iii)$ Denote by $r_{\alpha}= 2^{-\alpha}$. Given $x\in X$, we call $r_{\alpha}$
 is a {\textit{good scale}} if $X$ is $(0,\epsilon,r_{\alpha})$-symmetric at $x$, and a {\textit{bad scale}} otherwise.
\end{definition}
The following lemmas allow us to reduce the result of Theorem \ref{t:eps_reg_noncollapsed} to theorem 6.1 of \cite{ChNa2}:

\begin{lemma}\label{l:bad_scales_finite}
\cite{ChNa1}\label{bad-scale-estimate} Let $(M^n,g,p)$ be a Riemannian manifold with $\Ric\geq-(n-1)$ and $\Vol(B_1(p))>v>0$. Then for each $\epsilon>0$, there exists $N(n,\epsilon,v)>0$ such that
such that for every $q\in B_1(p)$, we have that there are at most $N$ bad scales at $q$. 
\end{lemma}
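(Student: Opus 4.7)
The plan is to combine Bishop--Gromov volume monotonicity with a quantitative version of Cheeger--Colding's `volume cone implies metric cone' theorem. The idea is to encode a bad scale at $q$ as a definite drop in the monotone relative volume ratio; since this ratio is bounded above by a dimensional constant and bounded below by the noncollapsing hypothesis, only finitely many such drops can occur.

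First I set up the monotone quantity. For $q \in B_1(p)$, let $V(q,r)$ denote the ratio of $\Vol(B_r(q))$ to the volume of a ball of radius $r$ in the simply connected space of constant sectional curvature $-1$. By Bishop--Gromov, $V(q,r)$ is non-increasing in $r$, with $V(q,r) \leq V_0(n)$ for small $r$. Moreover, for $q \in B_1(p)$ one has $B_2(q) \supseteq B_1(p)$, so $\Vol(B_2(q)) \geq v$, and a second application of Bishop--Gromov (now on $B_2(q)$) yields a uniform lower bound $V(q,r) \geq v'(n,v) > 0$ valid throughout $0 < r \leq 2$. Thus $V(q,\cdot)$ has bounded total variation depending only on $n$ and $v$.

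The central ingredient is the following quantitative rigidity: there exists $\delta = \delta(n,\epsilon,v) > 0$ such that whenever
$$V(q, r/2) - V(q, 8r) < \delta,$$
the ball $B_r(q)$ is $(0,\epsilon,r)$-symmetric at $q$. This follows from a standard compactness-and-contradiction argument based on Cheeger--Colding's volume cone implies metric cone theorem: a sequence of counterexamples converges to a noncollapsed Ricci limit on which $V(\cdot, r/2) = V(\cdot, 8r)$ forces the annulus about $q$ to be isometric to an annulus in a metric cone with tip $q$, and combined with the Bishop--Gromov equality propagating to the center, $B_r(q)$ itself is isometric to a ball in a metric cone, contradicting that the approximants were not $(0,\epsilon,r)$-symmetric.

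Granted this rigidity, the counting is immediate. List the bad scales at $q$ as $r_{\alpha_1} > r_{\alpha_2} > \cdots$; after discarding a bounded fraction of them (say, retaining only every fifth index so that the intervals $[r_{\alpha_k}/2,\, 8 r_{\alpha_k}]$ are pairwise disjoint), the rigidity forces each kept interval to absorb a drop of at least $\delta$, whence
$$\#\{\text{kept bad scales}\} \cdot \delta \;\leq\; \sum_k \bigl( V(q, r_{\alpha_k}/2) - V(q, 8 r_{\alpha_k}) \bigr) \;\leq\; V_0(n) - v'(n,v).$$
This bounds the total number of bad scales by some $N(n,\epsilon,v)$. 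The main obstacle is the quantitative rigidity statement, which genuinely requires both the noncollapsing (to keep the limit Ricci space nondegenerate so Cheeger--Colding rigidity applies) and the sharp form of `volume cone implies metric cone' to convert an infinitesimal flatness of $V(q,\cdot)$ into almost-cone structure at a single chosen scale.
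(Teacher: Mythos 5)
Your argument is correct and is essentially the quantitative differentiation argument of Cheeger--Naber that the paper cites for this lemma (the paper itself gives no proof, only the reference to \cite{ChNa1}): a monotone volume ratio of bounded total variation, a quantitative ``almost volume cone implies almost metric cone'' rigidity obtained by compactness, and a pigeonhole count over disjoint scale intervals. The one point worth being explicit about --- that near-constancy of $V(q,\cdot)$ on $[r/2,8r]$ controls the full ball $B_r(q)$ and not merely the annulus --- is justified exactly as you indicate, since equality of the integrated Bishop--Gromov ratio at two radii forces equality at all smaller radii, so in the contradiction limit the cone structure does propagate to the center.
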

We also the need the following quantitative cone splitting lemma which simply follows from standard contradicting arguments.
\begin{lemma}\label{cone-splitting-lemma}
Let $(M^n,g,p)$ be a Riemannian manifold with $\Ric\geq-(n-1)$. For each $\epsilon>0$, there exists $\delta_0(n,\epsilon)>0$ such that if for some metric space $(Y,y)$,
\begin{equation}
d_{GH}(B_2(p), B_2(0^{k},y))<\delta_0,\ \ (0^k,y)\in\dR^k\times Y,
\end{equation}
and
\begin{equation}
d_{GH}(B_2(p),B_2(y_1))<\delta_0,\ y_1\in C(Y_1)
\end{equation}
for some cone space $C(Y_1)$ over some compact metric space $Y_1$ with the cone tip $y_1$, then
we have that
\begin{equation}
d_{GH}(B_2(p),B_2(0^{k},y_0))<\epsilon,\ (0^k,y_0)\in\dR^{k}\times C(Y_0)
\end{equation}
where $C(Y_0)$ is some cone space over a compact metric space $Y_0$ with the cone tip $y_0$.
\end{lemma}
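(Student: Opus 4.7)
The plan is a standard compactness-and-contradiction argument together with a single application of the cone splitting principle in the Gromov-Hausdorff limit. Suppose no $\delta_0(n,\epsilon)$ works, and extract a sequence $(M_i^n,g_i,p_i)$ with $\Ric_{g_i}\geq -(n-1)$, metric spaces $Y_i$, cones $C(Y_1^i)$ with tips $y_1^i$, and parameters $\delta_i\to 0$, such that both $d_{GH}(B_2(p_i),B_2((0^k,y_i)))<\delta_i$ and $d_{GH}(B_2(p_i),B_2(y_1^i))<\delta_i$, but $B_2(p_i)$ is not $\epsilon$-close to any ball of the form $B_2((0^k,y_0))\subset \dR^k\times C(Y_0)$.

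First I would apply Gromov's precompactness theorem to the manifold balls to extract a pointed GH-limit $B_2(p_i)\to B_2(p_\infty)$. Since $\delta_i\to 0$, the auxiliary balls $B_2((0^k,y_i))$ and $B_2(y_1^i)$ converge, after further subsequences, to the same limit, yielding two isometric identifications of $B_2(p_\infty)$: with a ball in $\dR^k\times Y_\infty$ based at $(0^k,y_\infty)$, and with the radius-$2$ ball around the vertex $y_1^\infty$ of a metric cone $C(Y_1^\infty)$. In particular, $p_\infty$ is simultaneously the origin of an $\dR^k$ product-factor and the tip of a cone on the ball of radius $2$.

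The heart of the proof is the cone-splitting step. The product structure provides $k$ mutually orthogonal geodesic lines of length $4$ through the vertex $p_\infty$ of the cone $C(Y_1^\infty)$. In any metric cone, a geodesic through the vertex decomposes as the union of two radial rays joining the vertex to a pair of antipodal points of the link. The $k$ lines therefore produce $k$ mutually orthogonal antipodal pairs in $Y_1^\infty$, spanning an isometrically embedded round $(k-1)$-sphere inside $Y_1^\infty$. The cone splitting principle (Lemma \ref{cone-splitting-principle}) then delivers an isometric decomposition $C(Y_1^\infty)\cong \dR^k\times C(Y_0^\infty)$ for some metric space $Y_0^\infty$, with $p_\infty$ corresponding to the base point $(0^k,y_0^\infty)$. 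Hence $B_2(p_\infty)\cong B_2((0^k,y_0^\infty))\subset \dR^k\times C(Y_0^\infty)$, contradicting the failure of $\epsilon$-closeness for all sufficiently large $i$.

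The main subtlety is that the auxiliary spaces $Y_i$ and $C(Y_1^i)$ are not a priori proper, since they are not assumed to arise as Ricci limits. However, the $\delta_i$-closeness to the manifold balls, which are precompact by Gromov's theorem, forces precompactness and subsequential GH-convergence of the relevant radius-$2$ neighborhoods in the auxiliary spaces, which is all that the argument requires. I expect this compactness bookkeeping to be routine, with the actual geometric content residing in the cone-splitting step in the limit, itself a direct application of a by-now standard fact.
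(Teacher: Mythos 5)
Your overall skeleton (contradiction, Gromov precompactness, pass the two model structures to the limit, and invoke a rigid cone-splitting statement there) is exactly the "standard contradicting argument" the paper alludes to, and your handling of the compactness of the auxiliary spaces is fine. The problem is the execution of the rigid step in the limit, which is where the entire content of the lemma lives, and your version of it does not work. First, the $k$ mutually orthogonal geodesic segments of length $4$ through the vertex give you $2k$ points of the link $Y_1^\infty$ with the pairwise-distance pattern of $\{\pm e_j\}\subset S^{k-1}$ (antipodal pairs at distance $\pi$, cross pairs at distance $\pi/2$); this is far from an isometrically embedded round $S^{k-1}$, and you have not justified that stronger claim. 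Second, and more seriously, even a genuine round sphere (let alone antipodal pairs) in the link of a metric cone does not force the cone to split a Euclidean factor: the cone over three points at pairwise distance $\pi$ is a tripod, which contains lines through its vertex but is not isometric to $\dR\times Z$ for any $Z$. A splitting conclusion requires either a curvature hypothesis (the Cheeger–Colding splitting theorem, which is unavailable here since the complete cone $C(Y_1^\infty)$ is not known to be a Ricci-limit space and in any case you only have $\Ric\geq-(n-1)$, not $\Ric\geq 0$), or the actual hypotheses of Lemma \ref{cone-splitting-principle} — namely a \emph{global} isometry $F:\dR^s\times C(\bar Y)\to C(Y)$ together with a cone vertex of $C(Y)$ off the image of the axis. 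You have established neither: you have no second cone vertex and no global product structure on $C(Y_1^\infty)$, only a product structure on the radius-$2$ ball at its tip.

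The missing ingredient is the rigidity statement that if the radius-$2$ ball at the vertex of a complete metric cone is pointed-isometric to $B_2((0^k,y_\infty))\subset\dR^k\times Y_\infty$, then the whole cone is isometric to $\dR^k\times C(Y_0)$ compatibly with the base point. This uses the full ball isometry (not merely the $k$ distinguished geodesics) in combination with the dilation structure of the cone; alternatively one can avoid the rigid statement entirely and argue quantitatively on the manifolds, combining the $k$ almost-splitting functions furnished by the first hypothesis with the almost-radial function $\rho^2/2$ furnished by the second, as in the cone-splitting arguments of \cite{ChC1}, \cite{ChNa1}. Either route requires a real argument at precisely the point you have replaced by the incorrect "sphere in the link" reduction, so as written the proof has a genuine gap.
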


Applying the above lemmas, we proceed to prove Theorem \ref{t:eps_reg_noncollapsed}.
\begin{proof}[The proof of Theorem \ref{t:eps_reg_noncollapsed}] 
Fix $n$, $v$, we will explicitly determine 
\begin{equation}\epsilon(n,v)>0, r_0=r_0(n,v)>0
\end{equation}
 such that for some metric space $Y$,
if (\ref{(n-3)-GH-control}) holds for $\epsilon>0$, then 
\begin{equation} r_h(p)\geq r_0>0.
\end{equation}

Let $\epsilon'=\epsilon'(n,v)>0$ be the positive constant in the non-collapsed $\epsilon$-regularity of theorem 6.1 in \cite{ChNa2} and denote by $\delta_0=\delta_0(\epsilon',n)>0$ the constant in Lemma \ref{cone-splitting-lemma}. Take any $q\in B_1(p)$, Lemma \ref{bad-scale-estimate} implies that, with respect to the constant 
$\delta_0>0$ defined above,  we can drop a definite number of factors \begin{equation}N=N(n,\delta_0(\epsilon',n),v)>0\end{equation} such that there exists some $r>0$ with
\begin{equation} 2^{-2N(n,\epsilon',v)}< r <2^{-N(n,\epsilon',v)}<<\epsilon',\label{harmonic-radius-determined}\end{equation} then for some cone space $C(Y^*)$, it holds that 
 \begin{equation}d_{GH}(B_{2r}(q),B_{2r}(y^*))<r\delta_0,\ q\in B_1(p), \label{good-scale}\end{equation} where $y^*\in C(Y^*)$
 is the cone tip. 
Let (\ref{(n-3)-GH-control}) hold for 
\begin{equation}0<\epsilon\equiv r\cdot\delta_0(n,\epsilon')\, .\label{GH-control-determined}
\end{equation}
Then if we restrict to $B_{2r}(q)\subset B_2(p)$, it holds that
\begin{equation}
d_{GH}(B_{2r}(q),B_{2r}(0^{n-3},y))<\epsilon=r\cdot\delta_0.\end{equation}
Combining with (\ref{good-scale}), by Lemma \ref{cone-splitting-lemma}, there exists
a cone space $\dR^{n-3}\times C(Y_0)$ over some compact metric space $Y_0$ with the cone tip $(0^{n-3},y_0)$ such that
 \begin{equation}
 d_{GH}(B_{2r}(q),B_{2r}(0^{n-3},y_0))<r\epsilon'.
 \end{equation}
Applying theorem 6.1 of \cite{ChNa2}, we obtain the harmonic radius bound for each $q\in B_1(p)$,
\begin{equation}
r_h(q)\geq r>0.\end{equation}
Since the constants $r>0$, $\epsilon>0$ are determined by  (\ref{harmonic-radius-determined}) and (\ref{GH-control-determined}) which depend only on $n$, $v$, we have finished the proof.

 \end{proof}
 
We end this subsection by introducing the following \textit{Cone-splitting Principle} (see \cite{ChNa1} for more details) which is a powerful tool to study the $\epsilon$-regularity especially in the non-collapsed context (see Subsection \ref{ss:symmetry-and-splitting}). The following lemma will be heavily used in the proof of the quantitative splitting result.

\begin{lemma}[Cone-splitting Principle] \label{cone-splitting-principle} 
Let $(C(Y),y^*)$ be a metric cone with vertex $y^*$ over some compact metric space $Y$. Assume that there exists a metric cone $(C(\bar{Y}),\bar{y}^*)$ and there is an isometry $F:\dR^s\times C(Y')\rightarrow C(Y)$ such that $y^*\not\in F(\dR^s\times\{\bar{y}^*\})$, then for some compact metric space $W$, $C(Y)$ is isometric to $\dR^{s+1}\times C(W)$.
\end{lemma}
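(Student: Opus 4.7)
The plan is to exploit the two cone structures in tandem. The given cone structure with vertex $y^*$ forces any geodesic segment from $y^*$ to extend to an infinite ray, while the product structure $\dR^s\times C(Y')$ forces geodesics to respect the factor decomposition. Combining these will produce a complete geodesic line inside the factor $C(Y')$ that passes through its vertex $\bar y^*$, and the standard line-splitting for cones will then yield one additional $\dR$-factor.

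Via $F$, identify $C(Y)=\dR^s\times C(Y')$ and write $y^*=(v_0,w_0)\in\dR^s\times C(Y')$. The hypothesis $y^*\notin F(\dR^s\times\{\bar y^*\})$ says precisely that $w_0\ne \bar y^*$. After composing $F$ with a translation in the $\dR^s$-factor (an isometry of the product) we may assume $v_0=0$, so $y^*=(0,w_0)$. Let $q:=(0,\bar y^*)$ and let $\sigma:[0,r]\to C(Y)$ be the unit-speed minimizer from $y^*$ to $q$, where $r=d_{C(Y')}(w_0,\bar y^*)>0$. Since $\sigma$ lies entirely in the totally geodesic slice $\{0\}\times C(Y')$, its $\dR^s$-component has zero speed. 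Because $(C(Y),y^*)$ is a metric cone, $\sigma$ extends to an infinite ray $\tilde\sigma:[0,\infty)\to C(Y)$. Unit-speed geodesics in a product metric space have constant (and thus factor-wise-preserved) speeds on each factor, so the $\dR^s$-component of $\tilde\sigma$ remains $0$ for all $t$; hence $\tilde\sigma$ stays in $\{0\}\times C(Y')$. Projecting to the second factor yields a ray $\beta:[0,\infty)\to C(Y')$ which starts at $w_0$ and passes through the vertex $\bar y^*$ at time $r$. Concatenating the reverse of $\beta|_{[0,r]}$ with $\beta|_{[r,\infty)}$ produces a complete geodesic line in $C(Y')$ through $\bar y^*$.

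A line in a metric cone that passes through the vertex corresponds to a pair of points in the link at distance exactly $\pi$, so by the standard splitting theorem for metric cones one obtains a decomposition $C(Y')\cong\dR\times C(W)$ for some compact metric space $W$ (compactness of $W$ is inherited from that of $Y'$, which in turn is inherited from $Y$ through the product isometry). Combining,
\[
C(Y)\;\cong\;\dR^s\times C(Y')\;\cong\;\dR^s\times\dR\times C(W)\;=\;\dR^{s+1}\times C(W),
\]
as desired.

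The main delicate point is the claim in the second paragraph that the ray extension $\tilde\sigma$ of $\sigma$ actually remains inside the slice $\{0\}\times C(Y')$. Geodesic extensions past a point are in general not unique, and they are especially ambiguous when the extension point is the cone vertex $\bar y^*$. What rescues the argument is the product-geodesic principle: a constant-speed geodesic in $\dR^s\times C(Y')$ has constant speed on each factor separately, and since the initial segment $\sigma$ has speed $0$ on the $\dR^s$-factor, \emph{every} unit-speed extension of it must also have speed $0$ on the $\dR^s$-factor, forcing the extension to stay in the slice. The cone structure at $y^*$ supplies the existence of some such extension to a ray, and product geometry forces it to be of the required form.
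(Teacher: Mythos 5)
The paper itself does not prove this lemma (it is quoted from \cite{ChNa1}), so your argument has to stand on its own. The first two thirds do: identifying $C(Y)$ with $\dR^s\times C(Y')$, using the product-geodesic principle to confine the radial ray $\tilde\sigma$ issuing from $y^*$ toward $(0,\bar y^*)$ to the slice $\{0\}\times C(Y')$, and thereby producing a ray $\beta$ in $C(Y')$ passing through the vertex $\bar y^*$ at an interior time $r>0$, is all correct. Two small inaccuracies: the ``concatenation'' you describe does not yield a complete line, only the ray $\beta$ reparametrized on $[-r,\infty)$; and what you actually get is a pair of directions $\theta_\pm\in Y'$ with $d_{Y'}(\theta_-,\theta_+)\geq\pi$, not necessarily $=\pi$, since $\diam(Y')\leq\pi$ is not assumed. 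Both are repairable: minimality of $\beta$ across $t=r$ forces $d_{Y'}(\theta_-,\theta_+)\geq\pi$ by the cone law of cosines, and then the full radial curve $t\mapsto(|t|,\theta_{\operatorname{sgn}(t)})$ is a genuine line.

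The genuine gap is the final step. There is no ``standard splitting theorem for metric cones'' asserting that a metric cone containing a line through its vertex splits off an $\dR$-factor. This is false for general metric cones over compact metric spaces: the flat cone over a circle of circumference $4\pi$ contains many lines through its vertex (any pair of link points at distance $\geq\pi$) yet is not isometric to $\dR\times C(W)$ for any $W$; the cone over three points at pairwise distance $\pi$ (a tripod) is another counterexample, even though its cross-section has diameter exactly $\pi$. Line-splitting requires curvature hypotheses (Toponogov or Cheeger--Colding) that are not among the lemma's assumptions. The real content of the cone-splitting principle is exactly this step, and it must use \emph{both} cone structures simultaneously --- for instance by composing the dilations centered at $y^*$ with those centered at the points of $F(\dR^s\times\{\bar y^*\})$ to generate a one-parameter family of isometries translating along the line, or by noting that $\tfrac12 d(y^*,\cdot)^2-\tfrac12 d(q,\cdot)^2$ is affine along all geodesics and exploiting the cone structure to show the cross-section is a spherical suspension. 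Your argument uses the cone structure at $y^*$ only to manufacture the ray and then discards it. In every application in this paper the cones arise as rescaled limits of Ricci-limit spaces, where the Cheeger--Colding splitting theorem is available and would close the gap; but as a proof of the lemma as stated, the last step does not go through.
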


\subsection{Geometry of Ricci-limit Spaces}\label{ss:prelim:Riccilimit}

In this subsection, we briefly discuss the geometric properties of a Ricci-limit space which will be used in the proof of our $\epsilon$-regularity Theorems.  Specifically, we call a metric-measure space $(X,d_X,\nu_X,x)$ a Ricci-limit space if there exists a sequence $(M_i^n,g_i,\nu_i,p_i)$ of Riemannian manifolds with $\Ric_{g_i}\geq-(n-1)\lambda$ and $\nu_i=\Vol(B_1(p_i))^{-1}dv_{g_i}$ such that
\begin{equation}
(M_i^n,g_i,\nu_i, p_i)\xrightarrow{GH}(\dR^k\times X,d_{\dR^k\times X},\nu_{\dR^k}\times \nu_X,x)\, .
\end{equation}
Note this differs a little from other papers in the literature, which might call a Ricci limit space a metric-measure space which arises as a direct limit of Riemannian manifolds.  It takes very little work to see that all the results that hold for these spaces also hold for Ricci-limit spaces in our sense.  In this subsection, we focus on the concept of the dimension of a Ricci-limit space and its isometry group.

\subsubsection{The dimension of a Ricci-limit space}

There is a natural definition of the dimension of a Ricci-limit space. This definition is given by the following theorem, which follows from \cite{ColdingNaber} directly.

\begin{theorem}
[\cite{ColdingNaber}] \label{limiting-dimension}
Let $(X,d,\nu,x)$ be a Ricci limit space.  Then there is a unique integer $k\geq 0$ such that $\nu(X\setminus\mathcal{R}_k)=0$,
 where 
 \begin{equation}
 \mathcal{R}_k(X)\equiv\Big\{y\in X\Big|\text{each tangent cone at}\  y\ \text{is isometric to}\ \dR^k\Big\},
 \end{equation}
The above unique integer $k$ is called the dimension of  $X$, 
 and the points in $\mathcal{R}_k(X)$ are called $k$-regular points.
\end{theorem}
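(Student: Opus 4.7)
I plan to follow the approach of Colding--Naber \cite{ColdingNaber}. The starting point is the Cheeger--Colding structure theorem, which gives that for $\nu$-a.e.\ $y\in X$ every tangent cone at $y$ is a metric cone; combining this with iterated use of the splitting theorem and cone rigidity, one upgrades this to the statement that $\nu$-a.e.\ point lies in $\bigcup_{k\geq 0}\mathcal R_k(X)$, i.e.\ every tangent cone at such a point is isometric to some Euclidean space $\dR^{k(y)}$. The only remaining content of the theorem is then that the assignment $y\mapsto k(y)$ is $\nu$-essentially constant on the regular set; this is precisely the refinement Colding--Naber contribute.

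The central technical input I would invoke is the H\"older continuity of small geodesic balls along interior points of minimizing geodesics: if $\gamma:[0,L]\to X$ is a minimizing geodesic and $0<s_1\leq s_2<L$, then for all sufficiently small $r>0$,
\begin{equation*}
d_{GH}\bigl(B_r(\gamma(s_1)),B_r(\gamma(s_2))\bigr) \leq C(L,s_1)\, r^{1+\alpha},
\end{equation*}
for some universal $\alpha=\alpha(n)>0$. Proving this is the main obstacle and requires a delicate parabolic approximation argument: one constructs heat-flowed almost splitting maps along $\gamma$ and uses integral Ricci estimates together with sharp heat-kernel bounds to transport geometric information between nearby interior points. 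Granting this, rescaling by $r^{-1}$ shows the rescaled Gromov--Hausdorff distance is $\leq C r^\alpha\to 0$, so all tangent cones at interior points of $\gamma$ are mutually isometric; in particular $k(\gamma(s_1))=k(\gamma(s_2))$ whenever both points lie in the regular set.

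To conclude global uniqueness, suppose for contradiction that $\nu(\mathcal R_{k_1})>0$ and $\nu(\mathcal R_{k_2})>0$ for distinct $k_1\neq k_2$, and pick Lebesgue density points $y_1\in\mathcal R_{k_1}$, $y_2\in\mathcal R_{k_2}$. Applying the Cheeger--Colding segment inequality to the characteristic function of $X\setminus(\mathcal R_{k_1}\cup \mathcal R_{k_2})$, I would obtain a minimizing geodesic with one endpoint near $y_1$ and the other near $y_2$ whose interior meets both $\mathcal R_{k_1}$ and $\mathcal R_{k_2}$ on sets of positive $1$-dimensional Hausdorff measure. The preceding paragraph, applied to two such interior points, then forces $k_1=k_2$, a contradiction. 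Hence the index $k$ is unique, yielding the theorem. Once the H\"older continuity of balls along geodesics is in hand, the passage from ``tangent cones vary continuously along geodesics'' to ``the dimension function is essentially constant'' is a clean density-point argument using only standard tools from Cheeger--Colding theory.
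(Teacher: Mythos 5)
First, note that the paper does not actually prove this statement: it is imported from \cite{ColdingNaber} (the surrounding text says it "follows from \cite{ColdingNaber} directly"), so there is no internal proof to compare against. Your proposal therefore has to be measured against the actual Colding--Naber argument, and at the level of architecture it matches it: Cheeger--Colding regularity theory to place $\nu$-a.e.\ point in $\bigcup_k\mathcal{R}_k$, H\"older continuity of small balls along the interior of minimizing geodesics proved by parabolic approximation, and a segment-inequality/density-point argument to propagate the dimension between two sets of positive measure.

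There is, however, a genuine error in the key estimate you invoke, and the intermediate conclusion you draw from it is false. The Colding--Naber theorem gives, for $s,t$ in the $\delta$-interior of a unit-speed minimizing geodesic and all sufficiently small $r$,
\begin{equation*}
d_{GH}\bigl(B_r(\gamma(s)),B_r(\gamma(t))\bigr)\le \frac{C(n)}{\delta}\,|s-t|^{\alpha(n)}\, r,
\end{equation*}
i.e.\ the bound is \emph{linear} in $r$ and H\"older in the parameter $|s-t|$, not $O(r^{1+\alpha})$ for fixed distinct $s_1,s_2$ as you wrote. Your version would imply, after rescaling by $r^{-1}$, that the distance tends to $0$ and hence that tangent cones at any two interior points of the geodesic are isometric; that is strictly stronger than what is true (tangent cones at distinct interior points need not be isometric, only quantitatively close at matching scales), and it is not what the parabolic approximation argument yields. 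The correct deduction of dimension constancy needs an ingredient you omit: for $j\ne k$ with $j,k\le n$ the unit balls $B_1(0^j)$ and $B_1(0^k)$ are separated in Gromov--Hausdorff distance by a definite constant $c(n)>0$. Combining this gap with the correct estimate shows that two interior \emph{regular} points at parameter distance $|s-t|<\eta(n,\delta)$ must have equal dimension; a chaining argument along the geodesic (using that a.e.\ interior time is regular, itself a segment-inequality statement) then makes the dimension constant along the geodesic and forces $k_1=k_2$. With that repair, your outline is the Colding--Naber proof.
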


\subsubsection{The isometry group of a Ricci-limit space}

To study the isometry group of a Ricci-limit space, we recall some basic notions in compact-open topology.
Given a metric space $Y$, it is standard to define the topology of $\Isom(Y)$ which is called the compact-open topology. Denote
\begin{equation}
\mathcal{V}(K,U)\equiv\{g\in\Isom(Y)|g(K)\subset U\},
\end{equation}
where $K\subset Y$ is compact and $U\subset Y$ is open and let
\begin{equation}
\mathcal{K}(Y)\equiv\{\mathcal{V}(K,U)|K\ \text{is compact},\ U\ \text{is open}\}.
\end{equation}

\begin{definition}
The compact-open topology of $\Isom(Y)$ is the topology which is generated by $\mathcal{K}(Y)$ such that $\mathcal{K}(Y)$ is the subbase. 
\end{definition}
Let us briefly review some basic facts of the compact open topology of $\Isom(Y)$.
In fact, the compact open topology of $\Isom(Y)$ is equal to the compact-convergence topology.
Moreover, $\Isom(Y)$ and $Y$ share the same separation properties with respect to the 
compact-open topology.
There is another standard fact that if $Y$ is a proper metric space, then $\Isom(Y)$
is locally compact with respect to the compact-open topology.

Let $G\equiv\Isom(Y)$ be the isometry group of $Y$ endowed with the compact-open topology. Hence, $G$ is locally compact. Gleason-Yamabe's theorem gives a criterion for a locally compact topological group to be a Lie group. Roughly speaking, a locally compact topological group $G$ is a Lie group if $G$ has no small subgroup. The above idea is applied to study the isometry group of a Ricci-limit space.  Applying the H\"older continuity result of tangent cones developed in \cite{ColdingNaber}, the following was proved:
\begin{theorem}
[\cite{ColdingNaber}]\label{lie-group} Let  $(Y,d,\nu,y)$  be a Ricci-limit space, then $\Isom(Y)$ is a Lie group.
\end{theorem}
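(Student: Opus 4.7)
The plan is to apply the Gleason--Yamabe theorem, which characterizes Lie groups among locally compact topological groups as precisely those that have no small subgroups (NSS). Since $Y$ is a proper metric space, the excerpt already records that $\Isom(Y)$ is locally compact in the compact-open topology, so the task reduces to verifying the NSS condition: there exists a neighborhood $U$ of the identity in $\Isom(Y)$ containing no nontrivial subgroup.

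My strategy would be to argue by contradiction. Assuming NSS fails, for every neighborhood of the identity, in particular for compact sets $K \subset Y$ and $\epsilon>0$ defining sub-basic neighborhoods $\mathcal{V}(K,U_\epsilon(K))$, there is a nontrivial subgroup $H$ entirely inside this neighborhood. Choose an element $g_i \in H_i$ with minimal nonzero displacement $\epsilon_i \equiv d(g_i \cdot y, y) \to 0$, together with a power $k_i$ such that the iterate satisfies $d(g_i^{k_i} \cdot y, y) \geq \delta$ for some uniform $\delta>0$ (such powers must exist since $H_i$ is nontrivial and accumulates at the identity while not collapsing to it). Rescaling the metric by $\epsilon_i^{-1}$, Gromov's precompactness (available for Ricci limits) lets us pass to a subsequence with $(Y, \epsilon_i^{-1}d, y) \to (T_y Y, d_\infty, \hat y)$, a tangent cone at $y$. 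An Arzelà--Ascoli argument in the compact-open topology promotes the isometries $g_i$ to a limit isometry $\hat g$ of $T_y Y$ with $d_\infty(\hat g \cdot \hat y, \hat y) = 1$; similarly, the rescaled iterates $g_i^{k_i}$ yield limiting isometries at the larger scale.

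The main obstacle, and the essential input from \cite{ColdingNaber}, is turning this limiting picture into a contradiction. The difficulty is that \emph{a priori} tangent cones of Ricci limit spaces are highly nonunique and the actions of $g_i$ at different scales need not be comparable. What rescues us is the H\"older continuity of tangent cones proved in \cite{ColdingNaber}: for generic points, and after controlled scale changes, the tangent cones at scales $\epsilon_i$ and $1$ are pointed Gromov--Hausdorff close via a H\"older-controlled identification. This quantitative control lets us compare the orbit of $\hat y$ under $\hat g$ (of size $1$) with the orbit under the iterated limit (of size $\geq \delta$) and forces the action of $g_i$ at the original scale to produce an orbit of definite positive size, contradicting $d(g_i\cdot y, y) = \epsilon_i \to 0$. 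Thus NSS holds, and $\Isom(Y)$ is a Lie group. The hard part is precisely this last comparison step: Alexandrov techniques based on uniqueness and continuity of tangent cones are not available here, and the H\"older estimate must be used as a substitute to synchronize actions across scales quantitatively enough to extract the contradiction.
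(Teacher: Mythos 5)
The paper does not prove this theorem itself — it cites it from \cite{ColdingNaber} — but the surrounding discussion outlines exactly the strategy you describe: local compactness of $\Isom(Y)$ in the compact-open topology, the Gleason--Yamabe no-small-subgroups criterion, and the H\"older continuity of tangent cones from \cite{ColdingNaber} as the key input to rule out small subgroups by a rescaling/contradiction argument. Your proposal is the same approach, so no further comparison is needed.
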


Let $G$ be as above. 
Now we give a metric characterization of the identity component of $G$.
Denote by $G_0$ the identity component of the Lie group $G=\Isom(Y)$, and then 
we will prove the following Lemmas.

\begin{lemma}\label{small-neighborhood} In the above notations, let $\mathcal{B}_e\subset\Isom(Y)$ be any open set containing the identity element $e\in\Isom(Y)$, there exists $\epsilon_0(Y,\mathcal{B}_e)>0$ such that
\begin{equation}
I(\epsilon_0)\equiv\Big\{g\in\Isom(Y)\Big| d(g\cdot z,z)<\epsilon_0,\ \forall z\in\overline{B_{\epsilon_0^{-1}}(y)}\Big\}
\end{equation}
is an open set containing the identity and
\begin{equation}
I(\epsilon_0)\subset \mathcal{B}_e.
\end{equation}
 
\end{lemma}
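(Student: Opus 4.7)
The statement has three parts to verify: that $e \in I(\epsilon_0)$, that $I(\epsilon_0)$ is open in $\Isom(Y)$, and that for a suitable choice of $\epsilon_0$ one has $I(\epsilon_0) \subset \mathcal{B}_e$. The first is immediate from $d(e\cdot z, z) = 0$. My plan for the remaining two is to show that the family $\{I(\epsilon)\}_{\epsilon > 0}$ forms a neighborhood base of $e$ in the compact-open topology on $\Isom(Y)$; this is essentially the observation that, on a proper metric space, the compact-open topology coincides with uniform convergence on compact sets. Properness of $Y$ (needed so that closed balls $\overline{B_{\epsilon_0^{-1}}(y)}$ are compact) follows from its Ricci-limit structure, so I may freely assume $K := \overline{B_{\epsilon_0^{-1}}(y)}$ is compact.

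For the openness, fix $g \in I(\epsilon_0)$ and let $\eta > 0$ satisfy $\sup_{z\in K} d(g\cdot z, z) < \epsilon_0 - \eta$. Using compactness, cover $K$ by finitely many balls $B_{\eta/4}(z_i)$ with centers $z_i \in K$, $i=1,\ldots,N$. The set
\begin{equation}
W \equiv \bigcap_{i=1}^N \mathcal{V}\bigl(\{z_i\},\, B_{\eta/4}(g\cdot z_i)\bigr)
\end{equation}
is a finite intersection of subbasic open sets and contains $g$. For any $h\in W$ and $z\in K$, picking $z_i$ with $d(z,z_i) < \eta/4$ and using that $h$ is an isometry,
\begin{equation}
d(h\cdot z, z) \leq d(h\cdot z, h\cdot z_i) + d(h\cdot z_i, g\cdot z_i) + d(g\cdot z_i, z_i) + d(z_i, z) < \tfrac{\eta}{4} + \tfrac{\eta}{4} + (\epsilon_0 - \eta) + \tfrac{\eta}{4} < \epsilon_0.
\end{equation}
Hence $W \subset I(\epsilon_0)$, proving openness.

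For the containment $I(\epsilon_0) \subset \mathcal{B}_e$, I would unpack $\mathcal{B}_e$ via the definition of the compact-open topology: since the sets $\mathcal{V}(K,U)$ form a subbase, there exist compact sets $K_1,\ldots,K_N$ and open sets $U_1,\ldots,U_N$ with $K_i \subset U_i$ (because $e \in \bigcap_i \mathcal{V}(K_i,U_i)$) such that
\begin{equation}
e \in \bigcap_{i=1}^N \mathcal{V}(K_i, U_i) \subset \mathcal{B}_e.
\end{equation}
By compactness of each $K_i$ together with openness of $U_i$, there exist $\delta_i > 0$ with the open $\delta_i$-neighborhood $N_{\delta_i}(K_i) \subset U_i$. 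Let $\delta := \min_i \delta_i$ and choose $R < \infty$ so that $\bigcup_i K_i \subset B_R(y)$. Selecting $\epsilon_0 < \min(\delta, R^{-1})$ ensures $K_i \subset B_{\epsilon_0^{-1}}(y)$, and for any $g \in I(\epsilon_0)$ and $z \in K_i$ we obtain $d(g\cdot z, z) < \epsilon_0 < \delta_i$, so $g \cdot K_i \subset N_{\delta_i}(K_i) \subset U_i$, i.e.\ $g \in \mathcal{V}(K_i,U_i)$ for every $i$. This gives $I(\epsilon_0) \subset \mathcal{B}_e$.

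The argument is largely a topological exercise; the only non-formal input is properness of $Y$, guaranteed by the Ricci-limit hypothesis, which licenses both the compactness of $K$ and the identification of the compact-open topology with uniform convergence on compacts. No step should present a genuine obstacle, though one must be careful in the openness argument to avoid accidentally taking an uncountable intersection over $z \in K$; the finite subcover trick via $\{z_i\}$ is the key technical device.
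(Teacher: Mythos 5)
Your proof is correct. For the containment $I(\epsilon_0)\subset\mathcal{B}_e$ you follow essentially the same route as the paper: write $\mathcal{B}_e\supset\bigcap_{i}\mathcal{V}(K_i,U_i)\ni e$, extract a uniform $\delta$ so that the $\delta$-neighborhood of each $K_i$ sits inside $U_i$ (the paper does this via Lebesgue numbers of finite covers, you do it directly from compactness of $K_i$ and openness of $U_i$ --- the same content), and shrink $\epsilon_0$ below both $\delta$ and $R^{-1}$. Where you genuinely diverge is the openness of $I(\epsilon_0)$. The paper asserts the set identity $I(\epsilon)=\mathcal{V}\bigl(\overline{B_{\epsilon^{-1}}(y)},\,T_{\epsilon}(\overline{B_{\epsilon^{-1}}(y)})\bigr)$ and concludes openness because the right-hand side is a subbasic set; but that identity only holds as an inclusion $\subseteq$, since membership in $\mathcal{V}(K,T_\epsilon(K))$ requires $g\cdot z$ to be $\epsilon$-close to \emph{some} point of $K$, not to $z$ itself. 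Your argument --- fixing $g\in I(\epsilon_0)$, extracting a slack $\eta$, covering $K$ by finitely many $\eta/4$-balls, and sandwiching an intersection of finitely many subbasic sets $W$ between $g$ and $I(\epsilon_0)$ via the isometry property --- is the correct way to establish openness and repairs this weakness in the paper's write-up. The only external input in either version is properness of the Ricci-limit space $Y$, which you invoke appropriately to make $\overline{B_{\epsilon_0^{-1}}(y)}$ compact.
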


\begin{proof} 
First, by definition, for each fixed $\epsilon>0$, it is obvious that
\begin{equation}
I(\epsilon)=\mathcal{V}(\overline{B_{\epsilon^{-1}}(y)}, T_{\epsilon}(\overline{B_{\epsilon^{-1}}(y)})),
\end{equation}
where $ T_{\epsilon}(\overline{B_{\epsilon^{-1}}(y)}))$ is the $\epsilon$-neighborhood of the compact set 
$\overline{B_{\epsilon^{-1}}(y)}$. Hence,
$I(\epsilon)$
 is a subbase element and thus $I(\epsilon)$ is open.

Since $\mathcal{B}_e$ is open,
by the definition of compact-open topology, there exist a open set $e\subset\mathcal{B}_e'\subset\mathcal{B}_e$ and finitely many $\mathcal{V}(K_j,U_j)$ for $1\leq j\leq N$ such that
\begin{equation}
e\in \mathcal{B}_e'=\bigcap\limits_{j=1}^N\mathcal{V}(K_j,U_j).
\end{equation}
We will prove that for some sufficiently small $\epsilon_0(Y,\mathcal{B}_e)>0$, $I(\epsilon_0)\subset\mathcal{B}_e'$. 
It suffices to show that for all $1\leq j\leq N$ and
for every $g\in I(\epsilon_0)$, it holds that
\begin{equation}
g(K_j)\subset U_j.
\end{equation}
Since $\bigcup\limits_{j=1}^NK_j$ is compact, 
there exists $R_0>0$ such that
\begin{equation}
\bigcup\limits_{j=1}^NK_j\subset B_{R_0}(y).
\end{equation}
Notice that, $e(K_j)=K_j$
for every $1\leq j\leq N$, which implies that $K_j\subset U_j$. Since $K_j$ is compact, there exists an finite open cover of $K_j$ such that
\begin{equation}
K_j\subset \bigcup\limits_{\nu=1}^{m_j}O_{\nu}\subset U_j.
\end{equation}
Let $\delta_j>0$ be the Lebesgue number of this open cover, and then for each $q_j\in K_j$ it holds that for some $1\leq\nu_j\leq m_j$,
\begin{equation}
B_{\delta_j/3}(q_j)\subset O_{\nu_j}\subset U_j.
\end{equation}
Let 
\begin{equation}
\epsilon_0\equiv\frac{1}{3}\min\Big\{\delta_1,\ldots, \delta_N, (10\cdot R_0)^{-1}\Big\},
\end{equation}
then for each $z_j\in K_j\subset B_{R_0}(y)\subset B_{\epsilon_0^{-1}}(y)$ and for each $g\in I(\epsilon_0)$,
\begin{equation}
g\cdot z_j\in B_{\epsilon_0}(z_j)\subset B_{\delta_j/3}(z_j)\subset U_j.
\end{equation}
Therefore, $g\in \mathcal{V}(K_j,U_j)$. So we have finished the proof.

\end{proof}

\begin{lemma}\label{generating-set-of-identity-component}Let $(Y,d,\nu,y)$ be a Ricci-limit space and let $G\equiv\Isom(Y)$. Denote by $G_0$ the identity component of the Lie group $G$,  then there exists $\epsilon_0(Y)>0$ such that 
\begin{equation}
G_0=\langle I(\epsilon_0)\rangle.\end{equation}
\end{lemma}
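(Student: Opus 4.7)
The plan is to combine the preceding Lemma \ref{small-neighborhood} with a purely topological fact about connected topological groups: any open neighborhood of the identity generates the identity component. I will first select $\epsilon_0$ so that $I(\epsilon_0)$ lies inside $G_0$, and then use openness of $I(\epsilon_0)$ together with connectedness of $G_0$ to conclude that it generates $G_0$.

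First, I would observe that $G=\Isom(Y)$ is a Lie group by Theorem \ref{lie-group}, so in particular it is locally connected. Consequently, the identity component $G_0$ is an open (and closed) subgroup of $G$. Taking $\mathcal{B}_e = G_0$ in Lemma \ref{small-neighborhood}, I obtain some $\epsilon_0=\epsilon_0(Y,G_0)=\epsilon_0(Y)>0$ such that the set
\[
I(\epsilon_0)=\bigl\{g\in\Isom(Y)\,\bigl|\,d(g\cdot z,z)<\epsilon_0,\ \forall z\in\overline{B_{\epsilon_0^{-1}}(y)}\bigr\}
\]
is open, contains the identity $e$, and satisfies $I(\epsilon_0)\subset G_0$. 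Note also that $I(\epsilon_0)$ is symmetric: if $g\in I(\epsilon_0)$, then for $z\in\overline{B_{\epsilon_0^{-1}}(y)}$ we have $d(g^{-1}z,z)=d(z,gz)<\epsilon_0$, so $g^{-1}\in I(\epsilon_0)$.

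Next, let $H\equiv\langle I(\epsilon_0)\rangle$. Since $H\supset I(\epsilon_0)\subset G_0$ and $G_0$ is a subgroup, $H\subset G_0$. I claim $H$ is an open subgroup of $G_0$. Indeed, for any $h\in H$, left multiplication by $h$ is a homeomorphism of $G$, so $h\cdot I(\epsilon_0)$ is an open neighborhood of $h$; and $h\cdot I(\epsilon_0)\subset H$ because $H$ is a subgroup. Hence $H$ is open in $G_0$. A standard argument then gives that $H$ is also closed in $G_0$: the complement $G_0\setminus H$ is a disjoint union of cosets $gH$, each of which is open by the same translation argument, so $G_0\setminus H$ is open. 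Therefore $H$ is a non-empty clopen subset of $G_0$, and since $G_0$ is connected, $H=G_0$.

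The only nontrivial input is the existence step, which requires that $G_0$ actually be open — this is where the Lie group structure from Theorem \ref{lie-group} is essential, since the analogous statement is false for a general topological group. With that in hand, the argument is essentially a standard topological-group fact, and I do not expect significant obstacles beyond checking that the neighborhood $I(\epsilon_0)$ furnished by Lemma \ref{small-neighborhood} can indeed be arranged to sit inside $G_0$, which is immediate from the cited lemma.
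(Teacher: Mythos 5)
Your proof is correct and follows essentially the same route as the paper: apply Lemma \ref{small-neighborhood} with $\mathcal{B}_e=G_0$ (using that $G_0$ is open since $G$ is a Lie group by Theorem \ref{lie-group}) to get $I(\epsilon_0)\subset G_0$ open containing $e$, then invoke the standard fact that a connected topological group is generated by any open neighborhood of the identity. Your version is if anything slightly cleaner, since you spell out the open--closed argument and avoid the paper's unnecessary case distinction on whether $I(\epsilon_0)=\{e\}$.
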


\begin{proof}

Since $(Y,y)$ is a Ricci limit space, by theorem \ref{lie-group},
$G=\Isom(Y)$ is a Lie group and hence the identity component $G_0$ is a normal subgroup of $G$.
By Lemma \ref{small-neighborhood}, given the identity component $G_0$, there exists $\epsilon_0(Y)>0$ such that \begin{equation}I(\epsilon_0)\subset G_0.\end{equation}
If $I(\epsilon_0)=e$.
Notice that $G_0$ is Hausdorff and thus $I(\epsilon_0)$ is closed.
Since $I(\epsilon_0)$ is open and $G_0$ is connected,
\begin{equation}
G_0=\{e\}=I_{\epsilon_0}.
\end{equation}
We are done. Now we focus on the case $I(\epsilon_0)\neq\{e\}$, that is, $I(\epsilon_0)$ is a non-trivial neighborhood containing $e$. Since $G_0$ is a connected Lie group, $G_0$ can be generated by any arbitrary non-trivial neighborhood. Therefore, 
\begin{equation}
G_0=\langle I(\epsilon_0)\rangle.
\end{equation}

\end{proof}

\subsection{Ricci Curvature and Fundamental Group}\label{ss:prelim:fundamental_group}

This section is to show the connections between curvature and fundamental group. Specifically, we will discuss some Generalized Margulis Lemmas in the context of sectional and in general Ricci curvature bounded from below. Roughly speaking, the moral of General Margulis Lemma is that 
if curvature (sectional or Ricci) is bounded from below, the subgroup of $\pi_1(B_2(p))$ generated by short loops is well controlled, i.e. an almost nilpotent group. 

\subsubsection{Nilpotent groups and polycyclic groups}

In this subsection, we recall some basic notions related to nilpotent groups and polycyclic groups. First, we give the definitions of a nilpotent group and a polycyclic group.

\begin{definition} A group $\Gamma$ is called {\textit{nilpotent}} if the lower central series of $\Gamma$ is of finite length, that is, there exists a finite descending normal series 
\begin{equation}
\Gamma\equiv\Gamma_0\rhd\Gamma_1\rhd\ldots\rhd\Gamma_k=\{e\},
\end{equation}
where $\Gamma_{j}\equiv[\Gamma,\Gamma_{j-1}]$ for all $1\leq j \leq k$. The length of the lower central series is called the \textit{nilpotency step or class}  of $\Gamma$, namely, $\Step(\Gamma)=k$.
\end{definition}

\begin{definition}
A group $\Gamma$ is called \textit{polycyclic} if there is a finite subnormal series 
\begin{equation}\Gamma\equiv\Gamma_m\rhd\Gamma_{m-1}\rhd\ldots\rhd\Gamma_1\rhd\Gamma_0=\{e\}\end{equation}
such that $\Gamma_{j-1}$ is normal in $\Gamma_j$ and $\Gamma_{j}/\Gamma_{j-1}$ is cyclic for each $1\leq j\leq m$. Such a subnormal series is called a polycyclic series. 
\end{definition}
\begin{remark}
It is known that all finitely generated nilpotent groups are polycyclic (see \cite{funda}).
\end{remark}

For a polycyclic group, we introduce the following invariants 
 which will be used to define the rank of a finitely generated nilpotent group.
\begin{lemma}
Let $\Gamma$ be a polycyclic group with a subnormal series\begin{equation}\Gamma\equiv\Gamma_m\rhd\Gamma_{m-1}\rhd\ldots\rhd\Gamma_1\rhd\Gamma_0=\{e\},\end{equation}
such that $\Gamma_{j-1}$ is normal in $\Gamma_{j}$ and $\Gamma_{j}/\Gamma_{j-1}$ is nontrivial and cyclic for each $1\leq j\leq m$. Define the integer
\begin{equation}
\N_{\mathbb{Z}}(\Gamma)\equiv\#\Big
\{1\leq j\leq m\Big|\Gamma_{j}/\Gamma_{j-1}\cong\mathbb{Z}\Big\}.
\end{equation}
 Then  $\N_{\mathbb{Z}}(\Gamma)$ is independent of the choice of the polycyclic series, and is called the polycyclic rank of $\Gamma$. In particular, if $\Gamma$ is torsion-free, then the above normal series can be chosen such that $\Gamma_{j}/\Gamma_{j-1}\cong\mathbb{Z}$ for each $0\leq j\leq m-1$.
\end{lemma}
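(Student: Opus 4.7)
The proof is a routine application of Schreier's refinement theorem for subnormal series, together with the elementary fact that subgroups and quotients of cyclic groups are cyclic. The plan is to show that any two polycyclic series of $\Gamma$ admit equivalent (Jordan--H\"older) refinements, and that the refinement process preserves both cyclicity of factors and the count of those isomorphic to $\mathbb{Z}$; this immediately forces $\N_{\mathbb{Z}}(\Gamma)$ to agree for the two original series.

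For the refinement invariance, I would examine the effect of inserting a single subgroup $\Gamma_{j-1}\lhd H\lhd\Gamma_j$ into the series. Both $H/\Gamma_{j-1}$ and $\Gamma_j/H$ are cyclic since they are respectively a subgroup and a quotient of the cyclic group $\Gamma_j/\Gamma_{j-1}$. If $\Gamma_j/\Gamma_{j-1}\cong\mathbb{Z}$, then $H/\Gamma_{j-1}$ is either trivial (in which case $\Gamma_j/H\cong\mathbb{Z}$) or isomorphic to $\mathbb{Z}$ (in which case $\Gamma_j/H$ is finite cyclic); in both cases exactly one new factor is infinite cyclic, so the $\mathbb{Z}$-count is unchanged. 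If $\Gamma_j/\Gamma_{j-1}$ is finite cyclic, both new factors are finite cyclic, contributing zero $\mathbb{Z}$-factors. Iterating over finitely many insertions shows that any refinement of a polycyclic series is again a polycyclic series with the same $\N_{\mathbb{Z}}$-count. Now applying Schreier's refinement theorem to two polycyclic series of $\Gamma$ produces refinements whose successive quotients agree as multisets up to isomorphism; in particular the two refinements share the same number of $\mathbb{Z}$-factors. Combined with the invariance, this forces the two original series to have the same count, proving well-definedness.

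For the torsion-free refinement I would argue by induction on $\N_{\mathbb{Z}}(\Gamma)$. The base case $\N_{\mathbb{Z}}(\Gamma)=0$ means every polycyclic series has only finite cyclic factors, so $\Gamma$ is finite, and being torsion-free forces $\Gamma=\{e\}$. For the inductive step, fix any polycyclic series and let $j$ be the minimal index with $\Gamma_j/\Gamma_{j-1}\cong\mathbb{Z}$; then $\Gamma_{j-1}$ inherits a polycyclic series with only finite cyclic factors, hence is finite, and being a subgroup of the torsion-free $\Gamma$ it is trivial. Thus $\Gamma_j\cong\mathbb{Z}$. The main technical obstacle here is that $\Gamma/\Gamma_j$ need not be torsion-free, so a naive induction on $\Gamma/\Gamma_j$ fails; the standard remedy in the polycyclic setting is to enlarge $\Gamma_j$ to its isolator $I(\Gamma_j)=\{g\in\Gamma:g^k\in\Gamma_j\text{ for some }k\geq 1\}$, which in a torsion-free polycyclic group forms a normal subgroup such that $\Gamma/I(\Gamma_j)$ is torsion-free polycyclic of strictly smaller polycyclic rank. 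Applying the inductive hypothesis to $\Gamma/I(\Gamma_j)$, pulling back the resulting poly-$\mathbb{Z}$ series, and concatenating it with a poly-$\mathbb{Z}$ series for $I(\Gamma_j)$ (whose existence follows from a parallel induction, since $I(\Gamma_j)$ contains the $\mathbb{Z}$-subgroup $\Gamma_j$ and has smaller rank than $\Gamma$ whenever the ambient induction is nontrivial) yields the desired polycyclic series of $\Gamma$ with all $\mathbb{Z}$-factors.
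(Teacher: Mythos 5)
The paper offers no proof of this lemma (it is quoted as standard background, with \cite{funda} as the ambient reference), so your argument has to stand on its own. Your first part is fine and is the standard proof of the well-definedness of the Hirsch length: a single insertion $\Gamma_{j-1}\lhd H\lhd\Gamma_j$ splits an infinite cyclic factor into exactly one copy of $\mathbb{Z}$ and one finite cyclic group, and splits a finite cyclic factor into two finite cyclic groups, so refinement preserves the count of $\mathbb{Z}$-factors; Schreier's refinement theorem then identifies the refined factor multisets of any two polycyclic series. No objection there.

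The second part has a genuine gap, and in fact the assertion you are trying to prove is false at the stated level of generality. The gap is the sentence claiming that the isolator $I(\Gamma_j)=\{g\in\Gamma: g^k\in\Gamma_j\ \text{for some}\ k\geq1\}$ "forms a normal subgroup such that $\Gamma/I(\Gamma_j)$ is torsion-free polycyclic of strictly smaller polycyclic rank." Isolators of subgroups are subgroups in nilpotent groups (Mal'cev), but in a general polycyclic group the set $I(\Gamma_j)$ need not even be closed under multiplication, so the pivot of your induction is unjustified. No repair is possible, because the conclusion itself fails: the Hantzsche--Wendt group $\Gamma$ (the Bieberbach group of the $3$-dimensional flat manifold with holonomy $\mathbb{Z}/2\times\mathbb{Z}/2$) is torsion-free and polycyclic with $\N_{\mathbb{Z}}(\Gamma)=3$, yet $\Gamma/[\Gamma,\Gamma]\cong\mathbb{Z}/4\oplus\mathbb{Z}/4$ is finite, so $\Gamma$ has no infinite cyclic quotient and hence no polycyclic series whose top factor $\Gamma_m/\Gamma_{m-1}$ is isomorphic to $\mathbb{Z}$. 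Thus "torsion-free polycyclic implies poly-$\mathbb{Z}$" is false; the correct general statement is only that every polycyclic group contains a poly-$\mathbb{Z}$ subgroup of finite index. The "in particular" clause is true --- and is all the paper ever uses --- for finitely generated torsion-free \emph{nilpotent} groups, where one refines the upper central series (whose factors are finitely generated torsion-free abelian) into a poly-$\mathbb{Z}$ series, or equivalently where your isolator step does go through. Your write-up should either restrict the clause to the nilpotent case or invoke nilpotency explicitly where the isolator is used.
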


\begin{definition}
Let $\Gamma$ be a finitely generated nilpotent group, then the nilpotency rank is defined by:
\begin{equation}
\rank(\Gamma)\equiv\N_{\mathbb{Z}}(\Gamma).
\end{equation}

\end{definition}

Let us define the nilpotency length of a finitely generated nilpotent group.
\begin{definition}
Let $\Gamma$ be a finitely generated nilpotent group, then its {\textit{nilpotency length}}, denoted by $\length(\Gamma)$, is defined by the length of the shortest polycyclic series 
\begin{equation}
\{e\}=A_0\lhd A_1\lhd A_2\lhd\ldots\lhd A_k=N
\end{equation}
which satisfies for all $1\leq i,j\leq k$,
\begin{equation}
[A_i,A_j]\leq A_{\min\{i,j\}-1}.
\end{equation}
\end{definition}

The following lemma for a finitely generated nilpotent group will be applied in our later Theorems. The proof of the lemma is quite standard, for instance, see \cite{funda}. 

\begin{lemma}\label{basic-nil-lemma} Let $\Gamma$ be a finitely generated nilpotent group, then the following hold:

$(i)$ 
$\Gamma$ contains a torsion-free nilpotent subgroup of finite index.

$(ii)$ Given a nilpotent subgroup $\Gamma'\leq \Gamma$, then $[\Gamma:\Gamma']<\infty$ iff $\rank(\Gamma)=\rank(\Gamma')$.

$(iii)$ $\rank(\Gamma)\leq\length(\Gamma)$, $\Step(\Gamma)\leq\length(\Gamma)$.\end{lemma}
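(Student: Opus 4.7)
\textbf{Plan for Lemma \ref{basic-nil-lemma}.} All three parts are standard facts in the theory of finitely generated nilpotent groups, and I would organize the proof around the lower central series and the observation that $\Gamma_{k-1}$ is central (since $[\Gamma,\Gamma_{k-1}]=\Gamma_k=\{e\}$) and finitely generated abelian.  I will prove the three items in order, reducing each to a calculation with commutators.

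For (i), induct on the nilpotency step $k=\Step(\Gamma)$.  The base case $k=1$ is the structure theorem for finitely generated abelian groups: $\Gamma$ splits as torsion direct sum a free abelian group, and the free part is a torsion-free subgroup of finite index.  For $k\geq 2$, $\Gamma_{k-1}$ is finitely generated abelian and central, hence contains a torsion-free subgroup $T$ of finite index which is normal in $\Gamma$.  Apply the induction hypothesis to $\Gamma/\Gamma_{k-1}$ (step $\leq k-1$) to obtain a torsion-free finite-index subgroup $\bar H$, and apply it again to $\Gamma/T$, which is a central extension of the step $\leq k-1$ group $\Gamma/\Gamma_{k-1}$ by the finite group $\Gamma_{k-1}/T$, to obtain a torsion-free finite-index subgroup $H/T \leq \Gamma/T$.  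Then $H\leq\Gamma$ has finite index, fits in an exact sequence $1\to T\to H\to H/T\to 1$ with both ends torsion-free, and therefore $H$ is torsion-free (any $h\in H$ with $h^n=1$ maps to $1$ in $H/T$, so $h\in T$, and then $h=1$).

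For (ii), the forward implication is standard: if $\Gamma'\leq\Gamma$ has finite index, refining a polycyclic series for $\Gamma'$ by adjoining conjugates of a transversal produces a polycyclic series for $\Gamma$ whose additional factors are all finite cyclic, contributing nothing to $\N_{\mathbb{Z}}$, so $\rank(\Gamma)=\rank(\Gamma')$.  For the converse, first use (i) to replace both groups by torsion-free finite-index subgroups of the same ranks, so the reverse implication reduces to the torsion-free case.  Now induct on the nilpotency step: in the abelian case equality of $\mathbb{Z}$-ranks for a subgroup $\Gamma'\leq\Gamma$ of finitely generated torsion-free abelian groups is equivalent to finite index by the elementary divisor theorem; for higher step, push both groups to $\Gamma/\Gamma_{k-1}$ where induction supplies finite index in the quotient, while on the kernel $\Gamma_{k-1}\cap\Gamma'\leq\Gamma_{k-1}$ the abelian case forces the ranks to be equal, hence finite index.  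Multiplicativity of index across this exact sequence concludes the argument; this reverse direction is the main obstacle as one must propagate the rank equality through the extension.

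For (iii), both inequalities drop out of the defining series.  Given a length-realizing series $\{e\}=A_0\lhd A_1\lhd\cdots\lhd A_k=\Gamma$ with $[A_i,A_j]\leq A_{\min\{i,j\}-1}$, I claim $\Gamma_j\leq A_{k-j}$ for all $0\leq j\leq k$, by induction on $j$: the base case $\Gamma_0=\Gamma=A_k$ is immediate, and the step is
\begin{equation*}
\Gamma_j=[\Gamma,\Gamma_{j-1}]\leq[A_k,A_{k-j+1}]\leq A_{\min\{k,\,k-j+1\}-1}=A_{k-j}.
\end{equation*}
Taking $j=k$ yields $\Gamma_k\leq A_0=\{e\}$, so $\Step(\Gamma)\leq k=\length(\Gamma)$.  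Moreover the length-realizing series is by definition a polycyclic series of length $k$, so $\rank(\Gamma)=\N_{\mathbb{Z}}(\Gamma)\leq k=\length(\Gamma)$ since the polycyclic rank is the number of $\mathbb{Z}$-quotients among at most $k$ cyclic factors.
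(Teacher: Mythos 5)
The paper gives no proof of this lemma at all (it defers to \cite{funda}), so you are supplying an argument the authors omit. Your proofs of (ii) and (iii) are essentially correct: (iii) is a clean and complete commutator computation, and (ii) works modulo the standard facts you invoke (monotonicity and additivity of the polycyclic rank across subgroups and extensions, and the well-definedness of $\N_{\mathbb{Z}}$, which the paper records separately). One minor point in (ii): after you reduce to the torsion-free case, the quotient $\Gamma/\Gamma_{k-1}$ need not remain torsion-free (e.g.\ the index-two "Heisenberg-like" group with $[a,b]=c^2$), so the induction there should be formulated for general finitely generated nilpotent groups — which costs nothing, since the abelian base case does not actually require torsion-freeness.

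Part (i), however, has a genuine gap. Your induction is on $k=\Step(\Gamma)$, but the group $\Gamma/T$ to which you "apply the induction hypothesis again" need not have step $\leq k-1$: one only knows that $(\Gamma/T)_{k-1}=\Gamma_{k-1}/T$ is finite, not trivial, and it is trivial only when $\Gamma_{k-1}$ is itself torsion-free. The failure is concrete: take
\begin{equation*}
\Gamma=\langle a,b,c \mid [a,b]=c,\ c^2=e,\ c\ \text{central}\rangle,
\end{equation*}
a step-$2$ central extension of $\mathbb{Z}^2$ by $\mathbb{Z}/2$. Here $\Gamma_{k-1}=\Gamma_1=\langle c\rangle\cong\mathbb{Z}/2$ is finite, so $T=\{e\}$ and $\Gamma/T=\Gamma$; your inductive step reduces the problem for $\Gamma$ to the problem for $\Gamma$ itself. (A torsion-free finite-index subgroup does exist here, e.g.\ $\langle a^2,b^2\rangle\cong\mathbb{Z}^2$ of index $8$, but your argument does not produce it.) The missing ingredient is precisely how to kill a finite central torsion piece: the standard repairs are either to quotient by the full torsion subgroup $\Tor(\Gamma)$ (which is finite and characteristic) and use residual finiteness of finitely generated nilpotent groups to find a finite-index normal subgroup meeting $\Tor(\Gamma)$ trivially, or to run the induction on the Hirsch length $\rank(\Gamma)$ rather than on the step.
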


In fact, the above definition of nilpotency rank can be extended to a finitely generated almost nilpotent group. 
\begin{definition}
A group $\Gamma$ is called \textit{almost nilpotent} if there exists a nilpotent subgroup $N$ with $[\Gamma:N]<\infty$, and it is called $(C,m)$-\textit{nilpotent} if there exists a nilpotent subgroup $N$
with $[\Gamma:N]\leq C$ and $\length(N)\leq m$.
\end{definition}
The following lemma gives the definition of the rank of a finitely generated almost nilpotent group.

\begin{lemma}\label{basic-almost-nilpotent}
Let $\Gamma$ be a finitely generated almost nilpotent group, then

$(i)$ each nilpotent subgroup $N$ with $[\Gamma:N]<\infty$ has the same nilpotency rank. The common rank is called the nilpotency rank of $\Gamma$, denoted by $\rank(\Gamma)$,

$(ii)$ if there exists $N\leq\Gamma$ such that $\rank(N)=m$, then $\rank(\Gamma)\geq m$.
\end{lemma}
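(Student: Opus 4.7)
The plan is to derive both parts of the lemma from Lemma \ref{basic-nil-lemma}(ii), which says that for a nilpotent subgroup $\Gamma' \leq \Gamma$, finite index is equivalent to equality of nilpotency ranks. The almost nilpotent hypothesis gives us a nilpotent subgroup of finite index in $\Gamma$ to play the role of a reference.

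For part $(i)$, given two nilpotent subgroups $N_1, N_2 \leq \Gamma$ both of finite index, I would consider their intersection $N_1 \cap N_2$. This is a subgroup of each $N_i$, hence nilpotent, and the index satisfies $[N_i : N_1 \cap N_2] \leq [\Gamma : N_j] < \infty$ for $\{i,j\}=\{1,2\}$. Applying Lemma \ref{basic-nil-lemma}(ii) twice yields $\rank(N_1) = \rank(N_1 \cap N_2) = \rank(N_2)$, which shows the common rank is well-defined.

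For part $(ii)$, pick any nilpotent subgroup $N_0 \leq \Gamma$ with $[\Gamma:N_0] < \infty$, so by definition $\rank(\Gamma) = \rank(N_0)$. Given a nilpotent $N \leq \Gamma$ of rank $m$, form $H \equiv N \cap N_0$; it is nilpotent, and $[N:H] \leq [\Gamma:N_0] < \infty$, so Lemma \ref{basic-nil-lemma}(ii) gives $\rank(H) = \rank(N) = m$. It remains to compare the ranks of the nested nilpotent pair $H \leq N_0$.

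The one nontrivial ingredient, and the step where I expect to have to be careful, is the monotonicity $\rank(H) \leq \rank(N_0)$ for an inclusion of finitely generated nilpotent groups. My plan is to reduce to the torsion-free case via Lemma \ref{basic-nil-lemma}(i), passing to torsion-free finite-index subgroups $N_0' \leq N_0$ and $H' \equiv H \cap N_0' \leq H$, whose ranks agree with those of $N_0$ and $H$ by Lemma \ref{basic-nil-lemma}(ii). In the torsion-free setting, the nilpotency rank coincides with the Hirsch length, which is additive across subnormal series with infinite cyclic quotients: intersecting the polycyclic series of $N_0'$ (in which every factor is $\mathds{Z}$) with $H'$ produces a polycyclic series of $H'$ whose factors are subgroups of $\mathds{Z}$, hence cyclic, with at most as many infinite cyclic factors as the original. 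This gives $\rank(H') \leq \rank(N_0')$ and hence $m = \rank(H) \leq \rank(N_0) = \rank(\Gamma)$, completing the proof.
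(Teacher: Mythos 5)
Your proof is correct. The paper states Lemma \ref{basic-almost-nilpotent} without proof (treating it, like Lemma \ref{basic-nil-lemma}, as standard), so there is no argument in the text to compare against; your derivation — intersecting the two finite-index nilpotent subgroups for $(i)$, and for $(ii)$ reducing via $H=N\cap N_0$ and proving rank-monotonicity for nested finitely generated nilpotent groups by passing to torsion-free finite-index subgroups and intersecting a polycyclic series with infinite cyclic factors — is a complete and valid justification using only the paper's Lemma \ref{basic-nil-lemma}. The only point left tacit is that the subgroups involved are finitely generated, which holds because finite-index subgroups of finitely generated groups are finitely generated and because $\rank(N)$ is only defined for finitely generated nilpotent $N$ in the first place.
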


\subsubsection{Almost nilpotency theorems}\label{sss:almost_nilpotency}

Let us now recall the connection between Ricci curvature and nilpotency of the fundamental group which is called the Generalized Margulis Lemma.  The first such results go back to K. Fukaya and T. Yamaguchi (see \cite{FY} and \cite{yamaguchi-alexandrov}), where under the assumption of a lower sectional curvature bound (or generally in the context of Alexandrov space) they proved the following:

\begin{theorem}[\cite{FY}, \cite{yamaguchi-alexandrov}]
There exists some positive constant $\epsilon(n)>0$ such that the following holds:  If $(X^n,p)$ is an $n$-dimensional Alexandrov space with curvature $\geq-1$, then for any $p\in X$ we have that
\begin{equation}
\Gamma_{\epsilon}(p)\equiv\Image[\pi_1(B_{\epsilon}(p))\rightarrow\pi_1(B_1(p))]\end{equation}
is almost nilpotent.
\end{theorem}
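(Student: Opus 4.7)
The plan is to argue by contradiction via an equivariant Gromov--Hausdorff convergence argument, exploiting the fact that a limit of Alexandrov spaces with curvature $\geq -1$ is again Alexandrov, and that isometry groups of such spaces have strong structural properties (Lie, with nilpotent identity component). Suppose no such $\epsilon(n)$ exists. Then there is a sequence $\epsilon_i\to 0$ and pointed $n$-dimensional Alexandrov spaces $(X_i^n,p_i)$ with curvature $\geq -1$, such that $\Gamma_{\epsilon_i}(p_i)$ fails to be almost nilpotent. I would first reduce to a canonical choice of generators: on the universal cover of $B_1(p_i)$, pick a \emph{short basis} $\{\gamma_1^{(i)},\gamma_2^{(i)},\ldots\}$ of the deck transformation subgroup corresponding to $\Gamma_{\epsilon_i}(p_i)$ in the usual sense (each $\gamma_k^{(i)}$ minimizes displacement at a lift $\tilde p_i$ subject to not lying in the subgroup generated by the previous ones). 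In particular the displacement of $\gamma_1^{(i)}$ is $\leq 2\epsilon_i\to 0$.

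Next I would pass to limits along two scales. By Gromov precompactness in the category of pointed Alexandrov spaces with curvature $\geq -1$, after a subsequence $(X_i,p_i)\xrightarrow{GH}(X_\infty,p_\infty)$. On the universal covers $(\tilde B_1(p_i),\tilde p_i)$ one obtains an equivariant Gromov--Hausdorff limit $(\tilde X_\infty,\tilde p_\infty,G)$, where $G$ is the limit group obtained from the sequence of deck transformation groups together with the chosen short generators. The elements corresponding to $\gamma_k^{(i)}$ of uniformly bounded displacement subconverge to elements $g_k\in G$; those with vanishing displacement give an element of the stabilizer $G_{\tilde p_\infty}^0$, a subgroup which lies in the identity component of $G$. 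Then I would invoke the structural results on Alexandrov limits: $\tilde X_\infty$ is an Alexandrov space of curvature $\geq 0$ after suitable rescaling (the scale $\epsilon_i$ effectively removes the $-1$ lower bound), and $G$ is a Lie group whose identity component $G_0$ acts by isometries on $\tilde X_\infty$. The key structural input is the Fukaya--Yamaguchi fibration/nilpotency theorem for isometry groups arising from collapse, which says that $G_0$ is nilpotent. Hence any subgroup $H\leq G$ with $[H:H\cap G_0]<\infty$ is almost nilpotent.

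The final step is to check that the limit subgroup $H\leq G$ generated by the converging short elements has $G_0\cap H$ of finite index in $H$, which will contradict the assumption that no subgroup of finite index in $\Gamma_{\epsilon_i}(p_i)$ is nilpotent for large $i$. For this one chooses $\epsilon_i$ to be the maximal scale at which nilpotency fails, and rescales the sequence so that at the new unit scale there is a short generator of displacement bounded below: this pinning procedure, inductively applied together with the dimension drop $\dim \tilde X_\infty < n$ that occurs when $G_0\neq\{e\}$, allows one to run an induction on $n$. The main obstacle is exactly this last induction/rescaling step: one must carefully orchestrate the short basis and the renormalized scales so that the collapsing dimension strictly decreases while the would-be nilpotent structure is preserved under passing to quotients, and one must verify that short loops at scale $\epsilon_i$ at $p_i$ map into loops at scale $o(1)$ after rescaling. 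Once the induction closes at dimension $0$, where the statement is trivial, the contradiction yields the desired $\epsilon(n)>0$.
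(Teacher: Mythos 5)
This statement is quoted in the paper purely as background from \cite{FY} and \cite{yamaguchi-alexandrov}; the paper gives no proof of it, so there is no in-paper argument to compare your proposal against. Judged on its own terms, your sketch reproduces the correct high-level architecture of the Fukaya--Yamaguchi argument (contradiction, short basis on the universal cover, equivariant Gromov--Hausdorff limits, induction on dimension), but it has two genuine gaps that between them contain essentially all of the content of the theorem.

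First, the step you call ``the key structural input'' --- that the identity component $G_0$ of the limit group is nilpotent --- is not a citable black box. For a general Alexandrov space the identity component of the isometry group is certainly not nilpotent (consider $SO(n+1)$ acting on the round sphere), so the nilpotency must come from the specific collapsing structure: $G_0$ here is generated by limits of isometries whose displacements tend to zero, and its nilpotency is established in \cite{FY} via Margulis-type estimates showing that iterated commutators of short elements have much shorter displacement and eventually die. Invoking ``the Fukaya--Yamaguchi nilpotency theorem'' to supply this step is circular: that estimate \emph{is} the theorem you are proving. Your sketch never mentions the commutator displacement estimates, which is where the lower curvature bound actually enters. (Relatedly, your claim that elements of vanishing displacement land in the stabilizer $G^0_{\tilde p_\infty}$ ``which lies in the identity component'' is not automatic; one needs that $G$ is a Lie group with no small subgroups, and then elements converging to the identity lie in $G_0$ --- the stabilizer itself need not.) Second, you explicitly flag the induction/rescaling step --- choosing the critical scale, verifying the dimension drop on the quotient, and checking that the nilpotent structure survives passage to quotients and extensions --- as ``the main obstacle'' and do not carry it out. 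That bookkeeping, together with the algebraic fact that an extension of a nilpotent group by a finite group built from these successive quotients is again almost nilpotent, is the bulk of \cite{FY}. As written, the proposal is a correct roadmap but not a proof.
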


The primary drawback of the above result is that the index of the nilpotent group was not a priori bounded.  This was improved more recently in \cite{KPT} to deal with this, where it was shown the index is indeed uniformly bounded.  Recently using similar techniques, though technically much more demanding, these results have been extended in \cite{KW} to the Ricci curvature context.  

\begin{theorem}[\cite{KW}]\label{Generalized-Margulis-Lemma}
There are positive constants $\epsilon_1(n)>0$
and $w(n)<\infty$ such that the following holds:  If $(M^n,g,p)$ is a complete $n$-dimensional complete Riemannian manifold with $\Ric\geq-(n-1)$, then for any $p\in M^n$ we have that
\begin{equation}
\Gamma_{\epsilon_1}(p)\equiv\Image[\pi_1(B_{\epsilon_1}(p))\rightarrow\pi_1(B_1(p))]\end{equation}
is $(w(n),n)$-nilpotent.
\end{theorem}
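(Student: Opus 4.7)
The plan is to prove the theorem by induction on $n$ via an equivariant Gromov-Hausdorff compactness argument, following the strategy of Kapovitch-Wilking. Suppose the conclusion fails: there exist sequences $\epsilon_i \to 0$, $w_i \to \infty$, and manifolds $(M_i^n, g_i, p_i)$ with $\Ric \geq -(n-1)$ for which $\Gamma_{\epsilon_i}(p_i)$ is not $(w_i, n)$-nilpotent. First I would lift to a connected component of the universal cover of $B_1(p_i)$, choose lifts $\tilde p_i$, and rescale the metric $g_i \mapsto \lambda_i^2 g_i$ by a factor chosen so that, in the rescaled picture, a carefully chosen short generating set of $\Gamma_{\epsilon_i}(p_i)$ has uniform displacement of order one at $\tilde p_i$. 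Passing to an equivariant pointed Gromov-Hausdorff subsequential limit produces a triple $(X_\infty, x_\infty, G_\infty)$, where $G_\infty$ is a closed subgroup of $\Isom(X_\infty)$ and the curvature bound passes to a synthetic lower Ricci bound on $X_\infty$.

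Next I would analyze the limit. By repeated applications of the Cheeger-Colding splitting theorem, translation-like generators in the limit produce an isometric splitting $X_\infty \cong \dR^s \times Y_\infty$, and by Theorem \ref{lie-group} together with the H\"older continuity of tangent cones from \cite{ColdingNaber}, $\Isom(X_\infty)$ is a Lie group. Hence $G_\infty$ is a Lie subgroup; using Lemma \ref{generating-set-of-identity-component} together with Gleason-Yamabe, its identity component $(G_\infty)_0$ is a connected Lie group acting by translations on the $\dR^s$-factor, hence essentially abelian on that factor. The component group $G_\infty/(G_\infty)_0$ then acts on a space of strictly smaller effective dimension, so after a further rescaling that exposes this smaller model the inductive hypothesis applies and forces almost nilpotency with controlled length. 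Combining this with the abelian identity component yields the required $(w,n)$-nilpotent structure on $G_\infty$, and a short-basis construction \`a la Fukaya-Yamaguchi and \cite{KPT} transfers the structure back to $\Gamma_{\epsilon_i}(p_i)$ for large $i$, contradicting the choice of the sequence.

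The hard part will be organizing the induction so that each rescaling produces a strictly simpler limit and so that the algebraic structure detected in $G_\infty$ lifts faithfully to $\Gamma_{\epsilon_i}(p_i)$. This is the essence of the Kapovitch-Wilking zooming-in argument: one must simultaneously control the geometric displacement of generators and their commutator depth, ensuring that elements which appear trivial in the limit genuinely sit in a deeper term of the lower central series downstairs, and that no auxiliary ``small'' elements collapse and are lost. A secondary obstacle is that the sectional-curvature input used in \cite{FY} and \cite{KPT}, for example quasi-geodesic control and the gradient push of distance functions, must be replaced by tools available only under $\Ric \geq -(n-1)$; the segment inequality, H\"older continuity of tangent cones from \cite{ColdingNaber}, and the Lie-group characterization in Lemma \ref{generating-set-of-identity-component} are the natural substitutes.
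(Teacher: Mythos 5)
You should first note that the paper does not actually prove this statement: Theorem \ref{Generalized-Margulis-Lemma} is quoted from \cite{KW}, and even the refinement established in Appendix \ref{s:proof-of-margulis} (Theorem \ref{t:KW_almost_nilpotent}) uses both this theorem and the induction theorem of \cite{KW} as black boxes. So what you are attempting is a reconstruction of the Kapovitch--Wilking argument itself, which occupies the bulk of their paper. Your outline does capture the correct skeleton of that argument: contradiction, rescaling, equivariant Gromov--Hausdorff limits, the Lie group structure of $\Isom(X_\infty)$ via \cite{ColdingNaber}, and a downward induction on the dimension of the limit.

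There is, however, a genuine gap at the decisive step. You propose to conclude that $G_\infty$ (and hence $\Gamma_{\epsilon_i}(p_i)$ for large $i$) is almost nilpotent by combining an abelian, translational identity component with an almost nilpotent component group supplied by the inductive hypothesis. An extension of a nilpotent group by a nilpotent group is in general only solvable, and need not be even virtually nilpotent: $\dZ^2\rtimes\dZ$ with a hyperbolic gluing automorphism is abelian-by-abelian yet has exponential growth. The whole content of the Margulis lemma is precisely to rule out such extensions, and in \cite{KW} this is achieved by showing that the conjugation action of the short generators on the normal subgroup is unipotent up to controlled index; that step rests on the quantitative commutator--displacement estimates (commutators of isometries of small displacement have much smaller displacement at the relevant scale) together with the rescaling/zooming-in theorem that allows these estimates to be iterated across scales and transferred from the limit back to $\Gamma_{\epsilon_i}(p_i)$. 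You correctly flag this as ``the hard part,'' but deferring it means the argument as written yields at best virtual solvability, with no bound on the index $w(n)$ or on the nilpotency length by $n$, so the proposal does not yet constitute a proof of the theorem.
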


Though not explicitly stated, in fact their techniques lead to some refinements, which will be important in this work.  In particular, with the help of the algebraic structure of Section \ref{s:almost-nilpotent}, the nonlocalness Lemma of Section \ref{s:quantitative-splitting}, the dimensional result of theorem \ref{limiting-dimension}, and the induction lemma of \cite{KW} we prove the following in Appendix \ref{s:proof-of-margulis}:

\begin{theorem}\label{t:KW_almost_nilpotent} Let $(Z^k,z^k)$ be a pointed Ricci-limit metric space with $\dim Z^k = k$ in the sense of theorem \ref{limiting-dimension}.  Then there exists 
$\epsilon_0=\epsilon_0(n,B_1(z^k))>0$, $w_0=w_0(n,B_1(z^k))<\infty$ such that if a Riemannian manifold $(M^n, g,p)$ with 
$\Ric\geq-(n-1)$ satisfies that $B_2(p)$ has a compact closure in $B_4(p)$ and
 \begin{equation}d_{GH}(B_2(p),B_2(z^k))<\epsilon_0,
\end{equation}
 then the   group
\begin{equation}
\Gamma_{\epsilon_0}(p)\equiv \Image[\pi_1(B_{\epsilon_0}(p))\rightarrow\pi_1(B_2(p))]\label{image-group}\end{equation}
is $(w_0,n-k)$-nilpotent. In particular, $\rank(\Gamma_{\epsilon_0}(p))\leq n-k$.\end{theorem}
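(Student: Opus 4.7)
The plan is to argue by contradiction via equivariant Gromov--Hausdorff compactness, refining the Kapovitch--Wilking induction (Theorem \ref{Generalized-Margulis-Lemma}) by exploiting the fixed dimension of the Ricci-limit factor $Z^k$. Suppose Theorem \ref{t:KW_almost_nilpotent} fails. Then there exist $(M_i^n, g_i, p_i)$ with $\Ric_{g_i} \geq -(n-1)$, sequences $\epsilon_i \to 0$, $w_i \to \infty$, and $d_{GH}(B_2(p_i), B_2(z^k)) < \epsilon_i$, such that $\Gamma_{\epsilon_i}(p_i)$ admits no nilpotent subgroup of both index $\leq w_i$ and length $\leq n-k$. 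Theorem \ref{Generalized-Margulis-Lemma} already yields nilpotent subgroups $N_i \leq \Gamma_{\epsilon_i}(p_i)$ of index $\leq w(n)$, so we must have $\length(N_i) > n-k$ for all $i$; we derive a contradiction.

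First, I would reduce to a Euclidean-base ball. By Theorem \ref{limiting-dimension} there is a $k$-regular point $z' \in B_1(z^k)$ and some $r_0 = r_0(n, B_1(z^k)) > 0$ with
\begin{equation*}
d_{GH}\big(B_{2r_0}(z'), B_{2r_0}(0^k)\big) < \eta r_0
\end{equation*}
for $\eta$ arbitrarily small, and pulling back through the GH approximation produces corresponding $q_i \in B_{3/2}(p_i)$ with $B_{2r_0}(q_i)$ close to $B_{2r_0}(0^k) \subset \dR^k$. Using the geometrically compatible polycyclic refinement of $N_i$ from Section \ref{s:almost-nilpotent}, the nonlocalness Lemma \ref{every-point-rank} of Section \ref{s:quantitative-splitting} propagates this length: for some $\delta = \delta(n, r_0) > 0$, $\Image[\pi_1(B_{\delta r_0}(q_i)) \to \pi_1(B_2(p_i))]$ still contains a nilpotent subgroup of length $> n-k$.

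Now pass to universal covers $\pi_i: \widetilde{B_{r_0}(q_i)} \to B_{r_0}(q_i)$, lift $q_i$ to $\tilde q_i$, and take an equivariant pointed Gromov--Hausdorff limit $(\widetilde{B_{r_0}(q_i)}, \tilde q_i, G_i) \to (\tilde Y, \tilde q_\infty, G_\infty)$, with quotient $\tilde Y/G_\infty$ locally isometric to $\dR^k$ near the image of $\tilde q_\infty$. By Theorem \ref{lie-group} and Lemma \ref{generating-set-of-identity-component}, the identity component $G_\infty^0 \leq \Isom(\tilde Y)$ is a Lie group generated by short-displacement isometries $I(\epsilon_0)$. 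A standard Malcev/Jordan argument, combined with the polycyclic construction of Section \ref{s:almost-nilpotent}, shows that each step in the polycyclic filtration of $N_i$ converges to a genuinely independent one-parameter subgroup of $G_\infty^0$, forcing $\dim G_\infty^0 \geq \length(N_i) > n-k$. On the other hand, $\tilde Y$ is a Ricci-limit of dimension $\leq n$ while the quotient has dimension $\geq k$ near the image of $\tilde q_\infty$, so the orbit $G_\infty^0 \cdot \tilde q_\infty$ has dimension $\leq n-k$, and hence $\dim G_\infty^0 \leq n-k$. This contradicts the preceding inequality.

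The hard part will be justifying the dimension identity $\dim G_\infty^0 \geq \length(N_i)$: one must ensure that distinct steps of the nilpotent filtration of $N_i$ descend to transverse one-parameter subgroups of $G_\infty^0$ in the limit, rather than collapsing into a common orbit direction. This is precisely the role of the geometrically compatible polycyclic refinement from Section \ref{s:almost-nilpotent}, which produces generators with small and linearly independent displacements, together with the nonlocalness Lemma of Section \ref{s:quantitative-splitting}, which guarantees that translating the basepoint from $p_i$ to $q_i$ neither kills nor merges any generator. With these algebraic and geometric controls in hand, the dimension count above is the standard limit argument from the Kapovitch--Wilking framework, now terminating at $n-k$ instead of $n$ because the factor $Z^k$ already contributes $k$ dimensions to the base.
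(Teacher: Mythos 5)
Your overall strategy (contradiction plus equivariant compactness, reduction to a $k$-regular point of $Z^k$, nonlocalness to move the basepoint) matches the paper's in outline, but the core step — how the length bound $n-k$ is actually obtained — contains two genuine gaps.

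First, the inequality $\dim G_\infty^0 \geq \length(N_i)$ cannot be established by a single-scale limit argument. The graded generators $\sigma_{s,j}$ of the polycyclic refinement have displacements that may go to zero at wildly different rates; at any one fixed scale, a generator whose displacement is $o(1)$ relative to the chosen scale converges to the identity of $G_\infty$ and contributes no one-parameter subgroup. Moreover, the finite cyclic factors of the polycyclic series (which are counted by $\length$ but not by $\rank$) contribute nothing to the limit group's dimension even after renormalization. This is exactly why the paper's own orbit-dimension arguments (Claim~2 in the proof of Proposition \ref{quantitative-splitting-at-origin}) only yield $\dim_{\mathcal H}(\text{orbit}) \geq \rank$, not $\geq \length$, and why obtaining the length bound requires the Kapovitch--Wilking \emph{multi-scale induction} (their theorem 6.1), in which one quotients by one cyclic factor at a time, \emph{rescales}, and shows the limit dimension increases by at least one at each stage. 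The paper does not reprove this; it reduces to it by passing to the intermediate cover $\widetilde{B_2(p_i)}/\Gamma_{\delta_i}(B_1(\tilde p_i))$ — chosen precisely so that its fundamental group is generated by loops that are short \emph{uniformly on the ball} (this is what the $\mathcal N_a$ construction and Lemma \ref{every-point-Z} buy) — verifying $\dim \widehat Z \geq k$ for the limit of these covers via lifted splitting maps, and then quoting the induction theorem. Your proposal never sets up the hypotheses of that theorem, and the "standard Malcev/Jordan argument" you substitute for it does not prove what is needed.

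Second, the reverse inequality as you state it is also flawed: from $\dim_{\mathcal H}(G_\infty^0\cdot\tilde q_\infty)\leq n-k$ you conclude $\dim G_\infty^0\leq n-k$, but a Lie group's dimension can strictly exceed the dimension of its orbits whenever there is nontrivial isotropy (e.g. $SO(3)$ acting on $S^2$). The quantity that is actually controlled by the geometry is the orbit dimension, and the paper's argument is organized so that only orbit dimensions (in the successive rescaled quotients) ever need to be compared with $n-k$. To repair your proof you would essentially have to reconstruct the intermediate-cover reduction and then invoke the Kapovitch--Wilking induction theorem, which is what the paper does.
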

\begin{remark}
The constants $\epsilon_0$, $w_0$ depend only on the noncollapsing radius $r_c(z^k)$, see \eqref{e:noncollapsing_radius}.
\end{remark}

\section{Motivating Examples}\label{s:motivating examples}

The eventual goal of this paper is to $\epsilon$-regularity Theorems for collapsed manifolds with Ricci curvature bounds.  The straightforward generalization of the noncollapsed $\epsilon$-regularity of Theorem \ref{t:eps_reg_noncollapsed} fails, and in this Section we study the important examples which tell us why, and help motivate the main Theorem's of the paper. We will also present some examples which show that the assumptions in our $\epsilon$-regularity Theorems are optimal.

\subsection{Example \ref{ss:example1}: Anderson's Example of Codimension 1 Collapse}\label{ss:example1}

In \cite{AnEx} via a surgery construction of the Riemannian Schwarzschild metric M. Anderson built a complete simply connected Ricci flat manifold $(M^n,g)$ which is asymptotically $\dR^{n-1}\times S^1$ outside of a disk.  In particular we have that the asymptotic cone of $M^n$ is isometric to $\dR^{n-1}$, which is to say for $x\in M^n$ fixed that as $r\to\infty$ we have
\begin{align}
\Big(M^n,r^{-2}g,x\Big) \xrightarrow{GH}(\dR^{n-1},0^{n-1}).
\end{align}
In particular, for any $\epsilon>0$ and all $r$ sufficiently large we have that
\begin{align}\label{e:example1:1}
d_{GH}\Big( B_2(x_r), B_2(0^{n-1})\Big)<\epsilon\, ,
\end{align}
 where $x_r\in M^n$ denotes the point $x$ in the space $(M^n,r^{-2}g)$.  On the other hand, it is easy to check in the example that $r_h(x_r)\approx r^{-1}\to 0$ as $r\to\infty$, and hence the direct version of Theorem \ref{t:eps_reg_noncollapsed} cannot hold in the collapsed setting.  On the other hand, as is stated in Theorem \ref{t:eps_reg_collapsed} we should expect the fiberd fundamental group $\Gamma_\delta(x)$ to be almost nilpotent of rank at most $1$.  Using that $M$ is simply connected and the behavior of its asymptotics, it is not hard to see that in this example we have that rank$(\Gamma_\delta(x_r))=0$ for all $\delta>0$ and $r$ sufficiently large.  Hence, though it does not satisfy the $\epsilon$-regularity theorem, we do have the expected drop in the fundamental group.

\subsection{Example \ref{ss:example2}: Singular Fibration of K3 Surface}\label{ss:example2}

In \cite{GrW} the authors constructed a family of Ricci flat K\"ahler metrics $(K3,g_j)$ on a K3 surface with elliptic fibrations over $\mathbb{C}P^1\cong S^2$,
\begin{equation}
f: K3\longrightarrow S^2.
\end{equation}
Away from $24$ singular points, the fibers of $f$ are tori, even almost isometrically with respect to the geometries on $(K3,g_j)$.  Each of the $24$ singular fibers is of Kodaira type 
$I_1$ which is the pinched torus, that is, there is a finite subset $\{q_1,\ldots q_{24}\}\subset S^2$ such that
\begin{equation}f^{-1}(q_{\ell})=I_1, \ \ell=1,\ldots, 24\, .
\end{equation}
Geometrically, the sequence of collapsing K3 geometries converge
\begin{align}
(K3,g_j)\xrightarrow{GH} (S^2,d_{\infty}),
\end{align}
where the metric $d_\infty$ is a smooth metric on the topological two sphere $S^2$ away from the $24$ singular points.  It may be checked, using the explicit coordinate expression for the metric $d_\infty$, that at every point of $(S^2,d_{\infty})$ the tangent cone is $\dR^2$.  Hence, for every $\epsilon>0$ we have for all $r>0$ sufficiently small and any $x\in S^2$ that
\begin{align}
d_{GH}\Big(B_{r}(x),B_r(0^2)\Big)<\epsilon r\, .
\end{align}
In particular, we are again in a position to test the hypothesis of the noncollapsing $\epsilon$-regularity Theorem \ref{t:eps_reg_noncollapsed}.  Again, we immediately see that if $x$ is one of the $24$ singular points, then the curvature of $(K3,g_j)$ blows up along $f^{-1}(x)$.  However, note that since $f^{-1}(x)\approx I_1$ is a torus with a pinched neck, we have that $\pi_1(f^{-1}(x))\cong\mathbb{Z}$.  With a little work it can be then seen that one should expect the same of the fibered fundamental group $\Gamma_\delta(x_j)$.  More precisely, for $x_j\in (K3,g_j)\to x$ and all $\delta>0$, we see that for large enough $j$ that $\Gamma_\delta(x_j)\cong \mathbb{Z}$.  Since we have collapsed two dimensions here, this is a drop from the maximal possible fundamental group, and again confirms the picture of Theorem \ref{t:eps_reg_collapsed}.

\subsection{Example \ref{ss:example3}: The Eguchi-Hanson Metric}\label{ss:example3}

We will give a sequence of collapsing Ricci flat K\"ahler manifolds. In this example, the fibered fundamental group attains the maximal rank, however the $\epsilon$-regularity fails due to a lack of Gromov-Hausdorff control.

Consider first a sequence of tori with flat metrics which collapse to  a point:
\begin{equation}
(T^{n-4},g_i)\xrightarrow{GH}pt\, ,
\end{equation}
where $n\geq 4$.  On the other hand let $(M^4, h_i)=(M^4, i^{-2}g_{EH})$ be a sequence of rescaled Eguchi-Hanson spaces with $\Ric_{h_i}\equiv 0$, then it is a standard fact that
\begin{equation}
(M^4, h_i)\xrightarrow{GH}(\dR^4/\mathbb{Z}_2,d_0),
\end{equation}
by letting $i\rightarrow\infty$.  Topologically we have that $M^4$ is the cotangent bundle over $S^2$, and in particular simply connected.

Now let us consider the Ricci flat spaces $(M_i^{n},\omega_i)\equiv(T^k\times M^4, g_i\oplus h_i)$.  Then we have that  
\begin{equation}
(M_i^{n},\omega_i,p_i)\xrightarrow{GH}(\dR^4/\mathbb{Z}_2,d_0,0^*)\, .
\end{equation}
In this sequence it is not hard to check that $\rank (\Gamma_{\delta_0}(p_i))=n-4$, which is maximal. However,
the curvature blows up near to the vertex.
This example shows both the necessity of Gromov-Hasudorff control and that the dimensional assumption in Theorem \ref{t:eps_reg_collapsed} is sharp.

\subsection{Example \ref{ss:small-conj}: Non-Collapse and Conjugate Radius}
\label{ss:small-conj}

We construct a family of non-collapsed metrics $(M^2,g_{\delta},p_{\delta})$ with nonnegative Gaussian curvature  such that when $\delta\rightarrow0$ the pointed Gromov-Hausdorff limit space is $(\dR^2,g_{\dR^2},0^2)$. However, there exists $q_{\delta}\in B_1(p_{\delta})$ such that  $\Conj_{g_{\delta}}(q_{\delta})\rightarrow0$
and in particular, $\InjRad_{g_{\delta}}(q_{\delta})\rightarrow0$.
This example shows what we can expect for the $\epsilon$-regularity in the context of sectional curvature or Ricci curvature only bounded from below. Even in the non-collapsed setting with smooth limit space, the conjugate radius and hence injectivity radius may go to zero at some point. Hence, in this paper, in the context of lower Ricci curvature we go for a uniform control on the {\textit{weak conjugate radius}} (see Definition \ref{weak-conj} for the precise definition). In Subsection \ref{ss:eps-reg-lower-Ricci}, we will prove the $\epsilon$-regularity in the context of Ricci curvature bounded from below.

Now we give the concrete construction of metric.
Consider the following $C^{1,1}$-warped product metrics on $(\dR^2,0^2)$. Fix some  $0<\delta<10^{-2}$, for each $\epsilon>0$, let
\begin{equation}
g_{\delta,\epsilon}=dr^2+f_{\epsilon}^2(r)d\theta^2
\end{equation}
with
\begin{equation}
f_{\delta,\epsilon}(r)=\begin{cases}
\epsilon\sin\frac{r}{\epsilon}, \ 0\leq r\leq \delta\cdot\epsilon,\nonumber\\ 
r\cos\delta+\epsilon(\sin\delta-\delta\cos\delta) ,\ r>\delta\cdot\epsilon.
\end{cases}
\end{equation}
Observe that $f_{\delta,\epsilon}(r)$ is a concave $C^{1,1}$-function with $\Lip(f_{\delta,\epsilon}'(r))\leq1/\epsilon$ for $0\leq r< \infty$, and thus $g_{\delta,\epsilon}$ is a $C^{1,1}$-Riemannian metric. 
Also it holds that for fixed $0<\delta<10^{-2}$,
\begin{equation}
(\dR^2,g_{\delta,\epsilon}, 0^{2})\xrightarrow{pGH}(C({S}^1), g_{\delta,0},0^2),
\end{equation}
where $g_{\delta,0}=dr^2+ r^2\cdot \cos^2\delta\cdot d\theta^2$ is a cone metric.

Now we prove that the conjugate radius tends to zero. To this end, we focus on the behavior of the Jacobi field with respect to the $C^{1,1}$-Riemannian metric $g_{\delta,\epsilon}$.
Fix  $\delta\cdot\epsilon<t_0<1$which will be determined later in the proof, let $\gamma:(-\infty,\infty)\rightarrow \dR^2$
be a unit speed geodesic through $0^2$ with \begin{equation}
d(\gamma(0),0^2)=t_0.
\end{equation}
Let $e_1(t)\perp\gamma'(t)$ be a parallel vector field along the geodesic $\gamma$.  
We will calculate the Jacobi field $J(t)=F(t)e_1(t)$ with $J(0)=0$, $J'(0)=e_1(\gamma(0))$. 
Let \begin{equation}t_1\equiv\min\{t>0|\gamma(t)\cap B_{\delta\cdot\epsilon}(0^2)\}=t_0-\delta\cdot\epsilon,\end{equation} 
and
\begin{equation}t_2\equiv\max\{t>0|\gamma(t)\cap B_{\delta\cdot\epsilon}(0^2)\}=t_0+\delta\cdot\epsilon,\end{equation}
 then
the Jacobi field $J(t)$ has the following expression,
\begin{equation}
F(t)=\begin{cases}
t,\ 0<t\leq t_1,\\
C_1\epsilon\sin\frac{t-t_1}{\epsilon}+C_2\cos\frac{t-t_1}{\epsilon},\ {t}_1\leq t\leq {t}_2,\\
F'({t}_2)(t-{t}_2)+F({t}_2), \ {t}_2\leq t<\infty.
\end{cases}\label{jacobi-expression}
\end{equation}
Consider the geodesic segment $\gamma|_{t_1\leq t\leq t_2}\subset B_{\delta\epsilon}(0^2)$. By the initial data 
\begin{equation}
F(t_1)=t_1,\ F'(t_1)=1,
\end{equation}
it holds that
\begin{equation}
C_1=1,\ C_2=t_1.
\end{equation}
That is, for $t_1\leq t\leq t_2$, 
\begin{equation}
F(t)=\epsilon\sin\frac{t-t_1}{\epsilon}+t_1\cos\frac{t-t_1}{\epsilon}>0.
\end{equation}
In particular,
\begin{equation}
F(t_2)=\epsilon\sin2\delta+t_1\cos2\delta>0,\ F'(t_2)=\cos2\delta-\frac{t_1}{\epsilon}\sin2\delta.
\end{equation}

Now we are in a position to estimate the conjugate radius for $g_{\epsilon}$. Let \begin{equation}t_1\equiv\sqrt{\epsilon},\end{equation} then $t_2=2\delta\cdot\epsilon+\sqrt{\epsilon}$ and for sufficiently small $\epsilon>0$, $F'(t_2)<0$.
Recall that $K(\gamma(t))=0$ for $t\geq{t}_2$, so it holds that
\begin{equation}
F(t)=F'({t}_2)(t-{t}_2)+F({t}_2).
\end{equation}
Let $t_{conj}>{t}_2$
such that
\begin{equation}
F(t_{conj})=0,
\end{equation}
and hence 
\begin{eqnarray}
t_{conj}&=&t_2+\frac{F(t_2)}{-F'(t_2)}\nonumber\\
&=&t_2+\frac{\epsilon\sin2\delta+\sqrt{\epsilon}\cos2\delta}{-\cos2\delta+\frac{1}{\sqrt{\epsilon}}\sin2\delta}\nonumber\\
&< &2\sqrt{\epsilon}.
\end{eqnarray}
The last inequality holds for sufficiently small $\epsilon>0$.
Therefore, $\gamma(t_{conj})$ is a conjugate point of $\gamma(0)$ and thus
\begin{equation}
\Conj_{g_{\delta,\epsilon}}(\gamma(0))< 2\sqrt{\epsilon},
\end{equation}
with $d(\gamma(0),0^2)=t_0=\delta\epsilon+\sqrt{\epsilon}$.

For fixed $\delta>0$, $\epsilon>0$,  
by standard smoothing calculations, for any $i>0$, there is a smooth Riemannian metric ${g}_{\delta,\epsilon}^{(i)}=dr^2+(f_{\delta,\epsilon}^{(i)}(r))^2d\theta^2$
such that the following holds:

$(i)$ $f_{\delta,\epsilon}^{(i)}\rightarrow f_{\delta,\epsilon}$ and thus ${g}_{\delta,\epsilon}^{(i)}\rightarrow g_{\delta,\epsilon}$ pointwise and the radial geodesic keeps the same in the sequence,

$(ii)$ $(f_{\delta,\epsilon}^{(i)})''\leq 0$ and thus
$ K_{{g}_{\delta,\epsilon}^{(i)}}=-\frac{f_{\delta,\epsilon}^{(i)})''}{(f_{\delta,\epsilon}^{(i)})}\in[0,\epsilon^{-2}+O(1/i)]$, 

$(iii)$ for almost every $x\in(\dR^2,0^2)$, $\lim\limits_{i\rightarrow\infty} K_{g_{\delta,\epsilon}^{(i)}}(x)=K_{{g}_{\delta,\epsilon}}(x)$. 
\\ Consider the 
 Jacobi equations 
with respect to the metrics $\bar{g}_{\delta,\epsilon,i}$,
\begin{equation}
F_i''=-K_iF_i.
\end{equation}
Classical result shows that 
\begin{equation}
\|F_i-F_0\|_{C^{0}}\rightarrow0,\end{equation}
and notice that $\|F_i\|_{C^{1,1}}\leq C(\delta,\epsilon)$ it is easy to conclude that
\begin{equation}
\|F_i-F_0\|_{C^{1}}\rightarrow0.
\end{equation}
Let $q_{\delta,\epsilon}\equiv\gamma_{\delta,\epsilon}(0)$ in the above construction,
\begin{equation}
\Conj_{{g}_{\delta,\epsilon}^{(i)}}(q_{\delta,\epsilon})<4\sqrt{\epsilon}.
\end{equation}
Choose $\epsilon=\delta^2$ and when $\delta\rightarrow0$, it holds that
\begin{equation}
(\dR^2,g_{\delta}\equiv{g}_{\delta,\delta^2},0^2)\xrightarrow{pGH}(\dR^2,g_{\dR^2},0^2),
\end{equation}
and for $q_{\delta}\equiv q_{\delta,\delta^2}$,  \begin{equation}
\Conj_{{g}_{\delta}}(q_{\delta,\delta^2})<4\delta\rightarrow0.
\end{equation}

\subsection{Example \ref{ss:example 5}: Isometric Nilpotent Group Action without Lines}\label{ss:example 5} 

Here we present an example  based on the calculations in \cite{Wei}. 
This example is used to show the necessity of the cone splitting ideas in the proof of our $\epsilon$-regularity theorem,  which is a new manner 
to prove the splitting given appropriate symmetry compared to
the classical line splitting arguments.  \\

More specifically, in \cite{Wei} a complete noncompact space $(M^n,g)$ was built with the following properties:
\begin{enumerate}
\item $\Ric_g > 0$,
\item There exists a free isometric action on $M^n$ by a simply connected nilpotent lie group $N^{n-k}$.
\item $M$ does not contain any lines.  Equivalently, due to $(1)$, $M^n$ does not split any $\dR$ factors.
\item $M^n/N^{n-k}$ is diffeomorphic, but not isometric, to $\dR^{k}$.
\end{enumerate}

Let us analyze these properties in the context of this paper.  Recall from the discussion of Section \ref{ss:intro_outline_proof} that our basic strategy in the proof of the $\epsilon$-regularity theorem is to construct new $\dR$-factors on the limit space of normal covers. 
For convenience, let us briefly recall the context by presenting the following toy model. Let $(M_i^n,g_i,p_i)$ satisfy $\Ric_{g_i}\geq-i^{-2}$ and the following diagram,
\begin{equation}
\xymatrix{(\widetilde{M}_i^n,\Gamma_i,\tilde{p}_i)\ar [d]^{\pi_{i}} \ar [rr]^{eqGH}&& (\widetilde{Y},\Gamma_{\infty},\tilde{y})\ar [d]^{\pi_{\infty}} \\
(M_i^n,p_i)\ar [rr]^{pGH}&&(Y,y),}
\end{equation}
where $\widetilde{M}_i^n$
is the universal cover of $M_i^n$ and assume that $\Gamma_i\equiv\pi_1(M_i^n)$ and $\rank(\Gamma_i)=m$.  If $Y$ is compact, then by the techniques of \cite{FY} $\widetilde{Y}$ has $m$-independent lines and thus $\widetilde{Y}\cong\dR^{m'}\times\widetilde{Z}$ with $m'\geq m$ and $\widetilde{Z}$ compact, see lemma \ref{homoline}.
If $Y$ is non-compact, the above line splitting fails, even in the context of nonnegative Ricci curvature.  In fact, let us now use the example of \cite{Wei} presented above to produce a counterexample. \\ 

Indeed, let $\Gamma_i\leq N^{n-k}$ be a sequence of increasingly dense co-compact lattices, and let $M_i^n\equiv M^n/\Gamma_i$, where $M^n$ is the example presented at the beginning of this subsection.  For this example we then have that
\begin{align}
&\widetilde M_i^n = M^n\xrightarrow{GH} \widetilde Y\equiv M^n\, ,\notag\\
&\Gamma_i\to \Gamma_\infty\equiv N^{n-k},\ \rank(\Gamma_i)=n-k\, ,\notag\\
&M^n_i\xrightarrow{GH} Y\, ,
\end{align}
where $Y$ is diffeomorphic to $\dR^{k}$  and thus noncompact.  But now of course $\widetilde Y= M^n$ does not contain any lines, and so does not split any $\dR$-factors.  In particular, this shows that in the general case, when $Y$ is noncompact one requires new arguments to produce lines, which is why we use a cone splitting argument for the proof of Theorem \ref{t:eps_reg_collapsed}.

\section{The Structure of Almost Nilpotent Groups}\label{s:almost-nilpotent}

The proof of Theorem \ref{t:eps_reg_collapsed} will require not only the fundamental group control of \cite{KW}, but various refinements which require the ability to pick geometrically compatible bases along with studying not only the fibered fundamental group but the structure of the deck transformations of other normal covers.  This section is devoted to developing all the algebraic tools which will be necessary to make these studies.  Roughly speaking, the main result of this section is to build polycyclic extensions of the lower central series of nilpotent subgroups of $\Gamma_{\delta}(p)$ which are compatible with the geometry.  Such extensions will play an important role in inductive arguments throughout the paper.

\subsection{Effective Generating Set of a Finite-index Subgroup}
\label{ss:effective-generating-set}

In this subsection, we focus on Schreier's Lemma (lemma 4.2.1 in \cite{schreier-generators}) and its applications. Schreier's Lemma explicitly gives a generating set of a subgroup, called {\textit{Schreier's generating set}}. We will prove some effective estimates on the above generating set. That is, if a subgroup $H\leq G$ has a priori controlled index $C<\infty$ in $G$, then each generator in Schreier's generating set has controlled length in terms of the generators of $G$. 

Although Schreier's Lemma works for any group and its subgroup, every group in this subsection is assumed to be finitely generated, which is enough for our geometric applications. 

\subsubsection{Alphabetical ordering in a finitely generated group}
In this paragraph, we define the alphabetical ordering which is a total ordering of a finitely generated group. This ordering is the foundation to build the effective generating set of a subgroup of controlled index.
We start with some basic notions.

\begin{definition}
Let $\Gamma$ be a finitely generated group with an ordered symmetric generating set $S=(s_1,\ldots,s_d)$ with $S^{-1}= S$. We define the following notions:

$(i)$ Let $g\in\Gamma$. If $g=s_{i_1}\cdot s_{i_2}\cdot\ldots\cdot s_{i_m}$ with $s_{i_j}\in S$ for $1\leq j\leq m$, then $s_{i_1}\cdot s_{i_2}\cdot\ldots\cdot s_{i_m}$ is called a {\textit{presentation}} of $g$ in terms of the elements in $S$. A specific presentation is denoted by the corresponding bold letter,
\begin{equation}
\bm{g}=s_{i_1}\cdot s_{i_2}\cdot\ldots\cdot s_{i_m}.
\end{equation}

$(ii)$
 Given a group element $g\in\Gamma$, a presentation $\bm{g}=s_{i_1}\cdot s_{i_2}\cdot\ldots\cdot s_{i_m}$ is called a {\textit{reduced presentation}} of length $m$ if $s_{i_j}\cdot s_{i_{j+1}}\neq e$ for all $1\leq j\leq m-1$ and denote $\ell_S(\bm{g})=m$. The set of all reduced 
presentations of $g\in\Gamma$ is denoted by $\mathcal{P}_g$, while the set of all reduced presentations in $\Gamma$ is denoted by $\mathcal{P}(\Gamma)$. Thus $\ell_S$ is a function from $\mathcal{P}(\Gamma)$ to $\mathbb{N}$.

\end{definition}

\begin{remark}
By definition, it happens that two different presentations $\bm{g}_1$ and $\bm{g}_2$ corresponds to the same group element in $\Gamma$.
\end{remark}

We will give a total ordering among all reduced presentations of a group. 
\begin{definition}
Let $\Gamma$ be a finitely generated group with an ordered symmetric generating set 
$S=(s_1,\ldots,s_d)$ with $S^{-1}=S$.  We define an ordering $\prec_p$ for any two different reduced presentations in $\mathcal{P}(\Gamma)$:

$(i)$ If $\bm{g}\in\mathcal{P}(\Gamma)$ and $\bm{g}\neq e$, then $e\prec_p \bm{g}$.

$(ii)$ $s_1\prec_p s_2\prec_p\ldots\prec_p s_d$.

$(iii)$ Let $\bm{g}_1=\prod\limits_{\nu=1}^ms_{i_{\nu}}$ and $\bm{g}_2=\prod\limits_{\nu=1}^ns_{j_{\nu}}$ be two reduced presentations. We say $\bm{g}_1\prec_p \bm{g}_2$ if either 

$(a)$ $\ell_S(\bm{g}_1)<\ell_S(\bm{g}_2)$, i.e. $m<n$, or

$(b)$ $\ell_S(\bm{g}_1)=\ell_S(\bm{g}_2)=m$, then there exists $1\leq \nu_0\leq m$ such that $i_{\nu_0}<j_{\nu_0}$ and $i_{\nu}\leq j_{\nu}$ for all $1\leq \nu\leq \nu_0$.
\end{definition}

\begin{remark}
The above ordering $\prec_p$ in the set of all reduced words is equivalent to the ordering in the set of all presentations.
\end{remark}

\begin{lemma}\label{basic-properties-prec_p} The following properties hold:

$(i)$ Let $\Gamma$ and $S$ as above, for any two reduced presentations $\bm{g}_1,\bm{g}_2\in\mathcal{P}(\Gamma)$, then exactly one of $\bm{g}_1=\bm{g}_2$, $\bm{g}_1\prec_p \bm{g}_2$ and $\bm{g}_2\prec_p \bm{g}_1$ is true.

$(ii)$ If $\bm{g}_1\prec_p \bm{g}_2$, $\bm{g}_2\prec_p \bm{g}_3$, then $\bm{g}_1\prec_p \bm{g}_3$.

$(iii)$ If $\bm{g}_1\prec_p \bm{g}_2$ and there is no cancellation between $\bm{g}_3$ and $\bm{g}_2$, then $\bm{g}_3\cdot \bm{g}_1\prec_p  \bm{g}_3\cdot \bm{g}_2$ and $\bm{g}_1\cdot \bm{g}_3\prec_p  \bm{g}_2\cdot \bm{g}_3$.
\end{lemma}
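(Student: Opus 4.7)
The three properties are essentially formal consequences of the definition, obtained by reducing the ordering $\prec_p$ on $\mathcal{P}(\Gamma)$ to well-understood orderings on $\mathbb{N}$ and on finite tuples of indices. My plan is to verify each part by unpacking definitions and using elementary trichotomy/transitivity of standard orderings; the main ``thought'' is to recognize that case $(b)$ in the definition is equivalent to ordinary lexicographic order.

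\emph{Proof of (i) (trichotomy).} Given reduced presentations $\bm g_1=\prod s_{i_\nu}$ and $\bm g_2=\prod s_{j_\nu}$, I would first compare $\ell_S(\bm g_1)$ and $\ell_S(\bm g_2)$ in $\mathbb N$. If they differ, case $(a)$ of the definition yields exactly one of $\bm g_1\prec_p\bm g_2$ or $\bm g_2\prec_p\bm g_1$, and equality is impossible. If $\ell_S(\bm g_1)=\ell_S(\bm g_2)=m$, I reduce case $(b)$ to the statement that $(i_1,\dots,i_m)$ and $(j_1,\dots,j_m)$ are comparable under the usual lexicographic order on $\{1,\dots,d\}^m$ induced by $s_1<\cdots<s_d$. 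Indeed, if the tuples agree in every coordinate then $\bm g_1=\bm g_2$; otherwise, letting $\nu_0$ be the smallest index at which they differ, whichever of $i_{\nu_0},j_{\nu_0}$ is smaller determines which presentation precedes the other, and both the ``exists $\nu_0$'' and the ``$i_\nu\le j_\nu$ for $\nu\le\nu_0$'' conditions are automatic from $i_\nu=j_\nu$ for $\nu<\nu_0$.

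\emph{Proof of (ii) (transitivity).} Using $(i)$ I can split into cases on the lengths $m_1,m_2,m_3$ of $\bm g_1,\bm g_2,\bm g_3$. If any strict length inequality appears among the hypotheses then $m_1<m_3$, so $(a)$ gives $\bm g_1\prec_p\bm g_3$. The only remaining case is $m_1=m_2=m_3$, and here the problem reduces, via the equivalence established in $(i)$, to transitivity of lexicographic order on $\{1,\dots,d\}^m$, which is standard.

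\emph{Proof of (iii) (left/right translation).} The no-cancellation hypothesis on $\bm g_3\cdot\bm g_2$ gives $\ell_S(\bm g_3\cdot\bm g_2)=\ell_S(\bm g_3)+\ell_S(\bm g_2)$. For $\bm g_3\cdot\bm g_1$, I consider two subcases. If $\bm g_1\prec_p\bm g_2$ via case $(a)$, i.e.\ $\ell_S(\bm g_1)<\ell_S(\bm g_2)$, then regardless of cancellation, $\ell_S(\bm g_3\cdot\bm g_1)\le \ell_S(\bm g_3)+\ell_S(\bm g_1)<\ell_S(\bm g_3\cdot\bm g_2)$, so $\bm g_3\cdot\bm g_1\prec_p\bm g_3\cdot\bm g_2$. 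If $\bm g_1\prec_p\bm g_2$ via case $(b)$ with common length $m$, then either $\bm g_3\cdot\bm g_1$ has some cancellation and so has length strictly less than $\ell_S(\bm g_3)+m=\ell_S(\bm g_3\cdot\bm g_2)$ (again giving $(a)$), or $\bm g_3\cdot\bm g_1$ is also reduced of length $\ell_S(\bm g_3)+m$, in which case the two presentations share the prefix $\bm g_3$ and differ first at the position where $\bm g_1$ and $\bm g_2$ first differ; applying case $(b)$ at the shifted index $\ell_S(\bm g_3)+\nu_0$ finishes it. The argument for $\bm g_1\cdot\bm g_3\prec_p\bm g_2\cdot\bm g_3$ is symmetric, using no cancellation for $\bm g_2\cdot\bm g_3$ on the right.

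I do not anticipate any genuine obstacle; the only mildly subtle point is justifying the reduction of clause $(b)$ to standard lexicographic order, which I would isolate once at the start and reuse in both $(i)$ and $(ii)$, and noticing in $(iii)$ that cancellation can only shorten a product, which is always compatible with the desired conclusion.
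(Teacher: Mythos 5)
Your proof is correct; note that the paper itself gives no argument here (it states only that the lemma ``immediately follows from the definition''), so there is nothing to compare against — your write-up simply supplies the omitted verification. The one point worth making explicit is the equivalence of clause $(b)$ with ordinary lexicographic order, since the paper's phrasing (``there exists $\nu_0$ with $i_{\nu_0}<j_{\nu_0}$ and $i_\nu\leq j_\nu$ for all $\nu\leq\nu_0$'') is not literally the lexicographic condition. You prove the easy direction (lex order implies clause $(b)$ by taking $\nu_0$ to be the first index of disagreement), but trichotomy and transitivity also need the converse: if clause $(b)$ holds with witness $\nu_0$, then the first index $\mu$ of disagreement satisfies $\mu\leq\nu_0$ (because $i_{\nu_0}\neq j_{\nu_0}$), whence $i_\mu\leq j_\mu$ and $i_\mu\neq j_\mu$ give $i_\mu<j_\mu$; this one extra line should be recorded when you ``isolate the reduction once at the start.'' Your treatment of $(iii)$ — using that cancellation in $\bm{g}_3\cdot\bm{g}_1$ only shortens the reduced length, so case $(a)$ applies, while the no-cancellation case reduces to a shared prefix plus the comparison of $\bm{g}_1$ and $\bm{g}_2$ — is exactly how the lemma is used later in the paper (e.g.\ in Lemma \ref{smaller-canonical}), so the interpretation is the intended one.
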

\begin{proof}
The proof immediately follows from the definition. So we omit the proof here.\end{proof}

With the above ordering, we can define the canonical presentation of a group element, and then we are able to provide a total ordering among the group elements. 
\begin{definition}
Let $\Gamma$ be a finitely generated group with an ordered symmetric generating set (not containing the identity)
\begin{equation}
S^{-1}=S=(s_1, s_2,\ldots, s_d).
\end{equation}
We call a presentation $\bm{g}=s_{i_1}\cdot s_{i_2}\cdot\ldots\cdot s_{i_m}\in\mathcal{P}_g$ is a {\textit{canonical presentation}}  of $g$  if this presentation satisfies the following:

$(i)$ For any presentation $\bm{g}=s_{j_1}\cdot s_{j_2}\cdot\ldots\cdot s_{j_{m'}}\in\mathcal{P}_g$ with $s_{j_{\nu}}\in S$, we have $m\leq m'$.

$(ii)$ Denote by $\mathcal{M}_g\subset\mathcal{P}_g$ the set of all reduced presentations of $g$ such that the length of each presentation in $\mathcal{M}_g$ is equal to $m$, then the presentation $\bm{g}=s_{i_1}\cdot s_{i_2}\cdot\ldots\cdot s_{i_m}$ is the first member in $\mathcal{M}_g$ w.r.t. the ordering $\prec_p$.
\end{definition}

\begin{remark}
The above canonical presentation is well-defined. In fact, for the above fixed $m$, $\#(\mathcal{M}_g)\leq d^m$
and thus we can pick up the first element in $\mathcal{M}_g$ w.r.t. the ordering $\prec_p$.
\end{remark}

\begin{definition}Let $g\in\Gamma$, then $\length_S(g)$ is the defined by the length of its canonical presentation.
\end{definition}

\begin{example}
Suppose we have $4$ different presentations of $g$: $s_1^3s_4^2s_2^1$, $s_2s_1s_4s_3$, $s_2s_1s_3s_5$, $s_2^2s_4s_6$. Then by definition,  $\bm{g}=s_2s_1s_3s_5$ is the canonical presentation, $\length_S(g)=4$, and $s_2s_1s_3s_5\prec_p s_2s_1s_4s_3\prec_p s_2^2s_4s_6\prec_p s_1^3s_4^2s_2^1$. 
\end{example}

\begin{lemma}
Let $\Gamma$ as above, then each group element $g\in\Gamma$ has a unique canonical presentation.
\end{lemma}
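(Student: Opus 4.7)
The plan is to establish existence and uniqueness separately, both following directly from the well-ordering of $\mathbb{N}$ and the fact that the ordering $\prec_p$ restricts to a total order on a finite set.

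First, I would address existence. Since $S$ generates $\Gamma$, the element $g$ admits at least one presentation by elements of $S$. After cancelling adjacent inverse pairs, we obtain a reduced presentation, so $\mathcal{P}_g \neq \emptyset$. The set of lengths $\{\ell_S(\bm g) : \bm g \in \mathcal{P}_g\} \subset \mathbb{N}$ is nonempty, hence has a least element $m \in \mathbb{N}$ by well-ordering; this is the minimal length of a reduced presentation of $g$. The set $\mathcal{M}_g$ of reduced presentations of length exactly $m$ is thus nonempty. Moreover, since each entry of a length-$m$ reduced presentation lies in the $d$-element set $S$, we have the bound $\#\mathcal{M}_g \leq d^m < \infty$.

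Next, I would verify that $\prec_p$ gives a total order on $\mathcal{M}_g$. By Lemma \ref{basic-properties-prec_p}(i), for any two distinct reduced presentations $\bm{g}_1,\bm{g}_2 \in \mathcal{P}(\Gamma)$ exactly one of $\bm{g}_1 = \bm{g}_2$, $\bm{g}_1 \prec_p \bm{g}_2$, $\bm{g}_2 \prec_p \bm{g}_1$ holds; by Lemma \ref{basic-properties-prec_p}(ii), $\prec_p$ is transitive. Thus $(\mathcal{M}_g,\prec_p)$ is a totally ordered finite set, which necessarily has a unique $\prec_p$-least element. This element is, by definition, the canonical presentation of $g$, establishing both existence and uniqueness.

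The only point requiring a slight care is the distinction between a presentation (an ordered word in $S$) and the group element it represents: two distinct reduced presentations may represent the same $g$, which is precisely why $\mathcal{M}_g$ can have more than one element and why a choice rule is needed at all. There is no substantive obstacle here, however; the proof is essentially a formal consequence of (a) existence of reduced presentations of minimal length, (b) finiteness of $\mathcal{M}_g$, and (c) $\prec_p$ being a total order, all of which have already been recorded above or in Lemma \ref{basic-properties-prec_p}. I would therefore keep the write-up to a short paragraph invoking these three ingredients.
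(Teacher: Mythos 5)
Your proposal is correct and follows essentially the same route as the paper: existence of a minimal length via well-ordering of $\mathbb{N}$, finiteness of $\mathcal{M}_g$ (bounded by $d^m$), and then the unique $\prec_p$-least element of this finite totally ordered set. The only difference is that you spell out the total-order verification via Lemma \ref{basic-properties-prec_p}, which the paper leaves implicit.
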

\begin{proof}
Given $g\in\Gamma$. Let $\ell_s:\mathcal{P}(\Gamma)\rightarrow\mathbb{N}$ be the function which maps each reduced presentation to its length. 
First, the image set $\ell_S(\mathcal{P}_g)$ is a subset of $\mathbb{N}$, and thus the set $\ell_S(\mathcal{P}_g)$ has a finite minimum in $\mathbb{N}$. Therefore, $g$ has a presentation satisfying $(i)$.

Let $m\equiv\length_S(g)$, then  $\#(\mathcal{M}_g)\leq d^m$.
Immediately, there exists a unique member in $\mathcal{M}_g$ which satisfies $(ii)$.
\end{proof}

\begin{lemma}\label{smaller-canonical}
Let $s_{i_1}\cdot s_{i_2}\cdot\ldots\cdot s_{i_m}$ be the canonical presentation of some element $g\in\Gamma$, then for each $1\leq k\leq m-1$, both $\prod\limits_{\nu=1}^k s_{i_{\nu}}$ and $\prod\limits_{\nu=k+1}^m s_{i_{\nu}}$ are canonical presentations of the corresponding group elements.\end{lemma}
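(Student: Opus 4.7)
The plan is to argue by contradiction, using the two defining properties of a canonical presentation: (i) minimum length among all reduced presentations of the given group element, and (ii) first position in the ordering $\prec_p$ among reduced presentations of that minimum length. Write the given canonical presentation as $\bm{g} = s_{i_1}\cdots s_{i_m}$ and split it as $\bm{g} = \bm{h}_1 \cdot \bm{h}_2$, where $\bm{h}_1 \equiv \prod_{\nu=1}^{k} s_{i_\nu}$ and $\bm{h}_2 \equiv \prod_{\nu=k+1}^{m} s_{i_\nu}$ represent group elements $h_1, h_2 \in \Gamma$ respectively. Since $\bm{g}$ is reduced, every contiguous subword of $\bm{g}$ is automatically reduced, so $\bm{h}_1$ is a reduced presentation of $h_1$ of length exactly $k$ and $\bm{h}_2$ is a reduced presentation of $h_2$ of length exactly $m-k$.

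I would treat the prefix $\bm{h}_1$ in detail and invoke symmetry for the suffix $\bm{h}_2$. Assume for contradiction that $\bm{h}_1$ is not the canonical presentation of $h_1$. Then there exists a competing reduced presentation $\bm{h}_1^{\sharp}$ of $h_1$ which is either strictly shorter than $k$, or of length exactly $k$ and strictly $\prec_p$-smaller than $\bm{h}_1$. The main move is to splice $\bm{h}_1^{\sharp}$ into the position previously occupied by $\bm{h}_1$ and examine the composite word $\bm{h}_1^{\sharp} \cdot \bm{h}_2$, which still represents $g$. If $\ell_S(\bm{h}_1^{\sharp}) < k$, the composite has fewer than $m$ symbols, so its reduced form is a presentation of $g$ of length strictly less than $m$, contradicting property (i) of $\bm{g}$. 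If $\ell_S(\bm{h}_1^{\sharp}) = k$ and cancellation occurs at the junction between $\bm{h}_1^{\sharp}$ and $\bm{h}_2$, the reduced form again has length less than $m$, again contradicting (i). If $\ell_S(\bm{h}_1^{\sharp}) = k$ and no cancellation occurs, then $\bm{h}_1^{\sharp} \cdot \bm{h}_2$ is itself a reduced presentation of $g$ of length exactly $m$, and the hypothesis $\bm{h}_1^{\sharp} \prec_p \bm{h}_1$, read letter by letter through the definition of $\prec_p$ (the first-differing index $\nu_0 \leq k$ is unchanged by appending $\bm{h}_2$), produces $\bm{h}_1^{\sharp} \cdot \bm{h}_2 \prec_p \bm{h}_1 \cdot \bm{h}_2 = \bm{g}$, contradicting property (ii).

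For the suffix $\bm{h}_2$ the argument is entirely symmetric: one left-multiplies a hypothetical improvement $\bm{h}_2^{\sharp}$ by $\bm{h}_1$ and repeats the length-versus-ordering dichotomy, the only change being that the first differing letter now appears at position $k + \mu_0$ rather than at $\nu_0$. I do not anticipate a serious obstacle; the only point requiring a little care is the cancellation analysis at the splicing junction, but this is not a separate difficulty because any such cancellation automatically drops the total length below $m$ and is therefore absorbed into the same contradiction with property (i) already used in the shorter case.
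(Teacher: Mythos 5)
Your proposal is correct and follows essentially the same route as the paper: assume the prefix (or suffix) admits a better reduced presentation, splice it back into the full word, and derive a contradiction with either the minimal length or the $\prec_p$-minimality of the canonical presentation of $g$, using the compatibility of $\prec_p$ with concatenation (Lemma \ref{basic-properties-prec_p}(iii)). Your explicit treatment of possible cancellation at the splicing junction is in fact slightly more careful than the paper's, which glosses over that case, but it changes nothing essential.
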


\begin{proof}
First, we prove that $\bm{g}_k=\prod\limits_{\nu=1}^k s_{i_{\nu}}$ is the canonical presentation of $g_k$ for each $1\leq k\leq m$  by contradiction. Suppose that there is some $1\leq k_0\leq m-1$ such that $\bm{g}_{k_0}\equiv\prod\limits_{\nu=1}^{k_0}s_{i_{\nu}}$ is not a canonical presentation of the element $g_{k_0}$. Let $\bm{g}_{k_0}=\prod\limits_{\nu=1}^{k_1}s_{j_{\nu}}$
be the canonical presentation of $g_{k_0}$ such that $\prod\limits_{\nu=1}^{k_1}s_{j_{\nu}}\prec_p \prod\limits_{\nu=1}^{k_0}s_{i_{\nu}}$ with $k_1\leq k_0$. Since there is no cancellation between the two presentations $\prod\limits_{\nu=1}^{k_0}s_{i_{\nu}}$ and $\prod\limits_{\nu=k_0+1}^{m}s_{i_{\nu}}$, property $(iii)$ in Lemma \ref{basic-properties-prec_p} implies the following inequality
\begin{equation}
\prod\limits_{\nu=1}^{k_1}s_{j_{\nu}}\cdot \prod\limits_{\nu=k_0+1}^{m}s_{i_{\nu}}\prec_p \prod\limits_{\nu=1}^{k_0}s_{i_{\nu}}\cdot\prod\limits_{\nu=k_0+1}^{m}s_{i_{\nu}}= \prod\limits_{\nu=1}^{m}s_{i_{\nu}}.
\end{equation}
Consequently,
$\prod\limits_{\nu=1}^{m}s_{i_{\nu}}$ is not the canonical presentation of $g$, which gives the contradiction.

We have proved that $\prod\limits_{\nu=1}^k s_{i_{\nu}}$ is the canonical presentation of $g_k$ for each $1\leq k\leq m$. Immediately, 
$\prod\limits_{\nu=k+1}^m s_{i_{\nu}}$
is the canonical presentation of $h_k$.
Otherwise, by the same arguments,
the canonical presentation of $g$ would be strictly before $\prod\limits_{\nu=1}^m s_{i_{\nu}}$, which is impossible.
\end{proof}

Now we define a total ordering in a finitely generated group.
\begin{definition}
Let $\Gamma$ and $S$ as above, there is an ordering relation $\prec$ called the \textit{alphabetical ordering}   such that for every two {\textit{different}} elements, the following holds:

$(i)$ $e\prec g$ for any $g\neq e$.

$(ii)$ $s_1\prec s_2\prec\ldots\prec s_d$.

$(iv)$ Let $\bm{g}=s_{i_1}\cdot s_{i_2}\cdot\ldots\cdot s_{i_m}$ and $\bm{h}=s_{j_1}\cdot s_{j_2}\cdot\ldots\cdot s_{j_n}$ be the unique canonical presentations of $g,h\in\Gamma$, then $g\prec h$ iff $s_{i_1}\cdot s_{i_2}\cdot\ldots\cdot s_{i_m}\prec_p s_{j_1}\cdot s_{j_2}\cdot\ldots\cdot s_{j_n}$.
\end{definition}

\subsubsection{The canonical transversal of a subgroup}

In this subsection, we introduce the most important technical tool in this section. The canonical transversal gives a unique ``minimal" representative of each coset, which plays a crucial role 
in controlling the behavior of a  subgroup. We begin with the following basic definitions and conventions.

\begin{definition}
Let $\Gamma$ be a group and let $N\leq\Gamma$. Given $g\in\Gamma$, \begin{equation}g\cdot N\equiv\{g\cdot h| h\in N\},\ N\cdot g\equiv\{h\cdot g| h\in N\},\end{equation} are called a \textit{left coset} and a \textit{right coset} respectively.
The number of left cosets, which is equal to the number of the right cosets, is called the index of $N$ in $\Gamma$, and denoted by $[\Gamma:N]$.
\end{definition}
\begin{remark}
If $N\lhd\Gamma$, then both
\begin{equation}
\Gamma/N\equiv\{g\cdot N|\forall g\in\Gamma\}
\end{equation}
and 
\begin{equation}
N\setminus \Gamma\equiv\{N\cdot g|\forall g\in\Gamma\}
\end{equation}
have a group structure and they are isomorphic. Moreover, for all $g\in\Gamma$, we have that $g\cdot N=N\cdot g$.
\end{remark}

\begin{definition}
Let $\Gamma$ be a finitely generated group and $N$ be a subgroup of $\Gamma$ with $[\Gamma:N]=C<\infty$.  Let $\{g_j\cdot N\}_{j=1}^C$ be the left cosets such that $\bigcup\limits_{j=1}^Cg_j\cdot N=\Gamma$ and $(g_i\cdot N)\cap (g_j\cdot N)=\emptyset$ for $i\neq j$. A {\textit{transversal}} or {\textit{section}} of $N$ in $\Gamma$ is a selection of representatives, i.e. it is a surjective map  
\begin{equation}F:\Gamma\rightarrow\{g_1,\ldots,g_C\},
\end{equation}
such that $g\in g_j\cdot N$ if and only if
$F(g)\equiv g_j.$
\end{definition}

With the definition of the transversal of a subgroup, we will explain Schreier's explicit construction of a generating set of a subgroup. Roughly speaking, each transversal of a subgroup gives an algorithm which finds a generating set of the subgroup.
\begin{lemma}
[O. Schreier]\label{schreier-lemma} Let $S\equiv\langle s_1,\ldots, s_d\rangle$ be a generating set of the finitely generated group $\Gamma$. Assume that $N\leq\Gamma $ with $[\Gamma:N]=C<\infty$. Let $F$ be a transversal of $N$ in $\Gamma$ with $\Image(F)=\{g_1,\ldots,g_C\}$,  then 
\begin{equation}
\bar{S}\equiv\{(F(g_j\cdot s_i))^{-1}\cdot s_i\cdot g_j|g_j\in\Image(F), s_i\in S\}
\end{equation}
is a generating set of $N$.
\end{lemma}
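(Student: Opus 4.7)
The plan is to establish Schreier's Lemma in the classical two steps: first verify that every element of $\bar{S}$ actually lies in $N$, and then exhibit any $h \in N$ as a product of elements of $\bar{S}$ (and their inverses).

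For containment $\bar{S} \subset N$, fix $g_j \in \Image(F)$ and $s_i \in S$, and consider $g_j s_i \in \Gamma$. By the defining property of the transversal, $g_j s_i$ lies in the coset $F(g_j s_i) \cdot N$, so $(F(g_j s_i))^{-1} \cdot g_j \cdot s_i \in N$. (The order of multiplication in the statement of the lemma appears to reflect a transposition, but the verification is the same.) This immediately yields $\bar{S} \subset N$.

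For the generation claim, take any $h \in N$ and write $h = s_{i_1} s_{i_2} \cdots s_{i_m}$ using the symmetric generating set $S$. Introduce the partial coset representatives
\[
t_k := F(s_{i_1} s_{i_2} \cdots s_{i_k}) \qquad (0 \le k \le m),
\]
with the normalization that the coset $N$ itself is represented by $e$, so $t_0 = F(e) = e$ and $t_m = F(h) = e$ since $h \in N$. The telescoping identity
\[
h = s_{i_1} s_{i_2} \cdots s_{i_m} \;=\; \prod_{k=1}^{m}\bigl( t_{k-1}\, s_{i_k}\, t_k^{-1} \bigr)
\]
then expresses $h$ as a product of $m$ factors. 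The essential observation is that $t_k = F(t_{k-1} \cdot s_{i_k})$, because $t_{k-1}$ and $s_{i_1}\cdots s_{i_{k-1}}$ represent the same left coset and left multiplication by $s_{i_k}$ preserves coset membership. Consequently each factor $t_{k-1}\, s_{i_k}\, t_k^{-1} = t_{k-1}\, s_{i_k}\,(F(t_{k-1} s_{i_k}))^{-1}$ is the inverse of a Schreier generator, so $h$ is a word in $\bar{S}\cup\bar{S}^{-1}$, proving that $\bar{S}$ generates $N$.

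I expect no serious obstacle here: the argument is a straightforward rewriting procedure, essentially combinatorial once the transversal is in place. The one point that must be stated explicitly is the identity $F(t_{k-1} s_{i_k}) = F(s_{i_1}\cdots s_{i_k})$, which is what makes the telescoping valid and identifies each factor with a stated Schreier generator; everything else is bookkeeping. Note that this lemma does not yet carry any quantitative content — the effective length bounds promised at the beginning of Subsection \ref{ss:effective-generating-set}, which rely on the canonical presentation and the alphabetical ordering developed earlier, will be derived subsequently from this qualitative statement.
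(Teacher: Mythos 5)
The paper itself offers no proof of this lemma (it is quoted from \cite{schreier-generators}), so the only question is whether your argument stands on its own, and it does not: the generation step breaks down precisely because the paper's transversal is a \emph{left}-coset transversal ($F(g)=g_j$ means $g\in g_j\cdot N$). You set $t_k=F(s_{i_1}\cdots s_{i_k})$, and the whole telescoping rests on the identity $t_k=F(t_{k-1}s_{i_k})$, which you justify by saying that ``left multiplication by $s_{i_k}$ preserves coset membership.'' But the operation you actually perform is \emph{right} multiplication by $s_{i_k}$ (you append it to $t_{k-1}$ and to $s_{i_1}\cdots s_{i_{k-1}}$), and right multiplication does not preserve left cosets unless $N$ is normal: $aN=bN$ does not imply $asN=bsN$. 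Concretely, take $\Gamma=S_3$, $N=\langle(12)\rangle$, $S=\{(13),(23)\}$, transversal $\{e,(13),(23)\}$, and the word $h=(13)(23)(13)=(12)\in N$. Then $t_2=F((13)(23))=F((132))=(23)$ and $t_3=F(h)=e$, whereas $F(t_2\cdot(13))=F((123))=(13)\neq e$. Worse, the factor $t_2\, s_{i_3}\, t_3^{-1}=(23)(13)=(123)$ does not even lie in $N$, so your telescoping decomposes $h$ into factors that are neither in $N$ nor of Schreier form.

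This also shows that you resolved the typo in the displayed formula in the wrong direction. With left cosets one must \emph{prepend} the generator: the Schreier generators are $(F(s_i\cdot g_j))^{-1}\cdot s_i\cdot g_j$, which is exactly the form used later in the proof of Proposition \ref{effective-generating-set}; each such element lies in $N$ since $s_ig_j\in F(s_ig_j)\cdot N$. The rewriting must then process the word from the right: for $h=s_{i_1}\cdots s_{i_m}\in N$ set $v_{m+1}=F(e)=e$ and $v_k=F(s_{i_k}\cdots s_{i_m})$. Now the key identity $v_k=F(s_{i_k}v_{k+1})$ \emph{is} valid, because $s_{i_k}v_{k+1}N=s_{i_k}(s_{i_{k+1}}\cdots s_{i_m})N$ by genuine left multiplication, and $h=\prod_{k=1}^{m}\bigl(v_k^{-1}s_{i_k}v_{k+1}\bigr)$ exhibits $h$ as a product of Schreier generators (here $v_1=F(h)=e$ since $h\in N$, using the normalization $F(e)=e$ that you correctly flagged and that the canonical transversal of Lemma \ref{canonical-transversal} satisfies). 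In short, your argument is the right-coset Schreier rewriting transplanted verbatim into a left-coset setting, where its central step fails.
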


\begin{remark}
In the above notations, obviously,
\begin{equation}
\#(\bar{S})\leq C^2\cdot d.
\end{equation}

\end{remark}

Now we give the existence and uniqueness of the canonical transversal with some effective control.
\begin{lemma}[Canonical transversal]\label{canonical-transversal}
Let $\Gamma$, $S$ and $N$ as above. Let $\{N_j\}_{j=1}^C$ be the left cosets of $N$ in $\Gamma$, then there is a unique transversal (called the canonical transversal)\begin{equation}F_c:\Gamma\rightarrow\Image(F)=\{g_1,\ldots,g_C\}\end{equation} such that

$(i)$ $g_j$ is the first element in $N_j$ w.r.t. the alphabetical ordering $\prec$,

$(ii)$ $\length_S(g_j)\leq C$
for each $1\leq j\leq C$.

\end{lemma}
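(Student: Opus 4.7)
The statement has three pieces: for each coset $N_j$ there is a well-defined $\prec$-first element $g_j$, the map $F_c$ is then forced and hence unique, and the length bound $\length_S(g_j) \leq C$ must be verified. For the first two, I will observe that $\prec$ is a well-ordering of $\Gamma$: since $\prec$ breaks ties only among canonical presentations of the same length and there are at most $d^m$ reduced presentations of length $m$, each level set $\{g \in \Gamma : \length_S(g) = m\}$ is finite and $\Gamma$ is enumerated by $\prec$ in order type $\omega$. Every non-empty subset, in particular each left coset $N_j$, therefore has a unique $\prec$-minimum $g_j$; setting $F_c(g) \equiv g_j$ whenever $g \in N_j$ then defines a transversal, and any transversal satisfying $(i)$ must coincide with this $F_c$, giving both existence and uniqueness.

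The substantive step is the length bound $(ii)$, which I will prove via the Schreier coset graph $\mathcal{G}$: its vertex set is $\Gamma/N = \{N_1, \ldots, N_C\}$, and for each $s \in S$ and each vertex $aN$ there is a directed edge from $aN$ to $(sa)N$. Because $\Gamma$ acts transitively on $\Gamma/N$ by left multiplication and $S$ generates $\Gamma$, $\mathcal{G}$ is connected; because $S = S^{-1}$, each directed edge is paired with its reverse, so $\mathcal{G}$ is effectively an undirected connected graph on $C$ vertices and hence has diameter at most $C - 1$. A path of length $d$ from $eN$ to $N_j$ labeled successively by $s_{i_1}, \ldots, s_{i_d}$ produces the element $w \equiv s_{i_d} s_{i_{d-1}} \cdots s_{i_1} \in N_j$ of word-length at most $d$, so choosing a shortest such path exhibits a representative of $N_j$ with word-length at most $C-1$. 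Since $g_j$ minimizes $\length_S$ on its coset, this forces $\length_S(g_j) \leq C - 1 \leq C$.

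The main obstacle is that $N$ is not assumed to be normal, which breaks the tempting pigeonhole attempt on the prefixes $h_k \equiv s_{i_1} \cdots s_{i_k}$ of the canonical presentation of $g_j$. If $h_k N = h_{k'} N$ then $s_{i_{k+1}} \cdots s_{i_{k'}} \in N$, but excising this substring from $g_j$ alters the left coset of $g_j$ by a conjugate of this element, which in general does not lie in $N$. Passing to the Schreier graph with the \emph{left} action $g \cdot aN = g a N$, so that generators act on cosets by left-multiplication rather than on presentations by right-appending, sidesteps the non-normality issue and reduces the bound to the elementary diameter estimate for a connected graph on $C$ vertices.
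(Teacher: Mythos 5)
Your proof is correct, and the argument for the length bound $(ii)$ takes a genuinely different route from the paper's. The paper argues by contradiction on the canonical presentation $\bm{g}_{j_0}=s_{l_1}\cdots s_{l_{C+m}}$ of a hypothetically too-long representative: it shows that every \emph{suffix} $w_k=\prod_{\nu=C+m-k+1}^{C+m}s_{l_\nu}$ must itself lie in $\Image(F_c)$ (if $F_c(w_{k_0})\prec w_{k_0}$, then replacing the suffix by $F_c(w_{k_0})$ stays in the left coset $g_{j_0}N$, since $h_{k_0}\cdot F_c(w_{k_0})\in h_{k_0}w_{k_0}N$, and produces a $\prec$-smaller representative), and then counts $C+m$ distinct suffixes inside the $C$-element set $\Image(F_c)$. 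Your argument instead bounds the diameter of the Schreier coset graph for the left-multiplication action on left cosets, which is connected on $C$ vertices and hence has diameter at most $C-1$; combined with the observation that the $\prec$-first element of a coset has minimal word length, this even yields the sharper bound $\length_S(g_j)\leq C-1$. Both mechanisms correctly sidestep non-normality — the paper by modifying suffixes (which preserves left cosets under left multiplication by the prefix), you by letting generators act on cosets from the left — though I would note that your description of the "tempting pigeonhole on prefixes" as the obstacle slightly mischaracterizes the paper's actual argument, which pigeonholes suffixes into $\Image(F_c)$ rather than prefixes into cosets. One thing the paper's route buys that yours does not is the intermediate fact that all suffixes of a canonical transversal representative again lie in $\Image(F_c)$, which is reused later (Lemma \ref{shortening-words} and Corollary \ref{generator-is-minimal}); your route buys a cleaner separation of the combinatorics from the ordering and a marginally better constant.
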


\begin{proof}

First, take the first element in $N_j$ for each $1\leq j\leq C$.
This choice has uniquely determined the transversal $F_c$. Now it suffices to check property $(ii)$ for the transversal $F_c$.
Argue by contradiction and suppose $(ii)$ fails. That is, there exists $g_{j_0}\in\Image(F_c)$ with the canonical presentation in terms of the elements in $S$,
\begin{equation}
{\bm{g}}_{j_0}=s_{l_1}\cdot s_{l_2}\cdot\ldots\cdot s_{l_{C+m}},\ m\geq 1.\end{equation}
For each 
$ 1\leq k\leq C+m$, denote\begin{equation}
w_k\equiv\prod\limits_{\nu=C+m-k+1}^{C+m} s_{l_{\nu}}, \label{smaller-tail}
\end{equation}
and \begin{equation}h_k\equiv\prod\limits_{\nu=1}^{C+m-k} s_{l_{\nu}}.\label{smaller-head}\end{equation}
Clearly, $g_{j_0}=h_k\cdot w_k$.
By Lemma \ref{smaller-canonical}, for each  $1\leq k\leq C+m$, equation (\ref{smaller-tail}) and equation (\ref{smaller-head}) gives the canonical presentation of $w_k$ and $h_k$ respectively. In particular, $\length_S(w_k)=k$ and $\length_S(h_k)=C+m-k$.
We will prove the following claim:

\vspace{0.5cm}

{\bf{Claim.}} $w_k\in\Image(F_c)$ for each $1\leq k\leq C+m$. 

\vspace{0.5cm}

We argue by contradiction. Suppose $w_{k_0}\not\in\Image(F_c)$ for some $1\leq k_0\leq C+m$, then $F_c(w_{k_0})\prec w_{k_0}$. The contradiction would arise if we proved that 
\begin{equation}h_{k_0}\cdot F_c(w_{k_0})\prec g_{j_0}.\label{elements-comparison}\end{equation}
 In fact, notice that, $ F_c(w_{k_0})\cdot N=w_{k_0}\cdot N$ which leads to \begin{equation}h_{k_0}\cdot F_c(w_{k_0})\cdot N=h_{k_0}\cdot w_{k_0}\cdot N= g_{j_0}\cdot N.\end{equation} Consequently, $g_{j_0}$ is not the first element in the left coset $g_{j_0}\cdot N$. Contradiction.

Now we are in a position to prove inequality (\ref{elements-comparison}). Let 
\begin{equation}
\bm{h}_{k_0}=\prod\limits_{\nu={1}}^{C+m-k_0}s_{l_{\nu}},\
{\bm{w}}_{k_0}\equiv\prod\limits_{\nu=C+m-k_0+1}^{C+m} s_{l_{\nu}},\
\bm{F_c(w_{k_0})}=\prod\limits_{\nu=1}^{k'}s_{j_{\nu}}.
\end{equation}
be the canonical presentations. By assumption, $k'\leq k_0$.
Property $(iii)$ in Lemma \ref{basic-properties-prec_p} shows that
\begin{equation}
\bm{h}_{k_0}\cdot\bm{F_c(w_{k_0})}=\Big(\prod\limits_{\nu=1}^{C+m-k_0}s_{l_{\nu}}\Big)\cdot\Big(\prod\limits_{\nu={1}}^{k'}s_{j_{\nu}}\Big)\prec_p \Big(\prod\limits_{\nu=1}^{C+m-k_0}s_{l_{\nu}}\Big)\cdot\Big(\prod\limits_{\nu=C+m-k_0+1}^{C+m} s_{l_{\nu}}\Big)=\bm{g}_{j_0}.
\end{equation}
Since the canonical presentation $h_{k_0}\cdot F_c(w_{k_0})$ is before or equal to the presentation $(\prod\limits_{\nu=1}^{C+m-k_0}s_{l_{\nu}})\cdot(\prod\limits_{\nu={1}}^{k'}s_{j_{\nu}})$, the canonical presentation of $h_{k_0}\cdot F_c(w_{k_0})$ is before $\prod\limits_{\nu=1}^{C+m} s_{l_{\nu}}$.
By assumption, $\prod\limits_{\nu=1}^{C+m} s_{l_{\nu}}$ is the canonical presentation of $h_{k_0}\cdot w_{k_0}$, then it holds that,
\begin{equation}
h_{k_0}\cdot F_c(w_{k_0})\prec h_{k_0}\cdot w_{k_0}=g_{j_0}.
\end{equation}
We have proved the Claim.

\vspace{0.5cm}

With the Claim, we obtain a sequence of different elements $\{w_1,\ldots, w_{C+m}\}\subset\Image(F_c)$. However, $\#\Image(F_c)=C<C+m$. This contradiction shows that property $(ii)$
has to be true.
\end{proof}

Our main result of this section is the following Proposition, which gives an effective estimate for the Schreier's generating set of a subgroup with a priori controlled index.

\begin{proposition}\label{effective-generating-set}
Let $\Gamma$ be a finitely generated group with a generating set $S=\{s_1,s_2,\ldots,s_d\}$. Let $N\leq \Gamma$ with $[\Gamma:N]=C<\infty$. Then $N$ has a generating set $\bar{S}$ such that 

$(i)$ $\#(\bar{S})\leq C^2\cdot d$, 

$(ii)$
$\length_S(\bar{s})\leq 2C+1$ for each $\bar{s}\in\bar{S}$.

\end{proposition}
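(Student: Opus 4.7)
The plan is to apply Schreier's Lemma (Lemma \ref{schreier-lemma}) with the canonical transversal $F_c$ produced by Lemma \ref{canonical-transversal}, and then read off both conclusions by careful length bookkeeping.

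First I would set $\bar{S}\equiv\{(F_c(g_j\cdot s_i))^{-1}\cdot s_i\cdot g_j\mid g_j\in\Image(F_c),\ s_i\in S\}$ where $F_c:\Gamma\to\{g_1,\ldots,g_C\}$ is the canonical transversal of $N$ in $\Gamma$. Schreier's Lemma immediately gives that $\bar{S}$ generates $N$, so generation is free. The cardinality bound (i) is almost trivial: the pairs $(g_j,s_i)$ range over a set of size $C\cdot d\leq C^2\cdot d$, which bounds $\#(\bar{S})$ from above. (The weaker bound $C^2 d$ is stated to leave room for the case where one must first symmetrize $S$.)

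The substance is in (ii). By part $(ii)$ of Lemma \ref{canonical-transversal}, every $g_j\in\Image(F_c)$ satisfies $\length_S(g_j)\leq C$, and similarly $\length_S\!\bigl(F_c(g_j\cdot s_i)\bigr)\leq C$ since $F_c(g_j\cdot s_i)$ is itself a member of $\Image(F_c)$. Because $S$ is symmetric ($S^{-1}=S$), inverting the canonical presentation of $F_c(g_j\cdot s_i)$ yields a presentation of $(F_c(g_j\cdot s_i))^{-1}$ of length at most $C$ in terms of $S$. Concatenating these three pieces produces an explicit presentation
\begin{equation}
\bar{s}=\bigl(F_c(g_j\cdot s_i)\bigr)^{-1}\cdot s_i\cdot g_j
\end{equation}
of total length at most $C+1+C=2C+1$. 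Since $\length_S(\bar s)$ is the length of the canonical (hence minimal-length reduced) presentation, we conclude $\length_S(\bar{s})\leq 2C+1$, which is conclusion (ii).

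The only step requiring any care is the concatenation: one must verify that reducing the composite word can only shorten it, so that the upper bound $2C+1$ passes from the visible presentation to $\length_S$. This follows directly from the definition of canonical presentation as a minimum-length reduced presentation, together with the fact that the reduction of a word of length $m$ has length $\leq m$. I do not anticipate any genuine obstacle: the heart of the argument has already been done in Lemma \ref{canonical-transversal}, where the index bound $\length_S(g_j)\leq C$ for the transversal representatives was established by the alphabetical-ordering contradiction argument; Proposition \ref{effective-generating-set} is essentially the packaged application of that lemma via Schreier's classical generating procedure.
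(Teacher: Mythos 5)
Your proposal is correct and follows essentially the same route as the paper: apply Schreier's Lemma with the canonical transversal from Lemma \ref{canonical-transversal}, bound the cardinality by the number of pairs $(g_j,s_i)$, and bound each generator's length by $C+1+C=2C+1$ using the transversal length estimate. The extra remarks on symmetrizing $S$ and on reduction only shortening the concatenated word are harmless fill-ins that the paper leaves implicit.
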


\begin{proof}The proof is the combination of lemma \ref{schreier-lemma} and Lemma \ref{canonical-transversal}. In fact,
let $\bar{S}$ be the generating set from lemma \ref{schreier-lemma} with respect to the canonical transversal $F_c$ of $N$, then the remark after that lemma shows that
\begin{equation}
\#(\bar{S})\leq C^2\cdot d.
\end{equation}
Let $\bar{s}\in\bar{S}$, then there exists $g_{j_0}\in\Image(F_c)$
and $s_i\in S$ such that
\begin{equation}
\bar{s}=(F_c(s_i\cdot g_{j_0}))^{-1}\cdot s_i\cdot g_{j_0}.
\end{equation}
Lemma \ref{canonical-transversal} gives that
\begin{equation}
\length_S(F_c(s_i\cdot g_{j_0}))\leq C, \ \length_S(g_{j_0})\leq C,
\end{equation}
which implies that
\begin{equation}
\length_S(\bar{s})\leq C+1+C=2C+1.
\end{equation}
The proof is complete.
\end{proof}

In the above context, if $N$ is a normal subgroup, then we can obtain some better properties on the canonical transversal.

\begin{lemma}\label{shortening-words} Let $\Gamma$ and $S$ as above. 
Assume that $N\lhd \Gamma$ and let $F_c$ be the canonical transversal with respect to the quotient group $\Gamma/N$ (group of left cosets). Given $w\in\Gamma$
with the canonical presentation in terms of the elements in $S$,
\begin{equation}\bm{w}=s_{l_1}\cdot\ldots\cdot s_{l_k}.\end{equation} For any $1\leq j\leq k$, let $u\equiv s_{l_1}\cdot\ldots\cdot s_{l_j}$ and $v\equiv s_{l_{j+1}}\cdot\ldots\cdot s_{l_k}$. Then $w\in\Image(F_c)$ implies that $u\in\Image(F_c)$ and $v\in\Image(F_c)$.
\end{lemma}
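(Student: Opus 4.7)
The plan is to prove the two conclusions $u\in\Image(F_c)$ and $v\in\Image(F_c)$ separately, each by contradiction, by exploiting the normality of $N$ together with Lemma~\ref{basic-properties-prec_p}(iii) to produce, from a coset representative that is ``smaller'' than $u$ or $v$, another representative of $wN$ that is strictly $\prec w$; this contradicts the assumption $w=F_c(wN)$. Observe first that since $\bm{w}=s_{l_1}\cdots s_{l_k}$ is the canonical presentation of $w$, Lemma~\ref{smaller-canonical} tells us that $\bm{u}=s_{l_1}\cdots s_{l_j}$ and $\bm{v}=s_{l_{j+1}}\cdots s_{l_k}$ are the canonical presentations of $u$ and $v$, and that there is no cancellation between $\bm{u}$ and $\bm{v}$ (otherwise $\bm{w}$ would not be reduced).

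For $u\in\Image(F_c)$, suppose not; then $u':=F_c(uN)\prec u$. Since $N\lhd\Gamma$, we have $u^{-1}u'\in N$, hence $w^{-1}(u'v)=v^{-1}(u^{-1}u')v\in N$, so $u'v\in wN$. I will then show $u'v\prec w$ by splitting on the meaning of $u'\prec u$. If $\length_S(u')<\length_S(u)$, then any presentation of $u'v$ has length at most $\length_S(u')+\length_S(v)<k$, so the canonical presentation of $u'v$ has length strictly less than $\length_S(w)=k$ and hence $u'v\prec w$. Otherwise $\length_S(u')=\length_S(u)$ and $\bm{u'}\prec_p\bm{u}$ alphabetically; if the concatenation $\bm{u'}\cdot\bm{v}$ has any cancellation, the canonical presentation of $u'v$ is again strictly shorter than $k$ and so $u'v\prec w$, while if there is no cancellation then $\bm{u'}\cdot\bm{v}$ is itself a reduced presentation of $u'v$, and Lemma~\ref{basic-properties-prec_p}(iii) applied to $\bm{u'}\prec_p\bm{u}$ (using the already-noted non-cancellation between $\bm{u}$ and $\bm{v}$) yields $\bm{u'}\cdot\bm{v}\prec_p\bm{u}\cdot\bm{v}=\bm{w}$, whence $u'v\prec w$. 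In every subcase we obtain $u'v\prec w$, contradicting the minimality of $w$ in $wN$.

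For $v\in\Image(F_c)$ the argument is symmetric: supposing $v':=F_c(vN)\prec v$, normality of $N$ gives $v^{-1}v'\in N$, so $uv'=w(v^{-1}v')\in wN$; and an entirely parallel case analysis (short/equal length, cancellation/no cancellation between $\bm{u}$ and $\bm{v'}$) shows $uv'\prec w$, again contradicting $w=F_c(wN)$.

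The only genuinely delicate point is ensuring that the alphabetical ordering $\prec_p$ on \emph{reduced} presentations interacts correctly with concatenations that may cease to be reduced; this is handled uniformly by the observation that any cancellation shortens the canonical presentation, which automatically places it earlier in $\prec_p$, so that Lemma~\ref{basic-properties-prec_p}(iii) only needs to be invoked in the no-cancellation subcases where both sides are manifestly reduced words of the same length. I do not expect any further obstacles beyond this bookkeeping.
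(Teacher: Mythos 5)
Your proof is correct and follows essentially the same route as the paper: the paper also argues by contradiction, using normality of $N$ to show that $F_c(uN)\cdot v$ lies in the coset $wN$ and then invoking the ordering properties (the argument of the Claim in Lemma \ref{canonical-transversal}) to conclude $F_c(uN)\cdot v\prec w$, contradicting minimality of $w$; the paper obtains the suffix case $v\in\Image(F_c)$ directly from Lemma \ref{canonical-transversal}, which is the same content as your symmetric argument (and indeed, as your own computation shows, that case needs only $uv'\in uvN=wN$, not normality). Your explicit case analysis on length versus cancellation just fills in details the paper leaves implicit.
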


\begin{proof}
Lemma \ref{canonical-transversal} shows that $v\in\Image(F_c)$. So it suffices to show $u\in\Image(F_c)$. Suppose that $u\not\in\Image(F_c)$, then $F_c(u)\prec u$. Then by the same arguments of the proof of the Claim in Lemma \ref{canonical-transversal}, \begin{equation}F_c(u)\cdot v\prec u\cdot v.\label{ordering-contradiction}\end{equation}
The definition of $F_c$ gives that $F_c(u)\cdot N= u\cdot N$.
Since $N\lhd\Gamma$, it holds that $v\cdot N=N\cdot v$ and then
\begin{equation}
F_c(u)\cdot v\cdot N=F_c(u)\cdot N\cdot v=u\cdot N\cdot v=u\cdot v\cdot N.
\end{equation}
The above equation shows that $F_c(u)\cdot v\cdot N$
and $u\cdot v\cdot N$ belong to the same left coset, and thus inequality (\ref{ordering-contradiction})
contradicts to the assumption that $u\cdot v$ is the first element in 
$u\cdot v \cdot N$. Therefore, $v\in\Image(F_c)$.
\end{proof}

\begin{corollary}\label{generator-is-minimal}
Let $\Gamma$, $S$ and $N$ as Lemma \ref{shortening-words}. Let $F_c$ be the minimal transversal with respect to the quotient $\Gamma/N$.  Given $w\in\Gamma$
with the canonical presentation in terms of the elements in $S$,
$\bm{w}=s_{l_1}\cdot\ldots\cdot s_{l_k}$. Assume that $w\in\Image(F_c)$, then for every $i\leq j$,
\begin{equation}
s_{l_i}\cdot s_{l_{i+1}}\cdot\ldots\cdot s_{l_j}\in\Image(F_c).
\end{equation}
In particular, $s_{l_i}\in\Image(F_c)$ for all $1\leq i\leq k$.
\end{corollary}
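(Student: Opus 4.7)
The plan is to derive this corollary as a straightforward two-fold iteration of Lemma \ref{shortening-words}, with Lemma \ref{smaller-canonical} used to identify the canonical presentations of the intermediate subwords. Lemma \ref{shortening-words} already produces, from a canonical presentation of an element of $\Image(F_c)$, both a prefix and a suffix that lie in $\Image(F_c)$. A general contiguous subword is just a suffix of a prefix, so two applications should close the argument.

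Concretely, first I would take the prefix at position $j$: set $u \equiv s_{l_1}\cdot s_{l_2}\cdot\ldots\cdot s_{l_j}$ and $v \equiv s_{l_{j+1}}\cdot\ldots\cdot s_{l_k}$, so that $w = u\cdot v$. By Lemma \ref{shortening-words} applied to $w \in \Image(F_c)$ with its canonical presentation $\bm{w} = s_{l_1}\cdot\ldots\cdot s_{l_k}$, we obtain $u \in \Image(F_c)$. Next I would invoke Lemma \ref{smaller-canonical} to identify the canonical presentation of $u$: since $\bm{w}$ is a canonical presentation, every initial segment $s_{l_1}\cdot\ldots\cdot s_{l_j}$ is itself the canonical presentation of the group element it spells out, so $\bm{u} = s_{l_1}\cdot\ldots\cdot s_{l_j}$.

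Second, I would apply Lemma \ref{shortening-words} to $u$, this time splitting at position $i-1$. Writing $u = u_1 \cdot u_2$ with $u_1 \equiv s_{l_1}\cdot\ldots\cdot s_{l_{i-1}}$ and $u_2 \equiv s_{l_i}\cdot\ldots\cdot s_{l_j}$ (with the convention $u_1 = e$ when $i = 1$), the lemma yields $u_2 \in \Image(F_c)$, which is exactly the contiguous subword $s_{l_i}\cdot\ldots\cdot s_{l_j}$ that we needed. The ``in particular'' statement is the special case $i = j$. There is essentially no obstacle here; the only point requiring care is ensuring that at each stage we are indeed handing Lemma \ref{shortening-words} a \emph{canonical} presentation as input, which is precisely what Lemma \ref{smaller-canonical} guarantees when we restrict attention to substrings of an already canonical presentation.
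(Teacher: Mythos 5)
Your proposal is correct and follows essentially the same route as the paper: the paper also applies Lemma \ref{shortening-words} twice, first extracting the prefix $s_{l_1}\cdots s_{l_j}$ and then the suffix $s_{l_i}\cdots s_{l_j}$ of that prefix. Your explicit appeal to Lemma \ref{smaller-canonical} to certify that the intermediate prefix carries the canonical presentation required as input to the second application is a point the paper leaves implicit, and is a worthwhile clarification.
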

\begin{proof}
Lemma \ref{shortening-words} shows that 
$\bar{u}\equiv s_{l_1}\cdot s_{l_{i+1}}\cdot\ldots\cdot s_{l_j}\in\Image(F_c)$. Applying Lemma \ref{shortening-words} to $\bar{u}$, we have that $u\equiv s_{l_i}\cdot s_{l_{i+1}}\cdot\ldots\cdot s_{l_j}\in\Image(F_c)$.
\end{proof}

\subsection{Basics in Commutators Calculus}
\label{ss:commutators}

This subsection is to introduce some useful facts in the computations of commutators. 

\begin{definition}
Let $G$ be any arbitrary group with a generating set $B$. The sets of weighted basic commutators are inductively defined as follows:

$(i)$ $\mathcal{C}_0(B)\equiv B$.

$(ii)$ $\mathcal{C}_k(B)\equiv\{[g_k,s_i]|g_k\in \mathcal{C}_{k-1}(B), s_i\in B\}$.

\end{definition}

The following lemma is standard in commutators calculus which  inductively computes the lower commutators subgroup from higher commutators subgroup.

\begin{lemma}[\cite{funda}]\label{graded-commutators-subgroup}
Let $G$ be any arbitrary group with a generating set $B$, and let $G_0\equiv G$. Then for each $s\in\mathbb{N}_+$, $G_s\equiv[G_{s-1},G]$ can be generated by $\mathcal{C}_{s}(B)$ and $G_{s+1}$.
\end{lemma}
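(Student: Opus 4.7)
The plan is to prove both inclusions of $G_s=\langle \mathcal{C}_s(B)\cup G_{s+1}\rangle$, where the nontrivial direction is handled by induction on $s$ together with standard commutator identities carried out modulo $G_{s+1}$.

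The inclusion $\langle \mathcal{C}_s(B)\cup G_{s+1}\rangle\subseteq G_s$ is immediate: by construction every basic commutator in $\mathcal{C}_s(B)$ has the form $[g_{s-1},s_i]$ with $g_{s-1}\in\mathcal{C}_{s-1}(B)\subseteq G_{s-1}$ and $s_i\in B\subseteq G$, so it lies in $[G_{s-1},G]=G_s$; and $G_{s+1}=[G_s,G]\subseteq G_s$. The substantive content is the reverse containment.

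For the reverse containment I would induct on $s$. For the base case $s=1$, the commutator identities $[xy,z]\equiv[x,z][y,z]\pmod{G_2}$ and $[x,yz]\equiv[x,y][x,z]\pmod{G_2}$ (together with $[x,y^{-1}]\equiv[x,y]^{-1}\pmod{G_2}$) allow one to expand any commutator $[a,b]$ with $a,b\in G$ in terms of the generators $B$, producing a product of elements of $\mathcal{C}_1(B)^{\pm 1}$ times an element of $G_2$; this shows $G_1=\langle\mathcal{C}_1(B)\cup G_2\rangle$. For the inductive step, assume the statement holds for $s-1$, so every element of $G_{s-1}$ can be written as $c_1^{\epsilon_1}\cdots c_r^{\epsilon_r}\cdot h$ with $c_j\in\mathcal{C}_{s-1}(B)$, $\epsilon_j=\pm 1$ and $h\in G_s$. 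A general generator of $G_s$ is a commutator $[a,b]^{\pm 1}$ with $a\in G_{s-1}$ and $b\in G$. Crucially, since $[G_s,G]=G_{s+1}$, the identity $[xy,z]=[x,z]^y[y,z]$ simplifies to $[xy,z]\equiv[x,z][y,z]\pmod{G_{s+1}}$ whenever $[x,z]\in G_s$, i.e.\ whenever $x\in G_{s-1}$; the analogous identity holds in the second slot. Writing $a=c_1^{\epsilon_1}\cdots c_r^{\epsilon_r}h$ and $b=s_{j_1}^{\eta_1}\cdots s_{j_m}^{\eta_m}$ with $s_{j_\nu}\in B$, iterated application of these congruences yields
\begin{equation*}
[a,b]\equiv \prod_{i,\nu}[c_i,s_{j_\nu}]^{\epsilon_i\eta_\nu}\pmod{G_{s+1}},
\end{equation*}
because the extra factor $[h,b]$ lies in $[G_s,G]=G_{s+1}$. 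Each $[c_i,s_{j_\nu}]$ with $c_i\in\mathcal{C}_{s-1}(B)$ and $s_{j_\nu}\in B$ belongs to $\mathcal{C}_s(B)$ by definition, so $[a,b]$ equals a product of elements of $\mathcal{C}_s(B)^{\pm 1}$ times an element of $G_{s+1}$.

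Since $G_s$ is generated as a group by the commutators $[a,b]^{\pm 1}$ with $a\in G_{s-1}$ and $b\in G$, the above shows $G_s\subseteq\langle\mathcal{C}_s(B)\cup G_{s+1}\rangle$, completing the induction. The only delicate point is checking the reductions modulo $G_{s+1}$: one must verify that each error term produced by expanding $[xy,z]$ or $[x,yz]$ is a commutator with one entry in $G_s$ and the other in $G$, hence lies in $G_{s+1}$. I expect this careful bookkeeping with nested commutators to be the main (purely formal) obstacle, but it is routine once the inductive hypothesis $a=c_1^{\epsilon_1}\cdots c_r^{\epsilon_r}h$ is in hand.
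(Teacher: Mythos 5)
Your proof is correct: the inclusion $\langle\mathcal{C}_s(B)\cup G_{s+1}\rangle\subseteq G_s$ is immediate, and the reverse containment follows exactly as you describe, since for $x\in G_{s-1}$ the error terms in $[xy,z]=[x,z]^y[y,z]$ and $[x,yz]=[x,z][x,y]^z$ lie in $[G_s,G]=G_{s+1}$, so the commutator map $G_{s-1}/G_s\times G/[G,G]\to G_s/G_{s+1}$ is bilinear and the induction closes. The paper gives no proof of this lemma (it is cited to Kargapolov--Merzljakov as standard), and your argument is precisely the standard one.
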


\begin{corollary}\label{generating-lower-central-series}
Let $N$ be a nilpotent group with a generating set $B$ and $\Step (N)=c_0$. Then $N_s\equiv[N_{s-1},N]$ has a generating set
$\bigcup\limits_{k=s}^{c_0}\mathcal{C}_{k}(B)$.
\end{corollary}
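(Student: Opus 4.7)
The plan is to prove this by downward induction on $s$, running from $s=c_0$ down to $s=1$, with Lemma \ref{graded-commutators-subgroup} supplying the inductive step and the nilpotency hypothesis $\Step(N)=c_0$ providing the base case. The whole argument is essentially a matter of telescoping Lemma \ref{graded-commutators-subgroup} down the lower central series, so I expect no real obstacle; the only observation needed is that the series terminates in finitely many steps.

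More concretely, for the base case $s = c_0$, the definition of nilpotency step gives $N_{c_0} = \{e\}$, hence $N_{c_0}$ is generated (trivially) by $\mathcal{C}_{c_0}(B)$, which is exactly $\bigcup_{k=c_0}^{c_0}\mathcal{C}_k(B)$. For the inductive step, I would assume that
\begin{equation}
N_{s+1} = \left\langle \bigcup_{k=s+1}^{c_0} \mathcal{C}_k(B) \right\rangle,
\end{equation}
and then invoke Lemma \ref{graded-commutators-subgroup} applied to the group $N$ with generating set $B$, which yields that $N_s$ is generated by $\mathcal{C}_s(B) \cup N_{s+1}$. Combining this with the inductive hypothesis gives
\begin{equation}
N_s = \left\langle \mathcal{C}_s(B) \cup \bigcup_{k=s+1}^{c_0} \mathcal{C}_k(B) \right\rangle = \left\langle \bigcup_{k=s}^{c_0} \mathcal{C}_k(B) \right\rangle,
\end{equation}
which closes the induction.

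The only subtlety to flag is ensuring the compatibility of conventions: Lemma \ref{graded-commutators-subgroup} requires us to view $N_s$ as a term in the lower central series starting from $N_0 = N$ with the generating set $B$ of the ambient group $N$, which matches exactly the setup of the corollary. Once that is checked, there is nothing further to verify.
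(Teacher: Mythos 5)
Your proof is correct and follows essentially the same route as the paper's: both arguments telescope Lemma \ref{graded-commutators-subgroup} down the lower central series starting from $N_{c_0}=\{e\}$ (where one notes $\mathcal{C}_{c_0}(B)\subseteq N_{c_0}=\{e\}$). No gaps.
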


\begin{proof}
Since $N$ is nilpotent with $\Step(N)=c_0$, it holds that
\begin{equation}
N_{c_0}=\{e\},\ \mathcal{C}_{c_0}(B)=\{e\}.
\end{equation}
By lemma \ref{graded-commutators-subgroup},
$N_{c_0-1}$ is generated by $\mathcal{C}_{c_0-1}(B)$. Repeating lemma \ref{graded-commutators-subgroup}, we obtain that for each $1\leq s\leq c_0$
$N_s$ is generated by $\bigcup\limits_{k=s}^{c_0}\mathcal{C}_k(B)$.
\end{proof}

We finish this subsection by constructing a subgroup of controlled-index in a finitely-generated nilpotent group. 
This construction will be used in the proof of Theorem \ref{t:KW_almost_nilpotent}.  
\begin{lemma}\label{power-controlled-index}
Let $N$ be a finitely generated nilpotent group with a generating set $B_0=\{\sigma_{0,1},\ldots,\sigma_{0,d_0}\}$ and $\Step(N)=c$.
For each $1\leq k\leq c-1$, let $\mathcal{C}_k(B)=\{\sigma_{k,1},\ldots,\sigma_{k,d_k}\}$ be the set of basic commutators of weight $k$.
For each $0\leq k\leq c-1$, let \begin{equation}a_k=(a_{k,1},\ldots,a_{k,d_k})\end{equation}
be a $d_k$-tuple in $\mathbb{N}_+^{d_k}$ and
let 
\begin{equation}
N_{a_k}\equiv\langle\sigma_{k,1}^{a_{k,1}},\ldots,\sigma_{k,d_k}^{a_{k,d_k}}\rangle,
\end{equation}
then each $g\in N$ can be written in the following standard form
\begin{equation}
g=\Big(\prod\limits_{k=0}^{c-1}\prod\limits_{j=1}^{d_k}\sigma_{k,j}^{b_{k,j}}\Big)\cdot g_a,\ g_a\in N_a\equiv\prod\limits_{k=0}^{c-1}N_{a_k},\label{standard-form}
\end{equation}
where $0\leq b_{k,j}< a_{k,j}$ for all $0\leq k\leq c-1$ and $1\leq j\leq d_k$. In particular,
\begin{equation}
[N:N_a]\leq \prod\limits_{k=0}^{c-1}\prod\limits_{j=1}^{d_k}a_{k,j}.
\end{equation}

\end{lemma}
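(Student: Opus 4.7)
My plan is to prove this by induction on the nilpotency step $c = \Step(N)$, with the base case $c=1$ (the abelian case) handled by the Euclidean division algorithm and the inductive step driven by the fact that $N_{c-1}$ is central in $N$. The base case is easy: if $N$ is abelian then every $g \in N$ is of the form $\prod_j \sigma_{0,j}^{m_j}$, and dividing each $m_j$ by $a_{0,j}$ with remainder immediately gives $g = \prod_j \sigma_{0,j}^{b_{0,j}} \cdot g_a$ with $g_a \in N_{a_0}$, using that the $\sigma_{0,j}$ commute.

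For the inductive step, assume the result for all nilpotent groups of step $\leq c-1$ and consider $N$ of step $c$. Pass to the quotient $\bar N = N/N_{c-1}$, which has step $\leq c-1$ and is generated by $\{\bar\sigma_{0,j}\}$. By inspection of the inductive definition of basic commutators, the basic commutators of weight $k \leq c-2$ in $\bar N$ with respect to these generators are exactly the images $\bar\sigma_{k,j} = \sigma_{k,j} N_{c-1}$. Applying the inductive hypothesis with the tuples $a_0,\ldots,a_{c-2}$, for any $g \in N$ we may write
\begin{equation}
g = \prod_{k=0}^{c-2}\prod_{j=1}^{d_k} \sigma_{k,j}^{b_{k,j}}\cdot h \cdot z,
\end{equation}
where $h \in \prod_{k=0}^{c-2} N_{a_k}$ is a lift of the ``$\bar g_a$'' piece from the inductive hypothesis, $z \in N_{c-1}$, and $0 \leq b_{k,j} < a_{k,j}$ for $k \leq c-2$. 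By Corollary~\ref{generating-lower-central-series} combined with $N_c = \{e\}$, the subgroup $N_{c-1}$ is generated by $\mathcal{C}_{c-1}(B_0) = \{\sigma_{c-1,1},\ldots,\sigma_{c-1,d_{c-1}}\}$, and moreover $N_{c-1}$ is abelian because $[N_{c-1},N_{c-1}] \subseteq [N_{c-1},N] = N_c = \{e\}$. The base case applied inside $N_{c-1}$ then gives $z = \prod_j \sigma_{c-1,j}^{b_{c-1,j}} \cdot z_a$ with $0 \leq b_{c-1,j} < a_{c-1,j}$ and $z_a \in N_{a_{c-1}}$.

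Now comes the main point of the argument: since $[N_{c-1},N] = \{e\}$, the subgroup $N_{c-1}$ lies in the center of $N$, so the central factor $\prod_j \sigma_{c-1,j}^{b_{c-1,j}}$ commutes past $h \in N$. This lets us rearrange
\begin{equation}
g = \prod_{k=0}^{c-2}\prod_j \sigma_{k,j}^{b_{k,j}} \cdot \prod_j \sigma_{c-1,j}^{b_{c-1,j}} \cdot h \cdot z_a = \Bigl(\prod_{k=0}^{c-1}\prod_j \sigma_{k,j}^{b_{k,j}}\Bigr)\cdot g_a,
\end{equation}
where $g_a := h\cdot z_a \in \prod_{k=0}^{c-1} N_{a_k} = N_a$, yielding the claimed standard form. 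The index bound is then automatic: every element of $N$ admits a representative modulo $N_a$ of the form $\prod_{k,j}\sigma_{k,j}^{b_{k,j}}$ with $0\leq b_{k,j} < a_{k,j}$, and the number of such tuples is $\prod_{k,j} a_{k,j}$.

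The principal obstacle to watch out for is the non-commutativity issue in the inductive step: after splitting off the $N_{c-1}$-factor, one must slide the $\sigma_{c-1,j}^{b_{c-1,j}}$ past the middle piece $h \in \prod_{k\leq c-2} N_{a_k}$ to assemble the standard form, and this rearrangement is legitimate precisely because $N_{c-1}$ is central — a property that holds automatically only at the top of the lower central series, which is why the induction must be set up on the step $c$ and the quotient $N/N_{c-1}$ (rather than on $N/N_1$, for instance, where a more delicate commutator-collecting process would be required).
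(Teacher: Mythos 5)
Your proof is correct, but it assembles the standard form by a genuinely different decomposition than the paper's. Both arguments induct on the step $c$, and the base case is the same Euclidean division in an abelian group. In the inductive step, however, the paper works \emph{downward into the subgroup} $N_1=[N,N]$: it first collects the weight-zero generators to the front modulo $N_1$ via the identity $\sigma_{0,j}\cdot\bar g\cdot\sigma_{0,i}=\sigma_{0,i}\cdot\sigma_{0,j}\cdot\bar g\cdot[(\sigma_{0,j}\bar g)^{-1},\sigma_{0,i}^{-1}]$, performs the division with remainder on those exponents, and then applies the induction hypothesis to $N_1$, which has step $c-1$ and is generated by the basic commutators of weight $\geq 1$ by Corollary \ref{generating-lower-central-series}. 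You instead work \emph{upward through the quotient} $N/N_{c-1}$ and treat the kernel last, using that $N_{c-1}$ is central to slide the weight-$(c-1)$ factors into position without any collecting process. Your route requires the (easy but necessary) observations that basic commutators are carried to basic commutators by the quotient map and that $\Step(N/N_{c-1})=c-1$, which you supply; the paper's route avoids discussing the quotient but must justify its rearrangement step, which it leaves as ``simple inductive arguments.'' The two proofs are of comparable length, both yield the factors in the required order $k=0,\dots,c-1$, and both give the index bound by counting the admissible exponent tuples, so either is acceptable.
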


\begin{proof}
We can argue by induction on the step of $N$. Consider $N\rhd N_1$. Obviously, the statement of the Proposition holds for $\Step(N)=1$, namely, $N$ is a finitely generated abelian group. Assume that the statement is true for all finitely generated nilpotent group of step $\leq c-1$. Now we prove that for a finitely generated nilpotent group $N$ with $\Step(N)=c$. 

The notations are the same as those in the statement of the Proposition.
First, claim that each element $g\in N$ can be presented as follows: for some $(r_{0,1},\ldots, r_{0,d})\in\mathbb{Z}^{}$ such that
\begin{equation}
g=\sigma_{0,1}^{r_{0,1}}\cdot\ldots\cdot\sigma_{0,d_0}^{r_{0,d_0}}\cdot g_1, \ g_1\in N_1.\label{ordering}
\end{equation}
In fact, for any $i<j$ and $\bar{g}\in N$, it holds that
\begin{equation}
\sigma_{0,j}\cdot\bar{g}\cdot\sigma_{0,i}=\sigma_{0,i}\cdot\sigma_{0,j}\cdot\bar{g}\cdot\bar{g}_1,
\end{equation}
where $g_1\equiv[(\sigma_{0,j}\cdot\bar{g})^{-1},\sigma_{0,i}^{-1}]\in N_1$.
Hence, for any $g=\sigma_{0,l_1}\cdot\sigma_{0,l_2}\cdot\ldots\cdot\sigma_{0,l_m}\in N$, we can rearrange the generators such that equation (\ref{ordering}) holds. We have proved the claim. Furthermore,
for each $1\leq j\leq d_0$, there exists a unique $0\leq b_{0,j}<a_{0,j}$ and $m_{0,j}\in\mathbb{Z}$ such that 
\begin{equation}
r_{0,j}=m_{0,j}\cdot a_{0,j}+b_{0,j}.
\end{equation}
Simple inductive arguments shows that 

\begin{equation}
g=\sigma_{0,1}^{b_{0,1}}
\cdot\ldots\cdot
\sigma_{0,d_0}^{b_{0,d_0}}\cdot g_1'\cdot g_a',\label{refined-ordering}\end{equation} where 
$ g_1'\in N_1, \ g_a'\in N_a$.

Now we proceed to prove that $g\in N$ has the standard form in equation (\ref{standard-form}). We present $g$ in terms of the ordered generators as in equation (\ref{refined-ordering}). Since $N_1=[N,N]$ satisfies $\Step(N_1)=c-1$, by the induction hypothesis, 
\begin{equation}
g_1'=\Big(\prod\limits_{k=1}^{c-1}\prod\limits_{j=1}^{d_k}\sigma_{k,j}^{b_{k,j}}\Big)\cdot g_a'',\ g_a''\in \prod\limits_{k=1}^{c-1}N_{a_k}.
\end{equation}
Then equation (\ref{standard-form}) follows.
\end{proof}

\subsection{Refinement of the Lower Central Series of a Nilpotent Group}

\label{ss:refinement-lcs}

We prove in this subsection the main technical result in Section \ref{s:almost-nilpotent}. For a finitely generated nilpotent group $N$ with a generating set $S$, we will give an effective and geometrically compatible refinement of the lower central series. This effective refinement guarantees that the generator of each $\mathbb{Z}$-factor group has controlled length with respect to $S$, which plays a fundamental role in the inductive arguments throughout the paper.
 First, recall the following standard isomorphism lemma in group theory.
\begin{lemma}
\label{isomorphism-lemma}\label{pre-image-lemma}Let $A$ and $B$ be two groups. If $f:A\rightarrow B$ is a surjective homomorphism, then the following holds:

$(i)$ For each $B_1\leq B$, the pre-image $f^{-1}(B_1)$ is a group.

$(ii)$ If $B_1\lhd B_2$, then
$f^{-1}(B_1)\lhd f^{-1}(B_2)$. Moreover, $f^{-1}(B_2)/f^{-1}(B_1)
\cong B_2/B_1$.

$(iii)$ let $A_1\leq A_2\leq  A$, then
\begin{equation}
[f(A_2):f(A_1)]\leq [A_2:A_1].
\end{equation}

\end{lemma}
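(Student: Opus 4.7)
The statement is a standard collection of facts about the behaviour of subgroups under a surjective homomorphism, so the plan is to handle each part with the usual subgroup/isomorphism theorem machinery, paying attention only to where the surjectivity hypothesis is actually used.

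For part $(i)$, I would simply verify the subgroup criterion. Given $a_1, a_2 \in f^{-1}(B_1)$, we have $f(a_1 a_2^{-1}) = f(a_1) f(a_2)^{-1} \in B_1$ since $B_1 \leq B$, so $a_1 a_2^{-1} \in f^{-1}(B_1)$. Also $e_A \in f^{-1}(B_1)$ since $f(e_A) = e_B \in B_1$. No surjectivity is needed here.

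For part $(ii)$, I would first check normality directly: given $a \in f^{-1}(B_2)$ and $h \in f^{-1}(B_1)$, normality of $B_1$ in $B_2$ yields $f(a h a^{-1}) = f(a) f(h) f(a)^{-1} \in B_1$, whence $aha^{-1} \in f^{-1}(B_1)$. Then I would apply the first isomorphism theorem to the composition
\begin{equation}
\bar f : f^{-1}(B_2) \xrightarrow{\;f\;} B_2 \xrightarrow{\;\pi\;} B_2/B_1,
\end{equation}
where $\pi$ is the quotient map. The kernel of $\bar f$ is exactly $f^{-1}(B_1)$, and surjectivity of $\bar f$ follows from surjectivity of $f$ restricted to $f^{-1}(B_2)$ onto $B_2$ (this is the one place where the global surjectivity hypothesis is used, since we need every coset in $B_2/B_1$ to be hit). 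The first isomorphism theorem then gives $f^{-1}(B_2)/f^{-1}(B_1) \cong B_2/B_1$.

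For part $(iii)$, the idea is that cosets push forward. Let $\{a_i\}_{i \in I}$ be a transversal of $A_1$ in $A_2$, with $|I| = [A_2 : A_1]$ (possibly infinite). Since $A_2 = \bigsqcup_i a_i A_1$, applying $f$ gives $f(A_2) = \bigcup_i f(a_i) f(A_1)$, so $\{f(a_i)\}_{i \in I}$ exhausts the cosets of $f(A_1)$ in $f(A_2)$ (though this map on index sets need not be injective, as two cosets $a_i A_1$ and $a_j A_1$ may become the same coset modulo $f(A_1)$). Hence $[f(A_2) : f(A_1)] \leq |I| = [A_2 : A_1]$. Since each part reduces to a direct computation, there is really no technical obstacle; the only subtlety worth highlighting in the write-up is that surjectivity of $f$ is used precisely once, in the isomorphism half of $(ii)$.
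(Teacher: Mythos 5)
Your proof is correct and complete; all three parts are handled by the standard arguments (subgroup criterion for $(i)$, normality check plus the first isomorphism theorem applied to $f^{-1}(B_2)\to B_2/B_1$ for $(ii)$, and pushing forward a transversal for $(iii)$), and you correctly isolate where surjectivity is actually needed. The paper itself states this lemma as a standard fact and gives no proof, so there is nothing to compare against; your write-up would serve as a valid proof of the statement as used in the paper.
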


With the algebraic preparations in the above sections, we are ready to show the main result of this section.
Throughout this subsection, we will assume that $N$ is a finitely generated nilpotent group with $\rank(N)=m$ and $\Step(N)=c_0$.
Let $N\equiv\langle B\rangle$ with $B\equiv\{g_1, g_2,\ldots, g_{\bar{d}}\}$ and $B^{-1}=B$. Let
\begin{equation}
N\equiv N_0\rhd N_1\rhd\ldots\rhd N_{c_0-1}\rhd N_{c_0}=\{e\}\label{lower-central-series}
\end{equation}
be the lower central series with $N_{s}\equiv[N_{s-1},N]$ for every $1\leq s\leq c_0$. It is standard that for each $1\leq s\leq c_0$, $N_s$ is normal in $N$ and $N_{s-1}/N_s$
is a finitely generated abelian group. Denote $n_s\equiv\rank(N_{s-1}/N_{s})$. Obviously,
 $\sum\limits_{s=1}^{c_0}n_s=m$.

\begin{theorem}\label{t:refinement-of-lower-central-series} In the above notations,
there exists a refinement of the lower central series (\ref{lower-central-series}) such that for each $1\leq s\leq c_0$ the following properties hold:

$(i)$ There is a normal series
\begin{equation}
N_{s-1}\rhd N_{s,n_s}\rhd N_{s,n_s-1}\rhd N_{s,n_s-2}\rhd\ldots\rhd N_{s,1}\rhd N_{s,0}\rhd N_{s}
\end{equation}
 such that $N_{s,k}/N_{s,k-1}\cong\mathbb{Z}$ for $1\leq k\leq n_s$, while $N_{s-1}/N_{s,n_s}$ and $N_{s,0}/N_s$ are finite abelian groups.

$(ii)$ There exists a subset \begin{equation}\{\sigma_{s,1},\sigma_{s,2},\ldots,\sigma_{s,n_s}\}\subset N_{s-1}\label{good-cyclic-factors}\end{equation} such that for each $1\leq k\leq n_s$, 
\begin{equation}\length_{B}(\sigma_{s,k})\leq 3\cdot 2^{c_0}-2.\end{equation} 
 Moreover, the quotient group $N_{s,k}/N_{s,k-1}$ is generated by $\sigma_{s,k}\cdot N_{s,k-1}$. In particular, $\langle\sigma_{s,k}\cdot N_{s,k-1}\rangle\cong\mathbb{Z}$. 
\end{theorem}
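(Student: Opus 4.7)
The plan is to construct the refinement inside each abelian quotient $A_s \equiv N_{s-1}/N_s$ by selecting $\sigma_{s,1},\ldots,\sigma_{s,n_s}$ as basic commutators of weight $s-1$ whose images are $\dQ$-linearly independent modulo torsion; the length bound will then follow from a direct recursion on the depth of the commutator.

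First I would record a uniform length estimate: since $[x,y]=x^{-1}y^{-1}xy$ is a word of length $4$, induction on weight gives that every element of $\mathcal{C}_k(B)$ admits a presentation in $B$ of length at most $L_k$, where $L_0=1$ and $L_k=2L_{k-1}+2$; solving yields $L_k=3\cdot 2^k-2$. In particular any basic commutator of weight at most $c_0-1$ has length at most $3\cdot 2^{c_0}-2$. By Corollary~\ref{generating-lower-central-series}, $N_{s-1}$ is generated by $\bigcup_{k\geq s-1}\mathcal{C}_k(B)$, and every $\mathcal{C}_k(B)$ with $k\geq s$ lies in $[N_{s-1},N]=N_s$ by the inductive definition of basic commutators. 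Consequently the images of $\mathcal{C}_{s-1}(B)=\{c_1,\ldots,c_d\}$ already generate $A_s$.

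Next, decompose $A_s\cong \dZ^{n_s}\oplus T_s$ with $T_s$ the finite torsion subgroup. Since the projections of $\bar c_1,\ldots,\bar c_d$ to $A_s/T_s\cong\dZ^{n_s}$ span this group over $\dZ$, they span $\dQ^{n_s}$ over $\dQ$, so by standard linear algebra one may choose indices $i_1,\ldots,i_{n_s}$ for which the projections of $\bar c_{i_1},\ldots,\bar c_{i_{n_s}}$ are $\dQ$-linearly independent. Set $\sigma_{s,k}\equiv c_{i_k}\in N_{s-1}$; by the length estimate above, $\length_B(\sigma_{s,k})\leq 3\cdot 2^{c_0}-2$.

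Finally I would define $N_{s,0}$ as the preimage in $N_{s-1}$ of $T_s\subset A_s$, and for $1\leq k\leq n_s$ set $N_{s,k}\equiv\langle \sigma_{s,1},\ldots,\sigma_{s,k},N_{s,0}\rangle$. Each $N_{s,k}$ contains $N_s$, so it is determined by its image in $A_s$, which equals $T_s+\dZ\bar\sigma_{s,1}+\cdots+\dZ\bar\sigma_{s,k}$. Because $A_s$ is abelian every subgroup is normal, yielding the required chain of successive normal inclusions $N_s\lhd N_{s,0}\lhd N_{s,1}\lhd\cdots\lhd N_{s,n_s}\lhd N_{s-1}$. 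The successive quotient $N_{s,k}/N_{s,k-1}$ is generated by $\sigma_{s,k}N_{s,k-1}$ by construction, and the $\dQ$-linear independence forces the image of $\bar\sigma_{s,k}$ in the torsion-free group $\dZ^{n_s}/\langle\bar\sigma_{s,1},\ldots,\bar\sigma_{s,k-1}\rangle$ to have infinite order, so $N_{s,k}/N_{s,k-1}\cong\dZ$. The remaining finiteness assertions are automatic: $N_{s,0}/N_s=T_s$ is finite by definition, and $N_{s-1}/N_{s,n_s}\cong\dZ^{n_s}/\langle\bar\sigma_{s,1},\ldots,\bar\sigma_{s,n_s}\rangle$ is finite as the cokernel of a full-rank sublattice. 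The main obstacle is arranging both properties at once, namely that the $n_s$ chosen basic commutators satisfy the length bound \emph{and} the $\dQ$-linear independence needed to force infinite (rather than finite) cyclic successive quotients; once the $\sigma_{s,k}$ are selected as above, normality and finiteness follow mechanically from the abelianness of $A_s$.
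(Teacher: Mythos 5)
Your proof is correct, and it reaches the refinement by a genuinely more direct route than the paper. The paper selects the graded generators $\sigma_{s,k}$ by passing to $N_{s-1}/G_{s,0}\cong\dZ^{n_s}$ (with $G_{s,0}=\pr_s^{-1}(\Tor(A_s))$), taking coset representatives in the image of the canonical transversal of Section \ref{ss:effective-generating-set}, expanding their canonical presentations in the alphabet $B_{s-1}=\bigcup_{k\geq s-1}\mathcal{C}_k(B)$, and invoking Corollary \ref{generator-is-minimal} to see that each individual letter lies in the transversal and hence has infinite order; an induction then extracts $n_s$ letters whose images generate $\dZ^{k}$ for each $k$. You replace all of this with two elementary observations: since $\mathcal{C}_k(B)\subset N_k\subseteq N_s$ for $k\geq s$, the images of the weight-$(s-1)$ basic commutators alone already generate $A_s=N_{s-1}/N_s$; and a generating set of the lattice $A_s/\Tor(A_s)\cong\dZ^{n_s}$ spans $\dQ^{n_s}$, so $n_s$ of these commutators can be chosen with $\dQ$-linearly independent projections modulo torsion. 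From there your construction coincides with the paper's: the $N_{s,k}$ are the preimages of the corresponding subgroups of $A_s$, normality is automatic because every subgroup between $N_s$ and $N_{s-1}$ contains $[N_{s-1},N]$, the infinite cyclicity of each successive quotient follows by projecting a hypothetical relation to $A_s/\Tor(A_s)$, and the bound $\length_B(\sigma_{s,k})\leq 3\cdot 2^{c_0}-2$ comes from the same recursion $L_k=2L_{k-1}+2$. Your route dispenses with the transversal and alphabetical-ordering machinery for this particular theorem (that machinery is still needed elsewhere, e.g.\ in Proposition \ref{effective-generating-set}), at no cost, since your generators land in $\mathcal{C}_{s-1}(B)\subset B_{s-1}$ and satisfy the identical length estimate.
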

 
\begin{proof}
At the beginning of the proof, we clarify the notations which will be used in the proof. We have assumed that the finitely generated nilpotent group $N$ has a symmetric generating set $B$, then Lemma \ref{generating-lower-central-series} shows that
$N_{s-1}$ has a generating set $B_{s-1}\equiv\bigcup\limits_{k=s-1}^{c_0}\mathcal{C}_k(B)$. By straightforward calculations, for each $b_{s-1}\in B_{s-1}$, \begin{equation}\length_B(b_{s-1})\leq 3\cdot 2^{c_0}-2.\label{length-estimate-higher-generating-set}\end{equation}
Let \begin{equation}
\pr_s:N_{s-1}\rightarrow N_{s-1}/N_s
\end{equation}
be the natural quotient 
homomorphism and denote $A_s\equiv N_{s-1}/N_s$.
Since $A_s$ is a finitely generated abelian group with $\rank(A_s)=n_s$, then $A_s\rhd\Tor(A_s)$ with $A_s/\Tor(A_s)\cong\mathbb{Z}^{n_s}$, where $\Tor(A_s)$ is the torsion subgroup of $A_s$.
Denote $G_{s,0}\equiv\pr_s^{-1}(\Tor(A_s))$, by  Lemma \ref{pre-image-lemma}, $G_{s,0}\lhd N_{s-1}$ and
\begin{equation}
N_{s-1}/G_{s,0}=\pr_s^{-1}(A_s)/\pr_s^{-1}(\Tor(A_s))\cong A_{s}/\Tor(A_s)\cong\mathbb{Z}^{n_s}.
\end{equation}
Let $\{\bar{v}_1,\ldots,\bar{v}_{n_s}\}$
be a  sequence of elements in $N_{s-1}/G_{s,0}$ such that
\begin{equation}
\langle\bar{v}_1,\ldots,\bar{v}_k\rangle\cong\mathbb{Z}^{k},\ \forall 1\leq k\leq n_s.\label{Z^k-generators}
\end{equation}
Let \begin{equation}\pi_s: N_{s-1}\longrightarrow N_{s-1}/G_{s,0}\end{equation}
be the natural quotient homomorphism. In the proof of this Theorem, for convenience, given any $g\in N_{s-1}$, the image $\pi_s(g)$ is denoted by $\bar{g}$.
The Theorem will be proved through the following Claims.

\vspace{0.5cm}

{\bf{Claim 1.}} There exists a sequence of elements $\{\sigma_{s,1},\sigma_{s,2},\ldots,\sigma_{s,n_s}\}\subset B_{s-1}\subset N_{s-1}$ such that for each $1\leq k\leq n_s$, \begin{equation}\length_{B}(\sigma_{s,k})\leq 3\cdot 2^{c_0}-2,\end{equation} and
\begin{equation}
 A_{s,k}'\equiv\langle\bar{\sigma}_{s,1},\ldots, \bar{\sigma}_{s,k}\rangle,\label{good-free-abelian-subgroup}
\end{equation}
is a free abelian subgroup in $N_{s-1}/G_{s,0}$ with $\rank(A_{s,k}')=k$, where $\bar{\sigma}_{s,k}\equiv\pi_s(\sigma_{s,k})$.

\vspace{0.5cm}

Let $F_s$ be the canonical transversal with respect to the quotient $N_{s-1}/G_{s,0}$ with $F_s(e)=e$. With respect to the canonical transversal $F_s$, there is a sequence of elements $\{v_{\ell}\}_{\ell=1}^{n_s}\subset \Image(F_s)\subset N$ such that for each $1\leq\ell\leq n_s$, $v_{\ell}\in\bar{v}_{\ell}$, where $\bar{v}_{\ell}$ are torsion-free generators as in (\ref{Z^k-generators}). For each $1\leq \ell \leq n_s$,
let \begin{equation}\bm{v}_{\ell}=b_{\ell, 1}\cdot b_{\ell, 2}\cdot\ldots\cdot b_{\ell, d_{\ell}}\label{canonical-presentation-v_l}\end{equation}
 be the canonical presentation of $v_{\ell}$ such that $b_{\ell, j}\in B_{s-1}$ with $1\leq j\leq d_{\ell}$. By Corollary \ref{generator-is-minimal}, $b_{\ell, j}\in\Image(F_s)$, and hence $\bar{b}_{\ell, j}$ is of infinite order 
 in $N_{s-1}/G_{s,0}$ for all $1\leq j\leq d_{\ell}$. That is,
\begin{equation}
\langle\bar{b}_{\ell, j}\rangle\cong\mathbb{Z},\ \forall 1\leq j\leq d_{\ell}, \ 1\leq \ell\leq n_s.\label{single-element-torsion-free}
\end{equation}

Claim 1 follows from the following inductive statement. 

\vspace{0.5cm}

($*$) \quad For each 
$1\leq \ell\leq n_s$,  there exists some $b_{\ell,\mu_{\ell}}\in B_{s-1}$  with $1\leq \mu_{\ell}\leq d_{\ell}$
in the canonical presentations of $\{v_1,\ldots, v_{n_s}\}$ such that $\sigma_{s,\ell}$ can be chosen as $b_{\ell,\mu_{\ell}}$.

\vspace{0.5cm}

Let $\ell=1$ and we choose \begin{equation}\sigma_{s,1}\equiv b_{1,1}.\end{equation}
Equation (\ref{single-element-torsion-free}) shows that $\langle\bar{\sigma}_{s,1}\rangle=\langle\bar{b}_{1,1}\rangle\cong\mathbb{Z}$. 

We just set $n_s>1$. Otherwise, it is done. Assume that Statement $(*)$ holds for $1\leq\ell\leq k-1$, we will search some $b_{k,\mu_k}$
in the canonical presentations of $\{v_1,\ldots, v_k\}$ such that $\sigma_{s,k}$ can be chosen as $b_{k,\mu_k}$.
Observe that
 there exists some $b_{k,\mu_{k}}\in B_{s-1}$ with $1\leq\mu_{k}\leq d_{k}$ such that $\bar{b}_{k,\mu_{k}}\not\in\langle\bar{\sigma}_{s,1},\ldots,\bar{\sigma}_{s,k-1}\rangle$. If not, then for all $1\leq \ell\leq k$ and $1\leq j\leq d_{\ell}$, it holds that
\begin{equation}
 \bar{b}_{\ell, j}\in\langle\bar{\sigma}_{s,1},\ldots,\bar{\sigma}_{s,k-1}\rangle.
\end{equation}
which implies that
\begin{equation}
\bar{v}_{\ell}\in\langle\bar{\sigma}_{s,1},\ldots,\bar{\sigma}_{s,k-1}\rangle,\ \forall 1\leq\ell\leq k.
\end{equation}
That is, $\rank\langle\bar{v}_1,\ldots,\bar{v}_k\rangle=k-1$ which contradicts to equation (\ref{Z^k-generators}).
Therefore, if we choose $\sigma_{s,k}=b_{k,\mu_k}$, then by (\ref{single-element-torsion-free})
\begin{equation}
\langle\bar{\sigma}_{s,k}\rangle=\langle\bar{b}_{k,\mu_k}\rangle\cong\mathbb{Z},
\end{equation}
and
\begin{equation}
\langle\bar{\sigma}_{s,1},\ldots,\bar{\sigma}_{s,k-1},\bar{\sigma}_{s,k}\rangle\cong\mathbb{Z}^k.
\end{equation}
Now the induction is complete. 

Finally, for each $1\leq k\leq n_s$, $\length_{B}(\sigma_{s,k})\leq 3\cdot 2^{c_0}-2$ follows from $\sigma_{s,k}\in B_{s-1}$ and equation (\ref{length-estimate-higher-generating-set}).
We have finished the proof of the Claim 1.

\vspace{0.5cm}

We have constructed the subset (\ref{good-cyclic-factors}). So now we are in a position to give the desired normal series. 

\vspace{0.5cm}

{\bf{Claim 2.}} There exists a normal series 
\begin{equation}
N_{s-1}\rhd N_{s,n_s}\rhd N_{s,n_s-1}\rhd\ldots\rhd N_{s,1}\rhd N_{s,0}\rhd N_s=[N_{s-1},N] \label{first-level-normal-series}
\end{equation}
such that for each $1\leq k\leq n_s$, the following holds:

$(i)$
$N_{s,k}/N_{s,k-1}\cong\mathbb{Z}$. Moreover, $N_{s-1}/N_{s,n_s}$ and $N_{s,0}/N_s$ are finite abelian groups.

$(ii)$ The quotient factor $N_{s,k}/N_{s,k-1}$ is generated by $\sigma_{s,k}\cdot N_{s,k-1}$, and hence $\langle\sigma_{s,k}\cdot N_{s,k-1}\rangle\cong\mathbb{Z}$.

\vspace{0.5cm}

For each $1\leq k\leq n_s$, let $A_{s,k}'$ be the free abelian group
defined by equation (\ref{good-free-abelian-subgroup}) such that
\begin{equation}
A_{s,k}'/A_{s,k-1}'\cong\mathbb{Z}.
\end{equation}
 Recall that $\pi_s$ is the natural quotient homomorphism
\begin{equation}
\pi_s: N_{s-1}\rightarrow N_{s-1}/G_{s,0},
\end{equation}
then define \begin{equation}N_{s,k}\equiv\pi_1^{-1}(A_{s,k}')\end{equation} for each $1\leq k\leq n_s$.
Lemma \ref{pre-image-lemma} implies that
$N_{s,k-1}\lhd N_{s,k}$ and $N_{s,k}/N_{s,k-1}\cong\mathbb{Z}$ for each $1\leq k\leq n_s$. Moreover, $N_{s,n_s}\lhd N_{s-1}$, $N_{s}\lhd N_{s,0}$ and
\begin{equation}
N_{s-1}/N_{s,n_s}\cong A_{s}/A_{s,n_s}',\ N_{s,0}/N_s\cong\Tor(A_s).
\end{equation}
Hence both of the above two quotient groups are finite abelian.
We have obtained normal series (\ref{first-level-normal-series})
and proved property $(i)$.

Now we are showing that $N_{s,k}=\langle\sigma_{s,k}, N_{s,k-1}\rangle$ for each $1\leq k\leq n_s$. By definition, $\bar{\sigma}_{ks}\in A_{s,k}'$, and thus $\sigma_{k,s}\in\pi_s^{-1}(A_{s,k}')=N_{s,k}$. So it suffices to show that 
\begin{equation}
N_{s,k}\leq\langle\sigma_{s,k}, N_{s,k-1}\rangle.
\end{equation}
 In fact, take any $g\in N_{s,k}$, there are $a_1,\ldots, a_k\in \mathbb{Z}$ such that
\begin{equation}
\bar{g}=\bar{\sigma}_{s,1}^{a_1}\cdot\bar{\sigma}_{s,2}^{a_2}\cdot\ldots\cdot\bar{\sigma}_{s,k}^{a_{k}}\in A_{s,k}'.
\end{equation}
Since the quotient $\pi_s$ is a homomorphism,
\begin{equation}
\overline{g\cdot\sigma_{s,k}^{-a_{k}}}=\bar{\sigma}_{s,1}^{a_1}\cdot\ldots\cdot\bar{\sigma}_{s,k-1}^{a_{k-1}}\in A_{s,k-1}'.
\end{equation}
By definition, $g\cdot\sigma_{s,k}^{-a_{k}}\in N_{s,k-1}$ and thus
\begin{equation}
g\in \langle\sigma_{s,k}, N_{s,k-1}\rangle.
\end{equation}

The last is to show that the quotient group $N_{s,k}/N_{s,k-1}$ is generated by $\sigma_{s,k}\cdot N_{s,k-1}$.  The relation $N_{s,k-1}\lhd N_{s,k}$ implies that for any $d_1\in\mathbb{Z}$ and $h\in N_{s,k-1}$, there exists $h'\in N_{s,k-1}$ such that
$\sigma_{k,s}^{-d_1}\cdot h\cdot\sigma_{k,s}^{d_1}=h', 
$ i.e. $ h\cdot\sigma_{k,s}^{d_1}=\sigma_{k,s}^{d_1}\cdot h'$. Consequently, any $g\in N_{s,k}$
can be presented in terms of $\sigma_{s,k}^{d}\cdot g'$
for some $d\in\mathbb{Z}$ and for some $g'\in N_{s,k-1}$. Therefore, \begin{equation}g\cdot N_{s,k-1}=(\sigma_{k,s}\cdot N_{s,k-1})^d.
 \end{equation}
 Property $(iii)$ has been proved. 
Now the proof of Claim 2 is complete.

\vspace{0.5cm}

The Theorem immediately follows from Claim 1 and Claim 2.

\vspace{0.5cm}

\end{proof}

The following Corollary will be applied in the proof of some Lemmas in Section \ref{ss:nonlocal_fund_group}.

\begin{corollary}\label{collection-of-graded-generators} In the notations of the Theorem \ref{t:refinement-of-lower-central-series}, the following properties hold: 

$(i)$ \begin{equation}N'\equiv\langle\sigma_{1,n_1},\sigma_{1,n_1-1},\ldots,\sigma_{1,1},\sigma_{2,n_2},\sigma_{2,n_2-1},\ldots,\sigma_{2,1},\ldots, \sigma_{c_0,n_{c_0}},\sigma_{c_0,n_{c_0}-1},\ldots,\sigma_{c_0,1}\rangle
\end{equation} is a nilpotent subgroup in $N$ with $\rank(N')=m$.

$(ii)$ for any $a_{i,j}\in\mathbb{Z}$ with $1\leq i\leq c_0$ and $1\leq j\leq n_i$,
\begin{equation}
N''\equiv\langle\sigma_{1,n_1}^{a_{1,n_1}},\sigma_{1,n_1-1}^{a_{1,n_1-1}},\ldots,\sigma_{1,1}^{a_{1,1}},\sigma_{2,n_2}^{a_{2,n_2}},\sigma_{2,n_2-1}^{a_{2,n_2-1}},\ldots,\sigma_{2,1}^{a_{2,1}},\ldots, \sigma_{c_0,n_{c_0}}^{a_{c_0,n_{c_0}}},\sigma_{c_0,n_{c_0}-1}^{a_{c_0,n_{c_0}-1}},\ldots,\sigma_{c_0,1}^{a_{c_0,1}}\rangle
\end{equation}
is a nilpotent subgroup of $N'$ with $\rank(N'')=m$.
\end{corollary}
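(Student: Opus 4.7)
The plan is to intersect $N'$ (respectively $N''$) with the refined subnormal series of $N$ produced in Theorem \ref{t:refinement-of-lower-central-series}, and to verify that the induced series on the subgroup contributes exactly one infinite cyclic factor at each of the $m$ ``infinite-cyclic levels'' of the ambient chain. First I would note that $N' \leq N$ and $N'' \leq N'$ are immediate from the definitions, so both subgroups are finitely generated and nilpotent, and intersecting any polycyclic series of $N$ with the subgroup (together with the facts that nontrivial subgroups of $\mathbb{Z}$ are $\cong \mathbb{Z}$, while subgroups of finite cyclic groups are finite cyclic) yields the monotonicity $\rank(N'') \leq \rank(N') \leq \rank(N) = m$. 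Hence in each part it suffices to exhibit $m$ infinite cyclic factors in a polycyclic series of the subgroup.

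For part (i), I would fix $1 \leq s \leq c_0$ and $1 \leq k \leq n_s$ and consider the canonical injection
\[ (N' \cap N_{s,k})/(N' \cap N_{s,k-1}) \hookrightarrow N_{s,k}/N_{s,k-1} \cong \mathbb{Z}. \]
By Theorem \ref{t:refinement-of-lower-central-series}(ii), the image of $\sigma_{s,k}$ generates the target $\mathbb{Z}$, and by construction $\sigma_{s,k} \in N' \cap N_{s,k}$ (the containment $\sigma_{s,k} \in N_{s,k}$ is part of Claim 2 in the proof of that theorem). Hence the left-hand subquotient contains this generator and coincides with $\mathbb{Z}$. The remaining ``boundary'' subquotients coming from the finite abelian groups $N_{s-1}/N_{s,n_s}$ and $N_{s,0}/N_s$ remain finite after intersection with $N'$ and can be refined into finite cyclic factors. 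Assembling the $m=\sum_{s=1}^{c_0}n_s$ infinite cyclic factors produced this way into a polycyclic series of $N'$, and invoking the well-definedness of the polycyclic rank, I would conclude $\rank(N') \geq m$, which closes part (i).

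The argument for (ii) should be word-for-word identical, with $\sigma_{s,k}^{a_{s,k}}$ in place of $\sigma_{s,k}$ throughout. The only additional observation is that the image of $\sigma_{s,k}^{a_{s,k}}$ in $N_{s,k}/N_{s,k-1} \cong \mathbb{Z}$ is $a_{s,k}$ times the generator; since $a_{s,k} \in \mathbb{N}_+$ this image is nonzero, so $(N'' \cap N_{s,k})/(N'' \cap N_{s,k-1})$ is a nonzero subgroup of $\mathbb{Z}$ and therefore $\cong \mathbb{Z}$. I do not expect any substantive obstacle: all of the delicate algebraic work---namely, producing a refinement of the lower central series in which each $\sigma_{s,k}$ projects to a generator of an infinite cyclic subquotient---has already been carried out in Theorem \ref{t:refinement-of-lower-central-series}, and what remains is the routine (but essential) remark that intersecting a subnormal series of $N$ with a subgroup preserves subnormality and that each infinite cyclic level survives, thanks to the rigidity of $\mathbb{Z}$.
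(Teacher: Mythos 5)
Your proposal is correct, but it is organized differently from the paper's argument. The paper proves $(i)$ by induction on $\Step(N)$: it sets $N_1'\equiv\langle \sigma_{s,k}: s\geq 2\rangle\leq N_1=[N,N]$, invokes the inductive hypothesis to get $\rank(N_1')=m-n_1$, and then adjoins the top-level generators $\sigma_{1,1},\ldots,\sigma_{1,n_1}$ one at a time, showing each intermediate inclusion $N_{1,j}'\leq N_{1,j+1}'$ has infinite index (because $N_{1,j}'\leq N_{1,j}$ while $N_{1,j+1}/N_{1,j}=\langle\sigma_{1,j+1}\cdot N_{1,j}\rangle\cong\mathbb{Z}$) and appealing to Lemma \ref{basic-nil-lemma}$(ii)$ to convert infinite index into a strict rank increase. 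You instead intersect $N'$ with the entire refined series of Theorem \ref{t:refinement-of-lower-central-series} and compute every subquotient at once via the embedding $(N'\cap N_{s,k})/(N'\cap N_{s,k-1})\hookrightarrow N_{s,k}/N_{s,k-1}\cong\mathbb{Z}$, using that $\sigma_{s,k}\in N'\cap N_{s,k}$ maps onto the generator. Both arguments rest on the same structural input (that each $\sigma_{s,k}$ generates the infinite cyclic quotient $N_{s,k}/N_{s,k-1}$), but your version is more uniform: it treats all levels simultaneously, produces an explicit polycyclic series of $N'$ with exactly $m$ infinite cyclic factors, and sidesteps the induction on the step, in which the paper tacitly applies the inductive statement to $N_1$ equipped with the truncated series $N_1\rhd N_2\rhd\cdots$ rather than with the lower central series of $N_1$ itself. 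One caveat you handle correctly but should make explicit: in $(ii)$ the conclusion requires $a_{s,k}\neq 0$ (the corollary as literally stated with $a_{i,j}\in\mathbb{Z}$ fails if some exponent vanishes); this matches how the corollary is actually invoked, e.g.\ in Lemma \ref{every-point-rank}, where $1\leq a_{i,j}\leq N_1$.
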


\begin{proof}
$(i)$ We prove it by induction on the step of the nilpotent group $N$. It is clear that 
the statement is true if $\Step(N)=1$. In this case, $N'$ is a finite-index free abelian subgroup in $N$  such that $\rank(N')=\rank(N)$. Assume that the statement is true for any finitely generated nilpotent group of step $\leq c-1$, we will prove that the statement also holds for $N$ with $\Step(N)=c\geq 2$. 

Let us denote
\begin{equation}
N_1'\equiv\langle\sigma_{2,n_2},\sigma_{2,n_2-1},\ldots,\sigma_{2,1},\ldots, \sigma_{c_0,n_{c_0}},\sigma_{c_0,n_{c_0}-1},\ldots,\sigma_{c_0,1}\rangle\leq N_1=[N,N].
\end{equation}
Notice that, $\Step(N_1)=c-1$ and then
\begin{equation}
\rank(N_1')=\rank(N_1)=m-n_1,\label{inductive-rank}.
\end{equation}
where $n_1=\rank(N/N_1)$.
Let us denote 
\begin{equation}
N_{1,j}'\equiv\langle\sigma_{1,j},\ldots,\sigma_{1,1},\sigma_{2,n_2},\sigma_{2,n_2-1},\ldots,\sigma_{2,1},\ldots, \sigma_{c_0,n_{c_0}},\sigma_{c_0,n_{c_0}-1},\ldots,\sigma_{c_0,1}\rangle,\ 1\leq j\leq n_1.
\end{equation}
 It suffices to show that for every $1\leq j\leq n_1-1$,
\begin{equation}
[N_{1,j+1}':N_{1,j}']=\infty\label{infinite-index-1}\end{equation}
and
\begin{equation}
[N_{1,1}':N_1']=\infty.\label{infinite-index-2}
\end{equation}
In fact, by Lemma \ref{basic-nil-lemma}, the above two equations imply that
\begin{equation}
\rank(N_{1,j+1}')\geq\rank(N_{1,j}')+1,\ 1\leq j\leq n_1-1
\end{equation}
and
\begin{equation}
\rank(N_{1,1}')\geq \rank(N_1')+1.
\end{equation}
Therefore, by inequality (\ref{inductive-rank}),
 \begin{equation}
m\geq \rank(N')=\rank(N_{1,n_1}')\geq\rank(N_1')+n_1=(m-n_1)+n_1,
 \end{equation}
and thus $\rank(N')=m$.
Now we prove equation (\ref{infinite-index-1}) and equation (\ref{infinite-index-2}). In fact, by Theorem \ref{t:refinement-of-lower-central-series}, $N_{1,j+1}/N_{1,j}=\langle \sigma_{j+1}\cdot N_{1,j}\rangle\cong\mathbb{Z}$.
Notice that, $N_{1,j}'\leq N_{1,j}$ and then for any $d_{1,j+1}\in\mathbb{Z}$, $\sigma_{1,j+1}^{d_{1,j+1}}\not\in N_{1,j}'$.
We have proved (\ref{infinite-index-1}). Similarly, (\ref{infinite-index-2}) also holds.
The proof is complete

$(ii)$ The argument is the same as that of $(i)$. So we just omit the proof.

\end{proof}

\subsection{Geometric Properties of Almost Nilpotent Groups}
\label{ss:geometric-properties-of-nilpotency}
Those algebraic tools developed in the above subsections will now be presented and used in a more geometric fashion.
The basic setting is that on a Riemannian manifold $(M^n,g,p)$, assume that  $\Gamma_{\delta}(p)\equiv\Image[\pi_1(B_{\delta}(p))\rightarrow\pi_1(B_2(p))]$ is $(w,\ell)$-nilpotent for some a priori constants $w>0$, $\ell>0$. 
Note that in the context of $\Ric_g\geq-(n-1)$, theorem \ref{Generalized-Margulis-Lemma} shows that the above statement is always true for sufficiently small $\delta(n)$.

We will focus on more delicate geometric properties of $(w,\ell)$-nilpotency. 
Specifically, the nilpotent subgroup $\mathcal{N}$ of controlled index $w$ in $\Gamma_{\delta}(p)$ can be generated by the short loops of scale roughly $\delta$. Moreover, 
the lower central series of $\mathcal{N}$ has a nice refinement such that each infinite cyclic factor is generated by short loops of scale roughly $\delta$.
This refinement of the normal series plays an important role in the proof of our $\epsilon$-regularity Theorems. First, we give a precise description of the above picture.
\begin{definition}
[$(m,\epsilon)$-displacement property]\label{def-small-displacement}Given a metric space $(X,d,p)$, a nilpotent group $N$ with $\rank(N)=m$, assume that $N\leq\Isom(X)$.
We say $N$ satisfies the $(m,\epsilon)$-\textit{displacement property at} $p$ if there exists a $c$-tuple $(n_1,n_2,\ldots, n_{c})\in\mathbb{Z}^c$ with $c=\Step(N)$ and $m=\sum\limits_{s=1}^{c}n_s$ such that for each $1\leq s\leq c$, the following properties hold:

$(i)$ the lower central series has the refinement
\begin{equation}
N_{s-1}\rhd N_{s,n_s}\rhd N_{s,n_s-1}\rhd N_{s,n_s-2}\rhd\ldots\rhd N_{s,1}\rhd N_{s,0}\rhd N_{s}\label{refined-lcs}
\end{equation}
 such that $N_{s,k}/N_{s,k-1}\cong\mathbb{Z}$ with $1\leq k\leq n_s$, where $N_s\equiv [N_{s-1},N]$ and $N_0\equiv N$. Moreover, $N_{s-1}/N_{s,n_s}$ and $N_{s,0}/N_s$ are finite groups.

$(ii)$ There exists a subset \begin{equation}\{\sigma_{s,1},\sigma_{s,2},\ldots,\sigma_{s,n_s}\}\subset N_{s-1}\label{good-cyclic-factors}\end{equation} such that for each $1\leq k\leq n_s$, 
\begin{equation}d(\sigma_{s,k}\cdot p,p)<\epsilon,\end{equation} 
and  $N_{s,k}/N_{s,k-1}=\langle\sigma_{s,k}\cdot N_{s,k-1}\rangle$.
Each $\sigma_{s,k}$ is called an {\textit{
$\epsilon$-graded generator}}.
\end{definition}

\begin{remark}
In applications, we will focus on a normal cover with the isometric action of the deck transformation group.\end{remark}

By Definition \ref{def-small-displacement}, Theorem \ref{t:refinement-of-lower-central-series}
can be written in a geometric fashion. That is,

\begin{lemma}\label{w-nilpotent-nice-factors}
Given $\delta>0$, $\ell,w<\infty$, then the constant $K=10\cdot w\cdot 2^{\ell}<\infty$ satisfies the following property. Let $(X,d,p)$ be a metric space and $\Gamma\leq\Isom(X)$ be  finitely generated 
such that:

$(i)$ $\Gamma$ is $(w,\ell)$-nilpotent,

$(ii)$ $\Gamma$ has a finite generating set \begin{equation}S_{\delta}(p)\equiv\Big\{\gamma_1,\ldots,\gamma_{d_1}\Big|d(\gamma_j\cdot p,p)<\delta,\ \gamma_j\in\Isom(X),\ 1\leq j\leq d_1<\infty\Big\},\end{equation}
then $\Gamma$ has a nilpotent subgroup $N$ with $[\Gamma:N]\leq w$, $\length(N)\leq \ell$ and $N$ satisfies the $(m,K\cdot\delta)$-displacement property at $p$ for some $m\leq \ell$.
\end{lemma}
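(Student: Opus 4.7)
The strategy is to combine the effective Schreier generating set of Proposition~\ref{effective-generating-set} with the refinement of the lower central series given by Theorem~\ref{t:refinement-of-lower-central-series}, and then convert word-length estimates into displacement estimates via the triangle inequality.

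First, since $\Gamma$ is $(w,\ell)$-nilpotent, by definition there exists a nilpotent subgroup $N\leq\Gamma$ with $[\Gamma:N]\leq w$ and $\length(N)\leq\ell$. Lemma~\ref{basic-nil-lemma} then yields $\Step(N)\leq\ell$ and $m\equiv\rank(N)\leq\length(N)\leq\ell$, which takes care of the rank bound. Applying Proposition~\ref{effective-generating-set} to $N\leq\Gamma$ with the given generating set $S_\delta(p)$, we obtain a finite generating set $B$ of $N$ such that
\begin{equation}
\length_{S_\delta(p)}(b)\leq 2w+1\qquad \text{for every } b\in B.
\end{equation}
Because every $s\in S_\delta(p)$ acts as an isometry with $d(s\cdot p,p)<\delta$, iterating the triangle inequality along the canonical presentation of each $b\in B$ gives the displacement bound $d(b\cdot p,p)<(2w+1)\delta$.

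Next, apply Theorem~\ref{t:refinement-of-lower-central-series} to the nilpotent group $N$ with the generating set $B$ and $\Step(N)=c_0\leq\ell$. This produces the refinement of the lower central series
\begin{equation}
N_{s-1}\rhd N_{s,n_s}\rhd N_{s,n_s-1}\rhd\ldots\rhd N_{s,1}\rhd N_{s,0}\rhd N_s
\end{equation}
required by Definition~\ref{def-small-displacement}(i), together with graded generators $\sigma_{s,k}\in N_{s-1}$ satisfying $N_{s,k}/N_{s,k-1}=\langle\sigma_{s,k}\cdot N_{s,k-1}\rangle\cong\mathbb{Z}$ and
\begin{equation}
\length_B(\sigma_{s,k})\leq 3\cdot 2^{c_0}-2\leq 3\cdot 2^{\ell}-2.
\end{equation}

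To finish, estimate the displacement of each graded generator by chaining the two length bounds. Since $\sigma_{s,k}$ is a product of at most $3\cdot 2^\ell-2$ elements of $B$, each of which displaces $p$ by less than $(2w+1)\delta$, another application of the triangle inequality gives
\begin{equation}
d(\sigma_{s,k}\cdot p,p)<(3\cdot 2^\ell-2)(2w+1)\,\delta\leq 9w\cdot 2^\ell\,\delta<10\,w\,2^\ell\,\delta=K\,\delta,
\end{equation}
where the middle inequality uses $w\geq 1$ (so $2w+1\leq 3w$) and $3\cdot 2^\ell-2\leq 3\cdot 2^\ell$. This verifies Definition~\ref{def-small-displacement}(ii) with the constant $K=10w\cdot 2^\ell$, completing the proof. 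The only real content is the length arithmetic; both algebraic inputs (Proposition~\ref{effective-generating-set} and Theorem~\ref{t:refinement-of-lower-central-series}) have already been established.
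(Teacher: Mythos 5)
Your proposal is correct and follows essentially the same route as the paper: extract the nilpotent subgroup $N$ of index $\leq w$, apply Proposition \ref{effective-generating-set} to get a generating set $B$ with $\length_{S_\delta(p)}(b)\leq 2w+1$, invoke Theorem \ref{t:refinement-of-lower-central-series} for the graded generators with $\length_B(\sigma_{s,k})\leq 3\cdot 2^{c_0}-2$, and chain the two bounds via the triangle inequality to get displacement $<K\delta$. The only difference is cosmetic: you spell out the final arithmetic $(3\cdot 2^\ell-2)(2w+1)<10w\cdot 2^\ell$ slightly more carefully than the paper does.
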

\begin{proof}
By $(i)$, there exists $N\leq\Gamma$ such that $[\Gamma:N]\leq w$, $\rank(N)=m$ and $\Step(N)=c_0\leq \length(N)\leq \ell$. Lemma \ref{effective-generating-set} states that 
$N$ has a generating set
$B$ with $\#(B)\leq w^2\cdot d_1$ such that for each $b\in B$, 
\begin{equation}
\length_{S_{\delta}(p)}(b)\leq 2w+1,
\end{equation}
which implies that
\begin{equation}
d(b\cdot p,p)\leq (2w+1)\cdot\delta.
\end{equation}

The $(m,K\cdot\delta)$-displacement property of $N$ immediately follows from Theorem \ref{t:refinement-of-lower-central-series}. That is, by Theorem \ref{t:refinement-of-lower-central-series}, we can refine the lower central series of $N$ such that (\ref{refined-lcs}) holds. Moreover, there exists a subset
\begin{equation}
\{\sigma_{s,1},\ldots,\sigma_{s,n_s}\}\subset N_{s-1}
\end{equation}
such that 
\begin{equation}
N_{s,k}/N_{s,k-1}=\langle\sigma_{s,k}\cdot N_{s,k-1}\rangle
\end{equation}
and 
\begin{equation}
\length_B(\sigma_{s,k})\leq 3\cdot 2^{c_0}-2.
\end{equation}
Therefore, $d(\sigma_{s,k}\cdot p, p)\leq (2w+1)\cdot( 3\cdot 2^s-2)\cdot\delta< K\cdot\delta$.

\end{proof}

In the context of lower Ricci curvature, we consider a normal covering space $\widehat{M}$ with the deck transformation group $G$. The Theorem below shows the structure of $\hat{G}_{\epsilon}(p)$ 
which is generated by 
short elements in $G$.  The proof is a combination of Theorem \ref{t:KW_almost_nilpotent} and Lemma \ref{w-nilpotent-nice-factors}.

\begin{theorem}\label{t:small-displacement-property}
Let $(Z^k,z^k)$ be a pointed Ricci-limit metric space with $\dim Z^k = k$ in the sense of theorem \ref{limiting-dimension}.  Then there exists 
$\epsilon_0=\epsilon_0(n,B_1(z^k))>0$, $w_0=w_0(n,B_1(z^k))<\infty$, $K_0=K_0(n,B_1(z^k))<\infty$ such that the following holds. For every $0<\delta\leq\epsilon_0$, if $(M^n, g,p)$  is  a Riemannian manifold with
$\Ric\geq-(n-1)$ such that $B_2(p)$ has a compact closure in $B_4(p)$ and
 \begin{equation}d_{GH}(B_2(p),B_2(z^k))<\delta,
\end{equation}
and if $\pi:(\widehat{B_2(p)},\hat{p},G)\rightarrow B_2(p)$ is a normal covering with $\pi(\hat{p})=p$ and the deck transformation group $G$, then the group 
\begin{equation} \hat{G}_{\delta}(p)\equiv\langle g\in G|d(g\cdot\hat{p},\hat{p})<2\delta\rangle\end{equation} contains a nilpotent subgroup $\mathcal{N}$
such that

$(i)$
$[\hat{G}_{\delta}(p):\mathcal{N}]\leq w_0,\ \length(\mathcal{N})\leq n-k,$

$(ii)$ $\mathcal{N}$ has the $(m_0,K_0\cdot\delta)$-displacement property at $\tilde{p}$, where
$m_0\equiv\rank(\mathcal{N})\leq n-k$.\end{theorem}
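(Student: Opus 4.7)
The plan is to reduce the claim to Theorem \ref{t:KW_almost_nilpotent} applied to the base ball $B_2(p)$, and then invoke Lemma \ref{w-nilpotent-nice-factors} to repackage the almost-nilpotent structure with geometrically controlled generators on the cover. I take $\epsilon_0 = \epsilon_0(n,B_1(z^k))$ and $w_0 = w_0(n,B_1(z^k))$ from Theorem \ref{t:KW_almost_nilpotent}; under the hypothesis $d_{GH}(B_2(p),B_2(z^k))<\delta\leq\epsilon_0$ that theorem yields that $\Gamma_{\epsilon_0}(p)$ is $(w_0,n-k)$-nilpotent. Since $\delta\leq\epsilon_0$ implies $\Gamma_\delta(p)\leq \Gamma_{\epsilon_0}(p)$, the subgroup $\Gamma_\delta(p)$ is itself $(w_0,n-k)$-nilpotent, and the $(w_0,n-k)$-nilpotency property is preserved under passage to subgroups and to quotients with the same constants.

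The key technical step is to exhibit $\hat G_\delta(p)$ as a subgroup of $\phi(\Gamma_\delta(p))\leq G$, where $\phi\colon \pi_1(B_2(p))\twoheadrightarrow G$ is the surjection associated with the connected normal cover. Given $g\in G$ with $d(g\hat p,\hat p)<2\delta$, the local-completeness consequence of $\overline{B_2(p)}\subset B_4(p)$ (lifted to $\widehat{B_2(p)}$ by the local isometry of the covering) produces a minimizing geodesic segment from $\hat p$ to $g\hat p$ of length $<2\delta$. Its projection to $B_2(p)$ is a loop at $p$ of length $<2\delta$, which therefore lies entirely in $B_\delta(p)$ and represents a class in $\Gamma_\delta(p)$ that $\phi$-maps to $g$. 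Since such elements $g$ generate $\hat G_\delta(p)$ by definition, we obtain $\hat G_\delta(p)\leq\phi(\Gamma_\delta(p))$, so $\hat G_\delta(p)$ is itself $(w_0,n-k)$-nilpotent.

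It remains to apply Lemma \ref{w-nilpotent-nice-factors} to the isometric action of $\hat G_\delta(p)$ on $\widehat{B_2(p)}$ at $\hat p$. Proper discontinuity of the deck action makes
\begin{equation*}
S \;:=\; \{g\in G : d(g\hat p,\hat p)<2\delta\}
\end{equation*}
a finite generating set of $\hat G_\delta(p)$, each of whose elements displaces $\hat p$ by less than $2\delta$. Applying Lemma \ref{w-nilpotent-nice-factors} with $w=w_0$, $\ell=n-k$, and with the lemma's $\delta$ replaced by $2\delta$, yields a nilpotent subgroup $\mathcal N\leq \hat G_\delta(p)$ with $[\hat G_\delta(p):\mathcal N]\leq w_0$ and $\length(\mathcal N)\leq n-k$, satisfying the $(m_0,\,2K\delta)$-displacement property at $\hat p$ for some $m_0=\rank(\mathcal N)\leq n-k$, where $K=10w_0\cdot 2^{n-k}$. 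Setting $K_0:=20w_0\cdot 2^{n-k}$ completes the proof.

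The main obstacle is the identification $\hat G_\delta(p)\leq\phi(\Gamma_\delta(p))$ in the second paragraph: it rests on the elementary but essential observation that a loop at $p$ of length $<2\delta$ stays in $B_\delta(p)$, together with the use of the compact-closure hypothesis to guarantee that minimizing geodesic segments between nearby deck translates are actually realized in $\widehat{B_2(p)}$. Once this containment is secured, stability of $(w_0,n-k)$-nilpotency under subgroups and quotients, combined with the direct application of Lemma \ref{w-nilpotent-nice-factors}, is essentially bookkeeping.
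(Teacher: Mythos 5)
Your proof is correct and follows essentially the same route as the paper: both reduce to Theorem \ref{t:KW_almost_nilpotent} on the base ball, transfer the $(w_0,n-k)$-nilpotency to $\hat G_\delta(p)$ by identifying it inside the image of $\Gamma_\delta(p)$ under the covering quotient (you do this via short loops and stability of $(w,\ell)$-nilpotency under subgroups and quotients, while the paper explicitly constructs the lifted subgroup $\widetilde G_\delta(p)$ and intersects with the nilpotent subgroup before projecting), and then obtain the controlled generators and displacement property from the Section \ref{s:almost-nilpotent} machinery, which you invoke in one step via Lemma \ref{w-nilpotent-nice-factors}. The differences are organizational rather than substantive.
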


\begin{proof} First, we construct a nilpotent subgroup $\mathcal{N}\leq\hat{G}_{\delta}(p)$ such that $(i)$ holds.
To this end, let
\begin{equation}{S}_{\delta}(p,G)\equiv\{\gamma\in G|d(\gamma\cdot\hat{p},\hat{p})<2\delta\}\end{equation}
and then by definition ${S}_{\delta}(p,G)$ is a generating set of $\hat{G}_{\delta}(p)$.
Let $\tilde{p}$ be a lift of $\hat{p}$ on the universal cover and then let us denote by $\tilde{\gamma}\in\pi_1(B_2(p))$ the unique lifting such that 
\begin{equation}
d(\tilde{\gamma}\cdot\tilde{p},\tilde{p})<2\delta,
\ \pr(\tilde{\gamma})=\gamma,
\end{equation}
where $\pr:\pi_1(\widetilde{M}^n,\tilde{p})\rightarrow(\widehat{M}^n,\hat{p})$
is the natural quotient homomorphism.
By the above unique lifting, we define the group
\begin{equation}
\widetilde{G}_{\delta}(p)\equiv\Big\langle\tilde{\gamma}\in\pi_1(B_2(p))\Big|\gamma\in{S}_{\delta}(p,G)\Big\rangle.
\end{equation}
Immediately, $\widetilde{G}_{\delta}(p)\leq\Gamma_{\delta}(p)\equiv\Image[\pi_1(B_{\delta}(p))\rightarrow\pi_1(B_2(p))]$. Theorem \ref{t:KW_almost_nilpotent} shows that there exists a nilpotent subgroup $\mathcal{N}_0\leq \Gamma_{\delta}(p)$ such that
\begin{equation}
[\Gamma_{\delta}(p):\mathcal{N}_0]\leq w_0(n,B_1(z^k)),\ \length(\mathcal{N}_0)\leq n-k.
\end{equation}
For $\mathcal{N}_1\equiv\mathcal{N}_0\cap\widetilde{G}_{\delta}(p)\leq \mathcal{N}_0$, it holds that,
\begin{equation}
[\widetilde{G}_{\delta}(p):\mathcal{N}_1]=[\widetilde{G}_{\delta}(p):\mathcal{N}_0\cap\widetilde{G}_{\delta}(p)]\leq [\Gamma_{\delta}(p):\mathcal{N}_0]\leq w_0(n,B_1(z^k)),
\end{equation}
and
\begin{equation} \length(\mathcal{N}_1)\leq\length(\mathcal{N}_0)\leq n-k.
\end{equation}
Let $\mathcal{N}\equiv\pr(\mathcal{N}_1)$ be the homomorphism image and immediately
\begin{equation}
\mathcal{N}\leq\hat{G}_{\delta}(p),\ \length(\mathcal{N})\leq\length(\mathcal{N}_1)\leq  n-k. 
\end{equation}
Note that
$\hat{G}_{\delta}(p)=\pr(\widetilde{G}_{\delta}(p)),
$ and then by lemma \ref{isomorphism-lemma},
\begin{equation}
[\hat{G}_{\delta}(p):\mathcal{N}]= [\pr(\widetilde{G}_{\delta}(p)):\pr(\mathcal{N}_1)]\leq [\widetilde{G}_{\delta}(p)):\mathcal{N}_1]\leq w_0(n,B_1(z^k)).
\end{equation}
We have verified $(i)$.

By lemma \ref{basic-nil-lemma} $(iii)$, $\Step(\mathcal{N})\leq\length(\mathcal{N})\leq n-k$ and then property $(ii)$ directly follows from Theorem \ref{t:refinement-of-lower-central-series}.

\end{proof}

\section{Quantitative Splitting and Non-Collapse on Covering Spaces}\label{s:quantitative-splitting}

In this section we discuss the technical heart of this paper.  The following Theorem is the most crucial technical ingredient in the proof of $\epsilon$-regularity for both lower Ricci curvature bound and bounded Ricci curvature.  Its proof is also the main goal of this section:

\begin{theorem}\label{non-collapsed-splitting-maximal-rank}
Let $(M^n,g,p)$ be a Riemannian manifold with $\Ric\geq-(n-1)$ such that $B_2(p)$ has a compact closure in $B_4(p)$.  Given a pointed Ricci-limit space $(Z^{\ell},z^{\ell})$ with $\dim Z^{\ell}=\ell$ there exists $v_0(n,B_1(z^{\ell}))>0$, and for every $\epsilon>0$ there exists $\delta_0(\epsilon,n,B_1(z^{\ell}))>0$ such that if

$(i)$ $d_{GH}(B_2(p),B_2(0^{k-\ell},z^{\ell}))<\delta_0, \ (0^{k-\ell},z^{\ell})\in\dR^{k-\ell}\times Z^{\ell}$,

$(ii)$ $\Gamma_{\delta_0}(p)\equiv\Image[\pi_1(B_{\delta_0}(p))\rightarrow\pi_1(B_2(p))]$ satisfies $\rank(\Gamma_{\delta_0}(p))=n-k$,
\\
then for each $q\in B_1(p)$ 

$(1)$ $\Vol(B_{1/2}(\tilde{q}))\geq v_0(n,B_1(z^{\ell}))>0$,

$(2)$ for some $\delta_0<r<1$,
\begin{equation}
d_{GH}(B_{r}(\tilde{q}),B_{r}(0^{n-\ell},\hat{z}))<r\epsilon,\ (0^{n-\ell},\hat{z})\in\dR^{n-\ell}\times C(\hat{Z}),
\end{equation}
where $\tilde{q}$ is a lift of $q$ on the universal cover of $B_1(q)$ and $(C(\hat{Z}),\hat{z})$ is a metric cone over some compact metric space $\hat{Z}$ with the cone tip $\hat{z}$.
\end{theorem}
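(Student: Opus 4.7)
The plan proceeds in two stages matching the outline of Section \ref{ss:intro_outline_proof}: first the non-collapsing statement (1), then the cone-splitting structure statement (2). Throughout I use the algebraic machinery of Section \ref{s:almost-nilpotent}, especially the geometrically compatible graded generators produced by Theorem \ref{t:refinement-of-lower-central-series} and packaged geometrically in Theorem \ref{t:small-displacement-property}, in order to convert the abstract rank assumption into explicit small isometric displacements on the universal cover.

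As a warm-up I would first handle the Euclidean-factor case $\ell=0$: if $B_2(p)$ is $\delta$-close to $B_2(0^k)\subset\dR^k$ and $\rank(\Gamma_\delta(p))=n-k$, then on any normal cover $(\widehat{B_2(p)},\hat p,G)$ inheriting the maximal-rank condition one has $B_r(\hat p)\approx B_r(0^n)$ at some definite scale. Working in equivariant Gromov--Hausdorff limits, the $k$ Euclidean lines in the base pull up to $k$ lines on the cover and the Cheeger--Colding splitting theorem gives $\widehat Y=\dR^k\times W$. Theorem \ref{t:small-displacement-property} supplies an isometric action on $\widehat Y$ by a nilpotent group $N_\infty$ of rank $n-k$ whose graded generators displace $\hat p$ by uniformly bounded amounts, hence have a $C$-dense orbit on $W$; Lemma \ref{homoline} then produces a line in $W$. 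Iterating this construction along the refined lower central series of Theorem \ref{t:refinement-of-lower-central-series} yields $n-k$ additional, independent line splittings, so $\widehat Y\approx\dR^n$ and volume convergence delivers the uniform non-collapsing. For general $\ell$, Theorem \ref{limiting-dimension} gives an $\ell$-regular ball $B_{2r}(z)\subset B_1(z^\ell)$ close to $B_{2r}(0^\ell)$; translating through the GH approximation yields $B_{2r}(q)\subset B_2(p)$ with $d_{GH}(B_{2r}(q),B_{2r}(0^k))<\delta' r$. Lifting the connected component of its preimage to the universal cover of $B_2(p)$ produces a normal cover of $B_{2r}(q)$, and the non-localness Lemma \ref{every-point-rank} ensures that the maximal rank at $p$ descends to the image group at $q$. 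Applying the $\ell=0$ case to this rescaled cover gives a definite-size ball $B_{r'}(q')\subset B_2(\tilde p)$ close to a Euclidean ball in $\dR^n$, and Bishop--Gromov comparison then propagates the volume bound to $B_{1/2}(\tilde q)$ for every $q\in B_1(p)$, establishing (1).

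With the non-collapsing in hand, I turn to (2) via cone splitting. The quantitative-differentiation Lemma \ref{l:bad_scales_finite} applied on the now-non-collapsed cover implies that outside a uniformly bounded number of bad scales, $B_{r_\alpha}(\tilde p)$ is $(0,\eta,r_\alpha)$-symmetric, i.e.\ $\eta r_\alpha$-close to a metric cone with tip $\tilde p$. Combining this with the $k-\ell$ Euclidean directions pulled up from $\dR^{k-\ell}\times Z^\ell$ and Lemma \ref{cone-splitting-lemma} gives a definite scale $r\in(\delta_0,1)$ at which $B_r(\tilde p)\approx\dR^{k-\ell}\times C(Y)$ for some compact $Y$. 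It remains to produce $n-k$ further Euclidean factors inside $C(Y)$. For this I feed the $n-k$ graded generators of Theorem \ref{t:refinement-of-lower-central-series} into the cone factor: their orbits create $n-k$ mutually independent points in $C(Y)$, and a further application of Lemma \ref{cone-splitting-lemma} shows that $C(Y)$ is nearly a cone at each of these points. Iterating Lemma \ref{cone-splitting-principle} $n-k$ times yields $C(Y)\approx\dR^{n-k}\times C(\hat Z)$ and hence $B_r(\tilde p)\approx B_r(0^{n-\ell},\hat z)$; moving the basepoint from $\tilde p$ to a lift $\tilde q$ of arbitrary $q\in B_1(p)$ introduces only a controlled triangle-inequality error and completes (2).

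The most delicate step is the descent used to prove (1): one must quantitatively control the equivariant GH limits that arise when passing from the universal cover of $B_2(p)$ to the intermediate normal cover of the smaller $\ell$-regular ball $B_{2r}(q)$, and verify that the non-local rank-preservation Lemma \ref{every-point-rank} genuinely produces a rescaled cover whose deck group still has rank $n-k$ together with a small displacement at the lifted basepoint. This is precisely where the controlled-length Schreier generating set of Proposition \ref{effective-generating-set} and the compatibility of the graded refinement of Theorem \ref{t:refinement-of-lower-central-series} with Gromov--Hausdorff rescalings become essential, and it is the main reason the algebraic preparation of Section \ref{s:almost-nilpotent} is developed in the form given there.
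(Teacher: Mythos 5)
Your proposal reproduces the paper's own proof essentially step for step: part (1) is Proposition \ref{non-collapsed-universal-cover} (an $\ell$-regular point of $Z^{\ell}$ via Theorem \ref{limiting-dimension}, non-localness of the rank via Lemma \ref{every-point-rank}, the Euclidean-base normal-cover splitting of Proposition \ref{quantitative-splitting-at-origin}, and volume convergence), while part (2) is Proposition \ref{non-collapsed-splitting} with the cone-splitting principle. The only difference is one of phrasing: where you describe ``iterating $n-k$ independent line splittings'' from the graded generators, the paper extracts the whole Euclidean factor in a single application of Lemma \ref{Ricci-homoline} and then establishes the count $d\geq n-k$ by a Hausdorff-dimension estimate on the orbits in the tower of cyclic normal covers built from the refined lower central series of Theorem \ref{t:refinement-of-lower-central-series}.
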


The key aspect of the above result, as opposed to similar results in the literature and in this subsection, is that the space $Z$ is an arbitrary complete space.  To accomplish this we will need to use the {\textit{cone splitting principle}}, as opposed to the line splitting (see Lemma \ref{Ricci-homoline}) which is more commonly used.  The cone splitting principle in the context of lower Ricci curvature requires noncollapsing, thus we will need to first prove $(1)$ before we can prove $(2)$.

The outline of this Section is as follows.  In Section \ref{ss:nonlocal_fund_group} we prove a result which in effect tells us that the rank of the fibered fundamental group is nonlocal in nature.  Specifically, it will allow us to control the fibered fundamental group of a point based on the fibered fundamental group of nearby points.  In Section \ref{ss:symmetry-and-splitting} we will prove a version of Theorem \ref{non-collapsed-splitting-maximal-rank} when $Z$ is assumed to be a point.  On the other hand, this version will not require maximality of the rank and will hold for general covering spaces.  We will use this in Section \ref{ss:non-collapse-universal-cover}, in combination with the nonlocal results of Section \ref{ss:nonlocal_fund_group}, in order to prove $(1)$, the noncollapsing of the covering space.  Finally, in Section \ref{ss:proof_noncollapsed_splitting} we will combine this with the cone splitting principle in order to finish the proof of Theorem \ref{non-collapsed-splitting-maximal-rank}.

\subsection{Nonlocalness Properties of Fibered Fundamental Groups}\label{ss:nonlocal_fund_group}

Here we prove that if the fibered fundamental group $\Gamma_\delta(p)\equiv \Image[\pi_1(B_{\delta}(p))\rightarrow \pi_1(B_2(p))]$ has large nilpotency rank at some point $p$, then the nilpotency rank of the fibered nilpotent group centered at any other nearby point is also large. This non-localness property will be used in the proof of the quantitative splitting result on a normal cover. This is also important in order to extend the region of control of our $\epsilon$-regularity theorem.  We begin with the following:

\begin{lemma} \label{every-point-Z}
Given $n\geq 2$, $0<\epsilon<1/10$, $R\geq 1$, there exists $\Psi_1=\Psi_1(\epsilon,R,n)>0$, $N_1=N_1(\epsilon, R,n)<\infty$
such that the following properties hold: let $(M^n, g,p)$ be a Riemannian manifold with $\Ric\geq-(n-1)$ such that $B_{2R}(p)$ has a compact closure in $B_{4R}(p)$. If  $\gamma\in \Isom(M^n)$ satisfies \begin{equation}
d(\gamma\cdot p,p)<\Psi_1(\epsilon,R,n),
\end{equation}
then there is some positive integer $1\leq d\leq N_1$ such that  for all $x\in B_{R}(\hat{p})$
\begin{equation}
d(\gamma^{d}\cdot x,x)<\epsilon.\end{equation}
\end{lemma}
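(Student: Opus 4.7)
The plan is to argue by contradiction via pointed equivariant Gromov--Hausdorff convergence combined with the Lie group structure of isometry groups of Ricci limit spaces. Assume the lemma fails for some $\epsilon_0>0$, $R_0\geq 1$, and $n\geq 2$. Then for each $i$ there exist a Riemannian manifold $(M_i^n,g_i,p_i)$ satisfying the hypotheses and an isometry $\gamma_i\in\Isom(M_i^n)$ with $d(\gamma_i\cdot p_i,p_i)<1/i$, such that for every $1\leq d\leq i$ one can choose $x_{i,d}\in B_{R_0}(p_i)$ with $d(\gamma_i^d\cdot x_{i,d},x_{i,d})\geq\epsilon_0$.

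By Gromov's precompactness theorem, after passing to a subsequence, $(M_i^n,g_i,p_i)\xrightarrow{GH}(Y,y)$ for a proper Ricci limit space. The bound $d(\gamma_i^d\cdot p_i,p_i)\leq d/i$ implies that for each fixed $d$ the iterates $\gamma_i^d$ have uniformly bounded displacement on each compact neighborhood of $p_i$, so by the equivariant Gromov--Hausdorff framework a further diagonal subsequence yields an isometric limit $\gamma_\infty\in\Isom(Y)$ with $\gamma_\infty(y)=y$, and $\gamma_i^d\to\gamma_\infty^d$ equivariantly on compact sets for every $d\geq 1$. Extracting limits $x_{i,d}\to x_d\in\overline{B_{R_0}(y)}$ diagonally, one obtains for every $d\geq 1$
\begin{equation}\label{e:every-power-bad}
d(\gamma_\infty^d\cdot x_d,x_d)\geq\epsilon_0\, .
\end{equation}

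To contradict \eqref{e:every-power-bad}, invoke the Lie group structure of Theorem~\ref{lie-group}: $G:=\Isom(Y)$ is a Lie group under the compact-open topology, and since $Y$ is proper the $G$-action is proper, so the isotropy subgroup $G_y$ is a compact Lie subgroup. Because $\gamma_\infty\in G_y$, the closed cyclic subgroup $\overline{\langle\gamma_\infty\rangle}\leq G_y$ is a compact abelian Lie group, and in particular $\{\gamma_\infty^d\}_{d\geq 1}$ is precompact. Choose $d_1<d_2$ with $\gamma_\infty^{d_1}$ and $\gamma_\infty^{d_2}$ lying in an arbitrarily small neighborhood of the identity, and set $d^*:=d_2-d_1\geq 1$; then $\gamma_\infty^{d^*}$ can be made to lie in any prescribed neighborhood of the identity. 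Taking this neighborhood to be the $I(\epsilon)$ of Lemma~\ref{small-neighborhood} for $\epsilon:=\min\{\epsilon_0, R_0^{-1}\}$ gives $d(\gamma_\infty^{d^*}\cdot x,x)<\epsilon_0$ for all $x\in\overline{B_{R_0}(y)}$, contradicting \eqref{e:every-power-bad} at $d=d^*$. The uniform existence of $N_1$ follows, since the contradiction hypothesis in turn forced $N_1\to\infty$ along the sequence.

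The main technical obstacle is setting up the equivariant Gromov--Hausdorff convergence of the iterates $\gamma_i^d$ with only local completeness (i.e.\ $B_{2R}(p)$ has compact closure in $B_{4R}(p)$) rather than global completeness of $(M_i^n,g_i)$. This is handled by exploiting the hypothesis $d(\gamma_i\cdot p_i,p_i)<1/i$: for every fixed $d$, the iterate $\gamma_i^d$ maps a bounded neighborhood of $p_i$ to a nearby bounded neighborhood of $p_i$, so the usual equivariant GH machinery applies on balls of uniformly bounded radius, which suffices for the contradiction above. A secondary point is the compactness of the isotropy $G_y$, which follows from properness of the isometric action on the proper metric space $Y$ together with the compact-open topology on $\Isom(Y)$.
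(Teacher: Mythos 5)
Your proof is essentially correct in outline, but it takes a genuinely different route from the paper. The paper's argument is direct and quantitative: for a fixed $q$ it applies a pigeonhole/packing argument to the balls $B_{\epsilon/2}(\gamma^{\alpha}\cdot q)$, using Bishop--Gromov to bound the number of disjoint such balls inside $B_{2R}(p)$ and thereby producing a power $d(q)\leq M_0(\epsilon^{-1},R,n)$ with small displacement at $q$; it then takes a finite $\epsilon/10$-net $\{y_\alpha\}$ of $B_R(p)$ of controlled cardinality $k_0$, sets $d=\prod_\alpha d_\alpha\leq M_0^{k_0}=:N_1$ so that a \emph{single} power works at every net point simultaneously, and concludes by the triangle inequality. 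This yields explicit formulas for $\Psi_1$ and $N_1$ in terms of volume ratios, which the remark following the lemma exploits (tracking the behavior as $\epsilon\to 0$, $R\to\infty$). Your contradiction/compactness argument trades this explicitness for softness, which is a legitimate exchange but loses information the paper cares about.

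There is one step you should repair. You invoke Theorem \ref{lie-group} and the properness of $Y$ to conclude that the isotropy group at $y$ is a compact Lie group containing $\gamma_\infty$. In the setting of this lemma the manifolds are only locally complete ($B_{2R}(p)$ precompact in $B_{4R}(p)$), so the limit $Y$ you can actually extract is a limit of the balls $\overline{B_{2R}(p_i)}$ -- a compact metric space, not a complete proper Ricci-limit space in the sense the paper defines, and $\gamma_\infty$ is only a locally defined isometry of it, not an element of $\Isom(Y)$ for a global $Y$. So Theorem \ref{lie-group} does not apply as stated. Fortunately you do not need it: since $\gamma_\infty$ fixes $y$ and, together with its inverse, preserves distances, it restricts to a surjective isometry of each closed ball $\overline{B_r(y)}$ with $r\leq 2R-2$, and the isometry group of a compact metric space is compact in the uniform topology by Arzel\`a--Ascoli. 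The closure of $\{\gamma_\infty^d\}$ in this compact group then contains the identity as a limit point of positive powers (the standard recurrence $\gamma_\infty^{n_{k+1}-n_k}\to e$), which is all your final contradiction requires; Lemma \ref{small-neighborhood} is likewise unnecessary here, as "small uniform displacement on $\overline{B_{R_0}(y)}$" is exactly the topology in which this compactness is phrased. With that substitution, and a careful diagonal argument for the equivariant convergence of the iterates $\gamma_i^d$ (which your displacement bound $d(\gamma_i^d\cdot p_i,p_i)<d/i$ does make available for each fixed $d$), your proof goes through.
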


\begin{remark}
From the proof of this Lemma, it holds that for fixed $R\geq 1$, $\lim\limits_{\epsilon\rightarrow0}\Psi(\epsilon, R,n)=0$ and $\lim\limits_{\epsilon\rightarrow0}N_1(\epsilon, R,n)=\infty$. Moreover, when $\epsilon\rightarrow0$ and $R\rightarrow\infty$ simultaneously, $\Psi\rightarrow0$ and $N_1\rightarrow \infty$.
\end{remark}

\begin{proof} 

As the first step, we show that for each $0<\epsilon<1/10$, $R\geq 1$,  there exists $\Psi_1=\Psi_1(\epsilon,R,n)$, $M_0=M_0(\epsilon^{-1},R,n)$ such that for fixed
$q\in B_{R}(p)$, if $\gamma\in G$ satisfies
\begin{equation}
d(\gamma\cdot{p},{p})<\Psi_1(\epsilon,n,R),\label{c1-control}
\end{equation}
then for some positive integer $1\leq d({q})\leq M_0$ such that  
\begin{equation}
d(\gamma^{d({q})}\cdot{q},{q})<\epsilon.\label{1-point-small-displacement}\end{equation}

To this end, let $N$ be a positive integer such that \begin{equation}B_{\epsilon/2}(\gamma^{\alpha}\cdot{q})\subset B_{2R}({p}),\ \forall\ 1\leq \alpha\leq N.\end{equation}
We claim that there exists $\Psi_1=\Psi_1(\epsilon,R,n)$, $N_0=N_0(\epsilon^{-1},R,n)$ such that if inequality (\ref{c1-control}) holds for $\Psi_1$, then for all $N>N_0$ there are $1\leq \alpha_0< \beta_0\leq N$ with
\begin{equation}B_{\epsilon/2}(\gamma^{\alpha_0}\cdot{q})\cap B_{\epsilon/2}(\gamma^{\beta_0}\cdot{q})\neq\emptyset.\label{non-empty}
\end{equation}

Let us consider a little more general case than the claim. We will prove that there exists $N_0(\epsilon^{-1},R,n)<\infty$ such that
if $\{{x}_{\alpha}\}_{\alpha=1}^{N'}$ is a finite subset in $B_{R}({p})$ with $B_{\epsilon/2}({x}_{\alpha})\subset B_{2R}({p})$ and
\begin{equation}B_{\epsilon/2}({x}_{\alpha})\cap B_{\epsilon/2}({x}_{\beta})=\emptyset,\ \forall\ 1\leq \alpha<\beta\leq N',
\end{equation}
then
\begin{equation}
N'\leq N_0.
\end{equation}
We choose $x_0\in\{{x}_{\alpha}\}_{\alpha=1}^{N'}$ such that
\begin{equation}\Vol(B_{\epsilon/2}(x_0))=\min\limits_{1\leq\alpha\leq N'}\{\Vol(B_{\epsilon/2}({x}_{\alpha}))\}.\end{equation}
Since
$B_{\epsilon/2}({x}_0)\subset B_{2R}({p})\subset B_{10R}({x}_0)$ and the balls in $\{B_{\epsilon/2}({x}_a)\}$ are disjoint, 
 then
\begin{eqnarray}N'\leq\frac{\Vol(B_{2R}({p}))}{\Vol(B_{\epsilon/2}({x}_0))}
\leq\frac{\Vol(B_{10R}({x}_0))}{\Vol(B_{\epsilon/2}({x}_0))}\leq\frac{V_{-1}^n(10R)}{V_{-1}^n(\epsilon/2)}\equiv N_0(\epsilon^{-1},R,n),\label{def-N_0} 
\end{eqnarray} 
the last inequality is by Bishop-Gromov volume comparison theorem, 
 where $V_{-1}^n(r)$ is the volume of the ball $B_r(0^n)$ in the space form of curvature $-1$ and dimension $n$.
 
If we take ${x}_{\alpha}\equiv\gamma^{\alpha}\cdot{q}$, then the claim follows. In fact,
let $\Psi_1\equiv\frac{\epsilon}{2M_0(\epsilon^{-1},R,n)}$, where
$M_0\equiv2N_0(\epsilon^{-1},R,n)$ and $N_0$ is the constant in (\ref{def-N_0}). Clearly, by (\ref{def-N_0}), for fixed $R\geq 1$,
\begin{equation}
\lim\limits_{\epsilon\rightarrow 0}M_0(\epsilon^{-1},R,n)=\infty,\ \lim\limits_{\epsilon\rightarrow0} \Psi_1(\epsilon^{-1},R,n)=0.
\end{equation}
By triangle inequality, for each $1\leq \alpha\leq M_0$
\begin{equation}
d(\gamma^{\alpha}\cdot{q},{p})\leq d(\gamma^{\alpha}\cdot{q},\gamma^{\alpha}\cdot{p})+d(\gamma^{\alpha}\cdot{p},{p})<
R+M_0\cdot\Psi_1<2R,
\end{equation}
which implies that for each $1\leq \alpha\leq M_0$, 
$B_{\epsilon/2}(\gamma^{\alpha}\cdot{q})\subset B_{2R}({p})$.
Since $M_0\equiv2N_0>N_0$, then there existis $1\leq \alpha_0<\beta_0\leq M_0$ such that 
\begin{equation}B_{\epsilon/2}(\gamma^{\beta_0}\cdot{q})\cap B_{\epsilon/2}(\gamma^{\alpha_0}\cdot{q})\neq\emptyset,\end{equation}
That is,
\begin{equation}B_{\epsilon/2}(\gamma^{\beta_0-\alpha_0}\cdot{q})\cap B_{\epsilon/2}({q})\neq\emptyset,\ 1\leq \beta_0-\alpha_0\leq M_0.\end{equation}
We have finished the proof of the claim.
Let $d({q})\equiv\beta_0-\alpha_0$,
immediately, $d(\gamma^{d({q})}\cdot{q},{q})<\epsilon$. The proof of the first step is complete.

\vspace{0.5cm}

The next stage is to finish the proof of the statement. Fix a positive constant $\epsilon>0$.
Let $\{{y}_{\alpha}\}_{\alpha=0}^k$ be a $\frac{\epsilon}{10}$-dense subset of $B_{R}({p})$ with ${y}_0\equiv{p}$ such that $B_{R}({p})\subset\bigcup\limits_{\alpha=0}^k B_{\epsilon/10}({y}_{\alpha})$ and 
\begin{equation}
d({y}_{\alpha},{y}_{\beta})\geq \frac{\epsilon}{20},\ \forall \alpha\neq\beta.
\end{equation}
Relative volume comparison theorem implies that there exists some large integer $k_0=k_0(\epsilon^{-1},R,n)>0$ such that
\begin{equation}
k\leq k_0(\epsilon^{-1},R,n).
\end{equation}

Let $\Psi_1$ be the function in the first step and define
\begin{equation}
\Psi_2(\epsilon|n)\equiv\frac{\epsilon}{2M_0^{k_0}},\ \Psi(\epsilon|n)\equiv\frac{\Psi_1(\Psi_2,R,n)}{M_0^{k_0}}<<\Psi_2.\label{def-Psi}
\end{equation}
Let
\begin{equation}
d({p},\gamma\cdot{p})<\Psi.\label{initial-control}
\end{equation}
By the arguments in the first step,  inequality (\ref{initial-control}) implies that for each 
$0\leq \alpha\leq k$, there exists some integer $1\leq d_{\alpha}\leq M_0(\epsilon^{-1},R,n)$ with $d_0\equiv1$ and 
\begin{equation}
d(\gamma^{d_{\alpha}}\cdot {y}_{\alpha},{y}_{\alpha})<\Psi_2.\end{equation}
Define
$d\equiv\prod\limits_{\alpha=1}^{k}d_{\alpha},$ then immediately 
$d\leq M_0^k\leq M_0^{k_0},$
and let
\begin{equation}
N_1(\epsilon^{-1},R,n)\equiv M_0(\epsilon^{-1},R,n)^{k_0(\epsilon^{-1},R,n)}.\label{def-N_1}
\end{equation}
Triangle inequality implies that for each $0\leq \alpha\leq k$ and for each ${q}_{\alpha}\in B_{\epsilon/10}({y}_{\alpha})$
\begin{eqnarray}
d(\gamma^{d}\cdot{q}_{\alpha},{q}_{\alpha})&\leq & d(\gamma^{d}\cdot{q}_{\alpha},\gamma^{d}\cdot{y}_{\alpha})+
d(\gamma^{d}\cdot{y}_{\alpha},{y}_{\alpha})+d({y}_{\alpha},{q}_{\alpha})\nonumber\\
&\leq & \frac{\epsilon}{5}+M_0^{k_0}\cdot d(\gamma^{d_{\alpha}}\cdot{y}_{\alpha},{y}_{\alpha})\nonumber\\
&<&\frac{\epsilon}{5}+M_0^{k_0}\cdot\Psi_2<\epsilon.
\end{eqnarray}
Notice that $B_{R}({p})\subset \bigcup\limits_{\alpha=0}^kB_{\epsilon/10}({y}_{\alpha})$, then for each ${x}\in B_{R}({p})$, the above inequality gives that \begin{equation}d(\gamma^{d}\cdot{x},{x})<\epsilon\end{equation}
with $1\leq d\leq N_1(\epsilon^{-1}|n)$.

The constants $N_1=N_1(\epsilon^{-1},R,n)$ and $\Psi=\Psi(\epsilon,R,n)$ have been defined in (\ref{def-Psi}) and (\ref{def-N_1}), then the proof of the Lemma is complete.

\end{proof}

By applying the above Lemma, we obtain the following 
non-localness of the nilpotency rank of the fibered fundamental group.

\begin{lemma} \label{every-point-rank}
Let $(M^n,g,p)$ be a Riemannian manifold with $\Ric\geq-(n-1)$ and such that $B_2(p)$ has a compact closure in $B_4(p)$. There exists $\epsilon_1(n)>0$, $w(n)<\infty$
such that for each $\epsilon\leq\epsilon_1(n)$ there exists $\Psi_0=\Psi_0(\epsilon|n)<<\epsilon$, $N_1=N_1(\epsilon^{-1}|n)<\infty$
such that the following holds:

$(i)$ $\Gamma_{\Psi_0}(p)\equiv\Image[\pi_1(B_{\Psi_0}(p))\rightarrow\pi_1(B_2(p))]$ 
 is $(w,n)$-nilpotent.
 
$(ii)$ For each $x\in B_1(p)$, the group $\Gamma_{\epsilon}'(x)\equiv\Image[\pi_1(B_{\epsilon}(x))\rightarrow\pi_1(B_2(p))]$ is $(w,n)$-nilpotent with 
$\rank(\Gamma_{\epsilon}'(x))\geq\rank(\Gamma_{\Psi_0}(p))$. Moreover, $\Gamma_{\epsilon}(x)\equiv\Image[\pi_1(B_{\epsilon}(x))\rightarrow\pi_1(B_1(x))]$ is $(w,n)$-nilpotent with 
$\rank(\Gamma_{\epsilon}(x))\geq\rank(\Gamma_{\Psi_0}(p))$.

$(iii)$ The group
$\Gamma_{\epsilon}(B_1(\tilde{p}))\equiv\langle\{\gamma\in\pi_1(B_{2}(p))|d(\gamma\cdot\tilde{x},\tilde{x})<2\epsilon,\ \forall\tilde{x}\in B_1(\tilde{p})\}\rangle
$ is $(w,n)$-nilpotent with $\rank(\Gamma_{\delta}(B_1(\tilde{p})))\geq\rank(\Gamma_{\Psi_0}(p))$, where $\tilde{p}$ is a lift of $p$ on the universal cover of $B_2(p)$.  
\end{lemma}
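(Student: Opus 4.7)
The plan is to build all three statements from two ingredients already established: the Generalized Margulis Lemma (Theorem \ref{Generalized-Margulis-Lemma}) together with the effective refinement of the lower central series of Theorem \ref{t:refinement-of-lower-central-series} (packaged geometrically as Lemma \ref{w-nilpotent-nice-factors}), and the displacement-propagation result Lemma \ref{every-point-Z}. I set $\epsilon_1(n),w(n)$ to be the constants from Theorem \ref{Generalized-Margulis-Lemma} applied in $B_2(p)$. I fix $\epsilon\leq\epsilon_1(n)$ and will choose $\Psi_0(\epsilon,n)$ at the end.

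\textbf{Step 1 (statement (i)).} Take any $\Psi_0\leq \epsilon_1(n)$. Then $\Gamma_{\Psi_0}(p)$ is $(w(n),n)$-nilpotent by Theorem \ref{Generalized-Margulis-Lemma}; in particular there is a nilpotent subgroup $\mathcal{N}\leq \Gamma_{\Psi_0}(p)$ with $[\Gamma_{\Psi_0}(p):\mathcal{N}]\leq w(n)$, $\length(\mathcal{N})\leq n$, and $\rank(\mathcal{N})=\rank(\Gamma_{\Psi_0}(p))$. Applying Lemma \ref{w-nilpotent-nice-factors} to the action of $\Gamma_{\Psi_0}(p)$ on the universal cover of $B_2(p)$ at the lift $\tilde p$, I obtain graded generators $\{\sigma_{s,k}\}_{1\leq s\leq c_0,\,1\leq k\leq n_s}\subset \mathcal{N}$, with $\sum n_s=\rank(\mathcal{N})$, satisfying $d(\sigma_{s,k}\cdot\tilde p,\tilde p)\leq K(n)\cdot\Psi_0$ for the explicit constant $K(n)=10\,w(n)\,2^n$.

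\textbf{Step 2 (promoting short displacement from $\tilde p$ to $B_1(\tilde p)$).} I now invoke Lemma \ref{every-point-Z} with radius $R=2$ and target displacement $\epsilon$, producing constants $\Psi_1(\epsilon,2,n)$ and $N_1(\epsilon^{-1},2,n)$. I fix
\begin{equation*}
\Psi_0(\epsilon,n)\equiv \frac{\Psi_1(\epsilon,2,n)}{2K(n)},
\end{equation*}
which ensures $d(\sigma_{s,k}\cdot\tilde p,\tilde p)<\Psi_1(\epsilon,2,n)$. Lemma \ref{every-point-Z} then yields integers $1\leq d_{s,k}\leq N_1(\epsilon^{-1},2,n)$ such that
\begin{equation*}
d(\sigma_{s,k}^{d_{s,k}}\cdot \tilde y,\tilde y)<\epsilon,\qquad \forall\,\tilde y\in B_2(\tilde p).
\end{equation*}
Set $\tau_{s,k}\equiv \sigma_{s,k}^{d_{s,k}}$ and $\mathcal{N}'\equiv\langle\tau_{s,k}\rangle$. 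Corollary \ref{collection-of-graded-generators}(ii) gives $\rank(\mathcal{N}')=\rank(\mathcal{N})=\rank(\Gamma_{\Psi_0}(p))$, and $\mathcal{N}'$ is a nilpotent subgroup of $\pi_1(B_2(p))$ of length $\leq n$.

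\textbf{Step 3 (statements (ii) and (iii)).} Since every $\tau_{s,k}$ moves each point of $B_1(\tilde p)\subset B_2(\tilde p)$ by $<\epsilon$, statement (iii) follows immediately: $\mathcal{N}'\leq \Gamma_\epsilon(B_1(\tilde p))$, and by Theorem \ref{Generalized-Margulis-Lemma} applied to a generating set of $\Gamma_\epsilon(B_1(\tilde p))$ (whose elements displace $\tilde p$ by less than $2\epsilon<2\epsilon_1(n)$) this group is itself $(w(n),n)$-nilpotent, so Lemma \ref{basic-almost-nilpotent}(ii) gives the rank bound. For statement (ii), fix $x\in B_1(p)$ with lift $\tilde x\in B_1(\tilde p)$. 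For each $\tau_{s,k}$, the minimizing geodesic from $\tilde x$ to $\tau_{s,k}\cdot\tilde x$ has length $<\epsilon$, and projects to a loop at $x$ entirely contained in $B_\epsilon(x)\subset B_1(x)\subset B_2(p)$. Hence the class of $\tau_{s,k}$ in $\pi_1(B_2(p))$ (respectively $\pi_1(B_1(x))$) lies in $\Gamma_\epsilon'(x)$ (respectively $\Gamma_\epsilon(x)$). Both target groups are $(w(n),n)$-nilpotent by Theorem \ref{Generalized-Margulis-Lemma}, and the inclusion $\mathcal{N}'\hookrightarrow \Gamma_\epsilon'(x)$ (and its image in $\Gamma_\epsilon(x)$) together with Lemma \ref{basic-almost-nilpotent}(ii) yields $\rank\Gamma_\epsilon'(x)\geq\rank\mathcal{N}'=\rank\Gamma_{\Psi_0}(p)$ and likewise for $\Gamma_\epsilon(x)$.

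The main obstacle, which this outline handles via Steps 1--2, is that small displacement of the original short generators occurs only at the basepoint $\tilde p$; one must geometrically propagate this to the full ball $B_1(\tilde p)$ \emph{and} simultaneously retain the rank of the nilpotent group generated. Propagation comes from Lemma \ref{every-point-Z} through a controlled power, and retention of rank comes from Corollary \ref{collection-of-graded-generators}(ii), which is exactly why the refinement of the lower central series of Section \ref{ss:refinement-lcs} was designed so that powers of the graded generators $\sigma_{s,k}$ still generate a nilpotent subgroup of full rank.
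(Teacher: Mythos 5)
Your proposal is correct and follows essentially the same route as the paper: Margulis gives the $(w,n)$-nilpotency, Lemma \ref{w-nilpotent-nice-factors} (i.e.\ Theorem \ref{t:refinement-of-lower-central-series}) supplies graded generators with displacement $\leq K(n)\Psi_0$ at $\tilde p$, Lemma \ref{every-point-Z} converts these into powers with displacement $<\epsilon$ on the whole ball, and Corollary \ref{collection-of-graded-generators}(ii) preserves the rank of the subgroup they generate. The only differences are cosmetic (you apply Lemma \ref{every-point-Z} with $R=2$ and justify the $(w,n)$-nilpotency of $\Gamma_\epsilon(B_1(\tilde p))$ directly from Theorem \ref{Generalized-Margulis-Lemma} rather than citing the Claim in Appendix \ref{s:proof-of-margulis}, which costs at most a factor of $2$ in the choice of $\epsilon_1$).
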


\begin{remark}
Property $(i)$ is a rewording of theorem \ref{Generalized-Margulis-Lemma} due to Kapovitch-Wilking. We restate it here just for the convenience of our arguments. 
\end{remark}

\begin{proof}

Let $K\equiv10\cdot w\cdot 2^{n}$ 
be the corresponding constant in Lemma \ref{w-nilpotent-nice-factors}, and for each $\epsilon>0$ define
\begin{equation}
\Psi_0(\epsilon|n)\equiv\frac{\Psi(\epsilon|n)}{2K(n)}<<\epsilon,
\end{equation}
where  $\Psi$ is the function in Lemma \ref{every-point-Z}.
Let $N_1\equiv N_1(\epsilon^{-1}|n)$
be the corresponding constant in Lemma \ref{every-point-Z}.
We will prove that the above constants satisfy properties $(i)$, $(ii)$, $(iii)$.

$(i)$ 
Take $\epsilon_1(n)$, $w(n)$ to be the corresponding constants as in theorem \ref{Generalized-Margulis-Lemma}, then
$\Gamma_{\epsilon_1}(p)
$ contains a nilpotent group $\mathcal{N}$ such that
\begin{equation}
[\Gamma_{\epsilon_1}(p):\mathcal{N}]\leq w(n),\ \Step(\mathcal{N})\leq\length(\mathcal{N})\leq n.
\end{equation}
By definition, for each $\epsilon\leq \epsilon_1(n)$,
\begin{equation}
\Gamma_{\epsilon}(p)\leq \Gamma_{\epsilon_1}(p).
\end{equation}
Denote $\widehat{\mathcal{N}}\equiv\Gamma_{\epsilon}(p)\cap\mathcal{N}$, and then
\begin{equation}
[\Gamma_{\epsilon}(p):\widehat{\mathcal{N}}]\leq [\Gamma_{\epsilon_1}(p):\mathcal{N}]\leq w,\ \Step(\widehat{\mathcal{N}})\leq \Step(\mathcal{N})\leq n.
\end{equation}
Therefore, property $(i)$ is proved.

$(ii)$
First, we construct a nilpotent subgroup of $\Gamma_{\epsilon}'(p)$ whose rank is equal to $\rank(\Gamma_{\Psi_0}(p))$.

Notice that, $(i)$ gives that $\Gamma_{\Psi_0}(p)$
is $(w,n)$-nilpotent. Now applying Lemma \ref{w-nilpotent-nice-factors} implies that, 
$\Gamma_{\Psi_0}(p)$
contains a nilpotent subgroup $\mathcal{N}$ such that\begin{equation}[\Gamma_{\Psi_0}(p):\mathcal{N}]\leq w, \ \Step(\mathcal{N})= c_0\leq\length(\mathcal{N})\leq n,\end{equation}
and $\mathcal{N}$
satisfies $(m_0,K\cdot\Psi_0)$-displacement property with $m_0\equiv\rank(\mathcal{N})=\rank(\Gamma_{\Psi_0}(p))$. Recall Definition \ref{def-small-displacement}, we collect all those $(m_0,K\cdot\Psi_0)$-graded generators
\begin{equation}
\mathcal{S}=\{\sigma_{1,1},\ldots,\sigma_{1,n_1},\ldots,\sigma_{c_0,1},\ldots,\sigma_{c_0,n_{c_0}}\}.
\end{equation}
 Note that, for each $1\leq i\leq c_0$, $1\leq j\leq n_i$,
 \begin{equation}
 d(\sigma_{i,j}\cdot\tilde{p},\tilde{p})<K\cdot\Psi_0<\Psi(\epsilon|n),
 \end{equation}
 then by Lemma \ref{every-point-Z}, there exists 
\begin{equation}
1\leq a_{i,j}\leq N_1(\epsilon^{-1}|n)
\end{equation}
such that 
\begin{equation}
d(\sigma_{i,j}^{a_{i,j}}\cdot\tilde{x},\sigma_{i,j})<\epsilon,\ \forall\tilde{x}\in B_1(\tilde{p}),
\end{equation}
where $\tilde{p}$ is a lift of $p$ on the universal cover of $B_2(p)$.
Denote
\begin{equation}
\mathcal{S}_a\equiv\{\sigma_{1,1}^{a_{1,1}},\ldots,\sigma_{1,n_1}^{a_{1,n_1}},\ldots,\sigma_{c_0,1}^{a_{c_0,1}},\ldots,\sigma_{c_0,n_{c_0}}^{a_{c_0,n_{c_0}}}\},\ \mathcal{N}_a\equiv\langle\mathcal{S}_a\rangle,
\end{equation}
then by definition,
$\mathcal{N}_a\leq\Gamma_{\epsilon}(B_1(\tilde{p}))$.
In particular, $\mathcal{N}_a\leq\Gamma_{\epsilon}'(x)$ 
for every $x\in B_1(p)$. 
Lemma \ref{collection-of-graded-generators} shows that 
\begin{equation}
\rank(\mathcal{N}_a)=\rank(\langle\mathcal{S}\rangle)=\rank(\mathcal{N})=\rank(\Gamma_{\Psi_0}(p)).
\end{equation}

Since $\Gamma_{\epsilon}'(x)$ is $(w,n)$-nilpotent, Lemma \ref{basic-nil-lemma} implies that 
\begin{equation}
\rank(\Gamma_{\epsilon}'(x))\geq\rank(\mathcal{N}_a)=\rank(\Gamma_{\Psi_0}(p)).
\end{equation}
Note that, from the construction, it is clear that $\mathcal{N}_a\leq\Gamma_{\epsilon}(x)\equiv\Image[\pi_1(B_{\epsilon}(x))\rightarrow\pi_1(B_1(x))]$. Therefore, 
$\rank(\Gamma_{\epsilon}(x))\geq \rank(\mathcal{N}_a)=\rank(\Gamma_{\Psi_0}(p))$.
We have finished the proof of property $(ii)$.

$(iii)$ From the Claim in the proof of Theorem \ref{t:KW_almost_nilpotent}, the group 
$\Gamma_{\epsilon}(B_1(\tilde{p}))$ is $(w,n)$- nilpotent. Therefore, by $(ii)$, 
\begin{equation}
\rank(\Gamma_{\epsilon}(B_1(\tilde{p})))\geq\rank(\mathcal{N}_a)=\rank(\Gamma_{\Psi_0}(p)).
\end{equation}

\end{proof}

\subsection{Symmetry and Quantitative Splitting of Normal Covering Spaces}
\label{ss:symmetry-and-splitting}

In this subsection we prove quantitative splittings on normal covering spaces in two different contexts, which depends on the compactness of limit space of the base manifolds.

We start with a quantitative splitting by assuming the limit space of the base manifolds is Euclidean, which is the foundation of the non-collapse arguments on the universal cover in Theorem \ref{non-collapsed-splitting-maximal-rank} (see more details in Proposition \ref{non-collapsed-universal-cover}):

\begin{proposition}\label{quantitative-splitting-at-origin} Let $(M^n,g,p)$ be a Riemannian manifold with $\Ric\geq-(n-1)$  and assume that $B_2(p)$ has a compact closure in $B_4(p)$.  Let $\pi:(\widehat{B_2(p)},\hat{p})\rightarrow (B_2(p),p)$ be a normal cover with $\pi(\hat{p})=p$ and the deck transformation group $G$. Given $\epsilon>0$, there exists
$\delta(\epsilon,n)>0$ such that if

$(i)$ $d_{GH}(B_2(p),B_2(0^{k}))<\delta,\ 0^{k}\in\dR^{k},$

$(ii)$ $\hat{G}_{\delta}(p)\equiv\langle\gamma\in G|d(\gamma\cdot\hat{p},\hat{p})<2\delta\rangle$ satisfies $\rank(\hat{G}_{\delta}(p))\geq m$,
\\
 then for some $\delta<r<1$ and some integer $d(p)\geq m$, it holds that
\begin{equation}d_{GH}\Big(B_{r}(\hat{p}),B_{r}(0^{k+d(p)})\Big)<\epsilon r,\ 0^{k+d(p)}\in\dR^{k+d(p)}.\label{unit-ball-close}\end{equation}
\end{proposition}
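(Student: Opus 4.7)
The plan is to argue by contradiction via equivariant Gromov--Hausdorff convergence, combining the Cheeger--Colding splitting theorem with the refined almost nilpotent structure developed in Section \ref{ss:refinement-lcs}. Suppose the proposition fails for some $\epsilon_0>0$: then there is a sequence $(M_i^n,g_i,p_i)$ with $\Ric\geq -(n-1)$, parameters $\delta_i\to 0$, and normal covers $(\widehat{B_2(p_i)},\hat p_i,G_i)$ satisfying $d_{GH}(B_2(p_i),B_2(0^k))<\delta_i$ and $\rank\hat G_{\delta_i}(p_i)\geq m$, yet $d_{GH}(B_r(\hat p_i),B_r(0^{k+d}))\geq\epsilon_0 r$ for every $\delta_i<r<1$ and every integer $d\geq m$. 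Gromov precompactness and equivariant pointed Gromov--Hausdorff compactness allow me to extract, after a subsequence,
\begin{equation*}
(M_i,p_i)\xrightarrow{GH}(\dR^k,0^k),\qquad (\widehat M_i,\hat p_i,G_i)\xrightarrow{eqGH}(\widehat Y,\hat y,G_\infty),
\end{equation*}
where $G_\infty\subseteq\Isom(\widehat Y)$ acts with $\widehat Y/G_\infty\cong\dR^k$.

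Next I extract $m$ translation-like directions in $\widehat Y$ from the almost nilpotent structure of $\hat G_{\delta_i}(p_i)$. By Theorem \ref{t:small-displacement-property}, each $\hat G_{\delta_i}(p_i)$ contains a nilpotent subgroup $\mathcal N_i$ of controlled index and length with the $(m_i,K_0\delta_i)$-displacement property at $\hat p_i$, where $m_i\geq m$. Theorem \ref{t:refinement-of-lower-central-series} then produces graded generators $\sigma_{s,j}^{(i)}\in \mathcal N_i$ with $d(\sigma_{s,j}^{(i)}\hat p_i,\hat p_i)<K_0\delta_i\to 0$ and $\sum_s n_s=m_i$, while Corollary \ref{collection-of-graded-generators} ensures that any subgroup of the form $\langle(\sigma_{s,j}^{(i)})^{a_{s,j}}\rangle_{s,j}$ has rank $m_i$. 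Since $\sigma_{s,j}^{(i)}$ surjects onto the $\dZ$-factor of the refined lower central series and $G_i$ acts properly discontinuously, $\{(\sigma_{s,j}^{(i)})^\ell\hat p_i\}_{\ell\in\dZ}$ is discrete and unbounded in $\widehat M_i$. I choose integer powers $a_{s,j}^{(i)}$ so that $\tau_{s,j}^{(i)}\equiv(\sigma_{s,j}^{(i)})^{a_{s,j}^{(i)}}$ has displacement at $\hat p_i$ in $[1,2]$. Applying Lemma \ref{every-point-Z} to $\sigma_{s,j}^{(i)}$ on balls of radius $R_i\to\infty$, iterates of $\tau_{s,j}^{(i)}$ act as approximate translations on every fixed $B_R(\hat p_i)$, with displacement essentially additive in the iterate count. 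Passing to a further subsequence, $\tau_{s,j}^{(i)}$ converges equivariantly to an isometry $\tau_{s,j}\in G_\infty$ whose cyclic orbit through $\hat y$ is a bi-infinite quasi-geodesic.

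Applying Lemma \ref{homoline} (or its direct variant for the $C$-homogeneous action of $\langle\tau_{s,j}\rangle$) produces a line through $\hat y$ in each direction $\tau_{s,j}$, and transporting Corollary \ref{collection-of-graded-generators} to the limit shows that these $m$ lines are pairwise independent from each other and from the $k$ lines lifted from the coordinate axes of the base $\dR^k$. Iterated application of the Cheeger--Colding splitting theorem then yields an isometric splitting $\widehat Y=\dR^{k+d}\times Y''$ with $d\geq m$ and $Y''$ line-free; combined with a tangent-cone analysis at $\hat y$ (every tangent cone of $\widehat Y$ inherits the $\dR^{k+d}$-splitting, and at generic small scales the remaining fiber direction degenerates to a Euclidean factor by Colding--Naber) one finds some scale $\delta_i<r<1$, uniform in $i$ after a subsequence, such that $d_{GH}(B_r(\hat p_i),B_r(0^{k+d'}))<\epsilon_0 r$ for some $d'\geq m$, contradicting the assumption. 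The main technical obstacle is ensuring that the power elements $\tau_{s,j}^{(i)}$ neither degenerate to trivial isometries in the limit nor lose the rank-$m$ structure under the cancellations that arise from the nilpotent commutator relations; this is precisely where the refined central series of Theorem \ref{t:refinement-of-lower-central-series}, the rank preservation of Corollary \ref{collection-of-graded-generators}, and the non-localness Lemma \ref{every-point-Z} (which propagates small-displacement control from $\hat p_i$ to uniformly large neighborhoods) become indispensable together.
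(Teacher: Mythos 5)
Your overall architecture (contradiction, rescaling to equivariant GH limits, refined lower central series, Lemma \ref{every-point-Z}) matches the paper's, but the step where you actually prove $d\geq m$ is carried out by a line-splitting argument that does not work, and this is precisely the point the whole proposition turns on. First, a single cyclic subgroup $\langle\tau_{s,j}\rangle$ does not make the limit space $C$-homogeneous, so Lemma \ref{homoline} does not apply to it; and your assertion that iterates of $\tau_{s,j}^{(i)}$ have ``displacement essentially additive in the iterate count,'' so that the limit orbit is a bi-infinite quasi-geodesic, is unsupported -- Lemma \ref{every-point-Z} says only that some bounded power has small displacement on a large ball, and displacement of an isometry under iteration is in general highly non-additive (this failure is exactly why the paper goes to such lengths with the algebraic machinery; see Remark \ref{remark-on-line-splitting}). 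Second, even granting one line per generator, the claim that the $m$ resulting lines are ``pairwise independent'' because of Corollary \ref{collection-of-graded-generators} is a non sequitur: that corollary is a purely algebraic rank statement about subgroups generated by powers, and distinct infinite-order elements of $G_i$ can perfectly well converge to translations in the \emph{same} limit direction. Establishing that the $m$ graded generators contribute $m$ genuinely distinct Euclidean directions is the hard content of the proposition, and your proposal asserts it rather than proves it.

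The paper's proof uses homogeneity only once and for the full group: $G_\infty$ acts transitively on the fiber $Y$ over the Euclidean base, so Lemma \ref{Ricci-homoline} gives $Y\cong\dR^d\times Y_0$ with $Y_0$ compact (Claim 1). The inequality $d\geq m$ is then obtained not by producing lines from generators but by a Hausdorff-dimension count along the tower of cyclic normal covers $\widehat{\bf B}_i/\mathcal N^{(i)}_{s,\alpha}$ associated to the refined central series: for each $\dZ$-factor one uses Lemma \ref{every-point-Z} to manufacture a power $(\sigma^{(i)}_{s,\alpha})^{a_i(L_i+1)}$ that is $\bar\epsilon$-small on a large ball yet displaces the basepoint by a definite amount, so its limit is a nontrivial element of the identity component of the limit Lie group $\Lambda^{(\infty)}_{s,\alpha}$, forcing $\dim_{\mathcal H}(\Lambda^{(\infty)}_{s,\alpha}\cdot z_{s,\alpha-1})\geq 1$; summing these drops over the tower gives $\dim_{\mathcal H}(\dR^d)\geq m$. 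Your endgame is also off: the compact factor $Y_0$ becomes negligible at scales \emph{large} compared with $\operatorname{diam}(Y_0)$ (whence the choice $r=R\delta_i^{1/2}$ with $R$ large), not at small scales via tangent cones at $\hat y$, where $Y''$ need not look Euclidean at the basepoint.
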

\begin{remark}\label{remark-on-line-splitting}
Similar, though less general, statements are proved in \cite{FY}, \cite{KW}.  Besides the greater generality of the above statement, there is also a technical challenge which is new in the above statement which does not appear previously.  Namely, in \cite{KW} an additional assumption is made, which is that $\pi_1(B_2(p))$ is generated by loops of length $\leq\delta$.  This allows them to handle the compact case.  To avoid this assumption we use the algebraic structure of Theorems \ref{t:refinement-of-lower-central-series} and \ref{t:small-displacement-property} to build a geometrically compatible extension of the lower central series of the nilpotent subgroup in $\hat{G}_{\delta}(p)$.  
\end{remark}

\begin{remark} Theorem \ref{t:small-displacement-property} implies that for sufficiently small $\delta>0$, $\hat{G}_{\delta}(p)$ is $(w_0(n),n-k)$-nilpotent. In particular, $\rank(\hat{G}_{\delta}(p))$ is well-defined. Assumption $(ii)$ is just to emphasize the relation between the nilpotency  $\rank(\hat{G}_{\delta}(p))$ and the number of the Euclidean factors on the normal cover.

\end{remark}

\begin{remark}If the normal cover is chosen as the universal cover of $B_2(p)$, then the corresponding deck transformation group is exactly the fundamental group of $B_2(p)$, i.e. 
$G=\pi_1(B_2(p))$ and $\hat{G}_{\delta}(p)=\Image[\pi_1(B_{\delta}(p))\rightarrow\pi_1(B_2(p))]$. In this case, immediately we obtain the quantitative splitting on the universal cover (see Proposition \ref{non-local-splitting}).
\end{remark}

\begin{remark}
Note that we are not giving explicitly the $r>0$ for which the result holds, only that it holds for some $r$ of a definite size.
\end{remark}

Before proving the Proposition, we need to show the following lemmas.

\begin{lemma}\label{Ricci-homoline}
Let $(M_i^n,g_i,p_i)$ be a sequence of complete Riemannian manifolds with $\Ric_{g_i}\geq-(n-1)\epsilon_i^2$. Assume that 
\begin{equation}
(M_i^n,g_i,p_i)\xrightarrow{GH}(X,d,p),
\end{equation}
for some complete non-compact length space 
$(X,d,p)$. If $X$ is $C$-homogeneous for some $C<\infty$, i.e. for every $x,y\in X$, there is an isometry $f\in \Isom(X)$ with
$d_X(y,f(x))\leq C,$
then $X$ is isometric to $\dR^k\times X'$, where $k\geq 1$ and $X'$ is compact. In particular, any ray $\gamma\subset X$ can be extended to a line which is tangential to $\dR^k$. 
\end{lemma}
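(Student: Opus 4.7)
The plan is to produce the $\dR^k$ factor by iteratively invoking the Cheeger--Colding splitting theorem. Because $\Ric_{g_i}\geq -(n-1)\epsilon_i^2$ with $\epsilon_i\to 0$, the limit $X$ is a Ricci limit of a sequence with almost nonnegative Ricci, so the splitting theorem applies: whenever $X$ contains a line, $X\cong\dR\times X_1$ isometrically, where $X_1$ is again such a Ricci limit. Since $X$ is noncompact and $C$-homogeneous, Lemma \ref{homoline} supplies an initial line, so at least one splitting is available.

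Let $k$ be the largest integer for which $X$ admits an isometric splitting $X\cong \dR^k\times X'$, with $X'$ then containing no line. Such $k$ exists and is finite since each new $\dR$ factor strictly reduces the Hausdorff dimension of the complementary factor, which is bounded above by $n$. The key claim is then that $X'$ is compact. Suppose for contradiction that $X'$ is noncompact. I would show that $X'$ inherits $C$-homogeneity from $X$; then Lemma \ref{homoline} would produce a line in $X'$, contradicting the maximality of $k$.

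The inheritance of $C$-homogeneity by $X'$ rests on a structural fact: since $X'$ contains no line, every line in $X=\dR^k\times X'$ has constant $X'$-projection (a nontrivial projection would split as a constant-speed globally minimizing geodesic in $X'$, hence a line there). Hence the foliation $\{\dR^k\times\{y\}\}_{y\in X'}$ is the canonical ``maximal Euclidean'' foliation, and every $f\in\Isom(X)$ permutes its leaves, inducing $\bar f\in\Isom(X')$ with $f(v,y)=(\tilde v, \bar f(y))$ for some $\tilde v\in\dR^k$ depending on $(v,y)$. Given $y_1,y_2\in X'$, applying $C$-homogeneity of $X$ to $(0,y_1),(0,y_2)$ gives $f\in\Isom(X)$ with $d_X(f(0,y_1),(0,y_2))\leq C$; writing $f(0,y_1)=(v_0,\bar f(y_1))$ and using the product metric forces $d_{X'}(\bar f(y_1),y_2)\leq C$, proving $X'$ is $C$-homogeneous.

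For the ``in particular'' statement: given a unit speed ray $\gamma(t)=(\alpha(t),\beta(t))$ in $X=\dR^k\times X'$ with $X'$ compact of diameter $D$, both $\alpha$ and $\beta$ are geodesics of constant speeds $a,b$ with $a^2+b^2=1$. If $b>0$, then for $t>D/b$ one has $d_{X'}(\beta(0),\beta(t))\leq D<bt$, so $d_X(\gamma(0),\gamma(t))=\sqrt{(at)^2+d_{X'}(\beta(0),\beta(t))^2}<t$, contradicting that $\gamma$ is a ray. Hence $b=0$ and $\gamma(t)=(\alpha(0)+tv,\beta(0))$ for some unit $v\in\dR^k$, which extends by $t\in\dR\mapsto (\alpha(0)+tv,\beta(0))$ to a line tangent to the $\dR^k$ factor. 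The main obstacle in the argument is the structural rigidity of isometries of $\dR^k\times X'$: it rests on the characterization of lines in a metric product and on the maximality of the $\dR^k$ factor, and careful verification of this step in the Ricci-limit setting is the key technical point.
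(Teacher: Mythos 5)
Your proposal is correct and follows essentially the same route as the paper: Lemma \ref{homoline} to produce an initial line, iterated Cheeger--Colding splitting to extract the maximal Euclidean factor, and inheritance of $C$-homogeneity by the complementary factor to force its compactness. You are in fact somewhat more careful than the paper, which asserts without justification that each intermediate factor is $C$-homogeneous and that rays are tangential to $\dR^k$; your leaf-preservation argument for isometries of $\dR^k\times X'$ (using that $X'$ contains no line) and your explicit computation for the ray supply exactly the details the paper omits.
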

\begin{proof}Lemma \ref{homoline} implies that if $X$ is non-compact, then $X$ admits a line. Due to Cheeger-Colding's splitting theorem, $X$ is isometric to $\dR\times X_1$ for some complete length space $X_1$.  Since $X$ is $C$-homogeneous, we must have that $X_1$ is $C$-homogeneous.  If $X_1$ is compact then we are done, otherwise we may again find a line in $X_1$. Therefore, by Cheeger-Colding's splitting theorem again, $X\cong\dR^2\times X_2$. The above process stops after a finite number of steps such that $X\cong\dR^k\times X'$ for some compact length space $X'$. In this case we have that any ray $\gamma$ in $X$ is tangential to $\dR^k$ and thus can be extended to a line. 
\end{proof}

\begin{lemma}\label{lift}  For each 
$\epsilon>0$, $n\geq 2$, there exists $\delta=\delta(\epsilon, n)>0$ such that the following property holds: let $(M^n,g,p)$ be a Riemannian manifold with 
$\Ric_{g}\geq-(n-1)\delta^2$ such that $B_{\delta^{-1}}(p)$ has a compact closure in $B_{2\delta^{-1}}(p)$ and for some complete length space $(Z,z)$,
\begin{equation}
d_{GH}(B_{\delta^{-1}}(p),B_{\delta^{-1}}(0^k,z))<\delta,\ (0^k,z)\in\dR^k\times Z,
\end{equation}
then on any covering space $\pi: (\widehat{B_{\delta^{-1}}(p)},\hat{p})\rightarrow B_{\delta^{-1}}(p)$ with $\pi(\hat{p})=p$,
  \begin{equation}
d_{GH}(B_{\epsilon^{-1}}(\hat{p}),B_{\epsilon^{-1}}(0^k,z'))<\epsilon,\ (0^k,z')\in\dR^k\times Z',
\end{equation}
where $B_{\epsilon^{-1}}(\hat{p})$ is a ball on the covering space and $(Z',z')$ is complete length space.
\end{lemma}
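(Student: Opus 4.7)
The plan is to argue by contradiction, extract Gromov--Hausdorff limits, and lift lines from the base into the cover so that the Cheeger--Colding splitting theorem applies on the limit cover. Suppose the lemma fails: there exist $\epsilon_0>0$, a sequence $\delta_i\downarrow 0$, pointed Riemannian manifolds $(M_i^n,g_i,p_i)$ with $\Ric_{g_i}\geq -(n-1)\delta_i^2$ and $B_{\delta_i^{-1}}(p_i)$ relatively compact in $B_{2\delta_i^{-1}}(p_i)$, and complete length spaces $(Z_i,z_i)$ with
\[
d_{GH}\bigl(B_{\delta_i^{-1}}(p_i),B_{\delta_i^{-1}}(0^k,z_i)\bigr)<\delta_i,
\]
yet with coverings $\pi_i:(\widehat{B_{\delta_i^{-1}}(p_i)},\hat p_i)\to(B_{\delta_i^{-1}}(p_i),p_i)$ for which $B_{\epsilon_0^{-1}}(\hat p_i)$ is not $\epsilon_0$-Gromov--Hausdorff close to any ball $B_{\epsilon_0^{-1}}(0^k,z')$ in $\dR^k\times Z'$.

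By Gromov precompactness I may pass to a subsequence so that $(M_i^n,g_i,p_i)\xrightarrow{pGH}(Y,p_\infty)\cong(\dR^k\times Z_\infty,(0^k,z_\infty))$. Because $\pi_i$ is a local isometry, the cover inherits the Ricci lower bound $-(n-1)\delta_i^2\to 0$, so another application of Gromov precompactness together with Arzel\`a--Ascoli applied to the $\pi_i$ produces a pointed limit $(\widehat{B_{\delta_i^{-1}}(p_i)},\hat p_i)\xrightarrow{pGH}(\hat Y,\hat p_\infty)$ together with a $1$-Lipschitz surjection $\hat\pi:\hat Y\to Y$ satisfying $\hat\pi(\hat p_\infty)=p_\infty$; moreover $\hat Y$ is a Ricci-limit space on which the Cheeger--Colding splitting theorem is available.

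The heart of the argument is producing $k$ independent lines in $\hat Y$ through $\hat p_\infty$. For each coordinate direction $e_j\in\dR^k$ the curve $t\mapsto (te_j,z_\infty)$ is a line in $Y$. Using the $\delta_i$-Gromov--Hausdorff approximation I can pick unit-speed minimizing geodesic segments $\gamma_{i,j}:[-R_i,R_i]\to B_{\delta_i^{-1}}(p_i)$ with $R_i\to\infty$ passing through points $p_{i,j}$ near $p_i$ that $\delta_i$-approximate the $j$-th coordinate line in $Y$. Each $\gamma_{i,j}$ lifts to a unit-speed geodesic $\hat\gamma_{i,j}$ in the cover starting at a lift $\hat p_{i,j}$ of $p_{i,j}$ near $\hat p_i$, and the $1$-Lipschitz property of $\pi_i$ combined with the geodesic property of $\hat\gamma_{i,j}$ gives
\[
|s-t|-\delta_i\leq d\bigl(\gamma_{i,j}(s),\gamma_{i,j}(t)\bigr)\leq d\bigl(\hat\gamma_{i,j}(s),\hat\gamma_{i,j}(t)\bigr)\leq |s-t|,
\]
so that $\hat\gamma_{i,j}$ is almost distance-preserving on $[-R_i,R_i]$. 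Passing to a further subsequence, $\hat\gamma_{i,j}$ subconverges to a line $\hat\gamma_{\infty,j}$ in $\hat Y$ through $\hat p_\infty$ whose image under $\hat\pi$ is the $j$-th coordinate line in $Y$.

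Finally I would apply the Cheeger--Colding splitting theorem iteratively: the $k$ lines $\hat\gamma_{\infty,1},\dots,\hat\gamma_{\infty,k}$ force a splitting $\hat Y\cong\dR^k\times\hat Z'$ with $\hat p_\infty=(0^k,\hat z_\infty')$, since their projections under $\hat\pi$ already span the $\dR^k$-factor of $Y$ and $\hat\pi$ is $1$-Lipschitz, so the resulting lines remain transverse at each stage of the iteration. This contradicts the standing assumption for all sufficiently large $i$, proving the lemma. I expect the main obstacle to be the passage from almost-lines in $Y$ to genuine lines in $\hat Y$: the lifts $\hat\gamma_{i,j}$ must remain almost minimizing over intervals whose length grows without bound, which is precisely where the hypothesis that $B_{\delta_i^{-1}}(p_i)$ is $\delta_i$-close to the \emph{full} ball $B_{\delta_i^{-1}}(0^k,z_i)\subset\dR^k\times Z_i$ is used, since the relevant almost-isometry of the base must be defined on a scale that diverges with $i$.
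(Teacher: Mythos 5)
Your proposal is correct in outline, but it takes a genuinely different route from the paper. The paper does not lift geodesics at all: it takes the Cheeger--Colding harmonic almost-splitting map $\Phi_i=(h_i^{(1)},\dots,h_i^{(k)})$ on the base, lifts it to the cover (lifts of harmonic functions are harmonic), observes that the defining integral estimate $\fint\sum|\langle\nabla h^{(\alpha)},\nabla h^{(\beta)}\rangle-\delta_{\alpha\beta}|+\fint\sum|\nabla^2h^{(\alpha)}|^2\leq\Psi(\delta_i)$ holds verbatim on each fundamental domain, and then averages over the (arbitrarily many) fundamental domains covering $B_R(\hat p_i)$ -- the count cancels between numerator and denominator -- to recover the same integral estimate upstairs. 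The quantitative almost-splitting theorem of \cite{ChC} then applies directly on the cover at a finite scale. Your route instead lifts minimizing segments, passes to a pointed limit of the covers, and invokes the qualitative splitting theorem there. Both work: your key observation that a lift of a minimizing segment is automatically minimizing (so the chain $|s-t|=d(\gamma(s),\gamma(t))\leq d(\hat\gamma(s),\hat\gamma(t))\leq|s-t|$ is in fact a chain of equalities) is sound, and the obstruction the paper warns about for line-splitting arguments (Example \ref{ss:example 5}) does not apply here, since that concerns manufacturing \emph{new} lines from a deck-group action on a noncompact fiber, whereas you only transport the $k$ lines already present downstairs. What the paper's method buys is the explicit lifted splitting map on the cover, which is reused later (e.g.\ in Claim 1 of Proposition \ref{quantitative-splitting-at-origin}, where invariance of its level sets under the deck group matters); what your method buys is independence from the harmonic-map machinery.

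The one step you assert without argument is the transversality of the limit lines $\hat\gamma_{\infty,1},\dots,\hat\gamma_{\infty,k}$ under the iterated splitting, and this does need a sentence of proof. The clean way: since $\hat\pi$ is $1$-Lipschitz and $\hat\pi\circ\hat\gamma_{\infty,j}$ is the $j$-th coordinate line in $\dR^k\times Z_\infty$, one has $d_{\hat Y}(\hat\gamma_{\infty,j}(s),\hat\gamma_{\infty,j'}(t))\geq\sqrt{s^2+t^2}$ for $j\neq j'$ and all $s,t\in\dR$. If after splitting off $j$ factors the line $\hat\gamma_{\infty,j+1}$ had constant projection to the complementary factor, all $j+1$ lines would lie in a single Euclidean slice $\dR^j$, where the displayed inequality (for all signs of $s,t$) forces the $j+1$ direction vectors to be pairwise orthogonal -- impossible in $\dR^j$. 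Also note that the last inequality in your displayed chain should read with the segments exactly minimizing, so the $-\delta_i$ slack is unnecessary; the only place the full radius $\delta_i^{-1}$ enters is, as you say, in producing minimizing segments of diverging length in the base.
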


\begin{proof} 
We argue by contradiction. Suppose for some $\epsilon_0>0$, there is no such $\delta>0$. That is, we have a sequence of $\delta_i\rightarrow0$, a sequence of Riemannian manifolds $(M_i^n,g_i,p_i)$ with $\Ric\geq-(n-1)\delta_i^2$ and a sequence of complete length space $(Z_i,z_i)$ such that $B_{\delta_i^{-1}}(p_i)$ has a compact closure in $B_{2\delta_i^{-1}}(p_i)$ and 
\begin{equation}
d_{GH}(B_{\delta_i^{-1}}(p_i),B_{\delta_i^{-1}}(0^k,z_i))<\delta_i,\ (0^k,z_i)\in\dR^k\times Z_i,
\end{equation}
but there is some covering space $\pi_i:(\widehat{B_{\delta_i^{-1}}(p_i)},\hat{p}_i)\rightarrow B_{\delta_i^{-1}}(p_i)$ such that for any complete length space $(Z',z')$,
\begin{equation}
d_{GH}(B_{\epsilon_0^{-1}}(\hat{p}_i),B_{\epsilon_0^{-1}}(0^k,z'))\geq \epsilon_0,\ (0^k,z')\in\dR^k\times Z'.
\end{equation}

By Gromov's pre-compactness theorem, there is some complete length space $(Z_0,z_0)$
such that for sufficiently large $i$,
\begin{equation}
d_{GH}(B_{\delta_i^{-1}}(p_i),B_{\delta_i^{-1}}(0^k,z_0))<2\delta_i,\ (0^k,z_i)\in\dR^k\times Z_0.
\end{equation}
Let  $F(\hat{p}_i)$ be the fundamental domain of 
$(\widehat{B_{\delta_i^{-1}}(p_i)},\hat{p}_i)$
 containing $\hat{p}_i$, and then denote by $U_{2R}(\hat{p}_i)$ the subset of the covering space $(\widehat{B_{\delta_i^{-1}}(p_i)},\hat{p}_i)$ which satisfies
\begin{equation}
U_{2R}(\hat{p}_i)\equiv\pi_i^{-1}(B_{2R}(p_i))\cap F(\hat{p}_i).
\end{equation}
Fix $0<R<<\epsilon_i^{-1}$, then as in \cite{ChC} we have the following harmonic maps:
\begin{equation}\label{harmonicGHA}
\Phi_i\equiv(h_i^{(1)},\ldots, h_i^{(k)}): B_{2R}(p_i)\longrightarrow B_{2R}(0^k),
\end{equation}
 where $h_i^{(1)},\ldots, h_i^{(k)}$ are harmonic functions and satisfy the following estimate,
\begin{equation}\fint_{B_{2R}(p_i)}\sum\limits_{1\leq \alpha\leq \beta\leq k}|\langle\nabla h_i^{(\alpha)},\nabla h_i^{(\beta)}\rangle-\delta_{\alpha\beta}|
+\fint_{B_{2R}(p_i)}\sum\limits_{\alpha=1}^k|\nabla^2 h_i^{(\alpha)}|^2\leq\Psi(\delta_i|n,R).\label{integralclose}
\end{equation}

Observe that the inequality (\ref{integralclose}) is identical to the one on $U_{2R}(\hat{p}_i)$, that is,
\begin{equation}\fint_{U_{2R}(\hat{p}_i)}\sum\limits_{1\leq \alpha\leq \beta\leq k}|\langle\nabla \hat{h}_i^{(\alpha)},\nabla\hat{h}_i^{(\beta)}\rangle-\delta_{\alpha\beta}|+\fint_{U_{2R}(\hat{p}_i)}\sum\limits_{\alpha=1}^k|\nabla^2\hat{h}_i^{(\alpha)}|^2\leq\Psi(\delta_i|n,R),\label{domain-estimate}
\end{equation}
where $\hat{h}_i^{(\alpha)}$ is the lifting of $h_i^{(\alpha)}$ on the covering space $\widehat{B_{2\delta_i^{-1}}(p_i)}$.

Denote \begin{equation}
f_i\equiv\sum\limits_{1\leq \alpha\leq\beta\leq k}|\langle\nabla \hat{h}_i^{(\alpha)},\nabla\hat{h}_i^{(\beta)}\rangle-\delta_{\alpha\beta}|+
\sum\limits_{\alpha=1}^k|\nabla^2\hat{h}_i^{(\alpha)}|^2,
\end{equation}
and let $\{U_{\ell}\}_{\ell=1}^N$ be a finite collection of copies of $U_{2R}(\hat{p}_i)$ which covers $B_{R}(\hat{p}_i)$ and satisfies $U_{\ell}\cap B_{R}(\hat{p}_i)\neq\emptyset$ for each $1\leq\ell\leq N$.
Then clearly
\begin{equation}\bigcup\limits_{\ell=1}^NU_{\ell}\subset B_{10R}(\hat{p}_i)\end{equation}
and inequality (\ref{domain-estimate}) holds on each $U_{\ell}$,
which implies
\begin{eqnarray}\fint_{B_{R}(\hat{p}_i)}f_i&=&\frac{1}{\Vol(B_{R}(\hat{p}_i))}\int_{B_{R}(\hat{p}_i)}f_i\nonumber\\
&\leq&\frac{C(n,R)}{\Vol(B_{10R}(\hat{p}_i))}\int_{B_{R}(\hat{p}_i)}f_i\nonumber\\
&\leq&
\frac{C(n,R)}{\sum\limits_{\ell=1}^N\Vol(U_{\ell})}\int_{B_{R}(\hat{p}_i)}f_i\nonumber\\
&\leq&\frac{C(n,R)}{\sum\limits_{\ell=1}^N\Vol(U_{\ell})}\sum\limits_{\ell=1}^N\int_{U_{\ell}}f_i\nonumber\\
&\leq &C(n,R)\frac{\Psi(\epsilon_i|n,R)\sum\limits_{\ell=1}^N\Vol(U_{\ell})}{\sum\limits_{\ell=1}^N\Vol(U_{\ell})}\nonumber\\
&=&\Psi(\delta_i|n,R).
\end{eqnarray}
The last inequality follows from (\ref{domain-estimate}).  Using the quantitative splitting theorem of \cite{ChC} the above estimates imply that there exists some complete length space 
$(Z',z')$ such that
\begin{equation}
d_{GH}(B_{R}(\hat{p}_i),B_{R}(0^k,z'))<\Psi(\delta_i|n,R), \ (0^k,z')\in\dR^k\times Z'.
\end{equation}
 Therefore, taking $R=\epsilon_0^{-1}$ and sufficiently large $i$ such that $\Psi(\delta_i|n,R)<\epsilon_0/2$, we obtain the contradiction.
\end{proof}

With the above Lemmas, we proceed to prove Proposition \ref{quantitative-splitting-at-origin}.

\begin{proof}[Proof of Proposition \ref{quantitative-splitting-at-origin}]  We argue by contradiction. Suppose for some 
$\epsilon_0>0$, no such $\delta>0$ exists. That is, we have a contradicting sequence $(M_i^n,g_i,p_i)$ with $\Ric_{g_i}\geq-(n-1)$ and $B_2(p_i)$ has a compact closure in $B_4(p_i)$, and let $\pi_i:(\widehat{B_{2}(p_i)},\hat{p}_i,G_i)\rightarrow (B_2(p_i),p_i)$ be a sequence of normal covers with the deck transformation groups $G_i$, such that for $\delta_i\rightarrow0$ and for any arbitrary sequence $\{s_i\}$ with $1>s_i>\delta_i$, we have the following properties:

$(i)$ $d_{GH}(B_2(p_i),B_2(0^{k}))<\delta_i\rightarrow0$, $0^{k}\in\dR^{k}$.

$(ii)$ for sufficiently large $i$ (depending only on $n$),  $\rank(\hat{G}_{\delta_i}(p_i))=m_0\geq m$ (passing to a subsequence if necessary), notice that by Theorem \ref{t:small-displacement-property} for sufficiently large $i$, $\hat{G}_{\delta_i}(p_i)$ is always $(w(n),n-k)$-nilpotent.  

$(iii)$ but for any integer $d\geq m$, it holds that for each $i$,
\begin{equation}d_{GH}\Big(B_{s_i}(\hat{p}_i),B_{s_i}(0^{k+d})\Big)\geq s_i\epsilon_0,\ 0^{k+d}\in\dR^{k+d},\label{contradicting-rank-splitting}
\end{equation}
where $\hat{p}_i$ is a lift of $p_i$ on the normal cover.

Let $\epsilon_i\equiv\delta_i^{1/2}$ and rescale the metric $h_i\equiv\epsilon_i^{-2}g_i$, then we have the following properties: the contradicting sequence $(M_i^n,h_i,p_i)$ satisfies $\Ric_{h_i}\geq-(n-1)\epsilon_i^2$ and $B_{2\epsilon_i^{-1}}(p_i,h_i)$
has a compact closure in $B_{4\epsilon_i^{-1}}(p_i,h_i)$ such that for sufficiently large $i$, the following holds,

$(i)'$ $d_{GH}(B_{2\epsilon_i^{-1}}(p_i,h_i),B_{2\epsilon_i^{-1}}(0^{k}))<\epsilon_i\rightarrow0$, $0^{k}\in\dR^{k}$,

$(ii)'$ $\hat{G}_{\epsilon_i}(p_i,h_i)$ is $(w(n),n-k)$-nilpotent with $\rank(\hat{G}_{\epsilon_i}(p_i,h_i))=m_0\geq m$, where 
\begin{equation}
\hat{G}_{\epsilon_i}(p_i,h_i)\equiv\Big\langle\Big\{\gamma_i\in G_i\Big|d_{h_i}(\gamma_i\cdot\hat{p}_i,\hat{p}_i)<2\epsilon_i\Big\}\Big\rangle.
\end{equation}

In the following proof, we will obtain a contradiction to $(iii)$ by applying $(i)'$ and $(ii)'$. 
We start with the first claim.

\vspace{0.5cm}

{\bf{Claim 1.}} Passing to a subsequence, we have the following commutative diagram
\begin{equation}
\xymatrix{
\Big(\widehat{B_{2\epsilon_i^{-1}}(p_i)}, G_i,\hat{p}_i\Big)\ar[rr]^{eqGH}\ar[d]_{\pi_i} &   & \Big(\dR^k\times\dR^d\times Y_0,G_{\infty},(0^{k+d},y_0)\Big)\ar [d]^{\pi_{\infty}}\\
 \Big(B_{2\epsilon_i^{-1}}(p_i)=\widehat{B_{2\epsilon_i^{-1}}(p_i)}\Big/G_i,p_i\Big)\ar[rr]^{pGH} && \Big(\dR^k,0^k\Big),
}\label{hat-covering-diagram}
\end{equation} where $d\geq 0$, $(Y_0,y_0)$ is a compact length space and the normal covering maps $\pi_i$ converge to a submetry $\pi_{\infty}$. Moreover, $G_{\infty}$ acts trivially on $\dR^k$ and thus $G_{\infty}$ acts transitively on $\dR^{d}\times Y_0$.

\vspace{0.5cm}

Lemma \ref{lift} implies that passing to a subsequence, it holds that
\begin{equation}
\Big(\widehat{B_{2\epsilon_i^{-1}}(p_i)},\hat{p}_i\Big)\xrightarrow{pGH}\Big(\dR^{k}\times Y,(0^{k},y)\Big),\ (0^k,y)\in\dR^k\times Y,
\end{equation}
where $(Y,y)$ is a complete length space. If $Y$ is compact, it is done. 
If $Y$ is non-compact, we will prove that $Y$ is isometric to $\dR^d\times Y_0$ with $d\geq 1$ and $Y_0$ compact.

As in the proof of Lemma \ref{lift}, fix $0<R<<2\epsilon_i^{-1}$ and let
\begin{equation}
\Phi_{i,R}\equiv\Big(\Phi_{i,R}^{(1)},\ldots,\Phi_{i,R}^{(k)}\Big): B_{R}(p_i)\rightarrow  B_{R}(0^k)
\end{equation}
be the $\dR^k$-splitting map such that $\triangle\Phi_{i,R}^{(\alpha)}=0$ for each $1\leq \alpha\leq k$. Let 
$\widehat{\Phi}_{i,R}$
be the lifted map of $\Phi_{i,R}$, i.e. $\widehat{\Phi}_{i,R}\equiv\Phi_{i,R}\circ\pi_i$.
Lemma \ref{lift} shows that $\widehat{\Phi}_{i,R}$ is also a $\dR^k$-splitting map.
Therefore, $(Y,y)$ is the Gromov-Hausdorff limit of the lifted level sets 
$\widehat{\Phi}_{i,R}^{-1}(\widehat{\Phi}_{i,R}(\hat{p}_i))$. Moreover, by definition, the level sets $\widehat{\Phi}_{i,R}^{-1}(\widehat{\Phi}_{i,R}(\hat{p}_i))$
keep invariant under the deck transformation group $G_i$. Then the isometry group $G_{\infty}$ can be viewed as an action on $(Y,y)$, i.e. $G_{\infty}\leq \Isom(Y)$, and thus $G_{\infty}$ acts transitively on $Y$ and it acts trivially on $\dR^k$. Since $(Y,y)$ is assumed non-compact, Lemma \ref{Ricci-homoline} implies that the complete non-compact length space $Y$ is isometric to $\dR^d\times Y_0$ with $Y_0$ compact for some $d\geq 1$. We have proved Claim 1.

\vspace{0.5cm}

The main ingredient is to prove the following:
the assumption  \begin{equation}\rank(\hat{G}_{\epsilon_i}(p_i,h_i))=m_0\geq m,\label{rank-at-least-m}\end{equation} gives the inequality \begin{equation}d\geq m_0\geq m,\label{Euclidean-factor-inequality}
\end{equation}
in diagram (\ref{hat-covering-diagram}). 
Inequality (\ref{Euclidean-factor-inequality}) will be proved through the following arguments.

To simplify the arguments, we will blow down the limit space $Y_0$ in diagram (\ref{hat-covering-diagram}). That is, there exists a slowly converging sequence $\mu_{i}\rightarrow0$ and $\mu_i>\epsilon_{i}^{1/2}$ such that under the rescaled metrics $\mathfrak{h}_i\equiv\mu_i^2h_i$ with ${\bf{B}}_i\equiv\mu_i B_{2\epsilon_i^{-1}}(p_i,h_i)$ we have the following commutative diagram:
\begin{equation}
\xymatrix{
\Big(\widehat{{\bf{B}}}_i, G_i,\hat{p}_i\Big)\ar[rr]^{eqGH}\ar[d]_{\pi_i} &   & \Big(\dR^k\times\dR^d,G_{\infty},0^{k+d}\Big)\ar [d]^{\pi_{\infty}}\\
 \Big({\bf{B}}_i,p_i\Big)\ar[rr]^{pGH} && \Big(\dR^k,0^k\Big),
}\label{hat-covering-diagram-2}
\end{equation}
where $G_{\infty}$ acts transitively on $\dR^d$ and trivially on $\dR^k$. 
In the following arguments, the inequality 
$d\geq m$ will be proved by the dimension estimate of the limiting orbit. 
To this end, we will analyze the behavior of the nilpotent subgroup of $G_i$ and its limit in $G_{\infty}$. 

In this paragraph, we sum up the properties of the nilpotent subgroup of $G_i$ which will be applied in the proof of inequality (\ref{Euclidean-factor-inequality}).
Let $\lambda_i\equiv\mu_i\epsilon_i\rightarrow0$, by the contradicting assumption $(ii)'$, under the rescaled metric $\mathfrak{h}_i$, it holds that the group
\begin{equation}
\hat{G}_{\lambda_i}(p_i,\mathfrak{h}_i)\equiv\Big\langle\Big\{g_i\in G_i\Big| d_{\mathfrak{h}_i}(g_i\cdot\hat{p}_i,\hat{p}_i)<\lambda_i\Big\}\Big\rangle
\end{equation}
is $(w(n),n)$-nilpotent. Hence, $\hat{G}_{\lambda_i}(p_i,\mathfrak{h}_i)$
contains a nilpotent subgroup $\mathcal{N}_i$
such that (passing to a subsequence if necessary), 
\begin{equation}
[\hat{G}_{\lambda_i}(p_i,\mathfrak{h}_i):\mathcal{N}_i]\leq w(n),\  \ c_0\equiv\Step(\mathcal{N}_i)\leq\length(\mathcal{N}_i)\leq n. 
\end{equation}
Lemma \ref{w-nilpotent-nice-factors} implies that there exists 
\begin{equation}K=10^n\cdot w(n)<\infty,\end{equation} such that $\mathcal{N}_i$ has the $(m_0, K\cdot\lambda_i)$-displacement property at $\hat{p}_i\in(\widehat{\bf{B}}_i,\hat{p}_i)$. That is,
for each $1\leq s\leq c_0$, let $\mathcal{N}^{(i)}_{s}\equiv[\mathcal{N}^{(i)}_{s-1},\mathcal{N}_i]$, $\mathcal{N}^{(i)}_{0}\equiv\mathcal{N}_i$ and denote \begin{equation}n_s\equiv\rank(\mathcal{N}^{(i)}_{s}/\mathcal{N}^{(i)}_{s-1}),\end{equation}
so the $(m_0, K\cdot\lambda_i)$-displacement property of $\mathcal{N}_i$ implies that for each $1\leq s\leq c_0$, there is a normal series
\begin{equation}
\mathcal{N}^{(i)}_{s-1}\rhd \mathcal{N}^{(i)}_{s,n_s}\rhd\ldots\rhd\mathcal{N}^{(i)}_{s,1}\rhd\mathcal{N}^{(i)}_{s,0}\rhd\mathcal{N}^{(i)}_{s}\label{refined-normal-series}
\end{equation}
such that  for every $1\leq \alpha\leq n_s$ the following holds:

$(a)$ $\mathcal{N}^{(i)}_{s,\alpha}/\mathcal{N}^{(i)}_{s,\alpha-1}\cong\mathbb{Z}$. $\mathcal{N}^{(i)}_{s-1}/\mathcal{N}^{(i)}_{s,n_s}$ and $\mathcal{N}^{(i)}_{s,0}/\mathcal{N}^{(i)}_{s}$ are finite groups.

$(b)$ There are $\{\sigma^{(i)}_{s,1},\ldots,\sigma^{(i)}_{s,n_s}\}\subset\mathcal{N}^{(i)}_{s}$ such that,
\begin{equation}
d(\sigma_{i,s\alpha}\cdot\hat{p}_i,\hat{p}_i)<K\cdot\lambda_i,\label{factor-with-small-displacement} 
\end{equation}
and $\mathcal{N}^{(i)}_{s,\alpha}/\mathcal{N}^{(i)}_{s,\alpha-1}=\langle\sigma^{(i)}_{s,\alpha}\cdot \mathcal{N}^{(i)}_{s,\alpha-1}\rangle\cong\mathbb{Z}$.

The above normal series actually gives a sequence of normal $\mathbb{Z}$-cover.
Denote by 
\begin{equation}\pi^{(i)}_{s,\alpha}:\Big(\widehat{\bf{B}}_i/\mathcal{N}_{i,s(\alpha-1)},p_{i,s(\alpha-1)}\Big)\longrightarrow \Big(\widehat{\bf{B}}_i/\mathcal{N}_{i,s\alpha},p_{i,s\alpha}\Big)\end{equation} 
the normal cover 
induced by the transformation group $\Lambda_{i,s\alpha}\equiv\mathcal{N}_{i,s\alpha}/\mathcal{N}_{i,s(\alpha-1)}=\langle\sigma_{i,s\alpha}\cdot\mathcal{N}_{i,s(\alpha-1)}\rangle.$
Actually, the cyclic normal covers imply the following diagram of equivariant convergence,
\begin{equation}
\xymatrix{
\Big(\widehat{\bf{B}}_i/\mathcal{N}^{(i)}_{s}, p^{(i)}_{s}\Big)\ar@{->}[rr]^{pGH}\ar@{.>}[d]_{} &  & \Big(\dR^{k}\times Z_{s},(0^{k},z_{s})\Big)\ar@{.>} [d]^{}
\\
\Big(\widehat{\bf{B}}_i/\mathcal{N}^{(i)}_{s,\alpha-1},\Lambda^{(i)}_{s,\alpha},p^{(i)}_{s,\alpha-1}\Big)\ar[rr]^{eqGH}\ar[d]_{\pi^{(i)}_{s,\alpha}} &  & \Big(\dR^{k}\times Z_{s,\alpha-1},\Lambda^{(\infty)}_{s,\alpha},(0^{k},z_{s,\alpha-1})\Big)\ar [d]^{\pi^{(\infty)}_{s,\alpha}}
\\ 
\Big(\widehat{\bf{B}}_i/\mathcal{N}^{(i)}_{s,\alpha},p^{(i)}_{s,\alpha}\Big)\ar@{.>}[d]^{}
\ar[rr]^{pGH}& & \Big(\dR^{k}\times  Z_{s,\alpha}, (0^{k},z_{s,\alpha})\Big)\ar@{.>}[d]^{}
\\
\Big(\widehat{\bf{B}}_i/\mathcal{N}^{(i)}_{s-1},p^{(i)}_{s-1}\Big)\ar[rr]^{pGH}&  & \Big(\dR^{k}\times Z_{s-1},(0^{k},z_{s-1})\Big)} \label{cyclic-cover}
\end{equation}
where $Z_{s,\alpha}=Z_{s,\alpha-1}/\Lambda^{(\infty)}_{s,\alpha}$.

\vspace{0.5cm}

{\bf{Claim 2.}} In diagram (\ref{cyclic-cover}), given $1\leq \alpha\leq n_s$, $\dim_{\mathcal{H}}(\Lambda^{(\infty)}_{s,\alpha}\cdot z_{s,\alpha-1})\geq 1$.

\vspace{0.5cm}

By theorem \ref{lie-group}, $\Isom(\dR^k\times Z_{s,\alpha-1})$ is a Lie group.
Since $\Lambda^{(\infty)}_{s,\alpha}$ is a closed subgroup of
$\Isom(\dR^k\times Z_{s,\alpha-1})$ (actually acting trivially on $\dR^k$), 
$\Lambda^{(\infty)}_{s,\alpha}$ is also a Lie group.
Denote by $\Lambda^{0}_{s,\alpha}$ the identity component of 
$\Lambda^{(\infty)}_{s,\alpha}$,
we will show that
\begin{equation}
\dim_{\mathcal{H}}(\Lambda^{0}_{s,\alpha}\cdot z_{s(\alpha-1)})\geq 1.
\end{equation}
By definition, $\Lambda^{0}_{s,\alpha}$ is connected, so is the orbit $\Lambda^{0}_{s,\alpha}\cdot z_{s,\alpha-1}$. It suffices to show that the orbit $\Lambda^{0}_{s,\alpha}\cdot z_{s,\alpha-1}$ contains at least two points. 

First, we give a geometric description of the identity component $\Lambda^{0}_{s,\alpha}$. For each
$\epsilon>0$, define
\begin{equation}
I_{s,\alpha}(\epsilon)\equiv\Big\{ g\in \Lambda^{0}_{s,\alpha}\Big|d(g\cdot y_{s,\alpha-1},y_{s,\alpha-1})<\epsilon,\ \forall y_{s,\alpha-1}\in\overline{B_{\epsilon^{-1}}(z_{s,\alpha-1})}\Big\}.
\end{equation}
then applying Lemma \ref{generating-set-of-identity-component},  there exists $\bar{\epsilon}(Z_{s,\alpha-1})>0$ such that $\langle I_{s,\alpha}(\bar{\epsilon})\rangle=\Lambda^{0}_{s,\alpha}$.
Inequality (\ref{factor-with-small-displacement}) implies that, if $i$ is sufficiently large (depending only on $n$ and $\bar{\epsilon}$), it holds that (where we have identified $\sigma^{(i)}_{s,\alpha}$ with its natural projection on $\mathcal{N}^{(i)}_{s,\alpha}/\mathcal{N}^{(i)}_{s,\alpha-1}$),
\begin{equation}d(\sigma^{(i)}_{s,\alpha}\cdot p^{(i)}_{s,\alpha-1}, p^{(i)}_{s,\alpha-1})<K\cdot\lambda_i<\mu(\bar{\epsilon},n),
 \end{equation}
with $\mu(\bar{\epsilon},n)\equiv\frac{1}{10}\Psi\Big(\frac{\bar{\epsilon}}{2}\Big|n\Big)$, where $\Psi$ is the function in Lemma \ref{every-point-Z}.  Define
\begin{equation}
L_i(\mu/2)\equiv\sup\Big\{\ell\in\mathbb{N}_+\Big| d\Big((\sigma^{(i)}_{s,\alpha})^{\ell}\cdot p^{(i)}_{s,\alpha-1},p^{(i)}_{s,\alpha-1}\Big)\leq\mu/2\Big\},
\end{equation}
and notice that $\langle\sigma^{(i)}_{s,\alpha}\rangle\cong\mathbb{Z}$ acts isometrically and properly on $\widehat{\bf{B}}_i/\mathcal{N}^{(i)}_{s,\alpha-1}$, then
\begin{equation}
L_i(\mu/2)<\infty.
\end{equation}
Consequently, for each $\ell\geq L_i+1$,
\begin{equation}
d((\sigma^{(i)}_{s,\alpha})^{\ell}\cdot p^{(i)}_{s,\alpha-1},p^{(i)}_{s,\alpha-1})>\mu/2.
\end{equation}
Since the generator $\sigma^{(i)}_{s,\alpha}$ satisfies $d(\sigma^{(i)}_{s,\alpha}\cdot  p^{(i)}_{s,\alpha-1}, p^{(i)}_{s,\alpha-1})<\mu$,
then by triangle inequality,
\begin{eqnarray}
\frac{\mu}{2}&<&d((\sigma^{(i)}_{s,\alpha})^{L_i+1}\cdot p^{(i)}_{s,\alpha-1},p^{(i)}_{s,\alpha-1})\nonumber\\
&\leq& d((\sigma^{(i)}_{s,\alpha})^{L_i+1}\cdot p^{(i)}_{s,\alpha-1},(\sigma^{(i)}_{s,\alpha})^{L_i}\cdot p^{(i)}_{s,\alpha-1})+d((\sigma^{(i)}_{s,\alpha})^{L_i}\cdot p^{(i)}_{s,\alpha-1}, p^{(i)}_{s,\alpha-1})\nonumber\\
&=& d(\sigma^{(i)}_{s,\alpha}\cdot p_{i,s(\alpha-1)}, p_{i,s(\alpha-1)})+d((\sigma^{(i)}_{s,\alpha})^{L_i}\cdot p_{i,s(\alpha-1)}, p_{i,s\alpha-1}) \nonumber\\ &<& \mu+\frac{\mu}{2}<\Psi(\bar{\epsilon}|n).\end{eqnarray}
On the other hand, Lemma \ref{every-point-Z} implies that there exists $1\leq a_i\leq N_1(\bar{\epsilon},n)$ such 
that
\begin{equation}
d((\sigma^{(i)}_{s,\alpha})^{ a_i\cdot(L_i+1)}\cdot y^{(i)}_{s,\alpha-1},y^{(i)}_{s,\alpha-1})<\frac{\bar{\epsilon}}{2}, \ \forall y^{(i)}_{s,\alpha-1}\in B_{\bar{\epsilon}^{-1}}(p_{i,s(\alpha-1)}).
\end{equation}
Moreover, due to $a_i\cdot(L_i+1)>L_i$,
\begin{equation}
d((\sigma^{(i)}_{s,\alpha})^{ a_i\cdot(L_i+1)}\cdot p^{(i)}_{s,\alpha-1},p^{(i)}_{s,\alpha-1})>\frac{\mu}{2}.
\end{equation}
Denote $g^{(i)}_{s,\alpha}\equiv(\sigma^{(i)}_{s,\alpha})^{ a_i\cdot(L_i+1)}$, then the equivariant convergence 
\begin{equation}
\Big(\widehat{\bf{B}}_i/\mathcal{N}^{(i)}_{s,\alpha-1},\Lambda^{(i)}_{s,\alpha},p^{(i)}_{s,\alpha-1}\Big)\xrightarrow{eqGH}\Big(\dR^{k}\times Z_{s,\alpha-1},\Lambda^{(\infty)}_{s,\alpha},(0^{k},z_{s,\alpha-1})\Big)
\end{equation}
gives that there exists $g_{s,\alpha}\in\Lambda^{(\infty)}_{s,\alpha}$ such that
\begin{equation}
d(g_{s,\alpha}\cdot y_{s,\alpha-1},y_{s,\alpha-1})<\bar{\epsilon}, \forall y_{s,\alpha-1}\in B_1(z_{s,\alpha-1})
\end{equation}
and 
\begin{equation}
d(g_{s,\alpha}\cdot z_{s,\alpha-1},z_{s,\alpha-1})>\frac{\mu(\bar{\epsilon},n)}{4}.\label{definitely-move}
\end{equation}
The above two inequalities imply that $g_{s,\alpha}$ is a non-trivial element in $\langle I_{s,\alpha}(\bar{\epsilon})\rangle=\Lambda_{s,\alpha}^0$ and inequality (\ref{definitely-move}) shows that the orbit $\Lambda_{s,\alpha}^0\cdot z_{z,\alpha-1}$ contains at least two points. By definition, the identity component $\Lambda_{s,\alpha}^0$ is connected,
then the connected orbit $\Lambda_{s,\alpha}^0\cdot z_{s,\alpha-1}$ contains a non-trivial continuous path connecting $z_{s,\alpha-1}$ and $g_{s,\alpha}\cdot z_{s,\alpha-1}$. Therefore, for each $1\leq \alpha\leq n_s$,
 \begin{equation}\dim_{\mathcal{H}}(\Lambda^{(\infty)}_{s,\alpha}\cdot z_{s,\alpha-1})\geq\dim_{\mathcal{H}}(\Lambda_{s,\alpha}^0\cdot z_{s,\alpha-1})\geq 1.\end{equation}
So we have proved Claim 2.

\vspace{0.5cm}

With Claim 2, take $s=c_0$ in diagram (\ref{cyclic-cover}), then $Z_{c_0}=\dR^d$. We proceed to prove the dimension estimate 
\begin{equation}
\dim_{\mathcal{H}}(Z_{c_0})\geq m.
\end{equation}
In fact, in the context of quotients of Euclidean space, it is standard that in diagram (\ref{cyclic-cover}),
\begin{equation}\dim_{\mathcal{H}}(Z_{s,\alpha-1})\geq\dim_{\mathcal{H}}(\Lambda^{(\infty)}_{s,\alpha}\cdot z_{s,\alpha-1})+\dim_{\mathcal{H}}(Z_{s,\alpha}),\end{equation}
which implies that
\begin{eqnarray}
\dim_{\mathcal{H}}(Z_{s})\geq\dim_{\mathcal{H}}(Z_{s,0})&\geq& \sum\limits_{\alpha=1}^{n_s}\dim_{\mathcal{H}}(\Lambda^{(\infty)}_{s,\alpha}\cdot z_{s,\alpha-1})+\dim_{\mathcal{H}}(Z_{s,n_s})\nonumber\\
&\geq& n_s+\dim_{\mathcal{H}}(Z_{s,0})\nonumber\\
&\geq& n_s+\dim_{\mathcal{H}}(Z_s).
\end{eqnarray}
Therefore,
\begin{equation}
\dim_{\mathcal{H}}(Z_{c_0})\geq\sum\limits_{s=1}^{c_0}n_s+\dim_{\mathcal{H}}(Z_{0})= m_0+\dim_{\mathcal{H}}(Z_{0})\geq m,
\end{equation}
where $Z_0\equiv\dR^d/\mathcal{N}_{\infty}$ and by assumption $m_0\geq m$.

Combining Claim 1 and Claim 2, we have obtained that

\begin{equation}
\Big(\widehat{B_{2\epsilon_i^{-1}}(p_i)}, G_i,\hat{p}_i\Big)\xrightarrow{eqGH} \Big(\dR^k\times\dR^d\times Y_0,G_{\infty},(0^k,y_0)\Big), \ d\geq m,
\end{equation}
where $(Y_0,y_0)$ is a compact length space.
It gives that for each $10<R<<2\epsilon_i^{-1}$ and for some compact length space $Y_0$,
\begin{equation}
d_{GH}(B_R(\hat{p}_i),B_R(0^{k+d},y_0))<\Psi(\epsilon_i|n,R)\rightarrow0,\ d\geq m.
\end{equation}
We have proved inequality (\ref{Euclidean-factor-inequality}).

\vspace{0.5cm}

We will finish the proof of the Proposition.
Rescaling to the original metrics $g_i$, it holds that\begin{equation}
d_{GH}(B_{R\epsilon_i}(\hat{p}_i),B_{R\epsilon_i}(0^{k+d},y_i))<\epsilon_i\cdot\Psi(\epsilon_i|n,R),\ (0^{k+d},y_i)\in\dR^{k+d}\times Y_i,
\end{equation}
where $(Y_i,y_i)\equiv(\epsilon_i Y_0,y_0)$.
Therefore, 
\begin{eqnarray}
d_{GH}\Big(B_{R\epsilon_i}(\hat{p}_i),B_{R\epsilon_i}(0^{k+d})\Big)&\leq & d_{GH}\Big(B_{R\epsilon_i}(\hat{p}_i),B_{R\epsilon_i}(0^{k+d},y_i)\Big)\nonumber\\ &+& d_{GH}\Big(B_{R\epsilon_i}(0^{k+d},y_i),B_{R\epsilon_i}(0^{k+d})\Big)\nonumber\\
&<& R\epsilon_i\Big(\Psi(\epsilon_i|n,R)+\frac{\diam(Y_0)}{R}\Big).
\end{eqnarray}
Take some large $R>10^3$ such that $\frac{\diam(Y_0)}{R} <\frac{\epsilon_0}{4}$. By definition $\epsilon_i\equiv\delta_i^{1/2}>\delta_i$, then for sufficiently large $i$ such that $\Psi(\epsilon_i|n,R)<\frac{\epsilon_0}{4}$,
\begin{equation}
d_{GH}\Big(B_{R\delta_i^{1/2}}(\hat{p}_i),B_{R\delta_i^{1/2}}(0^{k+d})\Big)<(R\delta_i^{1/2})\cdot\frac{\epsilon_0}{2},\ 0^{k+d}\in\dR^{k+d}, \ d\geq m,
\end{equation}
which gives a contradiction to (\ref{contradicting-rank-splitting}).

\end{proof}

Now we extend the above splitting to a more general context. 
That is,  the limit space of the base manifolds is assumed of the form $\dR^k\times Z$, where $Z$ is any arbitrary metric space. In particular, the metric space $Z$ is not necessarily compact. In this general case, Fukaya-Yamaguchi's type line splitting strategy fails because their method essentially depends on the compactness of $Z$ (see Lemma \ref{homoline} for the reason). 
In exchange, we assume that the normal cover $(\widehat{B_2(p)},\hat{p})$ is non-collapsed. The non-collapsing condition enables us to argue in a different manner. That is, we will apply the cone splitting principle to produce $\dR$-factors on the normal covers. Precisely, we have the following Proposition:

\begin{proposition}\label{non-collapsed-splitting}
Given $\epsilon>0$, $n\geq 2$, $v>0$, there exists $\delta_0(\epsilon,n,v)>0$ such that the following holds. If $(M^n,g,p)$ is a Riemannian manifold which satisfies $\Ric\geq-(n-1)$ and $B_2(p)$ has a compact closure in $B_4(p)$, and if

$(i)$ $d_{GH}(B_2(p),B_2(0^{k},z))<\delta_0, \ (0^{k},z)\in\dR^{k}\times Z$,
where $(Z,z)$ is a complete length space,

$(ii)$ $\pi:(\widehat{B_2(p)},\hat{p},G)\rightarrow(B_2(p),p)$
is the normal cover of $B_2(p)$ with $\pi(\hat{p})=p$ and with the deck transformation group $G$ such that the group
$\hat{G}_{\delta_0}(p)\equiv\langle\{g\in G|d(g\cdot\hat{p},\hat{p})<2\delta\}\rangle
$ satisfies $\rank(\hat{G}_{\delta_0}(p))\geq m$,

$(iii)$ $\Vol(B_1(\hat{p}))\geq v>0$,
\\
then for some $\delta_0<r<1$, $d(p)\geq m$,
\begin{equation}
d_{GH}(B_{r}(\hat{p}),B_{r}(0^{k+d(p)},\hat{z}))<r\epsilon,\ (0^{k+d(p)},\hat{z})\in\dR^{k+d(p)}\times C(\hat{Z}),
\end{equation}
where $(C(\hat{Z}),\hat{z})$ is a metric cone over some compact metric space $\hat{Z}$ with cone tip $\hat{z}$.
\end{proposition}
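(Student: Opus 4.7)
Argue by contradiction. Assume that for some $\epsilon_0>0$ and some $v>0$ there is no such $\delta_0$, producing a sequence $(M_i^n,g_i,p_i)$ with $\Ric_{g_i}\geq-(n-1)$, normal covers $(\widehat{B_2(p_i)},\hat p_i,G_i)$, and $\delta_i\to 0$ satisfying (i)--(iii), but for which the conclusion fails at every scale $r\in(\delta_i,1)$. Rescale by $\epsilon_i^{-1}$ with $\epsilon_i\equiv\delta_i^{1/2}$ so that $\Ric\geq -(n-1)\epsilon_i^2\to 0$ and, after passing to a subsequence, extract an equivariant limit
\begin{equation}
\xymatrix{
(\widehat{B_{2\epsilon_i^{-1}}(p_i)},G_i,\hat p_i)\ar[rr]^{eqGH}\ar[d]_{\pi_i} & & (\widehat Y,G_\infty,\hat y)\ar[d]^{\pi_\infty}\\
(B_{2\epsilon_i^{-1}}(p_i),p_i)\ar[rr]^{pGH} & & (\dR^k\times Z_\infty,(0^k,z_\infty)).
}
\end{equation}
Lemma \ref{lift} ensures that the $\dR^k$-factor lifts, so $\widehat Y\cong\dR^k\times\widehat Y'$ with $G_\infty$ acting trivially on the $\dR^k$-factor, while assumption (iii) together with volume convergence implies that $\widehat Y'$ is a noncollapsed Ricci limit with $\Vol(B_1(\hat y'))\geq v/2$.

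Next, apply the bad-scale estimate, Lemma \ref{l:bad_scales_finite}, to $\widehat Y$ at $\hat y$ (using the noncollapsing of $\widehat Y$). This guarantees, for any auxiliary $\epsilon'>0$, the existence of a definite scale $r_0=r_0(\epsilon',n,v)>0$ (with $r_0>\delta_i$ for $i$ large) at which $\widehat Y$ is $(0,\epsilon',r_0)$-symmetric at $\hat y$, i.e.
\begin{equation}
d_{GH}\bigl(B_{r_0}(\hat y),B_{r_0}(0^k,y^*)\bigr)<\epsilon' r_0, \qquad (0^k,y^*)\in\dR^k\times C(Y),
\end{equation}
where $y^*$ is the cone tip. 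Combined with the $\dR^k$-splitting above and Lemma \ref{cone-splitting-lemma}, the factor $C(Y)$ is a genuine cone over a compact space with tip $y^*$. Pulling back, $B_{r_0}(\hat p_i)$ is $\Psi(\epsilon_i)$-close to $\dR^k\times C(Y)$ with $\hat p_i$ essentially at the tip.

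Now we use the rank hypothesis. By Theorem \ref{t:small-displacement-property}, $\hat G_{\delta_i}(p_i)$ contains a nilpotent subgroup $\mathcal N_i$ of index $\leq w_0(n,v)$ with the $(m,K\delta_i)$-displacement property at $\hat p_i$, so we have graded generators $\{\sigma^{(i)}_{s,\alpha}\}$, indexed by the refined lower central series of $\mathcal N_i$ from Theorem \ref{t:refinement-of-lower-central-series}, whose displacements satisfy $d(\sigma^{(i)}_{s,\alpha}\cdot\hat p_i,\hat p_i)\leq K\delta_i\ll r_0$. For each such generator, apply Lemma \ref{every-point-Z} to find an integer $L^{(i)}_{s,\alpha}$ such that the iterate $g^{(i)}_{s,\alpha}\equiv(\sigma^{(i)}_{s,\alpha})^{L^{(i)}_{s,\alpha}}$ has displacement $d(g^{(i)}_{s,\alpha}\cdot\hat p_i,\hat p_i)$ in a definite annulus of scale $r_0$. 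The candidate new cone tips are the points $q^{(i)}_{s,\alpha}\equiv g^{(i)}_{s,\alpha}\cdot \hat p_i$; since $g^{(i)}_{s,\alpha}$ is an isometry and $\hat p_i$ is a near-tip at scale $r_0$, each $q^{(i)}_{s,\alpha}$ is itself a near-tip at scale $r_0$.

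Passing to subsequential equivariant limits, the iterates $g^{(i)}_{s,\alpha}$ converge to nontrivial isometries of $\widehat Y$, and the points $q^{(i)}_{s,\alpha}$ converge to points $q^\infty_{s,\alpha}\in\dR^k\times C(Y)$ that are distinct cone tips. Mimicking the cyclic-cover diagram and the dimension-estimate argument of Claim~2 in Proposition \ref{quantitative-splitting-at-origin}, the polycyclic refinement of Theorem \ref{t:refinement-of-lower-central-series} together with Corollary \ref{collection-of-graded-generators} ensures that, at each step of the normal series, the identity component of the limiting action produces a genuinely new direction in the $C(Y)$ factor not already contained in the Euclidean part produced by earlier generators. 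Iteratively invoking the cone-splitting principle (Lemma \ref{cone-splitting-principle}) $m$ times, one concludes that $\dR^k\times C(Y)\cong\dR^{k+d}\times C(\hat Z)$ for some $d\geq m$ and some compact $\hat Z$, which contradicts the failure of the conclusion at scale $r_0$. The principal obstacle is precisely this last independence claim: one must rule out that the successive cone tips $q^\infty_{s,\alpha}$ lie in a strictly smaller Euclidean factor than expected, and this is the reason the refined polycyclic series of Section \ref{s:almost-nilpotent} (rather than an arbitrary generating set) is essential---its graded structure mirrors the inductive splitting produced by Lemma \ref{cone-splitting-principle}, and noncollapsing (iii) guarantees that each new limiting isometry acts nontrivially on the $C(Y)$ factor.
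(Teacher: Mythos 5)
Your overall strategy matches the paper's: contradict, rescale by $\epsilon_i=\delta_i^{1/2}$, use the noncollapsing (iii) to reach a cone structure, and then use the graded generators of the refined polycyclic series to force $d\geq m$. However, there are two concrete gaps. First, you locate the cone structure by applying the bad-scale estimate at a \emph{definite} scale $r_0$ and then pass to the equivariant limit at that scale; but the limit of $B_{r_0}(\hat p_i)$ is $B_{r_0}(\hat y)\subset\widehat Y$, which is only $\epsilon'$-GH-close to a ball in $\dR^k\times C(Y)$ — it is not itself a metric cone. Your subsequent steps (each $q^\infty_{s,\alpha}$ "is a cone tip", the slice/orbit arguments, Lemma \ref{cone-splitting-principle}) are all statements about \emph{exact} cones and do not apply to $\widehat Y$; you would need quantitative versions with error terms throughout. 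The paper avoids this by a second, slow blow-up: it chooses $\mu_i\to 0$ with $\mu_i>\epsilon_i^{1/2}$ and rescales again by $\mu_i^{-1}$, so that the equivariant limit is an exact tangent cone $\dR^k\times C(Y)$ (this is where noncollapsing is used: tangent cones of the noncollapsed limit are metric cones), while the final contradiction scale $\mu_i\epsilon_i$ stays above $\delta_i$.

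Second, and more seriously, your closing step — "iteratively invoking the cone-splitting principle $m$ times" on the $m$ limit points $q^\infty_{s,\alpha}$ — does not establish $d\geq m$, because nothing prevents all of these cone tips from lying on a single line through $y^*$, which would yield only one new $\dR$-factor. You flag this independence issue yourself but your proposed resolution ("the graded structure mirrors the inductive splitting") is not an argument. The paper's mechanism is different: it applies cone splitting \emph{once} to write $C(Y)\cong\dR^d\times C(\hat Z)$ with $C(\hat Z)$ line-free, shows $G_\infty$ must preserve the slice $\dR^d\times\{\hat z\}$ (else $C(\hat Z)$ would split a line, contradicting its choice), and then runs a Hausdorff-dimension count down the tower of cyclic quotients $\widehat{\bf B}_i/\mathcal N^{(i)}_{s,\alpha}$: each limiting cyclic factor $\Lambda^{(\infty)}_{s,\alpha}$ has a nontrivial identity component (this is where the annulus-displacement iterates and Lemma \ref{every-point-Z} enter), so its orbit has dimension $\geq 1$ and the projected slice $U_{s,\alpha}$ drops dimension by at least $1$ at each of the $m$ stages, forcing $d=\dim_{\mathcal H}(\dR^d)\geq m$. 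Without this quotient-by-quotient count, the rank hypothesis does not convert into $m$ independent Euclidean directions.
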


\begin{proof}
We will prove this Proposition by contradiction. Suppose the statement fails. That is, for some $\epsilon_0>0$, $v_0>0$, there is a sequence of Riemannian manifolds $(M_i^n,g_i,p_i)$
with $\Ric_{g_i}\geq-(n-1)$,  
such that $B_{2}(p_i)$ has a compact closure in $B_{4}(p_i)$, and there is a sequence of complete length spaces  $(Z_i,z_i)$, and for any sequence $s_i>\delta_i$ the following properties hold (passing to a subsequence if necessary).

$(i)$ $d_{GH}(B_2(p_i),B_2(0^{k},z_i))<\delta_i, \ (0^{k},z_i)\in\dR^{k}\times Z_i$,
for some complete length space $(Z_i,z_i)$,

$(ii)$ $\pi_i:(\widehat{B_2(p_i)},\hat{p}_i,G_i)\rightarrow(B_2(p_i),p_i)$
is the normal cover of $B_2(p_i)$ with $\pi(\hat{p}_i)=p_i$ and with the deck transformation group $G$ such that the group
$\hat{G}_{\delta_i}(p_i)\equiv\langle\{g_i\in G_i|d(g_i\cdot\hat{p}_i,\hat{p}_i)<2\delta_i\}\rangle
$ satisfies $\rank(\hat{G}_{\delta_i}(p_i)) =m_0\geq m$,

$(iii)$ $\Vol(B_1(\hat{p}_i))\geq v_0>0$,
\\
but

$(iv)$ for any $m'\geq m$ and for any metric cone $(C(\hat{Z}),\hat{z})$,
\begin{equation}
d_{GH}(B_{s_i}(\hat{p}_i),B_{s_i}(0^{k+m'},\hat{z}))>s_i\cdot\epsilon_0,\ (0^{k+m'},\hat{z})\in\dR^{k+m'}\times C(\hat{Z}).
\end{equation}

Let $\epsilon_i\equiv\delta_i^{1/2}$ and rescale the metric $h_i\equiv\epsilon_i^{-2}g_i$, then we have the following properties: the contradicting sequence $(M_i^n,h_i,p_i)$ satisfies $\Ric_{h_i}\geq-(n-1)\epsilon_i^2$ and $B_{2\epsilon_i^{-1}}(p_i,h_i)$
has a compact closure in $B_{4\epsilon_i^{-1}}(p_i,h_i)$ such that the following holds,

$(i)'$ by Gromov's precompactness theorem, there is some complete length space $(Z_0',z_0')$ such that \begin{equation}d_{GH}(B_{2\epsilon_i^{-1}}({p}_i,h_i),B_{2\epsilon_i^{-1}}(0^{k},z_0'))<\epsilon_i\rightarrow0, (0^{k},z_0')\in\dR^{k}\times Z_0',\end{equation}

$(ii)'$ $\hat{G}_{\epsilon_i}(p_i,h_i)\equiv\langle \{g_i\in G_i|d_{h_i}(g_i\cdot\hat{p}_i,\hat{p}_i)<2\epsilon_i\}\rangle$ has rank $m_0\geq m$,

$(iii)'$
$\Vol_{h_i}(B_1(p_i,h_i))\geq C(n)\cdot v_0>0,
$

$(iv)'$  for any $m'\geq m$, any sequence $s_i>\delta_i$ and for any metric cone $(C(\hat{Z}),\hat{z})$,
\begin{equation}
d_{GH}(B_{s_i\cdot\epsilon_i^{-1}}(\hat{p}_i,h_i),B_{s_i\cdot\epsilon_i^{-1}}(0^{k+m'},\hat{z}))>s_i\cdot\epsilon_i^{-1}\cdot\epsilon_0,\ (0^{k+m'},\hat{z})\in\dR^{k+m'}\times C(\hat{Z}).
\end{equation}

In the following proof, we will obtain a contradiction to $(iv')$ by applying $(i)'$, $(ii)'$ and $(iii)'$.
By $(iii)'$, $\widehat{B_{2\epsilon_i^{-1}}(p_i)}$ is non-collapsed and thus every tangent cone on the limit space is a metric cone.
So
we can choose a slowly converging  
sequence $\mu_i\rightarrow0$ with $\mu_i>\epsilon_i^{1/2}$ such that under the rescaled metrics $\mathfrak{h}_i\equiv\mu_i^{-2}h_i$, the following diagram holds,

\begin{equation}
\xymatrix{
\Big(\widehat{{\bf{B}}}_i, G_i,\hat{p}_i\Big)\ar[rr]^{eqGH}\ar[d]_{\pi_i} &   & \Big(\dR^k\times C(Y),G_{\infty},(0^k,y^*)\Big)\ar [d]^{\pi_{\infty}}\\
 \Big({\bf{B}}_i,p_i\Big)\ar[rr]^{pGH} && \Big(\dR^k\times Z_0'',(0^k,z_0'')\Big),
}\label{tangent-cone-diagram}
\end{equation}
where ${\bf{B}}_i\equiv\mu_i^{-1}B_{2\epsilon_i^{-1}}(p_i)$, $(C(Y),y^*)$ is a metric cone over some compact length space $Y$ with the cone vertex $y^*$, and $G_{\infty}\in\Isom(C(Y))$ is a Lie group. By $(ii)'$, then
\begin{equation}
\hat{G}_{\lambda_i}(\hat{p}_i,\mathfrak{h}_i)\equiv\langle\{g_i\in G_i|d_{\mathfrak{h}_i}(g_i\cdot\hat{p}_i,\hat{p}_i)<2\lambda_i\}\rangle
\end{equation}
satisfies 
 $\rank(\hat{G}_{\lambda_i}(p_i,\mathfrak{h}_i))=m_0\geq m$ with $\lambda_i\equiv\mu_i^{-1}\cdot\epsilon_i<\epsilon_i^{1/2}\rightarrow0$.

Since the Lie group $G_{\infty}$ isometrically acts on $C(Y)$, then each point $y\in G_{\infty}\cdot y^*$ is a cone tip. If the orbit $G_{\infty}\cdot y^*$ is not a point, Lemma \ref{cone-splitting-principle} implies that for some $s\geq 1$,
\begin{equation}
C(Y)\cong\dR^d\times C(\hat{Z}),\ \Gamma_{\infty}\cdot y^*\subset\dR^d,\end{equation}
where the metric cone $C(\hat{Z})$ does not admit any line. In fact, we will prove that 
\begin{equation}d\geq m,\label{euclidean-factor-esimtate}\end{equation}
by applying the assumption  
$\rank(\hat{G}_{\lambda_i}(p_i,\mathfrak{h}_i))=m_0\geq m$. The proof of inequality (\ref{euclidean-factor-esimtate}) is similar to the proof of inequality (\ref{Euclidean-factor-inequality}) in Proposition \ref{quantitative-splitting-at-origin}.
Inequality (\ref{euclidean-factor-esimtate}) easily follows from the following Claim:

\vspace{0.5cm}

{\bf{Claim.}} If $\rank(G_{\lambda_i}(p_i,\mathfrak{h}_i))=m$, in digram (\ref{tangent-cone-diagram}), the inequality $\dim_{\mathcal{H}}(G_{\infty}\cdot y^*)\geq m$
holds on the limit cone $(C(Y),y^*)$.

\vspace{0.5cm}

The proof of this Claim
is very similar to Claim 2 in the proof of Proposition \ref{quantitative-splitting-at-origin}. 
We have proved in Theorem \ref{t:small-displacement-property} that for sufficiently large $i$, $\hat{G}_{\lambda_i}(p_i,\mathfrak{h}_i)$ contains a nilpotent subgroup $\mathcal{N}_{i}$ such that (passing to a subsequence if necessary),
\begin{equation}
\rank(\mathcal{N}_i)=m_0\leq n,\ \ c_0\equiv\Step(\mathcal{N}_i)\leq \length(\mathcal{N}_i)\leq n\ ,\ [\hat{G}_{\lambda_i}(p_i,\mathfrak{h}_i):\mathcal{N}_i]\leq w(n),
\end{equation}
and $\mathcal{N}_i$ satisfies the 
$(m_0,K(n)\cdot\lambda_i)$-displacement property for some constant $K=K(n)>0$. Let $\mathcal{N}_{\infty}$ be the equivariant limit of $\mathcal{N}_i$.
Now it suffices to prove that \begin{equation}\dim_{\mathcal{H}}(\mathcal{N}_{\infty}\cdot y^*)\geq m.\label{cone-orbit-dimension-estimate}\end{equation} 
To this end, consider the following diagrams:
\begin{equation}
\xymatrix{
\Big(\widehat{\bf{B}}_i,G_i,\tilde{p}_{i}\Big)\ar[rr]^{eqGH}\ar[d]_{\pr^{(i)}_{s,\alpha}} &  & \Big(\dR^{k}\times\dR^{d}\times C(\hat{Z}),G_{\infty},(0^{k},0^{d},\hat{z})\Big)\ar [d]^{\pr^{(\infty)}_{s,\alpha}}\\
\Big(\widehat{\bf{B}}_i/\mathcal{N}^{(i)}_{s,\alpha},\Lambda^{(i)}_{s,\alpha+1},p^{(i)}_{s,\alpha}\Big)\ar[rr]^{eqGH}\ar[d]_{\varphi^{(i)}_{s,\alpha+1} }&  & \Big(\dR^{k}\times Z_{s,\alpha},\Lambda^{(\infty)}_{s,\alpha+1},(0^{k},z_{s,\alpha})\Big)\ar [d]^{\varphi^{(\infty)}_{s,\alpha+1}}\\
\Big(\widehat{\bf{B}}_i/\mathcal{N}^{(i)}_{s,\alpha+1},{p}^{(i)}_{s,\alpha+1}\Big)\ar[rr]^{pGH}& & \Big(\dR^{k}\times Z_{s,\alpha+1}, (0^{k},z_{s,\alpha+1})\Big),
}\label{cone-splitting-diagram}\end{equation}
where $Z_{s,\alpha}\equiv C(Y)/\mathcal{N}^{(\infty)}_{s,\alpha}$, $\mathcal{N}^{(\infty)}_{s,\alpha}=\lim\limits_{i\rightarrow}\mathcal{N}^{(i)}_{s,\alpha}$, $\Lambda^{(i)}_{s,\alpha}\equiv\mathcal{N}^{(\infty)}_{s,\alpha+1}/\mathcal{N}^{(\infty)}_{s,\alpha}$ and
$\Lambda^{(\infty)}_{s,\alpha}\equiv\lim\limits_{i\rightarrow\infty}\Lambda^{(i)}_{s,\alpha}$. 
Note that the above diagram is commutative. In particular, 
\begin{equation}\pr^{(i)}_{s,\alpha+1}=\varphi^{(i)}_{s,\alpha+1}\circ\pr^{(i)}_{s,\alpha},\ \pr^{(\infty)}_{s,\alpha+1}=\varphi^{(\infty)}_{s,\alpha+1}\circ\pr^{(\infty)}_{s,\alpha}.\label{composed-projections}\end{equation}

In the following arguments, we concentrate on the slice $\dR^d\times\{\hat{z}\}$. First, we show that
the limit Lie group $G_{\infty}$ preserves the slice $\dR^d\times\{\hat{z}\}$ in diagram (\ref{cone-splitting-diagram}), where $\hat{z}$ is the cone vertex of $C(\hat{Z})$. That is, 
$G_{\infty}\leq\Isom(\dR^d\times C(\hat{Z}))$ induces an isometric Lie group action on $\dR^d\times\{\hat{z}\}$. In fact, if not, suppose there exists $(v_0,\hat{z})\in \dR^d\times\{\hat{z}\}$, $g_{\infty}\in G_{\infty}$ such that $g_{\infty}((v_0,\hat{z}))\not\in\dR^d\times\{\hat{z}\}$. 
Recall that the metric cone $C(\hat{Z})$ is chosen such that it does not contain any line.  
Moreover, $g_{\infty}(\dR^d\times C(\hat{Z}))$ is a metric cone with the cone tip $g_{\infty}((v_0,\hat{z}))$ because $g_{\infty}\in G_{\infty}$ is an isometry and 
$\dR^d\times C(\hat{Z})$ is a metric cone with vertex $(v_0,\hat{z})$. By Lemma \ref{cone-splitting-principle}, there exists $d'\geq 1$, a metric cone $C(W)$ such that $C(\hat{Z})$ is isometric to $\dR^{d'}\times C(W)$. This contradicts to the choice of the integer $d$ and $C(\hat{Z})$. 
 
 We denote 
$U_{s,\alpha}\equiv\pr_{s,\alpha}^{(\infty)}(\dR^d\times\{\hat{z}\}).
$ The proof of the dimension estimate (\ref{cone-orbit-dimension-estimate}) follows from the following inequality, for each $1\leq s\leq c_0$ and $0\leq \alpha\leq n_s-1$,
\begin{equation}
\dim_{\mathcal{H}}(U_{s,\alpha})\geq \dim_{\mathcal{H}}(U_{s,\alpha+1})+1.\label{inductive-dimension-inequality}
\end{equation}
If (\ref{inductive-dimension-inequality}) holds, then it is straightforward that
\begin{equation}
\dim_{\mathcal{H}}(U_{s,0})\geq \dim_{\mathcal{H}}(U_{s,n_s})+n_s.
\end{equation}
Moreover, the normal series gives that 
\begin{equation}
\dim_{\mathcal{H}}(U_{s,n_s})\geq \dim_{\mathcal{H}}(U_{s-1,0}).
\end{equation}
Therefore, \begin{eqnarray}
\dim_{\mathcal{H}}(\dR^d)\geq 
\dim_{\mathcal{H}}(U_{c_0,0})&\geq & \Big(\dim_{\mathcal{H}}(U_{c_0,0})-\dim_{\mathcal{H}}(U_{1,n_1})\Big)\nonumber\\
&=&\sum\limits_{s=1}^{n_{c_0}}\Big((\dim_{\mathcal{H}}(U_{s,0})- \dim_{\mathcal{H}}(U_{s,n_s}))+(\dim_{\mathcal{H}}(U_{s,n_s})-\dim_{\mathcal{H}}(U_{s-1,0}))\Big)\nonumber\\
&\geq& \sum\limits_{s=1}^{n_{c_0}}(\dim_{\mathcal{H}}(U_{s,0})- \dim_{\mathcal{H}}(U_{s,n_s}))\nonumber\\
&\geq &\sum\limits_{s=1}^{c_0}n_s=m_0\geq m. \end{eqnarray}
Then we are done. 
Now we focus on the proof of inequality (\ref{inductive-dimension-inequality}). 
Notice that, equation (\ref{composed-projections}) gives
\begin{equation}
U_{s,\alpha+1}=\pr^{(\infty)}_{s,\alpha+1}(\dR^d\times\{z\})=\varphi^{(\infty)}_{s,\alpha+1}\circ\pr^{(\infty)}_{s,\alpha}(\dR^d\times\{z\})=\varphi^{(\infty)}_{s,\alpha+1}(U_{s,\alpha}).
\end{equation}
Notice that $\Lambda_{s,\alpha}^{(\infty)}$ gives an isometry Lie group action on $U_{s,\alpha}$, then it is standard that
\begin{equation}
\dim_{\mathcal{H}}(U_{s,\alpha})\geq \dim_{\mathcal{H}}(U_{s,\alpha+1})+\dim_{\mathcal{H}}(\Lambda^{(\infty)}_{s,\alpha+1}\cdot z_{s,\alpha}).
\end{equation}
By the same arguments of Claim 2 in the proof of Proposition \ref{quantitative-splitting-at-origin}, it holds that
\begin{equation}
\dim_{\mathcal{H}}(\Lambda^{(\infty)}_{s,\alpha+1}\cdot z_{s,\alpha})\geq 1.\label{1-dim-cyclic-orbit}\end{equation}
Specifically, inequality (\ref{1-dim-cyclic-orbit}) follows from the facts that the identity component of $\Lambda^{(\infty)}_{s,\alpha+1}$ is non-trivial and the corresponding orbit contains at least two points. 
Therefore, we have prove inequality (\ref{inductive-dimension-inequality}), and the proof of the Claim is complete.

\vspace{0.5cm}

The proof of this Proposition is almost finished. In fact, for sufficiently large $i$, 
\begin{equation}
d_{GH}(B_1(\hat{p}_i,\mathfrak{h}_i), B_1(0^{k+s},\hat{z}))<\epsilon_0,\ \hat{z}\in(C(\hat{Z}),\hat{z}),
\end{equation}
and by rescaling back to the metrics $h_i=\mu_i^2\mathfrak{h}_i$,
\begin{equation}
d_{GH}(B_{\mu_i}(\hat{p}_i), B_{\mu_i}(0^{k+s},\hat{z}))<\epsilon_0\cdot\mu_i,\ \hat{z}\in(C(\hat{Z}),\hat{z}),
\end{equation}
where $(C(\hat{Z}),\hat{z})$ is a metric cone over some compact metric space $\hat{Z}$ with cone tip $\hat{z}$.
But the above inequality contradicts $(iv)'$ because $\mu_i>\epsilon_i^{1/2}$.

\end{proof}

\subsection{Quantitative Splitting and Non-Collapse of Universal Covers}
\label{ss:non-collapse-universal-cover}

In this subsection, we prove the non-collapsing of the universal cover in Theorem  \ref{non-collapsed-splitting-maximal-rank} by using those technical tools developed in the above subsections.

First, we prove the non-localness of the quantitative splitting on the universal cover. That is, if the quantitative splitting in Proposition \ref{quantitative-splitting-at-origin} holds at one point, the splitting in fact holds at all nearby points. The non-localness of the quantitative splitting property follows from the non-localness of the nilpotency rank of the fibered fundamental group (See Lemma \ref{every-point-rank}).
This non-local quantitative splitting result will be applied in the proof of the non-collapse of the universal cover.

\begin{proposition}\label{non-local-splitting}
Let $(M^n,g,p)$ be a Riemannian manifold with $\Ric \geq -(n-1)$ and such that $B_2(p)$ has a compact closure in $B_4(p)$.  For each $\epsilon>0$, there exists
$\delta_0=\delta_0(n,\epsilon)>0$, such that if

$(i)$ $d_{GH}(B_2(p),B_2(0^{k}))<\delta_0,\ 0^{k}\in\dR^k$,

$(ii)$ $\Gamma_{\delta_0}(p)\equiv\Image[\pi_1(B_{\delta_0}(p))\rightarrow\pi_1(B_2(p))]$ satisfies 
$\rank(\Gamma_{\delta_0}(p))\geq m$, 
\\
then there is some $\delta_0<r<1$ such that for each $x\in B_1(p)$ and for some integer (depending on $x$) $d(x)\geq m$, it holds that 
\begin{equation}
d_{GH}\Big(B_{r}(\tilde{x}),B_{r}(0^{k+d(x)})\Big)<r\epsilon,\ 0^{k+d(x)}\in\dR^{k+d(x)},\end{equation}
where $\tilde{x}$ is a lift of $x$ on the universal cover of $B_1(x)$.\end{proposition}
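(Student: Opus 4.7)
The plan is to reduce Proposition \ref{non-local-splitting} to a pointwise application of Proposition \ref{quantitative-splitting-at-origin} at each $x \in B_1(p)$, using Lemma \ref{every-point-rank} as the bridge that propagates the rank hypothesis from $p$ to its neighbors. Given $\epsilon > 0$, let $\delta_1 = \delta_1(\epsilon, n) > 0$ denote the threshold from Proposition \ref{quantitative-splitting-at-origin}: on any normal cover of a radius-$2$ base ball with $\Ric \geq -(n-1)$ that is $\delta_1$-GH close to $B_2(0^k)$ and whose short-displacement deck group $\hat{G}_{\delta_1}$ has rank $\geq m$, one obtains the desired $\epsilon$-splitting at some scale in $(\delta_1, 1)$. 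Shrinking $\delta_1$ if needed so that $\delta_1/4 \leq \epsilon_1(n)$ with $\epsilon_1(n)$ from Lemma \ref{every-point-rank}, set
\begin{equation}
\delta_0 \equiv \min\big\{\Psi_0(\delta_1/4 \mid n),\ \delta_1/10\big\},
\end{equation}
where $\Psi_0$ is as in Lemma \ref{every-point-rank}.

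Fix $x \in B_1(p)$ and rescale by a factor of $2$: with $\tilde{g} \equiv 4 g$ we have $B_2(x, \tilde{g}) = B_1(x, g)$, $\Ric_{\tilde{g}} \geq -(n-1)/4 \geq -(n-1)$, and $B_2(x, \tilde{g})$ has compact closure in $B_4(x, \tilde{g}) = B_2(x, g) \subset B_3(p, g)$. Standard localization of a Gromov--Hausdorff approximation $B_2(p, g) \to B_2(0^k)$ produces a point $0^k_x \in B_1(0^k)$ with $d_{GH}(B_1(x, g), B_1(0^k_x)) < C \delta_0$ for a universal constant $C$, hence after rescaling
\begin{equation}
d_{GH}(B_2(x, \tilde{g}), B_2(0^k)) < 2 C \delta_0 < \delta_1,
\end{equation}
which verifies hypothesis (i) of Proposition \ref{quantitative-splitting-at-origin} for the rescaled ball.

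For hypothesis (ii), consider the universal cover $\pi : (\widetilde{B_2(x, \tilde{g})}, \tilde{x}) \to (B_2(x, \tilde{g}), x)$, whose deck transformation group is $\pi_1(B_1(x, g))$. Applying Lemma \ref{every-point-rank}(ii) at $p$ with threshold $\delta_1/4$, the choice $\delta_0 \leq \Psi_0(\delta_1/4 \mid n)$ yields
\begin{equation}
\rank(\Gamma_{\delta_1/4}(x)) \geq \rank(\Gamma_{\delta_0}(p)) \geq m,
\end{equation}
where $\Gamma_{\delta_1/4}(x) = \Image[\pi_1(B_{\delta_1/4}(x, g)) \to \pi_1(B_1(x, g))]$. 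Moreover, inspection of the construction in Lemma \ref{every-point-rank}(ii) exhibits a rank-$m$ nilpotent subgroup of $\Gamma_{\delta_1/4}(x)$ whose generators are obtained from powers (chosen via Lemma \ref{every-point-Z}) of the graded generators of a fixed nilpotent subgroup of $\Gamma_{\delta_0}(p)$; these powers $g$-displace $\tilde{x}$ by strictly less than $\delta_1/2$, equivalently $\tilde{g}$-displace $\tilde{x}$ by less than $\delta_1 < 2 \delta_1$. Hence this subgroup lies inside
\begin{equation}
\hat{G}_{\delta_1}(x, \tilde{g}) \equiv \big\langle \gamma \in \pi_1(B_2(x, \tilde{g})) : d_{\tilde{g}}(\gamma \cdot \tilde{x}, \tilde{x}) < 2 \delta_1 \big\rangle,
\end{equation}
and Lemma \ref{basic-almost-nilpotent}(ii) gives $\rank(\hat{G}_{\delta_1}(x, \tilde{g})) \geq m$.

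Proposition \ref{quantitative-splitting-at-origin} applied to $(B_2(x, \tilde{g}), x)$ with this universal cover then produces some $r' \in (\delta_1, 1)$ and integer $d(x) \geq m$ with $d_{GH}(B_{r'}(\tilde{x}, \tilde{g}), B_{r'}(0^{k + d(x)})) < \epsilon r'$. Rescaling back to $g$ and setting $r \equiv r'/2 \in (\delta_1/2, 1/2) \subset (\delta_0, 1)$ yields $d_{GH}(B_r(\tilde{x}, g), B_r(0^{k+d(x)})) < \epsilon r$, the claimed conclusion. The essential content of the argument is the rank transfer from $p$ to $x$ encapsulated in Lemma \ref{every-point-rank}, whose proof in turn depends on the displacement control of Lemma \ref{every-point-Z}; the rest of the proof here is essentially scale-invariance bookkeeping, and thus I expect no genuinely new obstacle.
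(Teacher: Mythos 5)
Your proposal is correct and follows essentially the same route as the paper: choose $\delta$ from Proposition \ref{quantitative-splitting-at-origin}, set $\delta_0$ via the function $\Psi_0$ from Lemma \ref{every-point-rank} so that the rank hypothesis propagates to every $x\in B_1(p)$, and then apply Proposition \ref{quantitative-splitting-at-origin} to the ball around $x$. The only difference is that you carry out explicitly the rescaling $B_1(x,g)\to B_2(x,\tilde g)$ and the verification that the transported rank-$m$ subgroup sits inside $\hat G_{\delta_1}(x,\tilde g)$, both of which the paper leaves implicit.
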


\begin{proof}

Fix $0<{\delta}<\epsilon_1(n)$ which is to be determined later, where $\epsilon_1(n)$ is the same constant in Lemma \ref{every-point-rank}. Then we define the positive constant $0<\delta_0<<1$ by letting \begin{equation}\delta_0\equiv\Psi_0(\delta|n)<<\delta<\epsilon_1(n)\end{equation} where $\Psi_0$ is the function in Lemma \ref{every-point-rank}. Property $(ii)$ of Lemma \ref{every-point-rank} shows that for every $x\in B_1(p)$, $\Gamma_{\delta}(x)$ is 
$(w(n),n)$-nilpotent with $\rank(\Gamma_{\delta}(x))\geq m$.
Moreover, if the Gromov-Hausdorff control $(i)$ holds for $\delta_0>0$, then for every $x\in B_{1}(p)$,
\begin{equation}d_{GH}(B_1(x),B_1(0^{k}))<\delta_0<<\delta,\ 0^k\in\dR^k.\label{delta-close}\end{equation}
Therefore, $B_1(x)$ satisfies the assumptions on the Gromov-Hausdorff control and the nipotency rank in Proposition \ref{quantitative-splitting-at-origin}.
 Hence by choosing the above $\delta=\delta(\epsilon,n)>0$ as the corresponding constant in Proposition \ref{quantitative-splitting-at-origin},
and let assumptions $(i)$ and $(ii)$ hold for ${\delta}_0\equiv\Psi_0(\delta|n)>0$,
then for some $1>r\geq \delta(n,\epsilon)>\delta_0(n,\epsilon)>0$,
\begin{equation}d_{GH}(B_{r}(\tilde{x}),B_{r}(0^{k+d(x)}))<r\epsilon, \ 0^{k+d(x)}\in\dR^{k+d(x)},\ d(x)\geq m,
\end{equation}
where $\tilde{x}$ is a lift of $x$ on the universal cover of $B_1(x)$.
\end{proof}

Recall that Theorem \ref{t:KW_almost_nilpotent} states that the nilpotency rank of the fibered fundamental group is bounded by the collapsed dimension. We will prove that if the nilpotency rank attains the maximum, i.e. is equal to the collapsed dimension, then the universal cover is non-collapsed. Precisely, we have the following: 

\begin{proposition}
\label{non-collapsed-universal-cover} Let $(M^n,g,p)$ be a Riemannian manifold with $\Ric\geq-(n-1)$ and such that $B_2(p)$ has a compact closure in $B_4(p)$.  Given a Ricci-limit space $(Z^{k},z^{k})$ with $\dim Z^{k}=k$ in the sense of theorem \ref{limiting-dimension}, there exists $\delta_0=\delta_0(n,B_1(z^{k}))>0$, $v_0=v_0(n,B_1(z^{k}))>0$ such that if

$(i)$ $d_{GH}(B_2(p),B_2(z^{k}))<\delta_0,\ z^k\in Z^k$,

$(ii)$ $\Gamma_{\delta_0}(p)\equiv\Image[\pi_1(B_{\delta_0}(p))\rightarrow\pi_1(B_2(p))]$ satisfies $\rank(\Gamma_{\delta_0}(p))=n-k$,
\\
then for any $x\in B_1(p)$ ,\begin{equation}\Vol(B_{1/2}(\tilde{x}))\geq v_0(n,B_1(z^{k}))>0,
 \end{equation}
 where $\tilde{x}$ is a lift of $x$ on the universal cover of $B_1(x)$.

\end{proposition}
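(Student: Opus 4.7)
The plan is to reduce the statement to Proposition \ref{quantitative-splitting-at-origin} by exploiting the fact that although $Z^k$ may be singular, it has a well-defined dimension $k$ in the sense of Theorem \ref{limiting-dimension}, so its regular set is nonempty at a definite scale. Concretely, by the noncollapsing radius \eqref{e:noncollapsing_radius}, there is a scale $r_c = r_c(B_1(z^k)) > 0$ and a point $z_0 \in B_1(z^k)$ with $d_{GH}(B_{r_c}(z_0), B_{r_c}(0^k)) < 10^{-6} r_c$. Taking $\delta_0 \ll r_c$ in hypothesis $(i)$, I can transfer this to find a point $q \in B_{3/2}(p)$ with
\begin{equation}
d_{GH}(B_{r_c}(q), B_{r_c}(0^k)) < 10^{-5} r_c.
\end{equation}

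Next, choose $\delta_0$ small enough that Lemma \ref{every-point-rank} applies with parameter $\epsilon \ll r_c$. The nonlocalness statement $(iii)$ of that lemma, together with the maximal-rank hypothesis $\rank(\Gamma_{\delta_0}(p)) = n-k$, guarantees that the image in $\pi_1(B_2(p))$ of short loops based throughout $B_1(\tilde p)$ already produces a subgroup of nilpotency rank $n-k$. Let $(\widehat{B_{r_c}(q)}, \hat q)$ denote the connected component containing a lift $\hat q$ of $q$ in the preimage of $B_{r_c}(q)$ under the universal cover $\widetilde{B_2(p)} \to B_2(p)$. This is a normal cover of $B_{r_c}(q)$ whose deck transformation group $\hat G$ contains all these short-loop elements, so $\rank(\hat G_\epsilon(q)) \geq n-k$, and the geometrically compatible generators of Section \ref{s:almost-nilpotent} ensure they act at $\hat q$ with displacement of order $\epsilon$.

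After rescaling $B_{r_c}(q)$ to radius $2$, apply Proposition \ref{quantitative-splitting-at-origin} to this normal cover: since the base is close to $\dR^k$ and $\rank(\hat G_\epsilon(q)) \geq n-k$, there exist a definite scale $s = s(n, B_1(z^k)) > 0$ and an integer $d(q) \geq n-k$ with
\begin{equation}
d_{GH}(B_s(\hat q), B_s(0^{k + d(q)})) < \epsilon_* s.
\end{equation}
Since $\hat q$ sits inside an $n$-manifold, we must have $k + d(q) = n$. Colding's volume convergence theorem then forces $\Vol(B_s(\hat q)) \geq \tfrac{1}{2} \omega_n s^n$. Bishop-Gromov volume comparison on the universal cover $\widetilde{B_2(p)}$ propagates this to a uniform lower bound $\Vol(B_{1/2}(\tilde x)) \geq v_0 = v_0(n, B_1(z^k)) > 0$ at any lift $\tilde x$ of any $x \in B_1(p)$; the same bound descends to the universal cover of $B_1(x)$ through the natural local-isometry covering projection from $\widetilde{B_2(p)}$ restricted to the appropriate lift of $B_1(x)$.

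The main obstacle is the second step: one must argue that the rank condition at $p$ genuinely descends to the deck transformation group $\hat G$ of the connected component of the preimage of $B_{r_c}(q)$, \emph{without losing rank}. This is the entire reason we must develop the effective generating sets and the refined lower central series of Section \ref{s:almost-nilpotent}: the graded generators of the nilpotent subgroup of $\Gamma_{\delta_0}(p)$ must be power-adjusted, via Lemma \ref{every-point-Z}, so that they displace \emph{every} point of $B_1(\tilde p)$ by at most $\epsilon$, while Corollary \ref{collection-of-graded-generators} ensures that these power-adjustments preserve nilpotency rank. Without this bookkeeping the connected component of the preimage over $B_{r_c}(q)$ could collapse in an uncontrolled way, and the appeal to Proposition \ref{quantitative-splitting-at-origin} would fail.
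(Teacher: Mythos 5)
Your overall strategy is the paper's: locate a ball in the base that is Gromov--Hausdorff close to a Euclidean ball, transfer the maximal-rank hypothesis to that ball via the nonlocalness Lemma \ref{every-point-rank} and the power-adjusted graded generators, apply Proposition \ref{quantitative-splitting-at-origin} to the induced normal cover to force an $\dR^n$-ball upstairs, and conclude by volume convergence. However, there are two concrete gaps. First, your scale selection does not verify the hypothesis of Proposition \ref{quantitative-splitting-at-origin}. That proposition requires $d_{GH}(B_2(\cdot),B_2(0^k))<\delta(\epsilon_*,n)$ where $\epsilon_*$ must be taken small enough (of size $\epsilon_2(n)$) for volume convergence to yield a definite lower volume bound; $\delta(\epsilon_*,n)$ comes from a compactness argument and need not exceed $2\cdot 10^{-5}$. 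Using the fixed $10^{-6}$ threshold in the noncollapsing radius only gives $10^{-5}r_c$-closeness, which after rescaling is $2\cdot 10^{-5}$ and may be far too coarse. The paper instead fixes the required $\delta$ first and then defines $s=s(\delta,B_1(z^k))>0$ as the supremum of scales at which some ball in $B_{1/10}(z^k)$ is $\tfrac12 s\delta$-close to $B_s(0^k)$; positivity of $s$ follows from Theorem \ref{limiting-dimension} (regular points are dense), and $\delta_0$ is then taken $\ll \Psi_0(s\delta|n)$.

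Second, the final propagation step fails as written. You propagate the volume bound from the single point $\hat q$ (over $q\in B_{3/2}(p)$) to an arbitrary lift $\tilde x$ of $x\in B_1(p)$ by Bishop--Gromov. This requires relative volume comparison centered at $\tilde x$ on a ball of radius roughly $d(x,q)+1/2\le 3$, but such a ball projects well outside $B_2(p)$ and is not compactly contained in the incomplete cover $\widetilde{B_2(p)}$, so the comparison is not available under the stated hypotheses. Moreover the covering map goes the wrong way: there is no projection from $\widetilde{B_2(p)}$ onto the universal cover of $B_1(x)$; rather $\widetilde{B_1(x)}$ covers the connected component of the preimage of $B_1(x)$. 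The paper avoids both problems by localizing: it proves the bound only at $p$ with the Euclidean point $q$ chosen in $B_{1/4}(p)$, so that $B_{sr}(\tilde q)\subseteq B_{1/2}(\tilde p)$ by direct inclusion (no comparison theorem), and then handles general $x\in B_1(p)$ by re-running the entire argument inside $B_1(x)$, using Lemma \ref{every-point-rank}(ii) to transfer the hypothesis $\rank(\Gamma_{\delta_0}(p))=n-k$ to the fibered group at $x$. You should restructure your last step along these lines; the Bishop--Gromov shortcut cannot be repaired with only $B_2(p)$ compactly contained in $B_4(p)$.
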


\begin{remark}
This non-collapse result works in a very general setting, that is, we only assume uniform lower Ricci curvature bound and maximal nilpotency rank. In particular, the limit space in assumption $(i)$
is for any $k$-dimensional Ricci-limit space in the sense of theorem \ref{limiting-dimension}. 
\end{remark}

\begin{proof}

It suffices to prove the statement at $p$. The general case follows from the non-localness of the nilpotency rank of the fibered fundamental group.
Let assumptions $(i)$, $(ii)$  hold for some $\delta_0>0$ which will be determined later.  Due to theorem \ref{limiting-dimension}, the regular set $\mathcal{R}_k(Z^k)$ is of full measure (the limiting renormalized measure). 
Given any $\delta>0$, we can define the positive constant 
\begin{equation}
s=s(\delta,B_1(z^{k}))\equiv\sup\Big\{r>0\Big|\exists\ y\in B_{1/10}(z^k)\ \text{such that}\ d_{GH}(B_r(y),B_r(0^k))<\frac{1}{2}r\delta\Big\},
\end{equation}
then $s(\delta,B_1(z^k))>0$. Consequently, there exists some $z_0\in B_{1/5}(z^k)$ such that
\begin{equation}
d_{GH}(B_s(z_0),B_s(0^k))\leq\frac{1}{2}s\delta.
\end{equation}
Since 
$d_{GH}(B_2(p),B_2(z^k))<\delta_0<<1$, we can pick some $q\in B_{1/4}(p)$ such that
\begin{equation}
d_{GH}(B_s(q),B_s(z_0))<2\delta_0.
\end{equation}
Let  $\delta_0>0$ satisfy 
\begin{equation}\delta_0<\frac{1}{10}\Psi_0(s\delta|n)<<s\delta<s\epsilon_1,\end{equation}
where $\epsilon_0$, $\Psi_0$ are in Lemma \ref{every-point-rank}.
It follows that,\begin{equation}
d_{GH}(B_s(q),B_s(0^k))<2\delta_0+\frac{1}{2}s\delta<s\delta.\label{se-GH}
\end{equation}

Let $\pi:(\widetilde{B_2(p)},\tilde{p})\rightarrow B_2(p)$ be the universal cover, then 
the pre-image $(\pi^{-1}(B_{s}(q)),\tilde{q})$ on $\widetilde{B_2(p)}$
is a normal cover of $B_s(q)$. Note that the deck transformation group of this normal cover is 
$G\equiv\pi_1(B_2(p))$.
Let 
\begin{equation}
\hat{G}_{s\delta}(q)\equiv\Big\langle\Big\{\gamma\in G\Big|d(\gamma\cdot\tilde{q},\tilde{q})<s\delta\Big\}\Big\rangle,
\end{equation}
and then
\begin{equation}
\hat{G}_{s\delta}(q)\leq 
\Image[\pi_1(B_{s\delta}(q))\rightarrow\pi_1(B_2(p))].
\end{equation}
Theorem \ref{Generalized-Margulis-Lemma} shows that if $\delta$ is sufficiently small, $\hat{G}_{s\delta}(q)$ is $(w(n),n)$-nilpotent.
Moreover,  
$\rank(\Gamma_{\delta_0}(p))=n-k$ and $\delta_0<\Psi_0(s\delta|n)$, 
then Lemma \ref{every-point-rank} implies that  
$\rank(\hat{G}_{s\delta}(q))\geq n-k$, 
and thus $\rank(\hat{G}_{s\delta}(q))= n-k$.
 Now fix $\epsilon>0$ and combined with (\ref{se-GH}), if  the above $\delta>0$ is chosen as the one in Proposition \ref{quantitative-splitting-at-origin}, then for some $\delta(n,\epsilon)<r<1$ we have that, 
\begin{equation}
d_{GH}(B_{sr}(\tilde{q}), B_{sr}(0^{k+d(q)}))<sr\epsilon,\ 0^{k+d(q)}\in\dR^{k+d(q)},\ d(q)\geq n-k.\label{R^n-convergence}\end{equation}
Immediately, the inequality $k+d(q)\geq k+(n-k)=n$ and the curvature condition $\Ric\geq-(n-1)$ implies that $k+d(q)=n$, and thus 
\begin{equation}
d_{GH}(B_{sr}(\tilde{q}), B_{sr}(0^{n}))<sr\epsilon,\ 0^n\in\dR^n.
\end{equation}
Applying volume convergence theorem, we have that, if (\ref{R^n-convergence}) holds for sufficiently small $\epsilon_2(n)>0$,\begin{equation}
\Vol(B_{sr}(\tilde{q}))\geq\frac{1}{2}\Vol(B_{sr}(0^n)).
\end{equation}
Therefore,
\begin{equation}
\Vol(B_{1/2}(\tilde{p}))\geq \Vol(B_{sr}(\tilde{q}))\geq\frac{1}{2}\Vol(B_{sr}(0^n))\geq v_0(n,B_1(z^k))>0.
\end{equation}
\end{proof}

\subsection{Proof of Theorem \ref{non-collapsed-splitting-maximal-rank}}\label{ss:proof_noncollapsed_splitting}

Proposition \ref{non-collapsed-universal-cover} of the last subsection gives the non-collapse of universal covers by assuming the maximality of the nilpotency rank of $\Gamma_{\delta}(p)$.  Hence this non-collapse result enables us to apply
Proposition \ref{non-collapsed-splitting} to obtain a strong quantitative splitting result in which the limiting base space is arbitrary (compared with Proposition \ref{quantitative-splitting-at-origin}).  That is, we can now prove Theorem \ref{non-collapsed-splitting-maximal-rank}:

\begin{proof}[Proof of Theorem \ref{non-collapsed-splitting-maximal-rank}]

First, by Proposition \ref{non-collapsed-universal-cover}, there exists $v_0(n,B_1(z^{\ell}))>0$, $\delta(n,B_1(z^{\ell}))>0$ such that for any $q\in B_1(p)$ if $(i)$, $(ii)$ holds for the above $\delta>0$, then 
\begin{equation}
\Vol(B_{1/2}(\tilde{q}))\geq v_0(n,B_1(z^{\ell}))>0,
\end{equation}
where $\tilde{q}$ is a lift of $q$ on the universal cover of $B_1(q)$.
Therefore, by Proposition \ref{non-collapsed-splitting}, for every $\epsilon>0$, there exists $\delta'(\epsilon, n, B_1(z^{\ell}))>0$ such that if $(i)$, $(ii)$ holds for $\delta_0\equiv\min\{\delta,\delta'\}$, then for some $\delta_0<r<1$,
\begin{equation}
d_{GH}(B_r(\tilde{q}),B_r(0^{n-\ell},\hat{z}))<r\cdot\epsilon,\ (0^{n-\ell},\hat{z})\in\dR^{n-\ell}\times C(\hat{Z}),
\end{equation}
where $C(\hat{Z})$
is a metric cone over some compact space $\hat{Z}$
with  cone tip $\hat{z}$.

\end{proof}

\section{The $\epsilon$-Regularity Theorems for Lower and Bounded Ricci Curvature}\label{s:eps_reg_lower_and_bounded_ricci}

In this section we prove the $\epsilon$-regularity theorems for collapsed manifolds in the context of Ricci curvature bounded from below and bounded Ricci curvature respectively.  In essence, we will see that under the conditions of Theorem \ref{t:eps_reg_collapsed} with only a lower Ricci curvature bound, then the {\textit{ weak conjugate radius}} will be bounded uniformly from below.  From Subsection \ref{ss:harmonic-radius} to Subsection \ref{ss:proof-of-main-theorem}, we will improve upon these results in the context of bounded Ricci curvature in order to prove the main Theorem in this paper.

The $\epsilon$-regularity in the context of lower Ricci curvature will hold when our ball is close to Euclidean space or a half-space.  On the other hand, by using Theorem \ref{t:eps_reg_noncollapsed}, we will see that the $\epsilon$-regularity in the bounded Ricci context only requires that $M^n$ is close to a space of the form $\dR^{k-\ell}\times Z^{\ell}$ with $\dim(Z^{\ell})= \ell\leq 3$.  This is a sharp assumption (see Example \ref{ss:example3}).  Let us point out here that we do not assume that $Z^{\ell}$ is a cone space here, since this is an unnatural assumption in the collapsed context.  See the comments after Theorem \ref{t:eps_reg_collapsed}.

\subsection{The Statement and the Proof of the $\epsilon$-Regularity for Lower Ricci Curvature}
\label{ss:eps-reg-lower-Ricci}

In this subsection we will give a precise statement of the $\epsilon$-regularity theorem for collapsed manifolds with lower Ricci curvature bound. 
Let us begin with a useful notion of regularity.

\begin{definition}\label{def-weak-conj}
Given a Riemannian manifold $(M^n,g)$ and $\epsilon>0$ we define for each $x\in M^n$ the {\textit{weak conjugate radius (or $\epsilon$-conjugate radius)}} 
$\Conj_\epsilon(x)$ to be the supremum of $r>0$ such that for each $0<s\leq r$ there exists a mapping $\phi_s:B_s(0^n)\to B_s(\tilde x)$, where 
\begin{enumerate}
\item $B_s(0^n)\subseteq \dR^n$ and $\tilde x\in \widetilde{B_{2r}(x)}$ is a lift of $x$ to the universal cover of $B_{2r}(x)$,
\item $\phi_s(0^n)=x$.
\item $\phi_s$ is a homeomorphism onto its image.
\item $\phi_s$ is a $\epsilon s$-Gromov-Hausdorff map, i.e. $d_{GH}(B_s(\tilde{x}),B_s(0^n))<s\epsilon$.
\end{enumerate}
\end{definition}

\begin{remark}
If we assume $\Ric\geq-(n-1)$, then (3) automatically follows from (4) due to the uniform Reifenberg property (see \cite{ChC1}). In fact, $\phi_s$ is a $C^{\alpha}$-homeomorphism.
\end{remark}

Notice that the size of the weak conjugate radius contains both geometric and topological consequences.
We prove the following useful lemma on the estimate 
on the weak conjugate radius, which will be used in the proof of the $\epsilon$-regularity theorem for lower Ricci curvature bound. 

\begin{lemma}\label{weak-conj-lemma} Assume $\Ric\geq-(n-1)$, then for each $\epsilon>0$, there exists $r_0=r_0(\epsilon,n)>0$  and $\delta=\delta(\epsilon,n)>0$ such that if
\begin{equation}
d_{GH}(B_{r_0}(\tilde{x}),B_{r_0}(0^n))<r_0\delta,\label{delta-radius}
\end{equation}
then it holds that \begin{equation}
\Conj_{\epsilon}(x)\geq\frac{r_0}{2}>0.
\end{equation}

\end{lemma}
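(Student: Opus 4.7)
The plan is to combine a Bishop--Gromov/volume-convergence chain to propagate the Gromov--Hausdorff hypothesis to all smaller scales with the Cheeger--Colding Reifenberg theorem, which upgrades uniform scale-invariant Euclidean closeness to a bi-H\"older homeomorphism. After rescaling by $r_0^{-2}$ we may assume $r_0 = 1$, so that $\Ric \geq -(n-1) r_0^2$ on the universal cover of $B_1(x)$ (a bound which becomes arbitrarily small as $r_0 \to 0$) and $d_{GH}(B_1(\tilde x), B_1(0^n)) < \delta$. The Ricci lower bound persists on the universal cover, and every ball $B_s(\tilde x)$ with $s \leq 1/2$ sits well inside $\widetilde{B_1(x)}$, so the standard local comparison theory is available on the cover.

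The first analytic step is to propagate the closeness downward. By Colding's volume convergence theorem applied at scale $1$,
\begin{equation*}
\Vol(B_1(\tilde x)) \geq \big(1 - \Psi(\delta, r_0\,|\, n)\big) \Vol(B_1(0^n)),
\end{equation*}
with $\Psi \to 0$ as $\delta, r_0 \to 0$. Bishop--Gromov on the cover (with $\Ric \geq -(n-1) r_0^2$) makes the quotient $\Vol(B_s(\tilde x))/V^n_{-r_0^2}(s)$ non-increasing, so the same volume lower bound (with a slightly worse $\Psi$) holds at every $s \in (0,1]$. The almost-rigidity direction of Colding's theorem then converts this volume pinching back into Gromov--Hausdorff closeness at every scale:
\begin{equation*}
d_{GH}(B_s(\tilde x), B_s(0^n)) < \Psi(\delta, r_0 \,|\, n) \cdot s \quad \text{for every } s \in (0, 1/2],
\end{equation*}
with the same $\Psi \to 0$ (and crucially independent of $s$).

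The second step is to produce the homeomorphisms $\phi_s$. The uniform scale-invariant Euclidean closeness just established is precisely the hypothesis of the Cheeger--Colding Reifenberg theorem \cite{ChC1}, which produces a bi-H\"older homeomorphism $\Phi : B_{3/4}(0^n) \to B_{3/4}(\tilde x)$ with $\Phi(0^n) = \tilde x$ that is itself a $\Psi(\delta, r_0 \,|\, n)$-GHA at every scale. Choosing $\delta = \delta(\epsilon,n)$ and $r_0 = r_0(\epsilon,n)$ small enough that $\Psi < \epsilon$, and setting $\phi_s := \Phi|_{B_s(0^n)}$ for $s \in (0, r_0/2]$ (working in the original, unrescaled metric), all four requirements in Definition \ref{def-weak-conj} are verified, yielding $\Conj_\epsilon(x) \geq r_0/2$.

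The only genuine subtlety is ensuring that the constant $\Psi$ entering the Reifenberg step is uniform in $s$ across the whole range $s \in (0, r_0/2]$; this is automatic because the Gromov--Hausdorff closeness from the first step is itself scale-invariant, so no scaling is lost through the chain of applications. Beyond bookkeeping of constants there is no essential analytic difficulty, since all of the heavy machinery (volume convergence, almost-rigidity, Reifenberg homeomorphism) is pre-existing Cheeger--Colding theory.
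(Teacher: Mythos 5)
Your proposal is correct and follows essentially the same route as the paper's proof: rescale by $r_0^{-2}$, use volume convergence plus Bishop--Gromov monotonicity to propagate almost-maximal volume to all scales, invoke Cheeger--Colding almost rigidity to recover scale-invariant Gromov--Hausdorff closeness at every radius, and finish with the uniform Reifenberg property to produce the homeomorphisms required by Definition \ref{def-weak-conj}. The only cosmetic difference is that you cite the Reifenberg theorem as producing a single map $\Phi$ restricted to each scale, while the paper simply notes the $C^{\alpha}$-homeomorphism at scale $r_0/2$; the substance is identical.
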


\begin{proof}It suffices to show that given $\epsilon>0$ there exists $\delta(n,\epsilon)>0$ and $r_0(n,\epsilon)>0$ such that (\ref{delta-radius}) implies that  for all $0<r\leq r_0$, 
\begin{equation}
d_{GH}(B_r(\tilde{x}),B_r(0^n))<r\epsilon.\label{epsilon-radius}
\end{equation}
In fact, if (\ref{epsilon-radius}) holds, by the uniform Reifenberg property, it holds that $B_{r_0/2}(0^n)$ is $C^{\alpha}$-homeomorphic to its image in $B_{r_0/2}(\tilde{x})$. Therefore, $\Conj_{\epsilon}(x)\geq r_0/2$.

We proceed to show the inequality (\ref{epsilon-radius}). Given $\epsilon>0$ assume that (\ref{delta-radius}) holds for some constants $\delta>0$, $r_0>0$  which depend only on $n$, $\epsilon$ and will be determined later.
By rescaling, 
\begin{equation}
d_{GH}(B_1(\tilde{x},r_0^{-2}g),B_1(0^n))<\delta.
\end{equation}
By volume convergence theorem and relative volume comparison,  for all $0<s\leq 1$,
\begin{equation}
\frac{\Vol(B_s(\tilde{x},r_0^{-2}g))}{\Vol(B_s(0^n))}\geq\frac{\Vol(B_1(\tilde{x},r_0^{-2}g))}{\Vol(B_1(0^n))}\geq 1-\Psi_1(\delta|n)\rightarrow1.\label{almost-maximal-volume}
\end{equation}
With respect to the rescaled metric $\Ric_{s^{-2}r_0^{-2}g}\geq-(n-1)s^2r_0^2$, then the almost maximal volume property (\ref{almost-maximal-volume}) implies the following almost rigidity due to \cite{ChC},
\begin{equation}
d_{GH}(B_{sr_0}(\tilde{x}_0),B_{sr_0}(0^n))<sr_0\Psi_2(\delta,r_0|n).
\end{equation}
For the fixed $\epsilon>0$, we choose sufficiently small $\delta>0$ and $r_0>0$ such that $\Psi_2(\delta,r_0|n)<\epsilon$ and the inequality (\ref{epsilon-radius}) follows. Then we finish the proof.\end{proof}

 Now let us state the main theorem of this subsection, which gives the $\epsilon$-regularity result in the context of uniform lower Ricci curvature.

\begin{theorem}\label{t:eps_reg_lower_ricci} 
Let $(M^n,g)$ be a Riemannian manifold with $\Ric \geq -(n-1)$ and such that $B_2(p)$ has a compact closure in $B_4(p)$.  There exists $w_0(n)<\infty$, and for each $\epsilon>0$ there exists $\delta=\delta(n,\epsilon)>0$, $c_0=c_0(n,\epsilon)>0$ such that if
\begin{equation}
d_{GH}(B_2(p),B_2(0^{k}))<\delta,
\end{equation}
where $B_2(0^k)\subset\dR^{k}$ or $\dR_+^k\equiv\dR^{k-1}\times\dR_+^1$, then the group $\Gamma_{\delta}(p)\equiv \Image[\pi_1(B_{\delta}(p))\rightarrow\pi_1(B_2(p))]$ is $(w_0,n-k)$-nilpotent.  In particular, $\rank(\Gamma_{\delta}(p))\leq n-k$, and if equality holds, then for every $q\in B_1(p)$ we have that
\begin{equation}
\Conj_\epsilon(q)\geq c_0(n,\epsilon)>0.
\end{equation}
\end{theorem}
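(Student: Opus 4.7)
The plan is to combine the structural result of Theorem \ref{t:KW_almost_nilpotent} for the almost nilpotency statement with the non-collapse plus quantitative splitting of Theorem \ref{non-collapsed-splitting-maximal-rank}, and then to feed the resulting Euclidean-closeness of the universal cover into the local Reifenberg-type bound of Lemma \ref{weak-conj-lemma} to extract the weak conjugate radius estimate.

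First, I would deduce the $(w_0,n-k)$-nilpotency and the upper bound $\rank(\Gamma_\delta(p))\leq n-k$ as a direct application of Theorem \ref{t:KW_almost_nilpotent}, taking the Ricci limit space to be $Z^k=\dR^k$ in the Euclidean case (respectively $\dR_+^k$ in the half-space case). Since $\dR^k$ and $\dR_+^k$ are fixed model spaces, the noncollapsing radius $r_c(0^k)$ is bounded below by a universal constant depending only on $n$, so the resulting constants $\delta(n)$ and $w_0(n)$ depend only on $n$, as required by the statement.

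Second, assuming equality $\rank(\Gamma_\delta(p))=n-k$, I would fix $q\in B_1(p)$ and invoke Theorem \ref{non-collapsed-splitting-maximal-rank} with $\ell=0$, $Z^\ell=\{\mathrm{pt}\}$ (Euclidean case) or $\ell=1$, $Z^\ell=\dR_+$ (half-space case). With any preassigned threshold $\epsilon'>0$, this gives a uniform $v_0(n)>0$ with $\Vol(B_{1/2}(\tilde q))\geq v_0$ and some scale $\delta<r<1$ at which
\begin{equation}
d_{GH}\big(B_r(\tilde q),B_r(0^{n-\ell},\hat z)\big)<r\epsilon',\quad (0^{n-\ell},\hat z)\in\dR^{n-\ell}\times C(\hat Z).
\end{equation}
In the Euclidean case $C(\hat Z)$ is a point and the right-hand side is a Euclidean ball $B_r(0^n)\subset\dR^n$. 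Choosing $\epsilon'$ to match the threshold $\delta(\epsilon,n)$ of Lemma \ref{weak-conj-lemma} then yields $\Conj_\epsilon(q)\geq r/2\geq c_0(n,\epsilon)>0$, which is exactly the required conclusion.

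The half-space case is the main obstacle. Theorem \ref{non-collapsed-splitting-maximal-rank} only forces $C(\hat Z)$ to be a one-dimensional metric cone, so a priori $C(\hat Z)\cong \dR_+$ and the cover is only close to $\dR^{n-1}\times\dR_+$, which is a half-space and does not satisfy the hypothesis of Lemma \ref{weak-conj-lemma}. The goal is to rule out this possibility and show $C(\hat Z)\cong \dR$. To achieve this, I would examine the proof of Theorem \ref{non-collapsed-splitting-maximal-rank} more carefully in the half-space setting: the cone splitting argument in that proof constructs $n-k$ independent cone tips in the fiber factor using the maximal rank assumption, which already accounts for the $(n-k)$ new $\dR$-factors on the cover; in the half-space case one must additionally exhibit a cone tip inside the one-dimensional $C(\hat Z)$ away from its origin, forcing a further cone-split by Lemma \ref{cone-splitting-principle} and hence $C(\hat Z)\cong \dR$. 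This should follow from an isometry in the limiting deck transformation group whose induced action on the one-dimensional factor is a nontrivial translation, which is furnished by the same $(m,\epsilon)$-displacement generators used in the quantitative splitting, but the bookkeeping to make this precise across both cases simultaneously is the delicate point. Once $C(\hat Z)\cong\dR$ is established, Lemma \ref{weak-conj-lemma} applies verbatim and the conclusion follows.
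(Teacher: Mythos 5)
Your overall architecture matches the paper's: Theorem \ref{t:KW_almost_nilpotent} for the $(w_0,n-k)$-nilpotency, Theorem \ref{non-collapsed-splitting-maximal-rank} for the noncollapsing and the splitting of the universal cover, and Lemma \ref{weak-conj-lemma} to convert Euclidean-closeness of $B_r(\tilde q)$ into the weak conjugate radius bound. The Euclidean case as you present it (taking $Z^\ell=\{\mathrm{pt}\}$, $\ell=0$, so that the cone factor is forced to be a point by the dimension count) is fine. You have also correctly identified that the half-space case is the crux. However, your proposed resolution of that case contains a genuine gap.

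You propose to rule out $C(\hat Z)\cong\dR_+$ by exhibiting an element of the limiting deck transformation group acting as a nontrivial translation on the one-dimensional cone factor, "furnished by the same $(m,\epsilon)$-displacement generators." This cannot work. In the proof of Theorem \ref{non-collapsed-splitting-maximal-rank} (via Proposition \ref{non-collapsed-splitting}), the limiting orbit $G_\infty\cdot y^*$ of the deck group is precisely what produces, and is contained in, the $n-k$ new $\dR$-factors; the deck transformations commute with the covering projection and act trivially on the base in the limit, and the remaining one-dimensional factor $C(\hat Z)$ is exactly the direction that descends to the $\dR_+^1$ factor of the base $\dR^{k-1}\times\dR_+^1$. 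An additional independent translation in that direction would have to come from somewhere other than the deck group, and the maximal rank hypothesis $\rank(\Gamma_\delta(p))=n-k$ supplies no such element. The missing ingredient is not group-theoretic but geometric: once Proposition \ref{non-collapsed-universal-cover} gives $\Vol(B_{1/2}(\tilde q))\geq v_0(n)>0$, the ball $B_r(\tilde q)$ is a noncollapsed ball close to $\dR^{n-1}\times C(Y^1)$, and the stratification theorem for noncollapsed Ricci-limit spaces of \cite{ChC1} (absence of a codimension-one singular stratum) forces $C(Y^1)\cong\dR$ rather than $\dR_+$; this is how the paper concludes $d_{GH}(B_r(\tilde q),B_r(0^n))<r\epsilon$ in both cases simultaneously, after which Lemma \ref{weak-conj-lemma} applies. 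Without invoking this (or an equivalent quantitative boundary-exclusion argument), your half-space case does not close.
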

\begin{remark}
We see from Theorem \ref{t:KW_almost_nilpotent} that the fibered fundamental group $\Gamma_{\delta}(p)$ in fact has {\it nilpotency length} bounded by $n-k$ with uniformly bounded index.  Our main interest then is to study the case of maximal rank.
\end{remark}

\begin{remark}
In this $\epsilon$-regularity Theorem, we assume that the limit space splits off at least $\dR^{k-1}$. This GH-control 
on the limit space is sharp because we can take a convex surface ($K\geq 0$) converging to a flat cone. 
\end{remark}

\begin{remark}
The conclusion of this $\epsilon$-regularity gives uniform control the weak conjugate radius and this is optimal in the context of lower Ricci. In fact, Example \ref{ss:small-conj} shows that in general it is impossible to expect any control on the conjugate radius even in the non-collapsed setting with smooth limit space.
\end{remark}

The proof of Theorem \ref{t:eps_reg_lower_ricci} quickly follows from Theorem \ref{non-collapsed-splitting-maximal-rank}.

\begin{proof}[Proof of Theorem \ref{t:eps_reg_lower_ricci}] First, Theorem \ref{t:KW_almost_nilpotent} gives the rank bound and the index bound $w_0(n)$. So the remaining is to prove the weak conjugate radius estimate. Observe that,
it suffices to show that for each $\epsilon>0$, there exists $\delta_0(\epsilon,n)>0$, $c_0(\epsilon,n)>0$
such that 
 if 
 \begin{equation}
 d_{GH}(B_2(p),B_2(0^k))<\delta_0,\label{lower-ricci-GH-control}
 \end{equation}
 where $B_2(0^k)\subset\dR^k$ or $\dR_+^k$, and if
 \begin{equation}
 \rank(\Gamma_{\delta_0}(p))=n-k,\label{lower-ricci-maximal-rank}
 \end{equation}
then for each $q\in B_1(p)$,
\begin{equation}
d_{GH}(B_{c_0}(\tilde{q}),B_{c_0}(0^n))<c_0 \epsilon,\ 0^n\in\dR^n,\label{weak-conj}
\end{equation}
where $\tilde{q}$ is the lifting of $q$ on the universal cover of $B_1(q)$. In fact, by Lemma \ref{weak-conj-lemma} and inequality (\ref{weak-conj}) we would then have that 

\begin{equation}\Conj_{\epsilon}(q)\geq c_0/2.\end{equation}
 
Now let us prove inequality (\ref{weak-conj}). Applying Theorem \ref{non-collapsed-splitting-maximal-rank}, there exists $v_0(n)>0$, and for each $\bar{\epsilon}>0$ there exists $\delta_0(n,\bar{\epsilon})>0$ such that if  (\ref{lower-ricci-GH-control}) and  (\ref{lower-ricci-maximal-rank}) hold for $\delta_0>0$, then $\Vol(B_{1/2}(\tilde{q}))>v_0>0$ and  for some $0<\delta_0<r<1$,
\begin{equation}
d_{GH}(B_{r}(\tilde{q}),B_{r}(0^{n-1},0^*))<r\bar{\epsilon},\label{(n-1)-dim-splitting}\end{equation}
where $0^*\in\dR^1$ or $0^*$ is the cone tip of $\dR_+^1$. By the stratification result on the singular set of \cite{ChC1}, we have that for every $\epsilon>0$, there exists $\bar{\epsilon}(n,v_0)>0$ such that if inequality (\ref{(n-1)-dim-splitting}) holds for $\bar{\epsilon}>0$, then
\begin{equation}
d_{GH}(B_{r}(\tilde{q}),B_{r}(0^{n}))<r\epsilon,\ 0^n\in\dR^n,\end{equation}
and thus the proof is complete.
\end{proof}

\subsection{Harmonic Radius and Curvature Estimates}

\label{ss:harmonic-radius}

In this subsection, we discuss the main part of Theorem \ref{t:eps_reg_collapsed}. 
The essential conclusion in the $\epsilon$-regularity result for bounded Ricci curvature is that conjugate radius is uniformly controlled from below if the assumptions of Theorem \ref{t:eps_reg_collapsed} hold.
More directly we will show that the $C^1$-harmonic radius is uniformly bounded from below on the universal
cover. Furthermore, if the manifold is Einstein or just has uniform bound on $|\nabla\Ric|$, the standard elliptic regularity theory and the harmonic radius estimate in fact gives the uniform curvature estimates.

\begin{proposition}\label{harmonic-radius}
Let $(M^n,g,p)$ be a Riemannian manifold with bounded Ricci curvature $|\Ric|\leq n-1$ and such that $B_2(p)$ has a compact closure in $B_4(p)$.  For each Ricci-limit space $(Z^{\ell},z^{\ell})$ with $\dim Z^{\ell}=\ell\leq 3$, there exists $\delta_0=\delta_0(n,B_1(z^{\ell}))>0$ and $h_0=h_0(n,B_1(z^{\ell}))>0$ such that if

$(i)$ $d_{GH}(B_2(p),B_2(0^{k-\ell},z^{\ell}))<\delta_0,\ (0^{k-\ell},z^{\ell})\in\dR^{k-\ell}\times Z^{\ell}$,

$(ii)$ $\Gamma_{\delta_0}(p)\equiv\Image[\pi_1(B_{\delta_0}(p))\rightarrow\pi_1(B_2(p))]$ satisfies $\rank(\Gamma_{\delta_0}(p))=n-k$,
\\
then for each $q\in B_1(p)$

\begin{equation}
r_h(\tilde{q})\geq h_0>0,
\end{equation}
where $\tilde{q}$ is a lift of $q$ on the universal cover of $B_1(q)$.
\end{proposition}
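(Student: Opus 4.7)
The plan is to combine the noncollapsing-and-quantitative-splitting result of Theorem \ref{non-collapsed-splitting-maximal-rank} with the noncollapsed $\epsilon$-regularity of Theorem \ref{t:eps_reg_noncollapsed}. The hypothesis $\ell\leq 3$ is precisely what lets the second theorem fire once the first one produces the right geometric picture on the universal cover.

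First, I would apply Theorem \ref{non-collapsed-splitting-maximal-rank} to the ball $B_2(p)$. Let $v_0=v_0(n,B_1(z^\ell))$ be the volume constant it produces, and let $\epsilon_1=\epsilon_1(n,v_0)$ be the constant in Theorem \ref{t:eps_reg_noncollapsed}. Choose the small parameter $\epsilon$ in Theorem \ref{non-collapsed-splitting-maximal-rank} equal to $\epsilon_1/10$, and let $\delta_0=\delta_0(\epsilon_1,n,B_1(z^\ell))$ be the resulting threshold; this is the $\delta_0$ we take in the Proposition. Then for every $q\in B_1(p)$ we obtain simultaneously the volume bound $\Vol(B_{1/2}(\tilde q))\geq v_0$ and a scale $\delta_0<r<1$ together with a compact metric space $\hat Z$ and a cone point $\hat z\in C(\hat Z)$ such that
\begin{equation}
d_{GH}\bigl(B_r(\tilde q),B_r(0^{n-\ell},\hat z)\bigr)<r\epsilon_1/10,\ (0^{n-\ell},\hat z)\in\dR^{n-\ell}\times C(\hat Z).
\end{equation}

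Next I would rewrite this splitting in the form required by Theorem \ref{t:eps_reg_noncollapsed}. Since $\ell\leq 3$, we have $n-\ell\geq n-3$, so
\begin{equation}
\dR^{n-\ell}\times C(\hat Z)\;\cong\;\dR^{n-3}\times Y,\qquad Y\equiv \dR^{3-\ell}\times C(\hat Z),
\end{equation}
with the base point placed accordingly. Thus $B_r(\tilde q)$ is $\tfrac{r\epsilon_1}{10}$-close to a ball centered at $(0^{n-3},y)\in\dR^{n-3}\times Y$, which is exactly the kind of singular cone-free model hypothesized in Theorem \ref{t:eps_reg_noncollapsed}.

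Now rescale by $(r/2)^{-2}$, so that the ball $B_{r/2}(\tilde q)$ becomes a unit ball. Under this rescaling $|\Ric|\leq n-1$ is preserved (and in fact improved), the GH-closeness becomes $d_{GH}(B_2,B_2(0^{n-3},y))<\epsilon_1$, and the volume bound at scale $1/2$ together with Bishop-Gromov gives $\Vol(B_{r/2}(\tilde q))\geq c(n)v_0(r/2)^n$ in the original metric, hence $\Vol(B_1(\tilde q))\geq c(n)v_0$ in the rescaled metric. Theorem \ref{t:eps_reg_noncollapsed} then yields a harmonic radius bound $r_h(\tilde q)\geq r_0(n,v_0)$ in the rescaled metric, i.e. $r_h(\tilde q)\geq r_0\cdot(r/2)\geq r_0\,\delta_0/2\equiv h_0$ in the original metric, proving the Proposition with $h_0=h_0(n,B_1(z^\ell))$.

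The only real subtlety, rather than an obstacle, is bookkeeping the scales and constants so that the $\epsilon$ fed into Theorem \ref{non-collapsed-splitting-maximal-rank} is compatible with the threshold required by Theorem \ref{t:eps_reg_noncollapsed}, and so that the lower scale $r>\delta_0$ produced by the splitting theorem is uniform enough to give a single constant $h_0$ independent of $q$. All other ingredients, including the Bishop–Gromov propagation of the volume bound from scale $1/2$ down to scale $r/2$ and the identification of $\dR^{n-\ell}\times C(\hat Z)$ with a space of the form $\dR^{n-3}\times Y$, are immediate consequences of $\ell\leq 3$ and the two-sided Ricci bound.
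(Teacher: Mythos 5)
Your proposal is correct and follows essentially the same route as the paper: invoke Theorem \ref{non-collapsed-splitting-maximal-rank} to get the noncollapsing and the $\dR^{n-\ell}\times C(\hat Z)$ splitting on the universal cover, then feed this (using $\ell\leq 3$) into the scale-invariant form of Theorem \ref{t:eps_reg_noncollapsed} to get $r_h(\tilde q)\geq r\cdot r_0(n,v_0)>\delta_0\cdot r_0(n,v_0)\equiv h_0$. Your extra bookkeeping (the identification $\dR^{n-\ell}\times C(\hat Z)\cong\dR^{n-3}\times Y$ and the Bishop--Gromov propagation of the volume bound down to scale $r$) is exactly what the paper leaves implicit.
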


\begin{proof}

We assume that conditions $(i)$, $(ii)$ hold for $\delta_0(n,B_1(z^\ell))>0$, which will be determined momentarily. By Theorem \ref{non-collapsed-splitting-maximal-rank}, there is some $v_0(n,B_1(z^\ell))>0$ and for each 
$\epsilon>0$, there exists some positive constant $\delta(\epsilon,n,B_1(z^\ell))>0$ such that if $(i)$, $(ii)$ holds for $\delta>0$, then \begin{equation}\Vol(B_{1/2}(\tilde{q}))\geq v_0>0\label{v_0-non-collapse}\end{equation}
and for some $\delta<r<1$, 
\begin{equation}
d_{GH}(B_{r}(\tilde{q}),B_{r}(0^{n-\ell},\hat{z}))<r\epsilon,\ (0^{n-\ell},\hat{z})\in\dR^{n-\ell}\times \hat{Z},\ \ell\leq 3,\label{(n-3)-splitting-h}
\end{equation}
where $C(\hat{Z})$ is a metric cone over some compact 
metric space $\hat{Z}$ with cone tip $\hat{z}$. Therefore, by Theorem \ref{t:eps_reg_noncollapsed}, there exists $\epsilon_0(n,v_0)>0$, $r_h(n,v_0)>0$ such that if (\ref{v_0-non-collapse}) holds for $v_0(n,B_1(z^{\ell}))>0$ and equation (\ref{(n-3)-splitting-h}) holds for $\epsilon_0(n,v_0)>0$, then
\begin{equation}
r_h(\tilde{q})\geq r\cdot r_h(n,v_0)>\delta\cdot r_h(n,v_0).
\end{equation}
Therefore, for the above $\epsilon_0(n,v_0)>0$, correspondingly, we choose
\begin{equation}
\delta_0=\delta(\epsilon_0,n,B_1(z^\ell))>0, \ h_0=\delta_0\cdot r_h(n,v_0)>0
\end{equation}
which depend only on $n$ and the geometry of $B_1(z^\ell)$.
The proof is complete.

\end{proof}

Now we prove the curvature bound in Theorem \ref{t:eps_reg_collapsed}. Actually, we can prove it in a little more general setting, that is, we will replace the Einstein condition with the uniform bound on the covariant derivative of Ricci tensor. The arguments are rather classical once we have the harmonic radius bound in Proposition \ref{harmonic-radius}.

\begin{proposition}\label{curvature-bound} Given $n\geq 2$, $\Lambda<\infty$ and a Ricci-limit space $(Z^{\ell},z^\ell)$ with $\dim(Z^{\ell})=\ell\leq 3$, there are 
positive constants $\delta_0=\delta_0(n,B_1(z^{\ell}))>0$ and $C=C(n,\Lambda,B_1(z^{\ell}))<\infty$ such that if the constant $\delta_0>0$
satisfies assumptions $(i)$, $(ii)$ in Proposition \ref{harmonic-radius} and in addition assume  
$\sup\limits_{B_2(p)}|\nabla\Ric|\leq\Lambda$, then \begin{equation}\sup_{B_1(p)}|\Rm|\leq C(n,\Lambda,B_1(z^{\ell})).\end{equation} Additionally, if $(M^n,g,p)$ is Einstein $\Ric\equiv\lambda g$ with $|\lambda|\leq n-1$, then for each $j\in \mathbb{N}$, there exists $0<C_j(n,B_1(z^\ell))<\infty$ such that
\begin{equation}\sup\limits_{B_1(p)}|\nabla^j\Rm|\leq C_j(n, B_1(z^\ell)).\end{equation}
\end{proposition}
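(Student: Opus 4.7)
The plan is to reduce Proposition \ref{curvature-bound} entirely to the harmonic radius estimate in Proposition \ref{harmonic-radius} combined with classical elliptic regularity for the Ricci tensor in harmonic coordinates. Since the universal covering map is a local isometry, any pointwise curvature bound obtained on a small ball $B_{h_0}(\tilde q)$ on the cover descends immediately to $B_1(p)$, so it suffices to work upstairs on the cover.

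First I would invoke Proposition \ref{harmonic-radius} under the hypotheses $(i)$ and $(ii)$, producing a uniform lower bound $r_h(\tilde q) \geq h_0 = h_0(n, B_1(z^\ell)) > 0$ and harmonic coordinates $u : B_{h_0}(\tilde q) \to \dR^n$ with $\|u\|_{h_0}\leq 1$. In these coordinates the metric $g_{ij} = g(\nabla u_i, \nabla u_j)$ satisfies the standard identity
\begin{equation*}
-\tfrac{1}{2}\, g^{ab}\,\partial_a\partial_b g_{ij} \;=\; R_{ij} - Q_{ij}(g,\partial g),
\end{equation*}
where $Q_{ij}$ is quadratic in $\partial g$ with smooth coefficients depending on $g$. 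The $C^1$-closeness of $g$ to $\delta_{ij}$ forces uniform ellipticity and gives $|Q_{ij}|\leq C(n)$, while $|R_{ij}|\leq n-1$. Interior $L^p$-regularity then yields $g_{ij}\in W^{2,p}$ for every $p<\infty$ on a slightly smaller ball, hence $g\in C^{1,\alpha}$ for every $\alpha\in(0,1)$, with constants depending only on $n$ and $h_0$.

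Next, using $\sup_{B_2(p)}|\nabla\Ric|\leq\Lambda$, which lifts unchanged to $B_{h_0}(\tilde q)$, I would differentiate the identity above. The function $\partial_k g_{ij}$ then solves an elliptic equation whose right-hand side is bounded in $L^\infty$ and whose coefficients belong to $C^{0,\alpha}$. Schauder theory therefore produces a $C^{2,\alpha}$ estimate on $g_{ij}$ with bounds depending only on $n$, $\Lambda$, and $h_0$. This gives $|\Rm|(\tilde q)\leq C(n,\Lambda,B_1(z^\ell))$, and covering $B_1(p)$ by the projections of finitely many such balls completes the proof of the first claim. For the Einstein case $\Ric = \lambda g$ with $|\lambda|\leq n-1$, the Ricci identity becomes a quasilinear elliptic system with real-analytic nonlinearity in $(g,\partial g)$, and a routine Schauder bootstrap provides $g\in C^{k,\alpha}$ with bounds depending only on $n$ and $h_0(n,B_1(z^\ell))$ for each $k$, yielding the uniform bounds $\sup_{B_1(p)}|\nabla^j \Rm|\leq C_j(n,B_1(z^\ell))$.

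The main conceptual work has already been carried out in Proposition \ref{harmonic-radius}; the remaining steps are essentially a mechanical elliptic bootstrap. The only point requiring care is that, aside from $\Lambda$ in the bounded $|\nabla\Ric|$ case, every constant entering the argument is controlled by $n$ together with $h_0(n,B_1(z^\ell))$, so the final curvature bounds inherit the advertised dependence on the geometry of $B_1(z^\ell)$ without any additional hypothesis.
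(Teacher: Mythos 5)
Your proposal is correct and follows essentially the same route as the paper: invoke Proposition \ref{harmonic-radius} for the uniform harmonic radius bound $r_h(\tilde q)\geq h_0$ on the universal cover, write the Ricci tensor as an elliptic equation for $g_{ij}$ in harmonic coordinates, apply Schauder theory using $|\nabla\Ric|\leq\Lambda$ to get $C^{2,\alpha}$ control on the metric and hence the curvature bound, then descend by the local isometry and bootstrap in the Einstein case. The minor variations (passing through $W^{2,p}$, differentiating the equation rather than using the $C^{0,\alpha}$ norm of $\Ric$ directly) do not change the substance of the argument.
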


\begin{proof} The proof is the standard Schauder estimate and we only prove $(i)$.
It  suffices to show that for each $q\in B_1(p)$, $|\Rm|(\tilde{q})\leq C(n,\Lambda,B_1(z^{\ell}))$, where $\tilde{q}$
is a lift of $q$ on the universal cover of $B_1(q)$. 
By Proposition \ref{harmonic-radius}, 
$r_h(\tilde{q})\geq h_0(n,Z^{\ell})>0.
$ Hence, in $B_{h_0/2}(\tilde{q})$, we can express Ricci tensor in terms of the harmonic coordinates $\{x_1,\ldots, x_n\}$, 
\begin{equation}
\R_{ij}=g^{kl}\frac{\partial^2 g_{ij}}{\partial x^k\partial x^l}+Q\Big(\frac{\partial g_{rs}}{\partial x_m}\Big),
\end{equation}
where $\R_{ij}$ are components of Ricci tensor and $Q$ is a quadratic form in terms of the first derivative of $g_{ij}$.
Note that $g_{ij}$ has uniform $C^{1,\alpha}$-norm in $B_{h_0/2}(\tilde{q})$.
Since $|\nabla \Ric|\leq\Lambda$, by the classical Schauder estimate, it follows that \begin{equation}h_0^{2}|\nabla^2g_{ij}|_{C^{0}(B_{h_0/4}(\tilde{q}))}+h_0^{2+\alpha}|\nabla^2g_{ij}|_{C^{\alpha}(B_{h_0/4}(\tilde{q}))}\leq C(n,\Lambda).\end{equation}
The above estimate actually gives the curvature bound,\begin{equation}\sup\limits_{B_{h_0/4}(\tilde{q})} |\Rm|\leq C(n,\Lambda)h_0^{-2}.\end{equation} Descending to the base space, we obtain the desired curvature bound.
\end{proof}

\subsection{Conjugate Radius Bound Implies Maximal Nilpotency Rank}
\label{ss:proof-maximal-rank}

The converse direction of Theorem \ref{t:eps_reg_collapsed} is an immediate consequence of the following 
fibration construction that we will discuss. Actually, a similar version of Proposition \ref{fiber-bundle} was stated in \cite{Dai-Wei-Ye} without proof. Although this result is known for experts, we will give a proof in Appendix \ref{s:proof-of-fiber-bundle} for completeness sake. 
Our proof of this fiber bundle result essentially relies on the pointwise $C^1$ and $C^2$ estimates of harmonic functions under a lower conjugate radius bound, which gives an explicit and intrinsic construction of the fiber bundle map (compared with Fukaya and Yamaguchi's embedding method via distance function, for instance see \cite{yamaguchi}). In fact, this analytical method can be applied in a broad setting.
Finally, we will apply the smoothing techniques in  \cite{Dai-Wei-Ye} to topologically identify the fiber.

\begin{proposition} \label{fiber-bundle}
Let $(M^n, g, p)$ be a Riemannian manifold with $|\Ric|\leq n-1$ and such that $B_3(p)$ has a compact closure in $B_6(p)$.
Given $c_0>0$, there exists $\delta=\delta(n,c_0)>0$ such that if

$(i)$ $d_{GH}(B_3(p), B_3(0^k))<\delta,\ 0^k\in\dR^k$, $k\leq n$,

$(ii)$ for each $q\in B_3(p)$, we have the conjugate radius lower bound $\Conj(q)\geq c_0>0$,
\\
then there is a smooth map $\Phi:B_{5/2}(p)\rightarrow B_{5/2}(0^k)$ such that 
$\Phi^{-1}(B_{9/4}(0^k))$ is diffeomorphic to $B_{9/4}(0^k)\times N^{n-k}$, where $N^{n-k}$ is an
infra-nilmanifold of dimension $n-k$. 
\end{proposition}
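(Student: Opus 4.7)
The plan is to build $\Phi$ as an $\mathbb{R}^k$-splitting map in the sense of Cheeger--Colding and then upgrade the standard integral estimates to pointwise $C^1$ and $C^2$ estimates by using the lower conjugate radius bound. Concretely, for $\delta$ sufficiently small in terms of $n$, the almost-splitting construction of \cite{ChC} yields harmonic functions $h_1,\dots,h_k:B_{11/4}(p)\to\mathbb{R}$ with
\begin{equation}
\fint_{B_{11/4}(p)}\Big(\sum_{\alpha,\beta}\big|\langle\nabla h_\alpha,\nabla h_\beta\rangle-\delta_{\alpha\beta}\big|+\sum_\alpha|\nabla^2 h_\alpha|^2\Big)\leq\Psi(\delta\mid n),
\end{equation}
and which simultaneously provide a $\Psi(\delta\mid n)$-Gromov--Hausdorff approximation $\Phi=(h_1,\dots,h_k)$ to the projection $B_{5/2}(p)\to B_{5/2}(0^k)$.

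First I would prove a pointwise bound $\big|\langle\nabla h_\alpha,\nabla h_\beta\rangle-\delta_{\alpha\beta}\big|+|\nabla^2 h_\alpha|<\Psi(\delta\mid n, c_0)$ on $B_{5/2}(p)$. This is the step where the conjugate radius hypothesis enters: lifting to the exponential chart at any $q\in B_{5/2}(p)$, a ball of radius $c_0/2$ pulls back injectively to a ball in $T_qM\cong\mathbb{R}^n$ on which the geometry is uniformly controlled (bounded Ricci, bounded conjugate radius, hence bounded sectional curvature in an integrated sense and uniform volume). On such a chart, elliptic regularity upgrades the $L^2$ smallness of $\nabla^2 h_\alpha$ to a $C^0$ smallness, and interpolating gives the $C^1$ bound on the Gram matrix. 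This is the step I expect to be the technical heart, since without a sectional curvature bound one cannot directly apply Schauder theory to $\Delta h_\alpha = 0$; instead one works in the conjugate radius chart where the metric has controlled $C^0$ norm and then uses the Bochner identity $\frac{1}{2}\Delta|\nabla h_\alpha|^2=|\nabla^2h_\alpha|^2+\mathrm{Ric}(\nabla h_\alpha,\nabla h_\alpha)$ with $|\mathrm{Ric}|\le n-1$ together with a mean value inequality to convert integral control into pointwise control.

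With the pointwise estimate in hand, the differential $d\Phi$ has rank $k$ at every point of $B_{5/2}(p)$, so $\Phi$ is a submersion onto its image, which covers $B_{9/4}(0^k)$ by the Gromov--Hausdorff approximation property. The preimage $\Phi^{-1}(B_{9/4}(0^k))$ has compact closure inside $B_{5/2}(p)$, so Ehresmann's fibration theorem applies and produces a smooth locally trivial fiber bundle
\begin{equation}
F^{n-k}\hookrightarrow\Phi^{-1}\big(B_{9/4}(0^k)\big)\longrightarrow B_{9/4}(0^k),
\end{equation}
and since $B_{9/4}(0^k)$ is contractible this bundle is trivial: $\Phi^{-1}(B_{9/4}(0^k))\cong B_{9/4}(0^k)\times F^{n-k}$.

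It remains to identify $F^{n-k}$ as an infra-nilmanifold. The fibers inherit the induced Riemannian metric with uniformly bounded second fundamental form (by the pointwise $C^2$ estimate on $h_\alpha$), hence have $|\mathrm{Ric}|$ bounded and diameter bounded in terms of $n$ and $c_0$. By the Gromov--Hausdorff closeness to $\mathbb{R}^k$, the diameter of each fiber tends to zero as $\delta\to 0$, while the conjugate radius on $M$ forces the injectivity radius of $F$ to stay bounded below relative to its diameter only via the nilpotent fundamental group structure coming from Theorem \ref{t:KW_almost_nilpotent}. Applying the Ricci flow smoothing of \cite{Dai-Wei-Ye} to produce a nearby metric with bounded sectional curvature and small diameter, and then invoking Gromov's almost flat manifolds theorem (in Ruh's form), we conclude $F^{n-k}$ is diffeomorphic to an infra-nilmanifold $N^{n-k}$. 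This completes the proof.
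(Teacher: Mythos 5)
Your overall strategy is the same as the paper's: a harmonic $\dR^k$-splitting map upgraded to pointwise non-degeneracy via the conjugate radius, Ehresmann plus contractibility of the base for triviality, and Dai--Wei--Ye smoothing followed by Gromov--Ruh to identify the fiber. One genuine difference: you work with a single global splitting map on $B_{11/4}(p)$, whereas the paper builds local harmonic maps at scale $\sim\delta^{1/2}$ around a net of points and glues them with Cheeger--Colding cutoff functions, checking compatibility of the local maps on overlaps via the Cheng--Yau gradient estimate. Your route skips the entire gluing step, and since the average of $|\nabla^2 h_\alpha|^2$ over a definite-size ball $B_\rho(q)$ is still controlled by volume comparison, the same exponential-chart argument (immersivity of $\exp_q$ below the conjugate radius, Anderson's harmonic radius bound from the resulting injectivity radius of the pulled-back metric, then Schauder and a compactness argument to pass from integral to pointwise $C^1$ control of the Gram matrix) applies directly to the global map; this is a legitimate simplification. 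One small imprecision there: $\exp_q$ on a ball below the conjugate radius is an immersion, not necessarily injective, so you should phrase the lift as working with the pulled-back metric on a ball in $T_qM$, exactly as the paper does.

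There is, however, a real error in your fiber-identification step. You assert that bounded second fundamental form of the fibers together with $|\Ric|\leq n-1$ on $M$ gives the fibers bounded (intrinsic) Ricci curvature; this does not follow, since the Gauss equation requires control of the \emph{ambient sectional curvature}, which a two-sided Ricci bound does not provide. You also seem to apply the Dai--Wei--Ye smoothing to the fiber, but their theorem needs a conjugate radius lower bound, which the fiber (of diameter $\Psi(\delta)$) does not satisfy. The correct order, and the one the paper uses, is: first smooth the \emph{ambient} metric using $|\Ric|\leq n-1$ and $\Conj\geq c_0$ to obtain a nearby metric $h_\epsilon$ with uniformly bounded sectional curvature and covariant derivatives; then check that $\Phi$ remains non-degenerate and the fibers have bounded second fundamental form with respect to $h_\epsilon$; then Gauss--Codazzi bounds the intrinsic sectional curvature of the fibers, and since $\diam^2\cdot|\!\sec\!|\leq\Psi(\delta)$, Gromov--Ruh applies. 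Your remark that the conjugate radius controls the injectivity radius of the fiber ``via the nilpotent fundamental group structure'' is a red herring: the almost flat manifolds theorem requires no injectivity radius input, only the scale-invariant smallness of $\diam^2\cdot|\!\sec\!|$.
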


We will prove the above in Appendix \ref{s:proof-of-fiber-bundle}.  The converse direction of Theorem \ref{t:eps_reg_collapsed} now follows from the Proposition below.

\begin{proposition}\label{converse-rank-maximum} Let $(M^n, g, p)$ be a Riemannian manifold with $|\Ric|\leq n-1$ such that $B_3(p)$ has a compact closure in $B_6(p)$.
There exists $w(n)>0$ and given $c_0>0$  there exists $\delta(n,c_0)>0$ such that if

$(i)$ $d_{GH}(B_3(p), B_3(0^k))<\delta,\ 0^k\in\dR^k,\ k\leq n$,

$(ii)$ for each $q\in B_3(p)$, we have the conjugate radius lower bound $\Conj(q)\geq c_0>0$,
\\
then 
\begin{equation}\Gamma_{\delta}(p)\equiv\Image[\pi_1(B_{\delta}(p))\rightarrow \pi_1(B_2(p))]\end{equation}
contains a nilpotent subgroup of rank $n-k$ of index at most $w(n)<\infty$.

\end{proposition}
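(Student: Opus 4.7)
The plan is to combine Proposition~\ref{fiber-bundle} with the classical structure theory of infra-nilmanifolds to identify $\Gamma_\delta(p)$ with the fundamental group of the infra-nilmanifold fiber of $\Phi$. Choosing the threshold $\delta(n,c_0)>0$ no larger than the one produced in Proposition~\ref{fiber-bundle}, that Proposition yields a smooth bundle $\Phi\colon B_{5/2}(p)\to B_{5/2}(0^k)$ whose restriction to $\Phi^{-1}(B_{9/4}(0^k))$ is diffeomorphic to the trivial product $B_{9/4}(0^k)\times N^{n-k}$, with $N^{n-k}$ an infra-nilmanifold of dimension $n-k$. By Auslander--Bieberbach type results on infra-nilmanifolds, $\pi_1(N^{n-k})$ contains a torsion-free nilpotent normal subgroup of rank exactly $n-k$ whose index is bounded above by a constant $w(n)$ depending only on $n$.

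The key geometric observation is that the fiber $F=\Phi^{-1}(\Phi(p))$ through $p$ has small \emph{ambient} diameter in $M$: because $\Phi$ is the smoothing of a Gromov--Hausdorff approximation, any $x,y\in F$ satisfy $d_M(x,y)\leq C(n)\delta$, so for $\delta(n,c_0)$ sufficiently small we have $F\subset B_\delta(p)$. Since the base $B_{9/4}(0^k)$ is contractible, the inclusion of any fiber induces an isomorphism
\[
\pi_1(F)\xrightarrow{\cong}\pi_1\bigl(\Phi^{-1}(B_{9/4}(0^k))\bigr)\cong\pi_1(N^{n-k}),
\]
and the chain of inclusions $F\subset B_\delta(p)\subset B_2(p)\subset\Phi^{-1}(B_{9/4}(0^k))$ forces the natural homomorphism $\pi_1(F)\to\pi_1(B_2(p))$ to be injective with image contained in $\Gamma_\delta(p)$.

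For the reverse containment $\Gamma_\delta(p)\subseteq \Image[\pi_1(F)\to\pi_1(B_2(p))]$, I would use a lifting argument. Any loop $\gamma\subset B_\delta(p)$ projects to a small loop $\Phi(\gamma)\subset B_{2\delta}(0^k)$ in the Euclidean base, which contracts to $\Phi(p)=0^k$ along the straight-line homotopy; lifting this contraction through the product trivialization yields a homotopy of $\gamma$ to a loop in $F$ whose trace stays in $\Phi^{-1}(B_{2\delta}(0^k))\subset B_2(p)$, provided $\delta$ is small enough relative to the Lipschitz constant of $\Phi^{-1}$. Consequently $\Gamma_\delta(p)\cong\pi_1(N^{n-k})$, so combining with the upper bound $\rank(\Gamma_\delta(p))\leq n-k$ from Theorem~\ref{t:KW_almost_nilpotent} gives rank exactly $n-k$ together with a nilpotent subgroup of index at most $w(n)$.

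The main obstacle is the quantitative metric control needed in both the observation that $F\subset B_\delta(p)$ and the verification that the lifted contracting homotopies stay inside $B_2(p)$; one must check that the trivializing diffeomorphism produced by Proposition~\ref{fiber-bundle} is compatible with the Riemannian distance up to universal constants. Both properties should follow from the construction of $\Phi$ in Appendix~\ref{s:proof-of-fiber-bundle} via smoothed harmonic Gromov--Hausdorff approximations under the conjugate radius and bounded Ricci assumptions, but the quantitative bookkeeping is the technical core.
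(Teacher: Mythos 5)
Your proposal is correct and follows essentially the same route as the paper: apply Proposition~\ref{fiber-bundle}, use the Auslander--Bieberbach structure of $\pi_1$ of the infra-nilmanifold fiber to get a nilpotent subgroup of rank $n-k$ and index $\leq w(n)$, and then identify $\Gamma_\delta(p)$ with $\pi_1(F)$ via the two mutually inverse inclusions (the paper phrases this as a pair of injections $\varphi,\psi$ built from $F\hookrightarrow U\cong D^k\times F^{n-k}$ and the retraction onto the fiber, which is exactly your lifted straight-line homotopy). The quantitative points you flag --- that the fiber through $p$ lies in $B_\delta(p)$ and that the contracting homotopies stay in $B_2(p)$ --- are precisely the details the paper handles informally via the length-$2\delta$ generating loops, so your bookkeeping concern is legitimate but not a gap relative to the published argument.
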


\begin{proof}
If all of the assumptions hold, by Proposition \ref{fiber-bundle}, there exists a smooth map \begin{equation}\Phi:B_{5/2}(p)\rightarrow B_{5/2}(0^k),\ 0^k\subset\dR^k,\end{equation} such that the pre-image 
$\Phi^{-1}(B_{9/4}(0^k))$ is homeomorphic to the trivial bundle $B_{9/4}(0^k)\times F^{n-k}$, where $F^{n-k}$ is an infranilmanifold of dimension $n-k$.  Hence there exists some $w(n)<\infty$ and a $w(n)$-finite normal cover of $F^{n-k}$, denoted by $\hat{F}^{n-k}$, such that $\hat{F}^{n-k}$ is a compact nilmanifold. Then $\hat{F}^{n-k}$ is diffeomorphic to $N^{n-k}/\mathcal{N}$ for some simply-connected nilpotent Lie group $N^{n-k}$ (and thus diffeomorphic to $\dR^{n-k}$) and
 for some co-compact lattice $\mathcal{N}<N^{n-k}$. It is a standard fact that $\rank(\mathcal{N})=\dim(N^{n-k})=n-k$ (see \cite{rag} for instance).
Since $\hat{F}^{n-k}$ is a bounded normal cover of $F^{n-k}$ by $w(n)$, it follows that $[\pi_1(F^{n-k}):\mathcal{N}]\leq w(n)<\infty$ and thus $\rank(\pi_1(F^{n-k}))=\rank(\mathcal{N})=n-k$.

Now we  claim that the   group \begin{equation}\Gamma_{\delta}(p)\equiv\Image[\pi_1(B_{\delta}(p))\rightarrow\pi_1(B_2(p))]\end{equation} is isomorphic to 
$\pi_1(F^{n-k})$. Take a fiber $F^{n-k}\subset B_{\delta}(p)$ at $p$. Notice that, each element in $\pi_1(F^{n-k})$ and $\Gamma_{\delta}(p)$  can be generated by a closed geodesic of length at most $2\delta$. In the following arguments, we give the injective homomorphisms
\begin{equation}
\varphi:\pi_1(F^{n-k})\rightarrow\Gamma_{\delta}(p),\ \psi:\Gamma_{\delta}(p)\rightarrow \pi_1(F^{n-k})
\end{equation}
 such that \begin{equation}\psi\circ\varphi=\id_{\pi_1(F^{n-k})}, \ \varphi\circ\psi=\id_{\Gamma_{\delta}(p)},\label{composition-identity}\end{equation} which particularly implies that $\pi_1(F_1)\cong\Gamma_{\delta}(p)$.

Observe that, by Proposition \ref{fiber-bundle}, there exists an open set such that  $B_2(p)\subset U$ 
 and  $U$ is diffeomorphic to $D^k\times F^{n-k}$. Denote by $\varphi$ the homomorphism induced by $F^{n-k}\hookrightarrow U$, and then $U\cong D^k\times F^{n-k}$ implies that $\pi_1(F^{n-k})\cong\varphi(\pi_1(F^{n-k}))\cong\pi_1(U)$. Since every generator $\gamma\in\pi_1(F^{n-k})$ is chosen such that $\ell(\gamma)< 2\delta$, $\varphi(\gamma)$ canonically determines an element in $\Gamma_{\delta}(p)$. That is, $\varphi$ gives a homomorphism $\pi_1(F^{n-k})\rightarrow \Gamma_{\delta}(p)$.
 The next is to check the injectivity of 
$\varphi$. In fact, for a trivial loop $\gamma\in\Gamma_{\delta}(p)$, $\gamma$ is trivial in $\pi_1(U)$, and thus $\gamma$ is trivial in $\pi_1(F^{n-k})$. 
Hence $\varphi$ is an injective homomorphism from $\pi_1(F)$ to $\Gamma_{\delta}(p)$. 

On the other hand, 
let $\psi$ be the  natural composed homomorphism $\Gamma_{\delta}(p)\rightarrow\pi_1 (U)\xrightarrow{\cong}\pi_1(F^k)$. For any trivial loop $\gamma\in\pi_1(F^{n-k})$, then $\gamma$ is trivial in $\pi_1(U)$ and thus trivial in $\Gamma_{\delta}(p)$. Therefore, $\psi$ is injective.
Moreover, it is straightforward to prove equation (\ref{composition-identity}). We have proved the claim.

Therefore,  
$\Gamma_{\delta}(p)$ contains a nilpotent subgroup $\mathcal{N}$ of rank $n-k$ and of index at most $w(n)<\infty$.

\end{proof}

\subsection{The Proof of the Main $\epsilon$-Regularity Theorem}

\label{ss:proof-of-main-theorem}

We complete this section by proving Theorem \ref{t:eps_reg_collapsed} and also giving an example to show the optimality of the converse direction in Theorem \ref{t:eps_reg_collapsed}.

\begin{proof}[Proof of Theorem \ref{t:eps_reg_collapsed}]

The proof of the main theorem is now just a combination of Theorem \ref{t:KW_almost_nilpotent}, Proposition  \ref{harmonic-radius}, and Proposition \ref{converse-rank-maximum}.  Indeed, let us assume that for some $\delta>0$ to be determined momentarily we have
\begin{align}
d_{GH}\Big(B_2(p), B_2((0^{k-\ell},z^\ell))\Big)<\delta,\label{GH-control-(k-l)}
\end{align}
where $(0^{k-\ell},z^\ell)\in \dR^{k-\ell}\times Z^\ell$ with $\dim(Z^{\ell})=\ell\leq 3$.  First,
by Theorem \ref{t:KW_almost_nilpotent}, it holds that $\rank(\Gamma_{\delta}(p))\leq n-k$ for sufficiently small $0<\delta\leq \delta_0(n,B_1(z^{\ell}))$. Furthermore, the assumption $\rank(\Gamma_{\delta}(p))=n-k$ and the Gromov-Hausdorff control (\ref{GH-control-(k-l)})
 enable us to apply Proposition \ref{harmonic-radius}. That is, we may choose $\delta(n,B_1(z^{\ell}))$) such that if the above assumptions hold, then we have on the universal cover the estimate for every $\tilde{q}\in B_1(\tilde{p})$,
\begin{align}
r_h(\tilde{q})\geq h_0(n,B_1(z^{\ell})).
\end{align}
It is a standard point, indeed just work in coordinates, to check that this implies the desired conjugate radius lower bound.  Additionally, if the manifold is assumed to be Einstein, then curvature bound follows from  Proposition \ref{curvature-bound}.  The converse conclusion is contained in Proposition \ref{converse-rank-maximum}.  Thus we have finished the proof.

\end{proof}

In the rest of this subsection, we give a simple counterexample to show that the assumption in the converse direction of Theorem \ref{t:eps_reg_collapsed} is sharp. More precisely,
the converse direction of Theorem \ref{t:eps_reg_collapsed} is not true in general if the limit space is not euclidean or not a smooth manifold. That is, without the Gromov-Hausdorff control (\ref{smooth-limit}), bounded curvature does not imply that the nilpotency rank of
$\Gamma_{\delta}(p)\equiv\Image[\pi_1(B_{\delta}(p))\rightarrow\pi_1(B_2(p))]$ attains the maximum.

\begin{example}\label{half-space-counter-example}Let $M^3=\dR^2\times S^1$. We define an isometric action of $\mathbb{Z}_k=\{e^{{\sqrt{-1}}\frac{2j\pi}{k}}|0\leq j\leq k-1\}$
as follows: for any $(re^{\sqrt{-1}\theta},e^{\sqrt{-1}\varphi})\in M^3$,
 the rotation $\gamma_{j}=e^{\sqrt{-1}\frac{2j\pi}{k}}$ is given by
\begin{equation}\gamma_{\ell}\cdot\Big(re^{\sqrt{-1}\theta},e^{\sqrt{-1}\varphi}\Big)
\equiv\Big(re^{\sqrt{-1}(\theta+\frac{2j\pi}{k})},e^{\sqrt{-1}(\varphi+\frac{2j\pi}{k})}\Big).\end{equation}
Under the above assumption, $\mathbb{Z}_k$ properly discontinuously and freely acts on $M^3$,
then the quotient space $N_k^3\equiv M^3/\mathbb{Z}_k$ is a flat manifold with the quotient metric.
Letting $k\rightarrow\infty$, then the limit space is a half line, i.e.
\begin{equation}(N_k^3,g_k,p_k)\xrightarrow{GH}(\dR^1_+,g_0,0),
\end{equation}
and the convergence keeps sectional curvature bounded. Notice that $M^3$ is a normal $\mathbb{Z}_k$-cover 
of $N_k^3$, i.e. $\pi_1(N_k^3)/\pi_1(M^3)\cong\mathbb{Z}_k$ and $\pi_1(M^3)\cong\mathbb{Z}$. On the other 
hand, the inclusion homomorphism
$i_*:\pi_1(B_1(p_k))\rightarrow\pi_1(N_k^3)$ is surjective.
Therefore, \begin{equation}\rank\Big(\Image[\pi_1(B_1(p_k))\rightarrow\pi_1(N_k^3)]\Big)=\rank(\pi_1(N_k^3))=1.\end{equation}


\end{example}

\appendix

\section{Generalized Margulis Lemma for Collapsed Manifolds}

\label{s:proof-of-margulis}

In this section, we give a proof of Theorem \ref{t:KW_almost_nilpotent}, 
which extends theorem \ref{Generalized-Margulis-Lemma} in 
Kapovitch-Wilking's joint paper \cite{KW}. Although they did not explicitly state it in this way, in fact their essential techniques lead to the refinement stated in Theorem \ref{t:KW_almost_nilpotent}.
Here we will show that the nilpotency length of the fibered fundamental group is controlled from above by the the collapsed dimension. The point here is that although in general the limit space is singular, we have the concept of limiting dimension in the sense of theorem \ref{limiting-dimension}.  We recall the statement of Theorem \ref{t:KW_almost_nilpotent}.

\vspace{0.5cm}

{\textit{ Let $(Z^k,z^k)$ be a pointed Ricci-limit metric space with $\dim Z^k = k$ in the sense of theorem \ref{limiting-dimension}.  Then there exists 
$\epsilon_0=\epsilon_0(n,B_1(z^k))>0$, $w_0=w_0(n,B_1(z^k))<\infty$ such that if a Riemannian manifold $(M^n, g,p)$ with 
$\Ric\geq-(n-1)$ satisfies that $B_2(p)$ has a compact closure in $B_4(p)$ and
 \begin{equation}d_{GH}(B_2(p),B_2(z^k))<\epsilon_0,
\end{equation}
 then the fibered fundamental group
$\Gamma_{\epsilon_0}(p)\equiv \Image[\pi_1(B_{\epsilon_0}(p))\rightarrow\pi_1(B_2(p))]\label{image-group}$ is $(w_0,n-k)$-nilpotent.
}}

\vspace{0.5cm}

\begin{proof}[Proof of Theorem \ref{t:KW_almost_nilpotent}]  Let us first outline the proof. The first step is to construct a nilpotent subgroup in $\Gamma_{\epsilon}(p)\cap\Gamma_{\delta}(B_1(\tilde{p}))$ for some positive constants $\epsilon>0$, $\delta>0$ of definite amount, where 
\begin{equation}
\Gamma_{\delta}(B_{1}(\tilde{p}))
\equiv\Big\langle\Big\{\gamma\in\pi_1(B_2(p))\Big|d(\gamma\cdot\tilde{q},\tilde{q})<\delta,\ \forall\tilde{q}\in B_{1}(\tilde{p})\Big\}\Big\rangle,
\end{equation}
and $\tilde{p}$ is a lift of $p$ on the universal cover of $B_2(p)$. In the second step, we will prove a claim which shows that 
$\Gamma_{\delta}(B_1(\tilde{p}))$ has a nilpotent subgroup of length $\leq n-k$ and of controlled index.  Finally, we will see that $\Gamma_{\epsilon}(p)\cap\Gamma_{\delta}(B_1(\tilde{p}))$ has controlled index in $\Gamma_{\epsilon}(p)$, from which we will have finished the proof.

As the first stage, we construct a nilpotent subgroup $\mathcal{N}_a\leq \Gamma_{\epsilon}(p)\cap\Gamma_{\delta}(B_1(\tilde{p}))$ such that $[\Gamma_{\epsilon}(p):\mathcal{N}_a]<C(n)$.
By theorem 2.5 in \cite{KW}, there exists $D_1(n)<\infty$ such that for any given $\epsilon>0$, there is some $d_1\leq D_1(n)$ such that $\Gamma_{\epsilon}(p)\equiv\Image[\pi_1(B_{\epsilon}(p))\rightarrow\pi_1(B_2(p))]$ has a generating set $\{\gamma_1,\ldots,\gamma_{d_1}\}$ with $d(\gamma_j\cdot\tilde{p},\tilde{p})<2\epsilon$ for every $1\leq j\leq d_1$. The positive constant $\epsilon>0$ will be determined momentarily.
On other hand, by theorem \ref{Generalized-Margulis-Lemma},  there exists $\epsilon_1(n)>0$ such that for every $0<\epsilon\leq\epsilon_1(n)$ there is a
nilpotent subgroup $\mathcal{N}$ in $\Gamma_{\epsilon}(p)$
such that
\begin{equation}
[\Gamma_{\epsilon}(p):\mathcal{N}]\leq C_1(n),\ \Step(\mathcal{N})=c\leq\length(\mathcal{N})\leq n.
\end{equation}
Lemma \ref{effective-generating-set} implies that $\mathcal{N}$ has a generating set
\begin{equation}
B=\langle g_1^{(0)},\ldots, g_{d_2}^{(0)}\rangle, \ d_2\leq C_1^2(n)\cdot D_1(n),
\end{equation}
and $d(g_j^{(0)}\cdot\tilde{p},\tilde{p})<2(2C_1+1)\cdot\epsilon$. Now we are in a position to explicitly write the nilpotent subgroup $\mathcal{N}_a\leq\mathcal{N}$. To this end, denote 
\begin{equation}
\bar{B}\equiv\bigcup\limits_{k=1}^{c}\mathcal{C}_k(B),
\end{equation}
then straightforward calculations show that
\begin{equation}
\#(\bar{B})\leq 10\cdot (d_2)^{c}\leq 10\cdot( C_1^2(n)\cdot D_1(n))^n\equiv C_3(n).
\end{equation}
Let \begin{equation}\bar{B}\equiv\{\sigma_1,\ldots,\sigma_{d_3}\},\ d_3\leq C_3(n),\end{equation}
and
\begin{equation}
d(\sigma_j\cdot\tilde{p},\tilde{p})\leq C_4(n)\cdot\epsilon,
\end{equation}
where $C_4\equiv 2\cdot(2C_1+1)\cdot(3\cdot 2^{n}-2)$.
We choose \begin{equation}\epsilon<\frac{\Psi_0(2\delta|n)}{4C_4}\label{nilpotent-group-transit}\end{equation} for some 
$\delta(n,B_1(z^k))>0$ determined later, where $\Psi_0$ is the function in Lemma \ref{every-point-Z}. By Lemma \ref{every-point-Z}, there exists a sequence of integers $1\leq a_j\leq N_1(n)$ such that if we choose 
\begin{equation}
\mathcal{N}_a\equiv\langle\sigma_1^{a_1},\ldots,\sigma_{d_3}^{a_{d_3}}\label{def-N_a}\rangle,
\end{equation}
then for each $1\leq j\leq d_3$ and for all $\tilde{x}\in B_1(\tilde{p})$,
\begin{equation}
d(\sigma_j^{a_j}\cdot\tilde{x},\tilde{x})<2\delta,\label{N_a-small-displacement}
\end{equation}
where $\tilde{p}$ is a lift of $p$ on the universal cover of $B_2(p)$.
In particular, inequality (\ref{N_a-small-displacement}) gives that
\begin{equation}
\mathcal{N}_a\leq\Gamma_{\delta}(B_1(\tilde{p})).
\end{equation}
By the construction (\ref{def-N_a}) and Lemma \ref{power-controlled-index},
\begin{equation}
[\mathcal{N}:\mathcal{N}_a]\leq N_1(n)^{C_3(n)}\equiv C_5(n),\ [\Gamma_{\epsilon}(p):\mathcal{N}_a]\leq C_1(n)\cdot C_5(n).\label{N_a-controlled-index}
\end{equation}

The next is to prove the following Claim.

\vspace{0.5cm}

{\bf{Claim.}} 
There exists $\delta=\delta(n,B_1(z^k))>0$, $C_0=C_0(n,B_1(z^k))<\infty$ such that if
\begin{equation}
d_{GH}(B_2(p),B_2(z^k))<\delta, \ z^k\in Z^k,\label{Margulis-GH-control}
\end{equation}
then  
\begin{equation}
\Gamma_{\delta}(B_{1}(\tilde{p}))
\equiv\Big\langle\Big\{\gamma\in\pi_1(B_2(p))\Big|d(\gamma\cdot\tilde{q},\tilde{q})<\delta,\ \forall\tilde{q}\in B_{1}(\tilde{p})\Big\}\Big\rangle
\end{equation}
contains a nilpotent subgroup $\widehat{\mathcal{N}}$ of length $\leq n-k$ and of index $\leq C_0(n,B_1(z^k))$.
In particular, we have the nilpotency rank bound 
$\rank(\Gamma_{\delta}(B_1(\tilde{p})))\leq n-k$.

\vspace{0.5cm}

The proof is an application of the induction theorem of \cite{KW}, we will include the details for completeness sake.  We argue by contradiction and suppose there is a pointed Ricci-limit space $(Z^k,z^k)$
with $\dim Z^k=k$ such that no such $\delta(n,B_1(z^k))>0$ and $C_0(n,B_1(z^k))<\infty$ exist. That is,
there is a sequence $\delta_i\rightarrow0$ and a  sequence of Riemannian manifolds $(M_i^n,g_i,p_i)$
with $\Ric_{g_i}\geq-(n-1)$ which satisfies $B_2(p_i)$ has a compact closure in $B_4(p_i)$ and
\begin{equation}
d_{GH}(B_{2}(p_i), B_{2}(z^k))<\delta_i.
\end{equation}
But each nilpotent subgroup in $\Gamma_{\delta_i}(B_{1}(\tilde{p}_i))$ of length $\leq n-k$ has index $\geq 2^i$, where $\tilde{p}_i$ is a lift on the universal cover of $B_2(p_i)$. 
We will produce a contradiction by considering the intermediate covering space of $B_2(p_i)$,
\begin{equation}\pi_i:(N_i,\hat{p}_i)\equiv\Big(\widetilde{B_{2}(p_i)}\Big/\Gamma_{\delta_i}(B_{1}(\tilde{p}_i)),\hat{p}_i\Big)\rightarrow \Big(B_2(p_i),p_i\Big).\end{equation}
Notice that $\pi_1(N_i,\hat{p}_i)=\Gamma_{\delta_i}(B_{1}(\tilde{p}_i)).$
Passing to a subsequence,  it holds that,  for some complete length space $(\hat{Z},\hat{z})$,\begin{equation}
d_{GH}\Big(B_{\epsilon_i^{-1}}(\hat{p}_i),B_{\epsilon_i^{-1}}(\hat{z})\Big)<\epsilon_i\rightarrow0, \ \hat{z}_{\infty}\in\widehat{Z},
\end{equation}
where $\lim\limits_{\delta_i\rightarrow0}\epsilon_i=0$. Moreover, the covering maps $\pi_i:(N_i,\hat{p}_i)\rightarrow(B_2(p_i),p_i)$ converges to a submetry $\pi_{\infty}:(\hat{Z},\hat{z})\rightarrow (Z^k,z^k)$.
Take a sequence $\hat{x}_i\in B_{1/4}(\hat{p}_i)$ with $\hat{x}_i\rightarrow\hat{x}_{\infty}$ such that $\hat{x}_{\infty}\in B_{1/4}(\hat{z})$ is a regular point in $\widehat{Z}$ in the sense of theorem \ref{limiting-dimension}.
 Rescale the metrics at $\hat{x}_i$ by letting $h_i=\lambda_i^2g_i$ with $\lambda_i\rightarrow\infty$ and $\lambda_i\leq
\min\{\epsilon_i^{-1/2}, \delta_i^{-1/2}\}$,
then 
\begin{equation}
(\lambda_iN_i,\hat{x}_i)\xrightarrow{pGH}(\dR^{k_1},\hat{x}_{\infty}),\ k_1\equiv\dim \widehat{Z},\end{equation}
and $\pi_1(\lambda_i N_i,\hat{x}_i)\cong\Gamma_{\delta}(B_{1}(\tilde{p}_i))$ is generated by loops of length $\leq\delta_i^{1/2}<1$. Observe that, 
\begin{equation}k_1\equiv\dim\widehat{Z}\geq \dim Z^k=k.\end{equation}
In fact, 
take any regular point $z_{\infty}\in B_1(z^k)\subset Z^k$ with $z_i\in B_1(p_i)$ converging to $z_{\infty}$. By definition, the tangent cone at $z_{\infty}$ is $\dR^k$ provided $\dim Z=k$. For any fixed $\bar{\epsilon}>0$, there exists $r=r(\bar{\epsilon},B_1(z^k))>0$ such that for sufficiently large $i$, we have 
an $\dR^k$-splitting harmonic map $\Phi_i:B_r(z_i)\rightarrow B_r(0^k)$
which is also an $r\bar{\epsilon}$-GH approximation between $B_r(p_i)$ and $B_r(0^k)$.
The standard rescaling arguments combined with the average estimate in Lemma \ref{lift} show that the
lifted harmonic map $\widehat{\Phi}_i$ with $\widehat{\Phi}_i\equiv\Phi_i\circ\pi_i$ is also an $\dR^k$-splitting map. Therefore, we can pass the quantitative $\dR^k$-splitting property to the limit space $(\widehat{Z},\hat{z})$, which gives $\dim\widehat{Z}\geq k$.

Therefore, by the induction theorem (theorem 6.1) in \cite{KW}, there is a positive constant $C_0(n)<\infty$ such that for sufficiently large $i$ (depending on the dimension $n$ and the geometry of $B_1(z^k)\subset Z^k$), $\Gamma_{\delta_i}(B_{1}(\tilde{p}_i))$ 
has a nilpotent subgroup of length $\leq n-k$
and of controlled index (independent of $i$).  This is a contradiction, and we have proved the claim.

\vspace{0.5cm}

Now if we choose 
$\delta(n,B_1(z^k))>0$ as the one in the above Claim, $\epsilon_0>0$ be a constant satisfying (\ref{nilpotent-group-transit}), and
take $\mathcal{N}_0\equiv\mathcal{N}_a\cap\widehat{\mathcal{N}}$, then 
$\length(\mathcal{N}_0)\leq \length(\mathcal{\widehat{N}})\leq n-k$ and 
\begin{equation}
[\mathcal{N}_a:\mathcal{N}_0]=[\mathcal{N}_a:\mathcal{N}_a\cap\widehat{\mathcal{N}}]\leq [\Gamma_{\delta}(B_1(\tilde{p})):\mathcal{\widehat{N}}]\leq C_0(n, B_1(z^k)).
\end{equation}
Therefore, by inequality (\ref{N_a-controlled-index}),
\begin{eqnarray}
[\Gamma_{\epsilon}(p):\mathcal{N}_0]&\leq& [\Gamma_{\epsilon}(p):\mathcal{N}_a]\cdot [\mathcal{N}_a:\mathcal{N}_0]\nonumber\\
&\leq& C_1(n)\cdot C_5(n)\cdot C_0(n,B_1(z^k))\equiv w_0(n, B_1(z^k)).
\end{eqnarray}
The proof of the Theorem is complete.
\end{proof}

\section{Harmonic Fiber Bundle Map for Bounded Ricci Curvature}

\label{s:proof-of-fiber-bundle}

This section is devoted to give the proof of Proposition  \ref{fiber-bundle}. For the convenience, let us restate the Proposition here.

\vspace{0.5cm}

{\textit{Let $(M^n, g, p)$ be a Riemannian manifold with $|\Ric|\leq n-1$ and $B_3(p)$ has a compact closure in $B_6(p)$.
Given $c_0>0$, there exists $\delta=\delta(n,c_0)>0$ such that if}

\textit{$(i)$ $d_{GH}(B_3(p), B_3(0^k))<\delta,\ 0^k\in\dR^k$, $k\leq n$,}

\textit{$(ii)$ for each $q\in B_3(p)$, we have the conjugate radius lower bound $\Conj(q)\geq c_0>0$,}
\\
\textit{
then there is a smooth map $\Phi:B_{5/2}(p)\rightarrow B_{5/2}(0^k)$ such that 
$\Phi^{-1}(B_{9/4}(0^k))$ is diffeomorphic to $B_{9/4}(0^k)\times N^{n-k}$, where $N^{n-k}$ is an
infra-nilmanifold of dimension $n-k$. 
}}
\vspace{0.5cm}

\begin{proof}[Proof of Proposition \ref{fiber-bundle}]

The proof consists of the following steps.

{\bf{Step 1.}} There exists $0<\delta_1(n,c_0)<1$ such that if $(i)$ holds for $0<\delta\leq\delta_1<1$, then for each $q\in B_{5/2}(p)$, there is a local fiber bundle, which is a $\Psi(\delta|n)$-Gromov-Hausdorff map.

\vspace{0.5cm}

Assume that \begin{equation}d_{GH}(B_3(p), B_3(0^k))<\delta,\ 0^k\in\dR^k,\ k\leq n,
\end{equation}
for some $\delta>0$ which will be determined later, and let 
\begin{equation}
f:B_3(p)\longrightarrow B_3(0^k),\ f(p)=0^k\in\dR^k
\end{equation}
be a $\frac{\delta}{3}$-GH approximation.
Immediately, for each $q\in B_{5/2}(p)$, \begin{equation}
d_{GH}(B_{1/6}(q),B_{1/6}(f(q)))<\delta.
\end{equation}
Let $h\equiv \delta^{-1}g$, then for the rescaled metric, it holds that $\Ric\geq-(n-1)\delta$ and
\begin{equation}
d_{GH}\Big(B_{\frac{1}{6}\cdot\delta^{-1/2}}(q,h),B_{\frac{1}{6}\cdot\delta^{-1/2}}(f(q))\Big)<\delta^{1/2}.
\end{equation}
The above inequalities implies that there exists a smooth map 
\begin{equation}
\Phi_{q,h}=(\Phi_{q,h}^{(1)},\ldots, \Phi_{q,h}^{(k)}):B_1(q,h)\rightarrow B_1(f(q)),\label{base-harmonic-map}
\end{equation}
where for each $1\leq \mu\leq k$, $\triangle\Phi_{q,h}^{(\mu)}=0$, and
they satisfy the following estimate (under the rescaled metric)
\begin{equation} \fint_{B_{1}(q,h)}\sum\limits_{\mu,\nu=1}^k|\langle \nabla \Phi_{q,h}^{(\mu)}, \nabla \Phi_{q,h}^{(\nu)}\rangle_h-\delta_{\mu\nu}|d\Vol_h+
\fint_{B_1(q,h)}\sum\limits_{\mu=1}^k|\nabla^2 \Phi_{q,h}^{(\mu)}|_h^2d\Vol_h<\Psi(\delta|n).\label{average-C1-C2}
\end{equation}
Moreover, $\Phi_{q,h}$ is a $\Psi(\delta|n)$-Gromov-Hausdorff map.
Note that  for each $q$, the conjugate radius is uniformly bounded from below, that is, 
\begin{equation}\Conj_h(q)\geq c_0\cdot\delta^{-1}>10^3,
\end{equation}
where we just choose \begin{equation}\delta<\frac{c_0}{10^3}.\label{delta-conjugate}\end{equation}
 Therefore, the exponential map $\exp_q: (B_{1}(0^n),\hat{h})\rightarrow (B_{1}(q),h)$ is non-degenerate with $\hat{h}\equiv\exp_q^*(h)$, and thus we have the injectivity radius estimate at $0^n\in B_1(0^n)$, i.e.
\begin{equation}
\InjRad_h(0^n)\geq 1>0,
\end{equation}
then the $C^1$-harmonic radius bound at $0^n$ has definite lower bound provided  $|\Ric_{\hat{h}}|\leq (n-1)\delta$,
\begin{equation}
r_h(0^n)\geq s_0(n)>0,\label{s0-harmonic-radius-bound}
\end{equation}
and by Schauder estimate the $C^{1,\alpha}$-norm of the rescaled metric $\hat{h}$ is controlled by
\begin{equation} |\nabla(\hat{h}_{ij}-\delta_{ij})|_{C^{0}(B_{2s_0/3}(0^n))}+
[\nabla(\hat{h}_{ij}-\delta_{ij})]_{C^{\alpha}(B_{2s_0/3}(0^n))}<C(n).
\end{equation}
By our construction, the pull-back function $\widehat{\Phi}_{q,h}^{(\mu)}\equiv\Phi\circ\exp_q$ is also harmonic for each $1\leq\mu\leq k$, that is, 
\begin{equation}
\triangle_{\hat{h}} \widehat{\Phi}_{q,h}^{(\mu)}(\hat{x})\equiv 0,\  \forall \hat{x}\in B_{2s_0/3}(0^n,\hat{h}),\ \forall \mu=1,\ldots, k.
\end{equation}
Since the coefficients of the above equation have uniform $C^{\alpha}$-norm, the Schauder interior estimate gives the $C^{2,\alpha}$-bound,
\begin{equation}
|\nabla^2\widehat{\Phi}_{q,h}^{(\mu)}|_{C^{0}(B_{s_0/2}(0^n))}+[\nabla^2\widehat{\Phi}_{q,h}^{(\mu)}]_{C^{\mu}(B_{s_0/2}(0^n))}<C(n),\ \mu=1,\ldots, k.
\end{equation}
Descending to the base space, it holds that
\begin{equation}
|\nabla^2\Phi_{q,h}^{(\mu)}|_{C^{0}(B_{s_0/2}(q))}+[\nabla^2\Phi_{q,h}^{(\mu)}]_{C^{\alpha}(B_{s_0/2}(q))}<C(n),\ \mu=1,\ldots, k.\label{pointwise-hess}
\end{equation}

On the other hand, relative volume comparison theorem and the average estimate (\ref{average-C1-C2}) implies the following estimate on $B_{s_0/2}(q)$, 
\begin{equation}
\fint_{B_{s_0/2}(q)}\sum\limits_{\mu,\nu=1}^k|\langle \nabla \Phi_{q,h}^{(\mu)}, \nabla\Phi_{q,h}^{(\nu)}\rangle-\delta_{\mu\nu}|d\Vol_h<\Psi(\delta|n),\label{average-C1}
\end{equation}
Observe that, given any $\bar{\epsilon}>0$, there exists \begin{equation}
\delta=\delta(\bar{\epsilon},n)>0\label{determine-delta-2}
\end{equation} such that if the integral $C^1$-estimate (\ref{average-C1}) holds for the above $\delta>0$, then by the uniform $C^{2,\alpha}$-estimate (\ref{pointwise-hess}) and simple compactness arguments, the following pointwise $C^1$-estimate holds,
\begin{equation}
\sum\limits_{\mu,\nu=1}^k|\langle \nabla \Phi_{q,h}^{(\mu)}, \nabla\Phi_{q,h}^{(\nu)}\rangle-\delta_{\mu\nu}|(x)<\bar{\epsilon}, \ x\in B_{s_0/2}(q).\label{pointwise-C1}
\end{equation}
So we can choose $\bar{\epsilon}=\bar{\epsilon}(n)>0$ such that the harmonic map $\Phi_{q,h}$ is non-degenerate.
Therefore, if $(i)$ holds for the constant $\delta_1=\delta_1(n,c_0,\bar{\epsilon}(n))>0$ which is determined by inequality (\ref{delta-conjugate}) and (\ref{determine-delta-2}), the harmonic map $\Phi_{q,h}$ is non-degenerate and it gives a fiber bundle with fiber of dimension-$(n-k)$.

\vspace{0.5cm}

{\bf{Step 2.}} For the original Riemannian metric $g$, there exists $0<\delta_2(n,c_0)<1$ such that if $(i)$ and $(ii)$ hold for $0<\delta\leq \delta_2<1$, then there exists a smooth fiber bundle map $\Phi: B_{5/2}(p)\rightarrow B_{5/2}(f(p))\subset\dR^k$ such that $\Phi^{-1}(B_{9/4}(0^k))$ is diffeomorphic to $N^{n-k}\times B_{9/4}(0^k)$ and $\Phi$ is a $\Psi(\delta|n)$-Gromov-Hausdorff map.

\vspace{0.5cm}

Let 
\begin{equation}
d_{GH}(B_3(p),B_3(0^k))<\delta, \ 0^k\in\dR^k
\end{equation}
where  $\delta>0$ is determined later. Define  
\begin{equation}r(n,\delta)\equiv\frac{s_0(n)\delta^{1/2}}{10}<<1,\end{equation}
where $s_0(n)$ is the constant in inequality (\ref{s0-harmonic-radius-bound}) then for each $q\in B_{5/2}(p)$, we have a local fiber bundle $\Phi_q: B_{r}(q)\rightarrow  B_{r}(f(q))$.

 Choose a finite $\frac{r}{2}$-dense subset $\{q_{\ell}\}_{\ell=1}^N\subset B_1(p)$ such that 
\begin{equation}
d(q_{\alpha},q_{\beta})>r/4,\ \forall \alpha\neq\beta.
\end{equation}
 By relative volume comparison, $N\leq N_0(\delta^{-1}|
n)$ and the number of the overlaps of the cover $\{B_{r}(q_{\ell})\}_{\ell=1}^N$ are uniformly bounded by some constant $Q_0(n)$.

Since we always assume $\Ric_g\geq-(n-1)$, there is a good cut-off function due to Cheeger-Colding. We briefly recall the construction here. For each $q_{\ell}$, $1\leq\ell\leq N$. First, we define the following functions,
\begin{equation}
\begin{cases}
\triangle f_{\ell}(x)=1, \ x\in B_{r}(q_{\ell})\setminus B_{r/2}(q_\ell) \\
f_{\ell} \ \text{is constant on the boundary and bounded by} \ C(r,n),
\end{cases}
\end{equation}
and the details of the above construction can be found in \cite{ChC}.
Take $\Psi:\dR^1\rightarrow [0,1]$ as a smooth cut-off function such that the composed function
$\varphi_{\ell}\equiv\Psi(f_{\ell})$ can be extended to a global smooth function on $M^n$ with \begin{equation}
\varphi_{\ell}=\begin{cases}
1, \ x\in B_{r/2}(q_{\ell})\\
0, \ x\in M^n\setminus B_{r}(q_{\ell}).
\end{cases}
\end{equation}
It is standard to construct the partition of unity yielding to the cover $\{B_{r}(q_{\ell})\}_{\ell=1}^N$:
\begin{equation}
1\equiv\sum\limits_{\ell=1}^N\Psi_{\ell}(x),\ x\in B_1(p),\  B_{r/2}(q_{\ell})\subset\Supp(\Psi_{\ell})\subset B_{r}(q_{\ell}),
\end{equation}
where 
\begin{equation}
\Psi_{\ell}\equiv\frac{\varphi_{\ell}}{\sum\limits_{\ell=1}^N\varphi_{\ell}}.
\end{equation}
Now we define
\begin{equation}
\Phi=\Big(\Phi^{(1)},\ldots,\Phi^{(k)}\Big)\end{equation} by
\begin{equation}
\Phi^{(\mu)}\equiv\sum\limits_{\ell=1}^N \Psi_{\ell}\cdot\Phi_{\ell}^{(\mu)}, \ 1\leq \mu\leq k,
\end{equation}
where $\Phi_{\ell}^{(\mu)}$ is the coordinate function of the local fiber bundle map in terms of the original metric $g$.
The remaining is to prove that $\Phi$ is a non-degenerate for each
$q\in B_{5/2}(p)$, and thus $\Phi$
is a fiber bundle map. Moreover, we will show that the  level set  is an infra-nilmanifold of dimension $n-k$.

For any $q\in B_{5/2}(p)$, there are $N'\leq Q_0(n)$ balls in $\{B_{r}(q_{\ell})\}_{\ell=1}^{N}$ which intersects with $B_{r/3}(q)$. In fact, 
\begin{equation}
\Phi^{(\mu)}(x)=\sum\limits_{\alpha=\ell_1}^{\ell_{N'}}\Psi_{\alpha}(x)\cdot\Phi_{\alpha}^{(\mu)}(x),\ 1\leq \mu\leq k,\ \forall x\in B_{r/3}(q). 
\end{equation}
It suffices to argue in terms of the rescaled metric 
$h\equiv{\delta^{-1}}g$. Denote $r_0(n)\equiv s_0(n)/10$.
Notice that, by the same argument as in Step 1, the pointwise $C^{2,\alpha}$-estimate in terms of the rescaled metric $h$ holds for the cut-off function
$\varphi_{\ell}$, 
\begin{equation}
|\varphi_{\ell}|_{C^{2,\alpha}(B_{r_0}(q_{\ell},h))}\leq C_1(n).
\end{equation}
Since the number of overlaps is bounded by $ Q_0(n)$,
\begin{equation}
|\Psi_{\ell,h}|_{C^{2,\alpha}(B_{r_0}(q_{\ell},h))}\leq C_2(Q_0(n),n).
\end{equation}
Quick computation shows that 
\begin{equation}
|\nabla^2\Phi_h^{{(\mu)}}|_{C^0(B_{r_0/3}(q,h))}\leq 
\sum\limits_{\alpha=\ell_1}^{\ell_{N'}}\Big|\nabla^2(\Psi_{\alpha,h}\cdot\Phi_{\alpha,h}^{(\mu)})\Big|_{C^0(B_{r_0}(q_{\alpha},h))}\leq C(n).\label{global-function-pointwise-C^2}
\end{equation}

The next we will prove that
\begin{equation}
|\langle\nabla\Phi_h^{{(\mu)}},\nabla\Phi_h^{{(\nu)}}\rangle-\delta_{\mu\nu}|_{C^0(B_{r_0/6}(q,h))}<\Psi(\delta|n),\ 1\leq \mu,\nu\leq k\label{global-function-pointwise-C^1}
\end{equation}
which implies that
$\Phi_h$ is non-degenerate for sufficiently small $\delta$. Since $\Phi_{\ell,h}(q_{\ell})$ gives a $\Psi(\delta|n)$-Gromov-Hausdorff map for $1\leq\ell\leq N$,
then
for any $x\in B_{r_0/3}(q)\subset B_{r_0}(q_{\alpha})\cap B_{r_0}(q_{\beta})$,
\begin{equation}
|\Phi_{\alpha,h}^{(\mu)}(x)-\Phi_{\beta,h}^{(\mu)}(x)|<\Psi(\delta|n),\ 1\leq \mu\leq k, \ \ell_1\leq \alpha,\beta\leq\ell_{N'}.
\end{equation}
It is clear that
\begin{equation}
\triangle_{h}(\Phi_{\alpha,h}^{(\mu)}-\Phi_{\beta,h}^{(\mu)})(x)=0, \ x\in B_{r_0/3}(q).
\end{equation}
By Cheng-Yau's gradient estimate for harmonic functions in the context of uniform lower Ricci curvature, it holds that
\begin{equation}
|\nabla\Phi_{\alpha,h}^{(\mu)}-\nabla\Phi_{\beta,h}^{(\mu)}|(x)\leq C(n)|\Phi_{\alpha,h}^{(\mu)}-\Phi_{\beta,h}^{(\mu)}|(x), \forall x\in B_{r_0/6}(q).
\end{equation}
The pointwise $C^1$-estimate (\ref{global-function-pointwise-C^1}) immediately follows. In fact, fix any $\ell_1\leq \alpha_0\leq\ell_{N'}$ and $x\in B_{r_0/6}(q)$, it holds that
\begin{align}
&|\langle\nabla\Phi_h^{{((\mu))}},\nabla\Phi_h^{{(\nu)}}\rangle
-\langle\nabla\Phi_{\alpha_0,h}^{{(\mu)}},\nabla\Phi_{\alpha_0,h}^{{(\nu)}}\rangle|(x)\nonumber\\
=&\Big|\sum\limits_{\alpha,\beta=\ell_1}^{\ell_{N'}}\Big\langle\nabla\Big(\Psi_{\alpha,h}\cdot(\Phi_{\alpha,h}^{{(\mu)}}
-\Phi_{\alpha_0,h}^{{(\mu)}})\Big),\nabla\Big(\Psi_{\beta,h}\cdot(\Phi_{\beta,h}^{{(\nu)}}-\Phi_{\alpha_0,h}^{{(\nu)}})\Big)\Big\rangle\Big|(x)\nonumber\\
\leq &\sum\limits_{\alpha,\beta=\ell_1}^{\ell_{N'}}\Big(|\nabla\Psi_{\alpha,h}|\cdot|\Phi_{\alpha,h}^{{(\mu)}}
-\Phi_{\alpha_0,h}^{{(\mu)}}|+|\Psi_{\alpha,h}|\cdot|\nabla(\Phi_{\alpha,h}^{{(\mu)}}
-\Phi_{\alpha_0,h}^{{(\mu)}})|\Big)(x)\cdot\nonumber\\
&\Big(|\nabla\Psi_{\beta,h}|\cdot|\Phi_{\beta,h}^{{(\nu)}}
-\Phi_{\alpha_0,h}^{{(\nu)}}|+|\Psi_{\beta,h}^{(\nu)}|\cdot|\nabla(\Phi_{\beta,h}^{{(\nu)}}
-\Phi_{\alpha_0,h}^{{(\nu)}})\Big|\Big)(x)\nonumber\\
\leq &\Psi(\delta|n). 
\end{align}
By triangle inequality, for every $x\in B_{r_0/6}(q)$,
\begin{eqnarray}
|\langle\nabla\Phi_h^{{(\mu)}},\nabla\Phi_h^{{(\nu)}}\rangle-\delta_{ij}|&\leq& |\langle\nabla\Phi_h^{{(\mu)}},\nabla\Phi_h^{{(\nu)}}\rangle
-\langle\nabla\Phi_{\alpha_0,h}^{{(\mu)}},\nabla\Phi_{\alpha_0,h}^{{(\nu)}}\rangle|+|\langle\nabla\Phi_{\alpha_0,h}^{{(\mu)}},\nabla\Phi_{\alpha_0,h}^{{(\nu)}}\rangle-\delta_{ij}|\nonumber\\
&\leq &\Psi(\delta|n).
\end{eqnarray}

Finally, notice that $B_{3}(0^k)$
is contractible and thus 
the bundle map $\Phi$ is in fact trivial.

\vspace{0.5cm}

{\bf{Step 3.}} There exists $0<\delta_3(n,c_0)<1$ such that if assumptions $(i)$, $(ii)$ hold for $0<\delta\leq\delta_3<1$, then the fiber $N^{n-k}\cong\Phi^{-1}(0^k)$ in Step 2 is diffeomorphic to an infra-nilmanifold of dimension $n-k$. 
\vspace{0.5cm} 

It suffices to argue under the rescaled metric, that is, fix some $0<\delta\leq\delta_2<1$ and let $h\equiv\delta^{-1}g$. Recall that, under the rescaled metric, the arguments in Step 2 show that the harmonic fiber bundle map $\Phi_h$ satisfies the pointwise estiamtes (\ref{global-function-pointwise-C^2}), (\ref{global-function-pointwise-C^1}). Since $|\Ric_h|\leq (n-1)\delta$ and 
$\Conj_h(q)\geq c_0\cdot\delta^{-1}>2\cdot 10^3$ in our context,  
then by the smoothing theorem due to Dai-Wei-Ye (\cite{Dai-Wei-Ye}), for each fixed small $0<\epsilon<10^{-2}$, there exists some metric with bounded curvature which satisfies,
\begin{equation}
e^{-\epsilon}h\leq h_{\epsilon}\leq e^{\epsilon}h,\ |\nabla^k h_{ij}|_{B_1(q, h_{\epsilon})}<C_k(\epsilon,n), \ \forall k\in\mathbb{N}_+
\end{equation}
and $\Conj_{h_{\epsilon}}(q)\geq 10^3$. So applying the same scheme as in Step 2 (by the exponential map), there exists $s_0(n)/2<s_1(n,\epsilon)\leq s_0(n)$ such that under the smoothing metric $h_{\epsilon}$, it holds that
\begin{equation}
|\nabla^2\Phi_{h}^{(\mu)}|_{C^0(B_{s_1}(q,h_{\epsilon})}+[\nabla^2\Phi_{h}^{(\mu)}]_{C^{\alpha}(B_{s_1}(q,h_{\epsilon}))}<C(n,\epsilon),
\end{equation}
and 
\begin{equation}
|\langle \nabla\Phi_{h}^{(\mu)}, \nabla\Phi_{h}^{(\nu)}\rangle_{g_{\epsilon}}-\delta_{ab}|
<\Psi(\delta|n,\epsilon)<<1,\ \forall x\in B_{s_1}(q,h_{\epsilon}).
\end{equation}
then $\Phi_{h}$ is non-degenerate with respect to the smoothing metric 
$h_{\epsilon}$. Moreover, 
we have the uniform bound on second fundamental form of the fiber at $q$,
\begin{equation}
|\II_{h_{\epsilon}}|\leq C(n)\frac{|\nabla^2\Phi_{h}^{(\mu)}|_{h_{\epsilon}}}{|\nabla\Phi_{h}^{(\mu)}|_{h_{\epsilon}}}\leq C(\epsilon,n).
\end{equation}
 Therefore, by Gauss-Codazzi equation, the sectional curvatures of the fibers are uniformly bounded by some constant $C(\epsilon,n)$. Since $\Phi_{h}$ a $\Psi(\delta|n)$-Gromov-Hausdorff map with respect to the metric $h$, the diameter (under $h_{\epsilon}$) of the fiber $E_q\equiv\Phi^{-1}_{h}(\Phi_{h}(q))$ satisfies
 \begin{equation}\diam_{h_{\epsilon}}(E_q)\leq e^{\epsilon}\cdot\Psi(\delta|n).\end{equation}
From now on, fix $\epsilon=1/10^6$, Gromov's theorem on the almost flat manifolds (combined with a refinement by Ruh, see \cite{Gromov} and \cite{ruh}) gives that there exists $\epsilon_{Gr}(n)>0$ such that if
\begin{equation}
\diam_{h_{\epsilon}}^2(E_q)\leq \epsilon_{Gr}(n),\label{almost-flat}
\end{equation}
then the fiber $E_q$ is diffeomorphic to an infra-nilmanifold of dimension $n-k$. 
In fact, 
 \begin{equation}
\diam_{h_{\epsilon}}^2(E_q)\cdot |\sec_{h_{\epsilon}}(E_q)|\leq e^{2\epsilon}\cdot\Psi^2(\delta|n)\cdot C(n,\epsilon),\end{equation}
 so
inequality (\ref{almost-flat}) immediately holds if we choose $\delta>0$
sufficiently small (depending on $n$ and $c_0$).
The proof is complete.

\end{proof}

\end{document}